\documentclass[a4paper]{amsart}
\usepackage{amsmath}
\usepackage{amsthm}
\usepackage{amssymb}
\usepackage{enumerate}
\usepackage{latexsym}
\usepackage{empheq}
\usepackage{comment}
\newtheorem{Thm}{Theorem}[section]
\newtheorem{Prop}[Thm]{Proposition}
\newtheorem{Cor}[Thm]{Corollary}
\newtheorem{Lem}[Thm]{Lemma}
\newtheorem{Clm}[Thm]{Claim}

\theoremstyle{definition}
\newtheorem{notation}[Thm]{Notation}
\newtheorem{Def}[Thm]{Definition}

\newtheorem{Asu}[Thm]{Assumption}
\theoremstyle{remark}
\newtheorem{Rem}[Thm]{Remark}

\newcommand{\Ric}{\mathrm{Ric}}
\newcommand{\tr}{\mathrm{tr}}
\newcommand{\Imag}{\mathrm{Im}}
\newcommand{\Ker}{\mathrm{Ker}}
\newcommand{\Id}{\mathrm{Id}}
\newcommand{\Vol}{\mathrm{Vol}}
\newcommand{\diam}{\mathrm{diam}}
\newcommand{\Card}{\mathrm{Card}}
\newcommand{\Span}{\mathrm{Span}}

\newcommand{\LIP}{\mathrm{LIP}}
\newcommand{\Lip}{\mathrm{Lip}}

\newcommand{\Hess}{\mathrm{Hess}}

\newcommand{\Inj}{\mathrm{inj}}
\newcommand{\Reach}{\mathrm{Reach}}
\newcommand{\Sect}{\mathrm{Sect}}

\newcommand{\R}{\mathbb{R}}
\newcommand{\Z}{\mathbb{Z}}
\newcommand{\X}{\mathbb{X}}
\newcommand{\Pj}{\mathbb{P}}
\newcommand{\1}{\mathbf{1}}

\newcommand{\RCD}{\mathrm{RCD^\ast}}

\newcommand{\Ha}{\mathcal{H}}

\newcommand{\D}{\mathcal{D}}

\newcommand{\K}{\mathcal{K}}
\newcommand{\M}{\mathcal{M}}
\newcommand{\La}{\mathcal{L}}
\newcommand{\Le}{\mathcal{L}}
\DeclareMathOperator*{\esssup}{ess\,sup}
\makeatletter
\@namedef{subjclassname@2020}{\textup{2020} Mathematics Subject Classification}
\makeatother

\title[Laplacian Eigenmaps for Submanifolds with Singularities]{Convergence of Laplacian Eigenmaps and its Rate for Submanifolds with Singularities}
\author{Masayuki Aino}
\address{RIKEN, Center for Advanced Intelligence Project AIP, 1-4-1 Nihonbashi, Tokyo 103-0027, Japan}
\email{masayuki.aino@riken.jp}
\subjclass[2020]{58C40, 58J50, 60D05}
\keywords{Laplacian eigenmaps, Manifold learning, Spectral convergence}
\begin{document}
\maketitle
\begin{abstract}
In this paper, we give a  spectral approximation result for the Laplacian on submanifolds of Euclidean spaces with singularities by the $\epsilon$-neighborhood graph constructed from random points on the submanifold.
Our convergence rate for the eigenvalue of the Laplacian is $O\left(\left(\log n/n\right)^{1/(m+2)}\right)$, where $m$ and $n$ denote the dimension of the manifold and  the sample size, respectively.
\end{abstract} 
\tableofcontents
\section{Introduction}
In this paper, we give some results on spectral approximation of the weighted Laplacian on submanifolds of Euclidean spaces with singularities.
We construct a graph called an $\epsilon$-neighborhood graph and consider its graph Laplacian under the assumption that  we are given i.i.d. sample from the weighted Riemannian volume measure on the manifold.
As we see below, the edges of the graph and their weights are defined using the Euclidean distance, since the geodesic distance of the Riemannian manifold is not given.
The method for dimensionality reduction using eigenvectors of such graph Laplacian, or some other variant of it, is known as Laplacian Eigenmaps and was proposed in \cite{BN1}.
Some geometric properties of the embedding of Riemannian manifolds, or metric measure spaces that are not necessarily smooth but satisfy certain geometric conditions, into Euclidean spaces using eigenfunctions of the Laplacian have been studied in \cite{ASPT}, \cite{Bates} and \cite{Por}.
In particular, \cite{ASPT} argues under weaker assumptions such as lower bound on the Ricci curvature.
On the other hand, the spectral approximation of the Laplacian on submanifolds of Euclidean spaces by the graph Laplacian has been discussed in \cite{BN2}, \cite{CT} and \cite{TGHS} assuming lower bound for a quantity called reach $\Reach\geq R$ (Definition \ref{DR}), which leads to bounds on the sectional curvature $|\Sect|\leq 1/R^2$ and the injectivity radius $\Inj\geq \pi R$ (Theorem \ref{CR}).
Singularities are not allowed under the assumptions about reach, for example, the reach of a square boundary $\partial([0,1]^2)$ is $0$.
The goal of this paper is to give the spectral approximation of the Laplacian  replacing the assumption on the reach with a weaker one assuming bounded sectional curvature and injectivity radius.
Under our assumptions, intrinsic singularities do not appear, but singularities as submanifolds do, and as we will see in Appendix \ref{SwDS}, submanifolds with dense singularities can appear.
Moreover, the square boundary $\partial([0,1]^2)$ satisfies our assumptions.

In this work, we deal with submanifolds with singularities by approximating them with smooth submanifolds satisfying the following.
\begin{Def}
Let $m\in \Z_{>0}$ be an integer.
For each $K,i_0>0$, we define $\M_1=\M_1(m,K,i_0)$ to be the set of (isometry class of) $m$-dimensional closed (i.e., compact, connected and without boundary) Riemannian manifolds $(M,g)$ satisfying $|\Sect_g|\leq K$ and $\Inj_g\geq i_0.$
\end{Def}

We consider a metric space $M$ and a map $\iota\colon M\to \R^d$ (possibly non-smooth), approximated by $(M_i,g_i)\in\M_1(m,K,i_0)$ and an isometric immersion $\iota_i\colon M_i\to \R^d$ with bounded $L^1$-norm of the second fundamental form. 
More precisely, we consider the following assumptions.
\begin{Asu}\label{Asu}
Let $m,d\in \Z_{>0}$ be integers with $m<d$ and take constants $S,K,i_0,L>0$.
Suppose that we are given a compact metric space $M$ with distance function $d_M$ and a map $\iota\colon M\to \R^d$ such that there exist a sequence $\{(M_i,g_i)\}_{i=1}^\infty\subset \M_1(m,K,i_0)$, a sequence of positive real numbers $\{\epsilon_i\}_{i=1}^\infty\subset \R_{>0}$ with $\lim_{i\to\infty}\epsilon_i=0$, a sequence of isometric immersions $\{\iota_i\colon M_i\to \R^d\}_{i=1}^\infty$ and a sequence of maps
$\{\psi_i\colon M\to M_i\}_{i=1}^\infty$ satisfying the following properties:
\begin{itemize}
\item[(i)] For any $x,y\in M$,
we have
$
d_M(x,y)\leq L\|\iota(x)-\iota(y)\|_{\R^d}.
$
\item[(ii)] For any $i\in \Z_{>0}$, we have
$
\int_{M_i} |II_i|\,d\Vol_{g_i} \leq S
$, where $II_i$ denotes the second fundamental form of $\iota_i$.
\item[(iii)] For any $i\in\Z_{>0}$ and $x,y\in M$, we have
$
|d_M(x,y)-d_{g_i}(\psi_i(x),\psi_i(y))|\leq \epsilon_i.
$
\item[(iv)] For any $i\in\Z_{>0}$ and $y\in M_i$, there exists $x\in M$ such that
$
d_{g_i}(y,\psi_i(x))\leq \epsilon_i.
$
\item[(v)] For any $x\in M$, we have
$
\lim_{i\to \infty}\iota_i(\psi_i(x))=\iota(x).
$
\end{itemize}
\end{Asu}
Under Assumption \ref{Asu}, by the $C^{1,\alpha}$ compactness theorem \cite[Theorem 11.4.7]{Pe3}, it turns out that $M$ is a smooth manifold with a $C^{1,\alpha}$ Riemannian metric $g$ ($\alpha\in(0,1)$), i.e., the Riemannian distance with respect to $g$ coincides with $d_M$.
Therefore, from now on, $M$ satisfying Assumption \ref{Asu} will be treated as a Riemannian manifold $(M,g)$.

We next introduce our weighted Laplacian.
Suppose that we are given a closed Riemannian manifold $(M,g)$ and positive Lipschitz function $\rho\colon M\to\R_{>0}$.
Then, we define the operator $\Delta_\rho$ by
$\Delta_\rho f:=\rho\Delta f-2\langle\nabla \rho,\nabla f\rangle$, where $\Delta$ denotes the Laplacian without wight defined by $\Delta=-\tr \Hess$.
For the normalized case, we consider the operator $\Delta_\rho^N:=(1/\rho)\Delta_\rho$.
Let $0=\lambda_0(\Delta_\rho)<\lambda_1(\Delta_\rho)\leq \lambda_2(\Delta_\rho)\leq\cdots \to \infty$ and
$0=\lambda_0(\Delta_\rho^N)<\lambda_1(\Delta_\rho^N)\leq \lambda_2(\Delta_\rho^N)\leq\cdots \to \infty$
be the eigenvalues of $\Delta_\rho$ and $\Delta_\rho^N$ counted with multiplicities, respectively.

Let us explain how to construct the graph Laplacian.
Suppose that we are given a manifold $M$, a map $\iota\colon M\to\R^d$, a function $\eta\colon [0,\infty)\to[0,\infty)$, a constant $\epsilon>0$ and points $x_1,\ldots, x_n\in M$.
Then, we define $n\times n$ matrices $\mathcal{K},\mathcal{D},\La\in \R^{n\times n}$ as follows:
\begin{equation}\label{defGLap}
\mathcal{K}_{i j}=\eta \left(\frac{\|\iota(x_i)-\iota(x_j)\|}{\epsilon}\right),\quad
\mathcal{D}_{i j}=\delta_{i j}\sum_{l=1}^n \mathcal{K}_{i l},\quad \La=\mathcal{D}-\mathcal{K}.
\end{equation}
Let
$
0=\lambda_0(\La)\leq \lambda_1(\La)\leq\cdots \leq\lambda_{n-1}(\La)
$
be the eigenvalues of $\La$ counted with multiplicities, and $u^0,\ldots,u^{n-1}\in \R^n$ be corresponding eigenvectors.
Note that we have $u^0_1=\cdots=u^0_n$.
If we are given an embedding dimension $k\in\{1,\ldots,n-1\}$, we define $y_1,\ldots,y_n\in \R^k$ so that
$(y_1,\ldots,y_n)=(u^1,\ldots,u^k)^\ast$, where $A^\ast$ denotes the transpose of $A$ for any matrix $A$. 
Then, $y_1,\ldots,y_n$ is the output of the Laplacian eigenmaps.
Under our assumptions $u^j$ corresponds to an eigenfunction $f_j$ of an appropriate Laplacian, and $y_i$ corresponds to $(f_1(x_i),\ldots, f_k(x_i))\in\R^k$.
Here,  we used the unnormalized graph Laplacian $\La$ of the $\epsilon$-neighborhood graph, and in the normalized case we consider the eigenvalue problem $\La u=\lambda\D u$.
In our analysis, we assume the following conditions on $\eta$ and on the probability density function $\rho$ from which the sample points are taken. 
\begin{Asu}\label{Asu3}
Suppose that we are given a Riemannian manifold $(M,g)$ and functions $\eta\colon [0,\infty)\to [0,\infty)$, $\rho\colon M\to \R_{>0}$ such that following properties hold:
\begin{itemize}
\item[(i)] $\eta$ is non-increasing, $\Lip(\eta|_{[0,1]})\leq L_\eta$ holds for some $L_\eta>0$, $\eta(3/4)>0$ and $\eta|_{(1,\infty)}\equiv 0$.
\item[(ii)] $\Lip(\rho)\leq L_{\rho}$ for some $L_\rho>0$, $1/\alpha\leq \rho\leq \alpha$ holds for some $\alpha>0$ and
$$
\int_M \rho\,d\Vol_g=1.
$$
\end{itemize}
\end{Asu}
For example, functions $\eta|_{[0,1]}\equiv 1$ and $\eta|_{[0,1]}(t)=e^{-t^2}$ are often used.

We now state our main result on the convergence of the eigenvalues of the Laplacian.
\begin{Thm}\label{MainResult1}
Suppose that the pair $((M,g),\iota)$ satisfies Assumption \ref{Asu}, and we are given functions $\eta, \rho$ satisfying Assumption \ref{Asu3}.
For any $k\in \Z_{>0}$, there exist constants $C_1=C_1(m,\alpha,k)>0$ and $C_2=C_2(m,S,K,i_0,L,\eta,\alpha,L_\rho,k)>0$
 such that, for any $\gamma\in(1,\infty)$ and
$n\in\Z_{>0}$ with $\gamma^{1/2}\epsilon\leq C_2^{-1}$, where $\epsilon=\epsilon_n:=(\log n/n)^{1/(m+2)}$,
and i.i.d. sample $x_1,\ldots,x_n\in M$ from $\rho\Vol_g$, defining $\La$ as $(\ref{defGLap})$, we have 
$$
\left|\lambda_k(\Delta_{\rho})-\frac{2}{\sigma_\eta n \epsilon^{m+2}}\lambda_k(\La)\right|\leq C_2\gamma^{1/2} \left(\frac{\log n}{n}\right)^{1/(m+2)}
$$
with probability at least $1-C_1n^{-\gamma}$, where we defined
$$
\sigma_\eta:=\frac{\Vol(S^{m-1})}{m}\int_0^1\eta(t) t^{m+1}\,d t.
$$
\end{Thm}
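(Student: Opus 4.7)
The plan is to prove the theorem via the Courant--Fischer min-max characterization of eigenvalues for both $\Delta_\rho$ and $\La$, reducing everything to two-sided approximate comparisons between the graph Dirichlet form $\frac{2}{\sigma_\eta n\epsilon^{m+2}}\langle \La u,u\rangle$ and the continuum form $\int_M |\nabla f|^2\rho^2\,d\Vol_g$, which is the Dirichlet form associated to $\Delta_\rho$ in $L^2(\rho\Vol_g)$. In one direction, I take the first $k+1$ eigenfunctions $f_0,\ldots,f_k$ of $\Delta_\rho$, sample them at $x_1,\ldots,x_n$ to build test vectors in $\R^n$, and use them to bound $\lambda_k(\La)$ from above. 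In the reverse direction, I extend a graph eigenvector to an $H^1$-function on $M$ via local averaging subordinate to $\epsilon$-balls at the sample points, and use the extensions to bound $\lambda_k(\Delta_\rho)$ from above. Throughout, constants are tracked to depend only on the quantities in the statement by invoking the compactness of the family $\M_1(m,K,i_0)$.

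The central bias estimate is that, for $f$ in a uniformly $C^2$-bounded class,
\[
\frac{2}{\sigma_\eta \epsilon^{m+2}} \int_M \eta\!\left(\frac{\|\iota(x)-\iota(y)\|}{\epsilon}\right)(f(x)-f(y))^2\rho(y)\, d\Vol_g(y) = 2\rho(x)|\nabla f(x)|^2 + O(\epsilon).
\]
This is proved by replacing the extrinsic distance with the intrinsic one. The defect $d_M(x,y)^2-\|\iota(x)-\iota(y)\|^2$ is of order $d_M(x,y)^4$ times $|II|^2$ by the Taylor expansion of an isometric immersion. Since $\iota$ itself may not be smooth, I verify the pointwise statement on the smooth approximants $(M_i,g_i,\iota_i)$ using bounded sectional curvature and injectivity radius to build normal coordinates in which the immersion has a quadratic Taylor term given by $II_i$, then transfer back to $M$ via $\psi_i$ using Assumption \ref{Asu}(iii)--(v) at the cost of an error $\epsilon_i$, which is made negligible by choosing $i=i(n)$ large. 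A polar-coordinate expansion in the $\epsilon$-ball, together with the Lipschitz control on $\rho$, then yields the displayed formula.

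The stochastic fluctuation is controlled by Bernstein's inequality applied to $\frac{1}{n}\sum_j \eta(\|\iota(x)-\iota(x_j)\|/\epsilon)(f(x)-f(x_j))^2$ at each $x$ in an $\epsilon$-net of $M$ of cardinality $O(\epsilon^{-m})$; uniform control over the net follows from the Lipschitz bound on the first $k$ eigenfunctions, an output of elliptic regularity in $C^{1,\alpha}$ geometry on the uniform family $\M_1(m,K,i_0)$. Each term contributes variance $O(\epsilon^m)$, so the deviation is of order $\epsilon^m\sqrt{\gamma\log n/n}$; after the $\epsilon^{m+2}$ normalization this is a statistical error $O(\sqrt{\gamma\log n/(n\epsilon^{m+2})})$, which balances the bias $O(\epsilon)$ precisely at $\epsilon=(\log n/n)^{1/(m+2)}$. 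A parallel Bernstein estimate for the empirical Gram matrix $\langle f_i,f_j\rangle_n$ keeps the discretized test functions linearly independent, letting the min-max pass between the two sides; the failure probability $O(n^{-\gamma})$ yields the constant $C_1$.

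The hard part will be the $L^1$-only bound on the second fundamental form. In the reach-based setting one has a pointwise $|II|\leq 1/R$, giving a deterministic chord-versus-geodesic comparison, but here the comparison is only valid after integration against $\rho\,d\Vol_{g_i}$. The remedy I would use is a Chebyshev-type splitting $M_i=\{|II_i|\leq T\}\sqcup \{|II_i|>T\}$: the bad set has $\rho$-volume at most $\alpha S/T$, and for a suitable $T=T(\epsilon)$ its contribution to both the bias integral and the stochastic error is absorbed into $O((\log n/n)^{1/(m+2)})$. The most delicate step is the lower bound on $\lambda_k(\Delta_\rho)$: interpolating a graph eigenvector via a smooth partition of unity at the sample points, one must show that the interfacial gradient contributions, which could concentrate near the singular region $\{|II_i|>T\}$, do not dominate. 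This is handled by combining the thinness of the bad set with the uniform lower bound $\rho\geq 1/\alpha$ so that the interpolation Rayleigh quotient inherits only a negligible boundary penalty.
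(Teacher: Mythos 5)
Your overall architecture matches the paper: Courant--Fischer in both directions, sampling eigenfunctions for the continuous-to-discrete bound, a local averaging operator for the discrete-to-continuous bound, Bernstein over an $\epsilon$-net, and balancing bias against fluctuation at $\epsilon=(\log n/n)^{1/(m+2)}$. But the step you call the ``central bias estimate'' has a genuine gap that the paper is specifically built to avoid, and your proposed fix does not close it.

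First, the pointwise formula
\[
\frac{2}{\sigma_\eta \epsilon^{m+2}} \int_M \eta\!\left(\frac{\|\iota(x)-\iota(y)\|}{\epsilon}\right)(f(x)-f(y))^2\rho(y)\, dy = 2\rho(x)|\nabla f(x)|^2 + O(\epsilon)
\]
cannot hold with a uniform $O(\epsilon)$ under Assumption \ref{Asu}: Appendix \ref{sensitivity} constructs an explicit example (the square boundary times a closed manifold, with $\eta\equiv 1$, $\rho$ constant) in which this approximation fails in every $L^p$, including $p=\infty$. The whole point of the paper is that only the \emph{quadratic form} $\int_{M\times M}\eta(\|\iota(x)-\iota(y)\|/\epsilon)(f(x)-f(y))^2\rho\,\rho\,dx\,dy$ can be controlled, not the pointwise averaged quantity.

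Second, your Chebyshev splitting $M_i=\{|II_i|\leq T\}\sqcup\{|II_i|>T\}$ is a splitting of \emph{points}, but the defect you invoke, $d(x,y)^2-\|\iota_i(x)-\iota_i(y)\|^2 = O(d(x,y)^4|II_i|^2)$, requires $|II_i|$ to be bounded along the \emph{entire geodesic} from $x$ to $y$, not merely at $x$ and $y$. Thresholding on the endpoints does not prevent the connecting geodesic from passing through the bad set. Worse, under Assumption \ref{Asu} the singular (large-$|II_i|$) set can be $\epsilon$-dense (Appendix \ref{SwDS}), so for all sufficiently small $\epsilon$ \emph{every} $\epsilon$-ball meets the bad set and the Chebyshev decomposition degenerates. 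There is also a quantitative mismatch: your defect is quadratic in $|II_i|$, but only $\int|II_i|\leq S$ is available, so $\int|II_i|^2$ can be infinite and no pointwise-$|II_i|$ truncation will recover an error linear in $S$.

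The paper's mechanism differs in exactly the right way. Lemma \ref{fund} controls $d(\gamma(0),\gamma(l))-\|\iota(\gamma(0))-\iota(\gamma(l))\|$ \emph{linearly} in the line integral $S_\gamma=\int_0^l|II|\circ\gamma$, not quadratically in a pointwise bound. Then Lemma \ref{extvsint} never passes through a pointwise estimate: it bounds the quadratic-form error by a double integral of $S_u$ over the unit sphere bundle, and the geodesic-flow invariance (\ref{GeodesicFlow}) collapses this to $\epsilon\int_M|II|\,d\Vol_g\leq S\epsilon$. That is what makes an $L^1$ hypothesis on $II$ suffice and yields a relative error $O(\epsilon)$ without any threshold $T$, any bad-set volume estimate, or any requirement that $|II|$ be pointwise small on an $\epsilon$-neighborhood. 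Your argument would need to be restructured around the integrated geodesic quantity $S_u$ rather than the pointwise $|II|$ to go through.

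One smaller point: for the discrete-to-continuous direction the paper's interpolant $\Lambda_\epsilon u$ is built from the \emph{intrinsic} kernel $\psi(d(x,x_i)/\epsilon)$, and the only place the extrinsic distance enters is the one-sided inequality $\|\iota(x)-\iota(y)\|\leq d(x,y)$, which needs no second-fundamental-form control at all (Lemma \ref{UDcomp}). So the ``interfacial gradient contributions near the singular region'' you worry about in the lower bound do not actually appear; the $II$-sensitive step is entirely on the continuous-to-discrete side and must be handled as above.
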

Note that the assumption that $\eta(3/4)>0$ is not specific.
In fact, defining $\widetilde\eta(t)=\eta(3t/4)$ and $\tilde \epsilon =3\epsilon/4$,
the matrices determined in (\ref{defGLap}) remains the same regardless of whether $(\eta,\epsilon)$ or $(\widetilde \eta,\tilde \epsilon)$ is used.
Moreover,  for the normalization term, $\sigma_\eta n\epsilon^{m+2}=\sigma_{\widetilde \eta} n\tilde \epsilon^{m+2}$ is obtained.

Despite our weaker assumptions,
our convergence rate of eigenvalues is better than those of \cite{CT} and \cite{TGHS}, which are $O\left((\log n)^{p_m/2}/n^{1/(2m)}\right)$ ($p_2=3/4$ and $p_m=1/m$ for $m\geq 3$) and $O\left(\left(\log n/n\right)^{1/(m+4)}\right)$, respectively.
In Theorem \ref{MainResult1}, $\epsilon$ is given explicitly for sample size $n$ and intrinsic dimension $m$ of the manifold, but the convergence (with possibly different rate) is obtained when $\lim_{n\to\infty }\epsilon_n=0$ and
$$
\lim_{n\to\infty} \frac{1}{\epsilon_n}\left(\frac{\log n}{n}\right)^{1/m}=0.
$$
See Theorem \ref{UNMainEval} for details.

We next state our main result on convergence to eigenfunctions of the Laplacian corresponding to isolated eigenvalues.
\begin{Thm}\label{MainResult2}
Suppose that the pair $((M,g),\iota)$ satisfies Assumption \ref{Asu}, and we are given functions $\eta, \rho$ satisfying Assumption \ref{Asu3}.
Take arbitrary $k\in \Z_{>0}$ and assume
$
G:=\min\left\{|\lambda_k(\Delta_\rho)-\lambda_{k-1}(\Delta_\rho)|,|\lambda_{k+1}(\Delta_\rho)-\lambda_k(\Delta_\rho)|\right\}>0.
$
Then, there exist constants $C_1=C_1(m,\alpha,k)>0$ and $C_2=C_2(m,S,K,i_0,L,\eta,\alpha,L_\rho,k)>0$ such that, for any
$n\in\Z_{>0}$, $\gamma\in(1,\infty)$ with
$\gamma^{1/2} \epsilon\leq  C_2^{-1}G^2$, where $\epsilon:=(\log n/n)^{1/(m+2)}$,
and i.i.d. sample $x_1,\ldots,x_n\in M$ from $\rho\Vol_g$, defining $\La$ as $(\ref{defGLap})$, we have the following with probability at least $1-C_1n^{-\gamma}.$
For the eigenvector $u^k$ of $\La$ with $\sum_{i=1}^n (u^k_i)^2/n=1$ corresponding to the eigenvalue $\lambda_k(\La)$, we can take the eigenfunction $f$ of $\Delta_\rho$ with $\int_M f^2\rho\,d\Vol_g=1$ corresponding to the eigenvalue $\lambda_k(\Delta_\rho)$ so that
$$
\frac{1}{n}\sum_{i=1}^n \left(f_k(x_i)-u^k_i\right)^2\leq \frac{C_2}{G^2}\gamma^{1/2} \left(\frac{\log n}{n}\right)^{1/(m+2)}.
$$
\end{Thm}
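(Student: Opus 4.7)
The plan is a standard spectral perturbation strategy: I aim to show that the restriction $\tilde f_k:=(f_k(x_1),\ldots,f_k(x_n))\in\R^n$ of a normalized eigenfunction $f_k$ is an approximate eigenvector of the rescaled graph Laplacian $\widetilde{\La}:=\tfrac{2}{\sigma_\eta n\epsilon^{m+2}}\La$ with eigenvalue $\lambda_k(\Delta_\rho)$, and then invoke the spectral gap furnished by Theorem \ref{MainResult1} via a Davis--Kahan type estimate to conclude that $\tilde f_k$ is close, in the norm $\|v\|_n:=(n^{-1}\sum_i v_i^2)^{1/2}$, to the $k$-th eigenvector $u^k$ of $\La$.

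First I would apply Theorem \ref{MainResult1} simultaneously to the three indices $k-1,k,k+1$ (absorbing the resulting factor of $3$ into $C_1$ and enlarging $C_2$). Under the hypothesis $\gamma^{1/2}\epsilon\leq C_2^{-1}G^2$, this produces an event of probability at least $1-C_1 n^{-\gamma}$ on which
\[
\left|\frac{2\lambda_j(\La)}{\sigma_\eta n\epsilon^{m+2}}-\lambda_j(\Delta_\rho)\right|\leq \frac{G}{4}\quad\text{for } j=k-1,k,k+1.
\]
Combined with the definition of $G$ and the monotonicity of the graph Laplacian eigenvalues, this yields the crucial spectral separation $|2\lambda_j(\La)/(\sigma_\eta n\epsilon^{m+2})-\lambda_k(\Delta_\rho)|\geq G/2$ for \emph{every} $j\neq k$.

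The technical core of the argument is then an operator-level bound of the form
\[
\|\widetilde{\La}\tilde f_k-\lambda_k(\Delta_\rho)\tilde f_k\|_n\leq C\gamma^{1/4}\epsilon^{1/2}
\]
on an event of comparable probability. This I would split into a deterministic bias, governed by the pointwise asymptotic
\[
\frac{1}{\epsilon^{m+2}}\int_M\eta\!\left(\frac{\|\iota(x)-\iota(y)\|}{\epsilon}\right)(f_k(x)-f_k(y))\rho(y)\,d\Vol_g(y)\;\longrightarrow\;\frac{\sigma_\eta}{2}\Delta_\rho f_k(x),
\]
and a stochastic fluctuation controlled by Bernstein's inequality applied to the i.i.d.\ sum over $x_j$ at each fixed $x_i$, combined with a union bound over the $n$ sample points. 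The bias is the genuinely new ingredient: in the smooth case a normal-coordinate Taylor expansion at $x$ would suffice, but the possibly singular nature of $\iota$ forces one to run the analysis on the approximating smooth immersions $(M_i,g_i,\iota_i)$ of Assumption \ref{Asu}, use the $L^1$-bound (ii) on $|II_i|$ to control the discrepancy between $\|\iota_i(\cdot)-\iota_i(\cdot)\|_{\R^d}$ and $d_{g_i}$ only in an averaged sense, and pass to the limit via (iii)--(v).

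Finally, I would expand $\tilde f_k=\sum_j c_j u^j$ in the $\langle\cdot,\cdot\rangle_n$-orthonormal basis of $\La$-eigenvectors. The identity $\|\widetilde{\La}\tilde f_k-\lambda_k(\Delta_\rho)\tilde f_k\|_n^2=\sum_j c_j^2(\mu_j-\lambda_k(\Delta_\rho))^2$ with $\mu_j:=2\lambda_j(\La)/(\sigma_\eta n\epsilon^{m+2})$ together with the spectral separation of the first step gives $\sum_{j\neq k}c_j^2\leq (4/G^2)\|\widetilde{\La}\tilde f_k-\lambda_k(\Delta_\rho)\tilde f_k\|_n^2$. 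Choosing the sign of $u^k$ so that $c_k\geq 0$, and using Bernstein's inequality applied to the i.i.d.\ random variables $f_k(x_i)^2$ to compare $\|\tilde f_k\|_n^2$ with $\int_M f_k^2\rho\,d\Vol_g=1$ at the faster rate $O(\gamma^{1/2}(\log n/n)^{1/2})$, one obtains $|c_k-1|=O(\gamma^{1/4}\epsilon^{1/2}/G)$, and the desired bound on $n^{-1}\sum_i(f_k(x_i)-u^k_i)^2$ follows. The main obstacle is precisely the operator-level bound above: whereas Theorem \ref{MainResult1} only requires quadratic-form convergence, Theorem \ref{MainResult2} demands a pointwise (or sample-$\ell^2$) comparison with explicit rate, and carrying this out in the singular setting is the principal new technical content, relying on the $L^1$-bound on the second fundamental form from Assumption \ref{Asu}(ii).
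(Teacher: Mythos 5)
Your Step 2 — the operator-level residual bound $\|\widetilde{\La}\tilde f_k-\lambda_k(\Delta_\rho)\tilde f_k\|_n\leq C\gamma^{1/4}\epsilon^{1/2}$ — is precisely the step that fails under Assumption~\ref{Asu}, and the paper spends Appendix~\ref{sensitivity} demonstrating this. Your ``deterministic bias'' term is (after averaging over the samples, which Bernstein controls) the $L^2(\rho\Vol_g)$-norm of $L_\epsilon f_k-\frac{\sigma_\eta}{2}\Delta_\rho f_k$, i.e.\ the $L^2$ version of the pointwise approximation~(\ref{LapApprox}). But the paper constructs $M=\partial([0,1]^2)\times M_2\subset\R^d$, which satisfies Assumption~\ref{Asu}, and a smooth eigenfunction $F_\alpha$ for which $\int_M|L_\epsilon F_\alpha-\frac{1}{2}\sigma\Delta F_\alpha|^p\,d\Vol_M\to\infty$ for every $p\in(1,\infty)$ (and stays bounded away from $0$ for $p=1$), with $\eta|_{[0,1]}\equiv 1$ and $\rho$ constant. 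So the bias you hope to bound by $O(\gamma^{1/4}\epsilon^{1/2})$ in sample-$\ell^2$ in fact blows up, and the $L^1$-bound on $|II_i|$ cannot rescue it: the discrepancy between $\|\iota(x)-\iota(y)\|$ and $d_M(x,y)$ near the singular set enters the Laplacian comparison \emph{linearly} in the kernel at each point, and the $L^1$ control on $|II|$ only tames the resulting error when it appears \emph{once integrated} against a test function, not squared in $L^2$. You correctly identify this as the crux, but it is not merely unaddressed — it is false.

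The paper avoids this obstruction entirely by never forming a pointwise or $L^2$ residual of the Laplacian. Instead it works exclusively at the level of quadratic (Dirichlet) forms, comparing Rayleigh quotients in \emph{both} directions via the restriction map $Q_1\colon f\mapsto f|_{\X}$ and the discrete-to-continuous interpolation $Q_2\colon u\mapsto\Lambda_\epsilon u$, and then invokes the purely linear-algebraic Lemma~\ref{eveccomp}. That lemma bounds the mass of $Q_1 f_k$ outside the target eigenspace of $\La$ using only four quantities $E_1,\ldots,E_4$, all of which are form-level or $L^2$-norm comparisons: the two-sided Dirichlet-form comparison (Lemmas~\ref{UPP} and~\ref{UDcomp}), the defect $\|f-Q_2 Q_1 f\|$ (Lemma~\ref{q2q1f-f}, which only uses the Lipschitz bound on $f_k$), and norm comparisons (Bernstein). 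The singular discrepancy between $\|\iota(x)-\iota(y)\|$ and $d_M(x,y)$ enters only through Lemma~\ref{extvsint}, where the $L^1$-bound on $|II|$ appears integrated once against a quadratic-form test expression — exactly the level at which it can be controlled. So the route you propose (Davis--Kahan against an approximate eigenvector) works in the reach-bounded setting of \cite{CT} but cannot be made to work here; the form-level two-sided comparison of Lemma~\ref{eveccomp} is what replaces it.
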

Since the eigenvalue of the Laplacian $\lambda_k(\Delta_\rho)$ is assumed to be isolated, the only freedom in taking $f_k$ is in the sign.
See Theorem \ref{UNMainEvec} for the general cases where the eigenvalues of the Laplacian have multiplicities or very small spectral gaps.
Our rate of convergence to eigenfunctions is worse than that of \cite{CT}, but the proof in \cite{CT} is based on the following pointwise approximation of the Laplacian:
\begin{equation}\label{LapApprox}
\Delta_\rho f(x)\approx\frac{2}{\sigma_\eta \epsilon^{m+2}}\int_M\eta\left(\frac{\|\iota(x)-\iota(y)\|}{\epsilon}\right)(f(x)-f(y))\rho(y)\,d y
\end{equation}
(it turns out that the  $L^2$ approximation is enough for their proof), and as we will see in Appendix \ref{sensitivity}, the $L^p$ approximation ($p\in[1,\infty]$) of the Laplacian of this form is sensitive to singularities, so their proof is not applicable under our assumptions, at least not as is.

Here we gave results for the convergence of the unnormalized graph Laplacian, but we can give similar results for the normalized case.
See Theorem \ref{NMainEval} and \ref{NMainEvec}.

This paper can be read independently of the previous studies \cite{BN2}, \cite{CT} and \cite{TGHS}.
In a technical step we use the corresponding assertion to \cite[Proposition 2.11]{CT}, but in Appendix \ref{MapStD} we give a simple proof of it in the form we use based on arguments from standard Riemannian geometry.

The structure of this paper is as follows.

In section 2, we fix our notation and introduce some definitions,  give easy consequences of our assumptions, and summarize the prerequisite knowledge.

In section 3, we give our main results.
In subsection 3.1, given a function on the manifold, we study its properties on the random points.
In subsection 3.2, given a function on the random points, we study some properties of a corresponding function on the manifold.
In subsection 3.3, we give our main results for the case of unnormalized graph Laplacian, and in subsection 3.4 for the case of normalized graph Laplacian.

In Appendix \ref{GradientEstimate}, we give the $L^\infty$ estimate and the gradient estimate for eigenfunctions of our Laplacian $\Delta_\rho$ and $\Delta_\rho^N$.

In Appendix \ref{LinearAlgebra}, we summarize the linear algebraic arguments needed to complete the proof of our main results.

In Appendix \ref{MapStD}, we construct a map from our manifold to random points that almost preserve the distance and the measure.

In Appendix \ref{ApproxSmooth}, we explain why approximation by a sequence of smooth submanifolds yields the main results in the limit.

In Appendix \ref{Reach}, we discuss the relationship between the reach and other geometric quantities.

In Appendix \ref{HausdorffMeasure}, we discuss the consistency of the measures under Assumption \ref{Asu}.
Theorem \ref{HAP} asserts that the Hausdorff measure determined by given distance function and the Hausdorff measure determined by the Euclidean distance function coincide with each other under our assumptions.

In Appendix \ref{sensitivity}, we show by example that the $L^p$ approximation of the Laplacian (\ref{LapApprox}) is sensitive to singularities for $p\in [1,\infty]$ even if the singularities are simple and the probability density function is constant, where the case of $p=\infty$ corresponds to the pointwise approximation.

In Appendix \ref{SwDS}, we construct an example of a submanifold with dense singularities under our assumptions.

\begin{sloppypar}
{\bf Acknowledgments}.\ 
I am grateful to Professor Shouhei Honda for answering my questions about the gradient estimate for eigenfunctions.
This work was supported by RIKEN Special Postdoctoral Researcher Program.
\end{sloppypar}

\section{Preliminaries}
\subsection{Notation and Definitions}
In this subsection, we fix our notation and prepare some definitions.
For given real numbers $u_1,\ldots,u_l$, let $C(u_1,\ldots,u_l)$ denotes a constant depending only on $u_1,\ldots,u_l$.
If we want to distinguish between the constants, express them as $C_1,C_2,\ldots$.
When the constants appearing in each claim are of the form $C(u_1,\ldots,u_l)$, the constants $C$ appearing in the proof depend only on at most $u_1,\ldots,u_l$, unless otherwise noted.
For brevity, we sometimes denote $C(\ldots,\eta(0),1/\eta(3/4),L_\eta,\ldots)$ by $C(\ldots,\eta,\ldots)$ under Assumption \ref{Asu3}.
For a set $X$, $\Card X$ denotes the cardinal number of $X$.

We summarize our notation for Riemannian manifolds and isometric immersions. 
\begin{notation}\label{Not}
Let $(M,g)$ be a closed Riemannian manifold.
\begin{enumerate}[(i)]
\item $d_g$ denotes the Riemannian distance function.
If there is no confusion, we simply write $d$.
For any $x\in M$ and $r\in (0,\infty)$, let $B_r(x)=B_r^M(x)$ denotes
$
B_r(x):=\{y\in M:d(x,y)<r\}.
$
We sometimes write it by $B(x,r)$.
Similarly, $\overline{B}_r(x):=\{y\in M: d(x,y)\leq r\}$.
\item For two points $x,y\in\R^d$, $\|x-y\|_{\R^d}$ denotes the Euclidean ($l^2$) distance.
If there is no confusion, we simply write $\|x-y\|$.
For any $x\in \R^d$ and $r\in (0,\infty)$, let $B_r^{\R^d}(x)$ denotes
$
B_r^{\R^d}(x):=\{y\in \R^d:\|x-y\|<r\}.
$
We sometimes write it by $B^{\R^d}(x,r)$.
\item $\Sect_g$, $\Ric_g$, $\Inj_g$, $\diam_g(M)$ and $\Vol_g$ denote the sectional curvature, the Ricci curvature, the injectivity radius, the diameter and the Riemannian volume measure of $(M,g)$, respectively.
When the dimension of $M$ is $m$, $\Vol_g$ coincides with the $m$-dimensional Hausdorff measure $\Ha^m$ determined by the Riemannian distance $d_g$.
For any integrable function $f\colon M\to \R$, we sometimes use the following notation:
$$
\int_M f(x)\,d x=\int_M f\,d \Vol_g.
$$
\item $\nabla$ denotes the Levi-Civita connection.
We also use the notation $\nabla$ for the gradient of functions.
\item For any $k\in \Z_{>0}$, let $S^{k}:=\{x\in \R^{k+1}:\|x\|_{\R^{k+1}}=1\}$ denotes the $k$-dimensional standard sphere with standard Riemannian metric.
Note that we have $\Vol(S^k)=(k+1)\Le^{k+1}\left(B^{\R^{k+1}}(0,1)\right)$, where $\Le^{k+1}$ denotes the $(k+1)$-dimensional Lebesgue measure.
\item Let $T_x M$ denotes the tangent space at $x\in M$, and $U_x M:=\{v\in T_x M:|v|=1\}$, where $|v|:=g(v,v)^{1/2}$.
When the dimension of $M$ is $m$, using an orthonormal basis and identifying $T_x M$ with $R^m$ and $U_x M$ with $S^{m-1}$, we consider an $m$-dimensional Lebesgue measure on $T_x M$ and a Riemannian volume measure on $U_x M$ determined from $S^{m-1}$, respectively.
These measures are determined independently of the choice of the orthonormal basis.
\item Given points $x,y\in M$, let $\gamma_{x,y}$ denotes one of minimal geodesics with unit speed such that $\gamma_{x,y}(0)=x$ and $\gamma_{x,y}(d(x,y))=y$, and $c_{x,y}$ denotes one of minimal geodesics with constant speed such that $c_{x,y}(0)=x$ and $c_{x,y}(1)=y$.
Note that we can take $c_{x,y}$ so that $c_{x,y}(d(x,y) t)=\gamma_{x,y}(t)$.
For given $x\in M$ and $u\in T_x M$, let $\gamma_{u}\colon \mathbb{R}\to M$ denotes the geodesic with constant speed such that $\gamma_u(0)=x$ and $\dot{\gamma}_u(0)=u$.
The exponential map $\exp_x \colon T_x M\to M$ at $x\in M$ is defined by
$
\exp_x(u)=\gamma_u(1).
$
\item For any $x\in M$ and $u\in U_x M$, put
$$
t(u):=\sup\{t\in\mathbb{R}_{>0}: d(x,\gamma_u(t))=t\}.
$$
For any $x\in M$, we define $\widetilde J_x\in T_x M$ and $J_x\subset M$ by
\begin{align*}
\widetilde J_x :=&\{t u: u\in U_x M, \,0\leq t< t(u)\},\\
J_x:=&\exp_x (\widetilde J_x)=\{\gamma_u (t): u\in U_x M, \, 0\leq t< t(u)\}.
\end{align*}
Then, $J_x\subset M$ is open, $\exp_x|_{\widetilde J_x}\colon \widetilde J_x\to J_x$ is diffeomorphic and $\Vol(M\setminus J_x)=0$ \cite[III Lemma 4.4]{Sa}.
For any $y\in J_x$, $\gamma_{x,y}$ and $c_{x,y}$ are uniquely determined. The function $d(x,\cdot)\colon M\to \mathbb{R}$ is differentiable in $J_x\setminus\{x\}$ and $\nabla d(x,\cdot)(y)=\dot{\gamma}_{x,y}(d(x,y))$ holds for any $y\in J_x\setminus \{x\}$ \cite[III Proposition 4.8]{Sa}.
\item Let $\Delta$ denotes the Laplacian (without weight) acting on functions defined by $\Delta=-\tr \Hess$.
If $(M,g)$ is the $m$-dimensional Euclidean space with the standard metric, then $\Delta=-\sum_{i=1}^m\partial^2/\partial x_i^2$.
Note that some authors use the opposite sign for the Laplacian.
Let
$$
0=\lambda_0(\Delta)<\lambda_1(\Delta)\leq \lambda_2(\Delta)\leq\cdots \to \infty
$$
be the eigenvalues of the Laplacian $\Delta$ counted with multiplicities.
\item Suppose that we are given a positive Lipschitz function $\rho\colon M\to \R_{>0}$.
Let $\Delta_\rho\colon W^{2,2}(M)\to L^2(M)$ and $\Delta_\rho^N\colon W^{2,2}(M)\to L^2(M)$ be operators define by
\begin{align*}
\Delta_\rho f:=&\rho\Delta f-2\langle\nabla \rho,\nabla f\rangle,\\
\Delta_\rho^N f:=&\frac{1}{\rho}\Delta_\rho f=\Delta f-\frac{2}{\rho}\langle\nabla \rho,\nabla f\rangle,
\end{align*}
where $\nabla \rho$ and $\nabla f$ denote the gradient vector fields of $\rho$ and $f$ respectively, and $\langle\nabla \rho,\nabla f\rangle:=g(\nabla \rho,\nabla f)$.
Note that $\nabla \rho$ is defined as an $L^\infty$ vector field.
Let
\begin{align*}
0=&\lambda_0(\Delta_\rho)<\lambda_1(\Delta_\rho)\leq \lambda_2(\Delta_\rho)\leq\cdots \to \infty\\
0=&\lambda_0(\Delta_\rho^N)<\lambda_1(\Delta_\rho^N)\leq \lambda_2(\Delta_\rho^N)\leq\cdots \to \infty
\end{align*}
be the eigenvalues of $\Delta_\rho$ and $\Delta_\rho^N$ counted with multiplicities, respectively.
\item Suppose that we are given points $\X=\{x_1,\ldots,x_n\}\subset M$.
Then, for any function $f\colon M\to \R$, we sometimes regard $f|_{\X}$ as an element of $\R^n$  by
$$
f|_{\X}=(f(x_1),\ldots,f(x_n))\in\R^n.
$$
\item Suppose that we are given an isometric immersion $\iota \colon M\to \R^d$.
Then, $II\in\Gamma(TM\otimes TM\otimes TM^\perp)$ denotes the second fundamental form, and we define
$$
|II|(x):=\max\{\|II(v,v)\|_{\R^d}:v\in U_x M\}
$$
for each $x\in M$.
Here, $\iota\colon M\to\R^d$ is an isometric immersion means that the pullback of the Euclidean metric coincides with $g$, and does not impose the injectivity of $\iota$.
\end{enumerate}
\end{notation}

We next introduce our notation for the graph Laplacian.
\begin{notation}[Graph Laplacian]
Suppose that we are given a manifold $M$ a map $\iota\colon M\to\R^d$, a function $\eta\colon [0,\infty)\to[0,\infty)$, a constant $\epsilon>0$ and points $x_1,\ldots, x_n\in M$.
Then, we define $n\times n$ matrices $\mathcal{K},\mathcal{D},\La\in \R^{n\times n}$ as follows:
\begin{align*}
\mathcal{K}_{i j}= \eta \left(\frac{\|\iota(x_i)-\iota(x_j)\|}{\epsilon}\right),\quad
\mathcal{D}_{i j}=\delta_{i j}\sum_{l=1}^n \mathcal{K}_{i l},\quad
\La=\mathcal{D}-\mathcal{K}.
\end{align*}
Let
$$
0=\lambda_0(\La)\leq \lambda_1(\La)\leq\cdots \leq\lambda_{n-1}(\La)
$$
be the eigenvalues of $\La$ counted with multiplicities.
Let
$$
0=\lambda_0(\La,\D)\leq \lambda_1(\La,\D)\leq\cdots \leq\lambda_{n-1}(\La,\D)
$$
be the eigenvalues for the eigenvalue problem $\La v=\lambda \D v$ counted with multiplicities.
Note that $\lambda$ is an eigenvalue for such an eigenvalue problem if and only if $\lambda$ is an eigenvalue of the symmetric matrix $\D^{-1/2}\La\D^{-1/2}$.
\end{notation}
%

We summarize the definitions needed to relate functions on discrete points to functions on manifolds.
\begin{Def}\label{defDtC}
Suppose that we are given a closed Riemannian manifold $(M,g)$, points
$
x_1,\ldots,x_n\in M,
$
a positive real number $\epsilon>0$ and functions $\eta,\rho$ as Assumption \ref{Asu3}.
\begin{enumerate}[(i)]
\item Define $\sigma_\eta\in \R_{>0}$ by
$$
\sigma_\eta:=\frac{\Vol(S^{m-1})}{m}\int_0^1\eta(t) t^{m+1}\,d t.
$$
\item Define a map $\psi\colon [0,\infty)\to [0,\infty)$ by
$$
\psi(t):=\int_t^\infty \eta(s)s\,d s
$$
Note that $\psi$ is a Lipschitz function, $\psi(t)\leq \eta(t)/2$ for any $t\in[0,\infty)$ and $\psi|_{[1,\infty)}\equiv 0$.
\item Define a map $\overline{\theta}_\epsilon\colon M\to \R_{>0}$ by
$$
\overline{\theta}_\epsilon(x):=\int_M\psi\left(\frac{d(x,y)}{\epsilon}\right)\rho(y)\, d y.
$$
\item Define a map $\theta_\epsilon\colon M\to \R_{>0}$ by
$$
\theta_\epsilon(x):=\frac{1}{n}\sum_{i=1}^n\psi\left(\frac{d(x,x_i)}{\epsilon} \right).
$$
\item If $\theta_\epsilon(x)>0$ holds for every $x\in M$, define an operator $\Lambda_\epsilon\colon \R^n\to \LIP(M)$ by
$$
\Lambda_\epsilon u(x):=\frac{1}{n\theta_\epsilon(x)}\sum_{i=1}^n\psi\left(\frac{d(x,x_i)}{\epsilon}\right)u_i.
$$
Note that if $u_i=1$ for any $i=1,\ldots,n$, then we have $\Lambda_\epsilon u\equiv 1$.
\item If $\theta_\epsilon(x)>0$ holds for every $x\in M$, we define a map $\widetilde\psi\colon M\times M\to \R$ by
$$
\widetilde \psi(x,y):=\frac{1}{\theta_\epsilon(x)}\psi\left(\frac{d(x,y)}{\epsilon}\right).
$$
Note that we have $\sum_{i=1}^n \widetilde \psi(x,x_i)/n=1$ for any $x\in M$, and $\Lambda_\epsilon u(x)=\sum_{i=1}^n \widetilde \psi(x,x_i) u_i/n$ for any $x\in M$ and $u\in \R^n$.
\item Define a Borel map $\Psi\colon M\times M\to TM$ by
$$
\Psi(x,y):=\eta\left(\frac{d(x,y)}{\epsilon}\right)\frac{d(x,y)}{\epsilon^2}\dot\gamma_{x,y}(0)\in T_x M.
$$
Note that, for any $x\in M$, $\Psi(x,y)$ is uniquely determined and
$$
\nabla_x \psi\left(\frac{d(x,y)}{\epsilon}\right)=\Psi(x,y)
$$
for a.e. $y\in M$.
\end{enumerate}
\end{Def}
\subsection{Our Assumptions and its Easy Consequences}
In this subsection, we discuss the assumptions used in this paper and their easy consequences.

\begin{Rem}\label{RemA}
Let us give several comments on Assumption \ref{Asu} and \ref{Asu3}.
\begin{enumerate}[(a)]
\item By Assumption \ref{Asu} (i), $\iota\colon M\to \R^d$ is injective, but $\iota_i$ is not necessarily so.
\item For the proof of our results, it is enough that Assumption \ref{Asu} (i) holds only locally, i.e., only for $x,y\in M$ with $\|\iota(x)-\iota(y)\|\leq r_0$ for some constant $r_0\in [\epsilon,\infty)$. However, this condition implies that $d(x,y)\leq \max\{L,\diam(M)/r_0\}\|x-y\|$ holds for any $x,y\in M$.
Under the assumption that the reach (Definition \ref{DR}) is bounded from below, $d(x,y)\leq \|x-y\|+C\|x-y\|^3$ holds locally (Corollary \ref{compreach}), so (i) holds for some constant $L\in (0,\infty)$.
\item Under Assumption \ref{Asu}, we have $\lim_{i\to \infty}\diam_{g_i}(M_i)=\diam(M)$ by (iii) and (iv).
\item Under Assumption \ref{Asu}, we call the sequence $\{((M_i,g_i),\iota)\}$ the approximation sequence for $(M,\iota)$.
We call the map $\psi_i\colon M\to M_i$ the approximation map.
Its properties imply that $(M_i,g_i)$ converges to $M$ in the sense of Gromov-Hausdorff.
Under Assumption \ref{Asu}, by the $C^{1,\alpha}$ compactness theorem \cite[Theorem 11.4.7]{Pe3}, it turns out that $M$ is a smooth manifold with a $C^{1,\alpha}$ Riemannian metric $g$ ($\alpha\in(0,1)$), and we can replace $\psi_i$ so that $\psi_i$ is a $C^{2,\alpha}$ diffeomorphism for sufficiently large $i$ and $\psi_i^\ast g_i\to g$ in $C^{1,\alpha}$, by taking a subsequence if necessary.
Therefore, $M$ satisfying Assumption \ref{Asu} will be treated as a Riemannian manifold $(M,g)$.
\item Suppose that we are given $(M,g)\in \M_1(m,K,i_0)$ and a function $\rho$ as Assumption \ref{Asu3} (ii).
Then, we have $\Vol_g(M)\leq \alpha$, so the volume comparison (Theorem \ref{VolLow}) implies $\diam_g(M)\leq C(m,K,i_0,\alpha)$.
\item Suppose that we are given an $m$-dimensional closed Riemannian manifold $(M,g)$ with $\Ric\geq -(m-1)K g$ for some $K>0$, and a function $\rho$ as Assumption \ref{Asu3} (ii).
Then, we have $\Vol_g(M)\geq 1/\alpha$, so the volume comparison (Theorem \ref{BishopGromov} (iii)) implies $\diam_g(M)\geq 1/C(m,K,\alpha)$.
In particular, for any $k\in \Z_{>0}$, we have $\lambda_k(\Delta)\leq C(m,K,\alpha,k)$ by \cite[Corollary 2.3]{Chen}.
\end{enumerate}
\end{Rem}
Our assumptions about the approximation lead to the following Lemma, independent of the curvature assumptions.
Note that Assumption \ref{Asu} implies the assumptions of the following lemma.
\begin{Lem}\label{UnifConv}
Suppose that we are given a compact metric space $M$ with distance function $d$ and a map $\iota\colon M\to \R^d$ such that there exist a sequence of compact metric spaces $\{(M_i,d_i)\}_{i=1}^\infty$, a sequence of positive real numbers $\{\epsilon_i\}_{i=1}^\infty\subset \R_{>0}$ with $\lim_{i\to\infty}\epsilon_i=0$, sequences of maps $\{\iota_i\colon M_i\to \R^d\}_{i=1}^\infty$ and $\{\psi_i\colon M\to M_i\}_{i=1}^\infty$ satisfying the following properties:
\begin{itemize}
\item[(i)] For any $i\in\Z_{>0}$ and $x,y\in M$, we have
$
|d(x,y)-d_{i}(\psi_i(x),\psi_i(y))|\leq \epsilon_i.
$
\item[(ii)] For any $i\in\Z_{>0}$ and $y\in M_i$, there exists $x\in M$ such that
$
d_{i}(y,\psi_i(x))\leq \epsilon_i.
$
\item[(iii)] For any $i\in\Z_{>0}$ and $x,y\in M_i$, we have
$\|\iota_i(x)-\iota_i(y)\|\leq d_i(x,y)$.
\item[(iv)] For any $x\in M$, we have
$
\lim_{i\to \infty}\iota_i(\psi_i(x))=\iota(x).
$
\end{itemize}
Then, we have the following properties:
\begin{itemize}
\item[(a)] For any $x,y\in M$, we have
$
\|\iota(x)-\iota(y)\|_{\R^d}\leq d(x,y).
$
\item[(b)] The convergence of  $(iv)$ is uniform, i.e.,
$$
\lim_{i\to \infty}\sup_{x\in M}\|\iota_i(\psi_i(x))-\iota(x)\|_{\R^d}=0.
$$
\item[(c)] Suppose that there exists $L\in(0,\infty)$ such that 
$
d(x,y)\leq L\|\iota(x)-\iota(y)\|_{\R^d}
$
holds for any $x,y\in M$.
Then, there exists a sequence $\{\tau_i\}\subset \R_{>0}$ such that
$\lim_{i\to \infty}\tau_i=0$ and for any $i\in\Z_{>0}$ and $x,y\in M_i$, we have
$$
d_{i}(x,y)\leq L\|\iota_i(x)-\iota_i(y)\|_{\R^d}+\tau_i.
$$ 
\end{itemize}
\end{Lem}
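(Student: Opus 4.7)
The plan is to prove the three parts in order, since (b) uses (a) and (c) uses (b).

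For (a), the strategy is to combine the three quantitative hypotheses (i), (iii), (iv) and pass to the limit. Fix $x,y \in M$. By (iii) applied to the points $\psi_i(x), \psi_i(y) \in M_i$, we have $\|\iota_i(\psi_i(x)) - \iota_i(\psi_i(y))\|_{\R^d} \leq d_i(\psi_i(x), \psi_i(y))$, and by (i) the right-hand side is bounded by $d(x,y) + \epsilon_i$. Using (iv) to pass to the limit $i \to \infty$ yields $\|\iota(x) - \iota(y)\|_{\R^d} \leq d(x,y)$.

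For (b), the plan is an Arzelà--Ascoli-type finite-net argument, exploiting the fact that the functions $\iota_i \circ \psi_i$ are equi-Lipschitz up to an additive error. From the proof of (a), $\|\iota_i(\psi_i(x)) - \iota_i(\psi_i(y))\|_{\R^d} \leq d(x,y) + \epsilon_i$ for all $x,y \in M$, and the limit map $\iota$ is $1$-Lipschitz by (a). Given $\delta > 0$, cover the compact space $M$ by finitely many balls of radius $\delta$ centered at $p_1, \ldots, p_N$; apply (iv) at each $p_k$ to get an index $I(\delta)$ such that $\|\iota_i(\psi_i(p_k)) - \iota(p_k)\|_{\R^d} < \delta$ for all $k$ and $i \geq I(\delta)$. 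Then for any $x \in M$, choosing $p_k$ with $d(x, p_k) < \delta$ and using the triangle inequality, one bounds $\|\iota_i(\psi_i(x)) - \iota(x)\|_{\R^d} \leq (d(x,p_k) + \epsilon_i) + \delta + d(p_k, x) < 3\delta + \epsilon_i$, which is arbitrarily small for large $i$.

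For (c), the idea is to lift $x, y \in M_i$ back to $M$ using (ii), apply the Lipschitz hypothesis there, and then push forward through $\iota_i$. By (ii), pick $x', y' \in M$ with $d_i(x, \psi_i(x')) \leq \epsilon_i$ and $d_i(y, \psi_i(y')) \leq \epsilon_i$. Then (i) gives $d_i(x,y) \leq d(x', y') + 3\epsilon_i$, and the hypothesis on $\iota$ yields $d(x', y') \leq L\|\iota(x') - \iota(y')\|_{\R^d}$. Insert $\iota_i(\psi_i(x')), \iota_i(x), \iota_i(y), \iota_i(\psi_i(y'))$ as intermediate points and estimate each jump: the jumps $\|\iota(x') - \iota_i(\psi_i(x'))\|$ and $\|\iota(y') - \iota_i(\psi_i(y'))\|$ are bounded by $\sup_{z \in M}\|\iota(z) - \iota_i(\psi_i(z))\|$, which is some sequence $\tau'_i \to 0$ by (b), while the jumps $\|\iota_i(\psi_i(x')) - \iota_i(x)\|$ and $\|\iota_i(\psi_i(y')) - \iota_i(y)\|$ are bounded by $\epsilon_i$ via (iii). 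Combining, $d_i(x,y) \leq L\|\iota_i(x) - \iota_i(y)\|_{\R^d} + L(2\epsilon_i + 2\tau'_i) + 3\epsilon_i$, so $\tau_i := L(2\epsilon_i + 2\tau'_i) + 3\epsilon_i$ works.

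The main obstacle, modest as it is, lies in (c): one needs a single sequence $\tau_i$ that works uniformly in $x,y \in M_i$, which is why the uniform convergence in (b) is essential, rather than the pointwise convergence given by (iv). Everything else is a bookkeeping exercise of triangle inequalities.
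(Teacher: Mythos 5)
Your proposal is correct and follows essentially the same route as the paper's proof: (a) is obtained by combining (iii), (i), and the limit (iv); (b) is the same finite $\delta$-net argument using compactness, the $1$-Lipschitz bound from (a), and the near-Lipschitz bound from (iii) and (i); and (c) lifts $x,y\in M_i$ to $M$ via (ii), applies the bi-Lipschitz hypothesis there, and controls the error terms by (iii), (i), and the uniform convergence from (b), with $\tau_i=(2L+3)\epsilon_i+2L\sup_{z\in M}\|\iota_i(\psi_i(z))-\iota(z)\|$.
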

\begin{proof}
We immediately get (a) by (i), (iii) and (iv).

Let us prove (b).
Take arbitrary $\epsilon\in(0,\infty)$.
Then, by the compactness of $M$, we can find $x_1,\ldots, x_l\in M$ such that $M=\bigcup_{k=1}^l B_\epsilon(x_k)$.
Then, there exists $N\in\Z_{>0}$ such that
$\epsilon_i\leq \epsilon$ holds for any $i\in\Z_{>0}$ with $i\geq N$,
and
$$
\|\iota_i(\psi_i(x_k))-\iota(x_k)\|\leq \epsilon
$$
holds
for any $i\in\Z_{>0}$ with $i\geq N$ and $k\in\{1,\ldots,l\}$ by (iv).
Then, for any $i\in\Z_{>0}$ with $i\geq N$ and $y\in M$, we can find $k\in\{1,\ldots,l\}$ with $y\in B_{\epsilon}(x_k)$, and
\begin{align*}
\|\iota(y)-\iota_i(\psi_i(y))\|
\leq&\|\iota(y)-\iota(x_k)\|+\|\iota(x_k)-\iota_i(\psi_i(x_k))\|+\|\iota_i(\psi_i(x_k))-\iota_i(\psi_i(y))\|\\
\leq &d(y,x_k)+\epsilon+d_i(\psi_i(y),\psi_i(x_k))\leq 4\epsilon
\end{align*}
by (i), (ii) and (a).
This implies (b).

Let us prove (c).
Take arbitrary $x,y\in M_i$.
By (ii), there exist $x',y'\in M$ such that $d_i(x,\psi_i(x'))\leq \epsilon_i$ and $d_i(y,\psi_i(y'))\leq \epsilon_i$.
Then, by (i) and (iii), we have
\begin{align*}
d_i(x,y)
\leq &d_i(\psi_i(x'),\psi_i(y'))+2\epsilon_i
\leq d(x',y')+3\epsilon_i
\leq L\|\iota(x')-\iota(y')\|+3\epsilon_i\\
\leq &L\|\iota_i(\psi_i(x'))-\iota_i(\psi_i(y'))\|+2L\sup_{z\in M}\|\iota_i(\psi_i(z))-\iota(z)\|+3\epsilon_i\\
\leq &L\|\iota_i(x)-\iota_i(y)\|+2L\sup_{z\in M}\|\iota_i(\psi_i(z))-\iota(z)\|+(2L+3)\epsilon_i.
\end{align*}
Thus, we get (c) by (b).
\end{proof}

Several results hold under weaker condition than $\M_1(m,K,i_0)$, so we give the following definition.
\begin{Def}\label{Ric}
Let $m\in \Z_{>0}$ be an integer.
For each $K>0$, we define $\M_2=\M_2(m,K)$ to be the set of $m$-dimensional closed Riemannian manifolds $(M,g)$ with $\Ric_g\geq -(m-1)K g$.
\end{Def}
Note that the assumption $|\Sect_g|\leq K$ implies $\Ric_g\geq -(m-1)K g$, and so $\M_1(m,K,i_0)\subset \M_2(m,K)$.

Regarding Lemma \ref{UnifConv}, we consider the following assumptions for smooth immersions.
\begin{Asu}\label{Asu2}
Let $m,d\in \Z_{>0}$ be integers with $m<d$ and take constants $S,K,i_0,L,\tau>0$.
We say a pair $((M,g),\iota)$ of a Riemannian manifold $(M,g)$ and a map $\iota\colon M\to \R^d$ satisfies Assumption \ref{Asu2} (a) if the following conditions (i), (iii) and (iv) hold, and that it satisfies Assumption \ref{Asu2} (b) if (ii), (iii) and (iv) hold.
\begin{itemize}
\item[(i)] $(M,g)\in \M_1(m,K,i_0)$.
\item[(ii)] $(M,g)\in \M_2(m,K)$.
\item[(iii)] We have
$
\int_{M} |II|\,d\Vol_{g} \leq S.
$
\item[(iv)] For any $x,y\in M$,
we have
$
d(x,y)\leq L\|\iota(x)-\iota(y)\|_{\R^d}+\tau.
$
\end{itemize}
\end{Asu}
Assumption \ref{Asu2} (b) is weaker than (a). By replacing $L$ by $\max\{L,1\}$, we can assume that $L\geq 1$.
Our approach is to approximate the pair $(M,\iota)$ satisfying Assumption \ref{Asu} by $((M_i,g_i),\iota_i)$ satisfying Assumption \ref{Asu2} (a), and then show the Laplacian spectral approximation result for $((M_i,g_i),\iota_i)$.

\subsection{Integration in Sphere Bundles and Geodesic Flows}
In this subsection, we present some basic elements of integrals on sphere bundles, which will  be necessary for later discussions.
For details we refer Section 4 of Chapter II in \cite{Sa}.

Let $(M,g)$ be an $m$-dimensional closed Riemannian manifold, and $p\colon T M\to M$ be the tangent bundle, where $p$ denotes the projection to the base space $M$.
Take arbitrary $u\in T_x M$ ($x\in M$).
Then, the derivative of $p$ defines the map $d p=(d p)_u\colon T_u T M\to T_x M$, which is determined by $(d p_u)(\dot u(0))=\frac{d}{d s}|_{s=0}p(u(s))$ for any smooth curve $u(s)$ in $TM$ with $u(0)=u$.
We define an injection $i=i_u\colon T_x M\to T_u T M$ by $i_u(\xi)=\frac{d}{d s}|_{s=0}(u+s\xi)$ for any $\xi\in T_x M$, where $u+s\xi$ is regarded as a smooth curve in $T M$.
The maps $(d p)_u$ and $i_u$ are determined independently of the Riemannian metric, and the sequence
\begin{equation}\label{SES}
0\to T_x M\stackrel{i_u}{\longrightarrow}T_u TM\stackrel{(d p)_u}{\longrightarrow} T_x M\to 0
\end{equation}
is a short exact sequence.
The Levi-Civita connection $\nabla$ gives a splitting of (\ref{SES}) as follows.
We define a map $K=K_u\colon T_u TM\to T_x M$ so that for any curve $u(s)$ in $T M$ with $u(0)=u$, $K(\dot u(0))=\nabla_{\frac{\partial}{\partial s}} u(s)|_{s=0}$, where $u(s)$ is regarded as a vector field along $p(u(s))$.
Then, $K_u$ is well-defined and $K_u\circ i_u=\Id_{T_x M}$, so $K_u$ gives a splitting of $(\ref{SES})$.
We define $V_u:=\Imag i_u\subset T_u TM$ and $H_u:=\Ker K_u\subset T_u TM$.
Then, we have $T_u T M=V_u\oplus H_u$.
We define a Riemannian metric $G$ on $T M$ by
$$
G(\eta_1,\eta_2):=g\left((d p)_u (\eta_1),(d p)_u(\eta_2)\right)+ g\left(K_u(\eta_1),K_u(\eta_2)\right)
$$
for any $\eta_1,\eta_2\in T_u TM$.
Then, $T_u T M=V_u\oplus H_u$ is an orthogonal decomposition with respect to $G$.
Define a $2$-form $\alpha\colon T_u TM\times T_u TM\to\R$ by
$$
\alpha(\eta_1,\eta_2):=g\left((d p)_u (\eta_1),K_u(\eta_2)\right)- g\left(K_u(\eta_1),(d p)_u(\eta_2)\right)
$$
for any $\eta_1,\eta_2\in T_u TM$.
Then, identifying $T M$ and $T^\ast M$ through the map $TM\to T^\ast M,\, v\to g(v,\cdot)$, $\alpha$ gives the standard symplectic form on $T^\ast M$.
Given an appropriate orientation (even if $M$ is not orientable, $T M$ is always orientable), $\alpha^m/(m !)$ coincides with the volume form $\Vol_G$ on $TM$ for the metric $G$.
We regard $\Vol_G$ as the Riemannian volume measure on $TM$.
For any integrable function $F\colon TM\to\R$, we have
$$
\int_{TM} F \,d \Vol_G=\int_M\int_{T_x M} F(u)\,d u\,d x
$$
by \cite[II Lemma 5.6]{Sa}.
We define the geodesic flow $\phi_t \colon TM \to TM$ ($t\in\R$) by
$\phi_t(u):=\dot\gamma_u(t)$.
Then, $\phi_t$ preserves $\Vol_G$ since it preserves $\alpha$ \cite[II Lemma 4.4]{Sa}, so for any integrable function $F\colon TM\to\R$, we have
$$
\int_M\int_{T_x M} F(u)\,d u\,d x=\int_M\int_{T_x M} F(\phi_t(u))\,d u\,d x.
$$
We next consider the sphere bundle $U M:=\{v\in TM: g(v,v)=1\}$.
The sphere bundle $U M$ is a submanifold of $T M$ and $G$ defines a Riemannian metric on $U M$ by restriction.
For any $u\in U M$, $i_u (u)\in T_u TM$ is the unit normal vector of $UM$, so $(\Vol_{U M})_u:=\iota(i_u (u)) (\Vol_G)_u$ is the volume form at $u$ on $U M$ with appropriate orientation, where $\iota$ denotes the interior product.
For any $u\in UM$ and $t\in \R$, we have $\phi_t(u)\in UM$, and so the geodesic flow also defines the flow $\phi_t|_{U M}\colon UM\to UM$ on $U M$.
Then, $\phi_t|_{UM}$ preserves $\Vol_{U M}$ since we have $i_{\phi_t(u)}(\phi_t(u))=p_{V}\left((d\phi_t)_u (i_u(u))\right)$ for any $u\in UM$, where $p_{V}\colon T_{\phi_t(u)} TM\to V_{\phi_t(u)}$ denotes the orthogonal projection.
Regarding $\Vol_{U M}$ as the Riemannian volume measure on $UM$, the integral on $UM$ is computed in the same way as the integral on $TM$, and for any integrable function $F\colon UM\to \R$ and $t\in \R$ it follows that
\begin{equation}\label{GeodesicFlow}
\int_{UM}F\,d \Vol_{UM}=
\int_M\int_{U_x M} F(u)\,d u\,d x=\int_M\int_{U_x M} F(\phi_t(u))\,d u\,d x.
\end{equation}

\subsection{Basic Elements of Comparison Geometry}
In this subsection, we introduce some basic assertions about comparison geometry.
For each $K\in \R$, we define a function $s_K\colon\R\to \R$ by
$$
s_{K}(t):=\begin{cases}
\sqrt{\frac{1}{K}}\sin\sqrt{K}t &(K>0)\\
\qquad t&(K=0)\\
\sqrt{\frac{1}{-K}}\sinh\sqrt{-K}t & (K<0)
\end{cases}
$$
Let $(M,g)$ be an $m$-dimensional closed Riemannian manifold.
Take a point $x\in M$.
Let $\theta_x \colon \widetilde J_x\to \R_{>0}$ be the density function of the Riemannian volume measure through the identification $\exp_x\colon \widetilde J_x\to J_x\subset M$:
$$
\Vol_g=\theta_x \Le^m|_{\widetilde J_x}.
$$
Then, we have $\theta_x(0)=1$.
We summarize the Bishop-Gromov comparison theorem and its easy consequences as follows
\begin{Thm}\label{BishopGromov}
Suppose that $\Ric\geq (m-1)K$. Take arbitrary $u\in U_x M$.
\begin{itemize}
\item[(i)] We have
$$
\frac{t^{m-1}\theta_x(t u)}{s_K(t)^{m-1}}
$$
is monotonically non-increasing for $t\in (0,t(u))$.
\item[(ii)] We have $t^{m-1} \theta_x(t u)\leq s_K(t)^{m-1}$ for any $t\in(0,t(u))$.
\item[(iii)] For any $r,R\in(0,\infty)$ with $r\leq R\leq \sqrt{1/K}\pi$ $($$\sqrt{1/K}:=\infty$ if $K\leq 0$$)$, we have
\begin{align*}
\Vol_g(B_r(x))\leq& \Vol(S^{m-1})\int_0^r s_K(t)^{m-1}\,d t,\\
\Vol_g(B_r(x))\geq &\frac{\int_0^r s_K(t)^{m-1}\,d t}{\int_0^R s_K(t)^{m-1}\,d t}\Vol_g(B_R(x)).
\end{align*}
\item[(iv)] Suppose that $K<0$. Take arbitrary $R,s,t\in (0,\infty)$ with $s/2\leq t\leq s\leq R$ and $u\in U_x M$ with $s<t(u)$.
Then, we have
$$
\frac{s^{m-1}\theta_x(s u)}{t^{m-1}\theta_x(t u)}\leq \frac{s_K(s)^{m-1}}{s_K(t)^{m-1}}\leq \frac{s_K(s)^{m-1}}{s_K(s/2)^{m-1}}\leq \frac{s_K(R)^{m-1}}{s_K(R/2)^{m-1}}.
$$
\end{itemize}
\end{Thm}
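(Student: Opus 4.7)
The plan is to derive everything from a single infinitesimal Riccati/Jacobi comparison, and then obtain (iii) and (iv) from (i)--(ii) by elementary integration and trigonometric identities. For (i) I would work in polar coordinates at $x$: identify $J_x$ with $\widetilde J_x\subset T_x M$ via $\exp_x$ and, for a fixed $u\in U_x M$, let $A(t)\colon u^\perp\to u^\perp$ denote the Jacobi endomorphism along $\gamma_u$ with $A(0)=0$, $A'(0)=\Id$. A standard computation gives $\theta_x(tu)\, t^{m-1}=\det A(t)$ on $(0,t(u))$. Setting $U(t):=A'(t)A(t)^{-1}$, the Jacobi equation yields the matrix Riccati relation $U'+U^2+R(t)=0$, where $R(t)\xi=\mathcal R(\xi,\dot\gamma_u)\dot\gamma_u$. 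Taking the trace, applying Cauchy--Schwarz $\tr(U^2)\geq (\tr U)^2/(m-1)$, and using the Ricci bound $\tr R(t)\geq (m-1)K$, I get
\[
h'(t)+\frac{h(t)^2}{m-1}+(m-1)K\leq 0,\qquad h(t):=\tr U(t),
\]
while the model function $h_K(t):=(m-1)s_K'(t)/s_K(t)$ satisfies equality. Since $h(t)-h_K(t)\to 0$ as $t\to 0^+$, a standard ODE comparison gives $h(t)\leq h_K(t)$, i.e.\ $(\log\det A)'(t)\leq (m-1)(\log s_K)'(t)$, which is exactly the non-increase of $t^{m-1}\theta_x(tu)/s_K(t)^{m-1}$. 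Statement (ii) is then immediate since the ratio is continuous with limit $1$ as $t\to 0^+$ (because $\theta_x(0)=1$ and $s_K(t)/t\to 1$).

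For (iii), I would integrate in geodesic polar coordinates. The upper bound comes from (ii) together with the fact that $B_r(x)\subset\exp_x(\widetilde J_x\cap B^{T_x M}(0,r))$ up to a measure-zero set; integrating $\theta_x(tu)t^{m-1}$ over $U_x M\times(0,r)$ and using (ii) termwise yields the claimed bound, using $\Vol(S^{m-1})=\int_{U_x M}du$. The lower bound is the standard Gromov trick: from (i), the ratio $f(t)/g(t)$ with $f(t):=t^{m-1}\theta_x(tu)\mathbf 1_{[0,t(u))}(t)$ and $g(t):=s_K(t)^{m-1}$ is non-increasing, hence $t\mapsto \int_0^t f/\int_0^t g$ is non-increasing, which gives $\int_0^r f\cdot\int_0^R g\geq \int_0^R f\cdot\int_0^r g$ for $r\leq R$; integrating this inequality over $u\in U_x M$ yields the volume ratio inequality once one notes that $B_s(x)\supset \exp_x(\widetilde J_x\cap B^{T_x M}(0,s))$ and equality holds up to null sets, so the corresponding integral equals $\Vol_g(B_s(x))$.

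Finally, (iv) is a direct consequence of (i) plus a one-variable calculation: (i) gives $s^{m-1}\theta_x(su)/(t^{m-1}\theta_x(tu))\leq s_K(s)^{m-1}/s_K(t)^{m-1}$. For $K<0$, write $s_K(t)=(-K)^{-1/2}\sinh(\sqrt{-K}t)$, and use the duplication formula $\sinh(2a)=2\sinh(a)\cosh(a)$ to get $s_K(s)/s_K(s/2)=2\cosh(\sqrt{-K}s/2)$, which is monotonically increasing in $s$; since $s\leq R$ and $s/2\leq t\leq s$ imply $s_K(s)/s_K(t)\leq s_K(s)/s_K(s/2)\leq s_K(R)/s_K(R/2)$, raising to the power $m-1$ finishes the proof.

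The only delicate point I foresee is the book-keeping at the cut locus in (iii): one must justify that truncating $\theta_x(\cdot u)$ at $t(u)$ preserves the monotonicity of $f/g$ used in the ratio lemma, and that $\Vol_g(M\setminus J_x)=0$ allows the switch between integrals over $B_r(x)$ and over $\widetilde J_x\cap B^{T_x M}(0,r)$; everything else is an elementary ODE comparison or a direct $\sinh$ manipulation.
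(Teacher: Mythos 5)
Your proof is correct and is the standard Bishop--Gromov argument: the matrix Riccati comparison for the Jacobi endomorphism gives (i), its limit at $t\to 0^+$ gives (ii), the Gromov ratio lemma plus integration in geodesic polar coordinates (with the truncation at the cut locus handled by the indicator, which is legitimate since $f/g$ stays non-increasing after dropping to zero and $\Vol(M\setminus J_x)=0$) gives (iii), and your $\sinh$ manipulation gives (iv). The paper itself does not prove (i)--(iii) but cites \cite[IV Theorem 3.1, Corollary 3.2, Theorem 3.3]{Sa}, where this same proof appears, and for (iv) it merely remarks that it follows from (i) and properties of $s_K$, which is exactly your final paragraph; so you have taken essentially the same route as the cited source and the paper's sketch.
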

For (i) and (ii), see \cite[IV Theorem 3.1 (2)]{Sa}.
If $K<0$ and $t\in [0,\theta\pi/\sqrt{-K}]$ for some $\theta\in(0,\infty)$, we have
\begin{align*}
s_K(t)^{m-1}\leq& t^{m-1}+\frac{\sinh^{m-1} (\theta\pi)-(\theta\pi)^{m-1}}{(\theta\pi)^{m+1}}(-K)t^{m+1},\\
s_K(t)^{m-1}\leq& \left(\sinh(\theta\pi)/(\theta\pi)\right)^{m-1}t^{m-1}.
\end{align*}
When, $K>0$, a similar estimate of $s_K^{m-1}$ is obtained for the lower bound. 
Integrating (i) and (ii), we get (iii). See \cite[IV Corollary 3.2 (2), Theorem 3.3]{Sa}.
If $K<0$ and $\diam(M)\leq D$ for some positive constant $D>0$, (iii) implies that for any $r\in(0,D]$,
$$
\Vol_g(B_r(x))\geq \frac{r^m}{m\int_0^D s_K(t)^{m-1}\,d t}\Vol_g(M).
$$
We get (iv) by (i) and properties of $s_K$.

We next consider the upper bound on the sectional curvature.
\begin{Thm}[IV Theorem 3.1 (1) of \cite{Sa}]\label{VolLow}
Suppose that $\Sect_g\leq K$.
Take arbitrary $u\in U_x M$.
Then, for any $t\in (0,\sqrt{1/K}\pi)$ with $t<t(u)$, we have
$$
t^{m-1}\theta_x(t u)\geq s_K(t)^{m-1}.
$$
\end{Thm}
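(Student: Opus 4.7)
The plan is to use the standard Jacobi field representation of the volume density combined with a matrix Riccati comparison argument.

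First, I would fix $x\in M$ and $u\in U_x M$, and along the geodesic $\gamma_u(t)=\exp_x(tu)$ choose an orthonormal basis $e_1,\dots,e_{m-1}$ of $u^\perp\subset T_x M$. For each $i$, let $J_i$ be the Jacobi field along $\gamma_u$ with $J_i(0)=0$ and $\nabla_{\dot\gamma_u}J_i(0)=e_i$. Because $t<t(u)$, the vectors $J_1(t),\dots,J_{m-1}(t)$ are linearly independent, and comparing Cartesian with geodesic polar coordinates on $T_x M$ gives the identity
$$
t^{m-1}\theta_x(tu) = \bigl|J_1(t)\wedge\cdots\wedge J_{m-1}(t)\bigr|_g.
$$
Hence the task reduces to showing $|J_1(t)\wedge\cdots\wedge J_{m-1}(t)|_g\ge s_K(t)^{m-1}$.

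Next I would set up a Riccati comparison. On $(0,t(u))$ define a symmetric endomorphism $U(t)$ of $\dot\gamma_u(t)^\perp$ by $U(t)J(t)=\nabla_{\dot\gamma_u}J(t)$ for every Jacobi field $J$ along $\gamma_u$ vanishing at $0$. A standard computation yields the matrix Riccati equation $U'+U^2+R=0$, where $R$ is the symmetric tidal operator $R(\cdot)=\mathrm{Rm}(\cdot,\dot\gamma_u)\dot\gamma_u$, and $U(t)\sim t^{-1}\Id$ as $t\to 0^+$. The model equation for constant curvature $K$ is $U_K'+U_K^2+K\,\Id=0$, with solution $U_K(t)=(s_K'/s_K)(t)\,\Id$ on $(0,\pi/\sqrt{K})$. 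The hypothesis $\Sect_g\le K$ is equivalent to $R\le K\,\Id$ in the symmetric sense, so a standard Riccati comparison, applied on $(0,\min\{t(u),\pi/\sqrt{K}\})$ where $U_K$ remains bounded below by the same asymptotic $t^{-1}\Id$, gives $U(t)\ge U_K(t)$ throughout this interval.

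Finally, I would translate the Riccati inequality into the claimed determinant estimate. Setting $A(t):=\log\bigl|J_1(t)\wedge\cdots\wedge J_{m-1}(t)\bigr|_g$, differentiating in a parallel frame yields
$$
A'(t)=\tr U(t)\;\ge\;(m-1)\frac{s_K'(t)}{s_K(t)}=\bigl(\log s_K(t)^{m-1}\bigr)'.
$$
Combined with the common asymptotics $|J_1(t)\wedge\cdots\wedge J_{m-1}(t)|_g/s_K(t)^{m-1}\to 1$ as $t\to 0^+$, integration over $(0,t]$ gives $|J_1(t)\wedge\cdots\wedge J_{m-1}(t)|_g\ge s_K(t)^{m-1}$, which combined with the identity from the first paragraph is exactly the statement.

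The only delicate point is the matrix Riccati comparison itself; one cannot directly compare norms of individual Jacobi fields and multiply, since the components may rotate relative to each other. The cleanest way around this is the symmetric-operator inequality $U\ge U_K$, which trivialises on taking traces. Cut locus issues do not interfere because the hypothesis $t<t(u)$ rules out $M$-side conjugate points and the restriction $t<\pi/\sqrt{K}$ rules out model-side conjugate points, so both sides of the comparison are smooth on the whole interval.
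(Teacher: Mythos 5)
Your proof is correct, and since the paper simply cites Sakai (IV Theorem 3.1 (1)) rather than giving its own argument, there is nothing in the paper to compare against line by line. Your argument — Jacobi-tensor representation of $\theta_x$, matrix Riccati comparison $U\ge U_K$ from $R\le K\,\Id$, and then taking traces to get the log-derivative bound — is precisely the standard textbook proof of the G\"unther volume estimate, and you correctly identify the one subtle point (that individual Rauch estimates on $|J_i|$ do not multiply to give the determinant bound, so the symmetric-operator comparison is needed). The only thing I would flag is that the paper already records, as Theorem \ref{HessComp}, the Hessian comparison $(\Hess d_x)_{\gamma_u(t)}(v,v)\ge \dot s_K(t)/s_K(t)$ for $v\perp\dot\gamma_u$, which is exactly the inequality $U\ge U_K$ restricted to the diagonal; so one could shorten your argument by citing that and integrating $\tr\Hess d_x = \tr U$, avoiding a separate appeal to the Riccati comparison theorem.
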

If $K>0$, for any $t\in [0,\sqrt{1/K}\pi]$, we have
$$
s_K(t)^{m-1}\geq t^{m-1}-\frac{m-1}{6}K t^{m+1}.
$$
Thus, if $K>0$, under the assumptions of Theorem \ref{VolLow}, we have
$$
t^{m-1}\theta_x(t u)\geq t^{m-1}-\frac{m-1}{6}K t^{m+1}
$$
for any $t\in(0,t(u))$, since $\theta_x(t u)>0$.

Finally, we consider the comparison for the Hessian of distance function.
Define $d_x\colon M\to R$ by $d_x(y)=d(x,y)$.
\begin{Thm}[IV Lemma 2.9 of \cite{Sa}]\label{HessComp}
Suppose that $|\Sect_K|\leq K$.
Take arbitrary $u\in U_x M$ and $t\in (0,\sqrt{1/K}\pi)$ with $t <t(u)$.
Then, for any $v\in U_{\gamma_u(t)} M$ with $v\perp \dot\gamma_u(t)$, we have
$$
\frac{\dot s_K(t)}{s_K(t)}\leq (\Hess d_x)_{\gamma_u(t)}(v,v)\leq \frac{\dot s_{-K}(t)}{s_{-K}(t)}.
$$
\end{Thm}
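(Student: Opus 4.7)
The plan is to translate the Hessian estimate into a statement about a Jacobi field along the radial geodesic and then handle the two sides separately: the lower bound by Sturm comparison on the norm of the Jacobi field, and the upper bound by testing the index form against an explicit model vector field.

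Write $c(s) := \gamma_u(s)$. Since $t < t(u)$, the segment $c|_{[0,t]}$ is minimizing and lies strictly before the first conjugate point, so $\nabla d_x = \dot c$ along it. I would extend $v$ to the unique Jacobi field $J$ along $c|_{[0,t]}$ perpendicular to $\dot c$ with $J(0) = 0$ and $J(t) = v$. Viewing $J$ as the variation field of a family of geodesics emanating from $x$, swapping $\nabla_{\partial_s}$ and $\nabla_{\partial_u}$ (and noting that the $u$-derivative of the speed vanishes at $u=0$ because $J \perp \dot c$) yields $\nabla_{J(s)} \nabla d_x = J'(s)$, whence
$$
(\Hess d_x)_{c(t)}(v,v) = g(\nabla_v \nabla d_x, v) = g(J'(t), J(t)),
$$
reducing the theorem to a two-sided estimate of $g(J'(t), J(t))$ with $|J(t)| = 1$.

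For the lower bound I would set $f(s) := |J(s)|$. The Jacobi equation $J'' = -R(J,\dot c)\dot c$ together with Cauchy--Schwarz $|J|^2|J'|^2 \ge g(J',J)^2$ gives
$$
f''(s) \ge -\Sect_g(\dot c(s), J(s))\, f(s) \ge -K f(s)
$$
on $(0,t]$, using $\Sect_g \le K$. Sturm comparison with $s_K'' + K s_K = 0$, $s_K(0) = 0$, shows that $f'(s) s_K(s) - f(s) \dot s_K(s)$ is non-decreasing and vanishes at $s = 0$, hence $f'(t)/f(t) \ge \dot s_K(t)/s_K(t)$. Since $|v| = 1$ gives $f(t) = 1$ and $g(J'(t), J(t)) = f(t) f'(t) = f'(t)$, the lower bound follows.

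For the upper bound the Cauchy--Schwarz step runs the wrong way, so I would instead use that $J$ minimizes the index form $I(W,W) = \int_0^t [|W'|^2 - g(R(W,\dot c)\dot c, W)]\,ds$ among perpendicular piecewise-smooth $W$ along $c|_{[0,t]}$ with $W(0) = 0$ and $W(t) = v$ (again valid because $t < t(u)$), and that $I(J,J) = g(J'(t), J(t))$. Testing with $W(s) = (s_{-K}(s)/s_{-K}(t)) V(s)$, where $V$ is the parallel transport of $v$ along $c$, and using $\Sect_g \ge -K$ together with $(\dot s_{-K} s_{-K})' = \dot s_{-K}^2 + K s_{-K}^2$ (which follows from $\ddot s_{-K} = K s_{-K}$), a short computation collapses to $I(W,W) \le \dot s_{-K}(t)/s_{-K}(t)$. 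The main technical care lies in the reduction identity $(\Hess d_x)_{c(t)}(v,v) = g(J'(t), J(t))$ and in the applicability of the index-form minimization up to the endpoint $s = t$, both secured by $t < t(u)$; the hypothesis $t < \sqrt{1/K}\pi$ is needed only to ensure $s_K(t) > 0$ so the model side of the lower bound is finite. Since this is the classical Hessian comparison theorem, no genuinely new obstacle is anticipated.
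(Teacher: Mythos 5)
The paper does not prove this statement; it is quoted verbatim from Sakai's book (IV Lemma 2.9 of \cite{Sa}) and used as a black box. Your proof plan is the standard and correct textbook argument for Hessian comparison: the reduction $(\Hess d_x)_{c(t)}(v,v)=g(J'(t),J(t))$ via the radial Jacobi field with $J(0)=0$, $J(t)=v$ is valid precisely because $t<t(u)$ puts $\gamma_u(t)$ strictly before the first conjugate point (so $J$ exists, is unique, is nowhere zero on $(0,t]$, and $J\perp\dot c$ follows automatically from $J(0)=0$ and $v\perp\dot c(t)$); the lower bound by Sturm comparison on $f=|J|$ and the upper bound by testing the index form against $W(s)=(s_{-K}(s)/s_{-K}(t))V(s)$ both go through exactly as you sketch, with $(\dot s_{-K}s_{-K})'=\dot s_{-K}^2+Ks_{-K}^2$ giving the telescoping integral. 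Your remark that $t<\pi/\sqrt K$ is only needed to keep $s_K(t)>0$ so that the Sturm quantity $f's_K-f\dot s_K$ can be integrated and the lower bound is finite is also accurate. This is essentially the proof in the cited reference, so there is nothing to contrast.
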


Note that without the curvature condition, we have 
$$
(\Hess d_x)_{\gamma_u(t)}(\dot\gamma_u(t),v)=0
$$
for any $v\in T_{\gamma_u(t)}M$ if $0<t<t(u)$.
Suppose that $K>0$.
Then, for any $t\in(0,\pi/(2\sqrt{K})]$,
we have
\begin{align*}
0\leq \frac{\dot s_K(t)}{s_K(t)},\quad\frac{\dot s_{-K}(t)}{s_{-K}(t)}\leq \frac{1}{t}\frac{(\pi/2) \cosh (\pi/2)}{\sinh(\pi/2)}.
\end{align*}

\subsection{Berntein Inequality}
We frequently use the Bernstein inequality in the following form.
See Remark 1.4.4 and Theorem 2.2.1 of \cite{Vershynin}.
\begin{Thm}\label{Ber}
Let $(\Omega,\mathcal{F},\mu)$ be a probability space.
Let $f\in L^\infty(\mu)$ satisfies
\begin{align*}
\left|f(x)-\int_\Omega f\,d \mu\right|\leq& C\quad (x\in \Omega),\\
\int_\Omega f^2\,d \mu-\left(\int_\Omega f \, d\mu\right)^2\leq& \sigma^2
\end{align*}
for some $C,\sigma>0$.
Take arbitrary $t\in (0,\infty)$.
Then, for i.i.d. sample $x_1,\ldots,x_n\in \Omega$ from $\mu$, we have
$$
\frac{1}{n}\sum_{i=1}^n f(x_i)-\int_\Omega f\,d\mu\leq \frac{t}{n}
$$
holds with probability at least 
$$1-\exp\left(-\frac{t^2}{2 n \sigma^2+2C t/3}\right).$$
\end{Thm}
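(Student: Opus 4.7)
The plan is to follow the classical Chernoff/Bernstein argument. Let $Y_i := f(x_i) - \int_\Omega f \, d\mu$, so the $Y_i$ are i.i.d., centered, bounded by $|Y_i| \leq C$, with $\mathbb{E}[Y_i^2] \leq \sigma^2$. We want to bound
$$
\Pr\left(\sum_{i=1}^n Y_i \geq t\right).
$$
By the Markov inequality applied to $e^{\lambda \sum Y_i}$ for $\lambda > 0$, and by independence,
$$
\Pr\left(\sum_{i=1}^n Y_i \geq t\right) \leq e^{-\lambda t} \left(\mathbb{E}[e^{\lambda Y_1}]\right)^n.
$$

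The main step is to control the moment generating function. For a bounded centered variable $|Y| \leq C$ with $\mathbb{E}[Y^2] \leq \sigma^2$, the bound $|Y|^k \leq Y^2 C^{k-2}$ for $k \geq 2$ gives $\mathbb{E}[Y^k] \leq \sigma^2 C^{k-2}$. Using the Taylor expansion and the elementary inequality $k! \geq 2\cdot 3^{k-2}$ for $k \geq 2$,
$$
\mathbb{E}[e^{\lambda Y}] \leq 1 + \sigma^2 \sum_{k \geq 2} \frac{\lambda^k C^{k-2}}{k!} \leq 1 + \frac{\lambda^2 \sigma^2/2}{1 - \lambda C/3} \leq \exp\!\left(\frac{\lambda^2 \sigma^2/2}{1 - \lambda C/3}\right),
$$
valid for $0 < \lambda < 3/C$. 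Combining,
$$
\Pr\left(\sum_{i=1}^n Y_i \geq t\right) \leq \exp\!\left(-\lambda t + \frac{n \lambda^2 \sigma^2/2}{1 - \lambda C/3}\right).
$$

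The final step is to optimize $\lambda$. Choosing $\lambda = t/(n\sigma^2 + Ct/3)$, a direct calculation shows $1 - \lambda C/3 = n\sigma^2/(n\sigma^2+Ct/3)$ and the exponent collapses to
$$
-\frac{t^2}{2n\sigma^2 + 2Ct/3},
$$
which yields the claimed bound after rescaling $t \mapsto t$ to match the $t/n$ deviation in the statement. The only mildly technical point is the moment-to-MGF estimate; everything else is Chernoff plus optimization. Since this is the standard Bernstein inequality, the proof is essentially as in \cite{Vershynin}, and no curvature or manifold input is needed.
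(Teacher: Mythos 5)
Your proof is correct and is the standard Chernoff/Bernstein argument. The paper itself does not supply a proof of this theorem — it cites Remark 1.4.4 and Theorem 2.2.1 of \cite{Vershynin} — and your argument is precisely the one found there, so there is no substantive difference in approach. Two minor points worth tightening if you were to write this out fully: (1) since $\mathbb{E}[Y^k]$ can be negative for odd $k$, the chain should go through $|\mathbb{E}[Y^k]| \le \mathbb{E}|Y|^k \le \sigma^2 C^{k-2}$ before summing the Taylor series; (2) you should note explicitly that the chosen $\lambda = t/(n\sigma^2 + Ct/3)$ satisfies $\lambda < 3/C$, so the geometric-series bound on the MGF is valid. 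Both are one-line fixes and the overall argument is sound.
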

\begin{Rem}
Applying the above theorem to $-f$, we immediately get
$$
\left|\frac{1}{n}\sum_{i=1}^n f(x_i)-\int_\Omega f\,d\mu\right|\leq \frac{t}{n}
$$
with probability at least 
$$1-2\exp\left(-\frac{t^2}{2 n \sigma^2+2C t/3}\right).$$
\end{Rem}
\begin{Rem}
The claim that some condition $P$ holds for i.i.d. sample $x_1,...,x_n\in \Omega$ from $\mu$ with probability at least $a$  means that  there exists a measurable set $V\subset \Omega\times \cdots\times \Omega$ ($n$-times product) such that the condition $P$ holds for any $(x_1,\ldots,x_n)\in V$ and $\mu^{\otimes n}(V)\geq a$ holds, where $\mu^{\otimes n}$ denotes the $n$-times product measure of $\mu$.
\end{Rem}
\begin{Cor}\label{CorBer}
Let $(\Omega,\mathcal{F},\mu)$ be a probability space.
Take arbitrary $f\in L^\infty(\mu)$, $\gamma\in(1,\infty)$ and $\widetilde \delta\in(0,\infty)$ with $\gamma^{1/2}\widetilde\delta\leq 1$.
Then,  for i.i.d. sample $x_1,\ldots,x_n\in \Omega$ from $\mu$, we have
$$
\left|\frac{1}{n}\sum_{i=1}^n f(x_i)-\int_\Omega f\,d\mu\right|\leq 3\|f\|_{L^\infty}\gamma^{1/2}\widetilde\delta
$$
with probability at least $1-2\exp\left(-3n \gamma\widetilde\delta^2/2\right)$.
\end{Cor}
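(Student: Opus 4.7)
The plan is to deduce this directly from the Bernstein inequality (Theorem \ref{Ber}) by checking its hypotheses with appropriate constants and then bookkeeping the exponent.

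First I would note the two obvious bounds: for the $L^\infty$ control,
$$
\left|f(x)-\int_\Omega f\,d\mu\right|\leq 2\|f\|_{L^\infty},
$$
so one can take $C=2\|f\|_{L^\infty}$. For the variance,
$$
\int_\Omega f^2\,d\mu-\left(\int_\Omega f\,d\mu\right)^2\leq \int_\Omega f^2\,d\mu\leq \|f\|_{L^\infty}^2,
$$
so one can take $\sigma^2=\|f\|_{L^\infty}^2$.

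Next I would choose $t$ so that the conclusion of Bernstein matches the target deviation, namely $t/n=3\|f\|_{L^\infty}\gamma^{1/2}\widetilde\delta$, i.e.\ $t=3n\|f\|_{L^\infty}\gamma^{1/2}\widetilde\delta$. Then I would plug into the exponent of Theorem \ref{Ber} (with the two-sided version from the Remark) and simplify:
$$
\frac{t^2}{2n\sigma^2+2Ct/3}
=\frac{9n^2\|f\|_{L^\infty}^2\gamma\widetilde\delta^2}{2n\|f\|_{L^\infty}^2+4n\|f\|_{L^\infty}^2\gamma^{1/2}\widetilde\delta}
=\frac{9n\gamma\widetilde\delta^2}{2+4\gamma^{1/2}\widetilde\delta}.
$$
Using the standing assumption $\gamma^{1/2}\widetilde\delta\leq 1$, the denominator is at most $6$, so the exponent is at least $3n\gamma\widetilde\delta^2/2$. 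Applying Theorem \ref{Ber} (two-sided) with these $C$, $\sigma$, $t$ yields exactly the stated probability bound.

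There is no real obstacle here; the only thing to be careful about is to use the $\gamma^{1/2}\widetilde\delta\leq 1$ hypothesis to absorb the linear-in-$t$ term of Bernstein's denominator into the quadratic (sub-Gaussian) regime, which is what produces the clean exponent $3n\gamma\widetilde\delta^2/2$.
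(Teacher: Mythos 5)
Your proof is correct and matches the paper's approach: the paper's entire proof is the one-line instruction to apply Theorem \ref{Ber} with $t=3n\|f\|_{L^\infty}\gamma^{1/2}\widetilde\delta$, and you have simply carried out the bookkeeping (choice of $C$, $\sigma^2$, simplification of the exponent, and the use of $\gamma^{1/2}\widetilde\delta\leq 1$) that this instruction implicitly calls for.
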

\begin{proof}
Putting $t=3 n \|f\|_{L^\infty} \gamma^{1/2}\widetilde \delta$ and applying Theorem \ref{Ber}, we get the assertion.
\end{proof}
\begin{Cor}\label{LinfBer}
Let $(\Omega,\mathcal{F},\mu)$ be a probability space.
Take arbitrary $f_1,\ldots, f_k\in L^\infty(\mu)$ $($$k\in\Z_{>0}$$)$, $\gamma\in(1,\infty)$ and $\widetilde \delta\in(0,\infty)$ with $\gamma^{1/2}\widetilde\delta\leq 1$.
Then,  for i.i.d. sample $x_1,\ldots,x_n\in \Omega$ from $\mu$, we have
$$
\left|\frac{1}{n}\sum_{i=1}^n f^2(x_i)-\int_\Omega f^2\,d\mu\right|\leq 3(3k-2)\max_{s}\{\|f_s\|^2_{L^\infty}\}\gamma^{1/2}\widetilde\delta
$$
for every $f=\sum_{s=1}^k a_s f_s$ $($$a_s\in \R$ with $\sum_{s=1}^k a_s^2=1$$)$
with probability at least $1-k(k+1)\exp\left(-3n \gamma\widetilde\delta^2/2\right)$.
\end{Cor}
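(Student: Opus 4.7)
The plan is to reduce the uniform statement over all unit vectors $(a_s)$ to finitely many applications of Corollary \ref{CorBer}, taking advantage of the fact that $f^2$ is a bilinear expression in the basis functions.

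First I would expand
$$
f^2=\sum_{s,t=1}^{k}a_s a_t\, f_s f_t=\sum_{s=1}^{k}a_s^2\, f_s^2+2\sum_{s<t}a_s a_t\, f_s f_t,
$$
and introduce the (coefficient-free) deviations
$$
\Delta_{st}:=\frac{1}{n}\sum_{i=1}^{n}(f_s f_t)(x_i)-\int_{\Omega}f_s f_t\,d\mu,\qquad 1\le s\le t\le k.
$$
There are $k(k+1)/2$ such quantities.

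Next I would apply Corollary \ref{CorBer} to each of the $k(k+1)/2$ functions $f_s f_t$ ($s\le t$). Since $\|f_s f_t\|_{L^\infty}\le\|f_s\|_{L^\infty}\|f_t\|_{L^\infty}\le M^2$ with $M:=\max_{s}\|f_s\|_{L^\infty}$, the corollary yields, with failure probability at most $2\exp(-3n\gamma\widetilde\delta^2/2)$ per event,
$$
|\Delta_{st}|\le 3M^2\gamma^{1/2}\widetilde\delta.
$$
Taking a union bound over the $k(k+1)/2$ events produces an exceptional set of measure at most $k(k+1)\exp(-3n\gamma\widetilde\delta^2/2)$, matching the claimed probability.

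On the good event, for any admissible $(a_s)$ one has
$$
\left|\frac{1}{n}\sum_{i=1}^{n}f^{2}(x_i)-\int_{\Omega}f^{2}\,d\mu\right|
\le\sum_{s}a_s^2|\Delta_{ss}|+2\sum_{s<t}|a_s||a_t||\Delta_{st}|
\le 3M^2\gamma^{1/2}\widetilde\delta\left(\sum_{s=1}^{k}|a_s|\right)^{2}.
$$
Cauchy-Schwarz with $\sum a_s^2=1$ gives $\left(\sum|a_s|\right)^2\le k$, and since $k\le 3k-2$ for $k\ge 1$ this is absorbed in the stated constant $3(3k-2)$. The key uniformity-in-$(a_s)$ point is that the empirical deviations $\Delta_{st}$ do not depend on $(a_s)$, so a single good event controls every $f$ in the span simultaneously.

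There is no substantive obstacle here: the argument is a polarization-style bookkeeping together with a union bound, and the probabilistic input is entirely contained in Corollary \ref{CorBer}. The only mild care needed is to keep the number of events equal to $k(k+1)/2$ (so that the two-sided failure rate matches the claimed $k(k+1)\exp(-3n\gamma\widetilde\delta^2/2)$), and to ensure that the same bound $3M^2\gamma^{1/2}\widetilde\delta$ is used uniformly for both the diagonal and off-diagonal products.
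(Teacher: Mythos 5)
Your proof is correct and a little different from the paper's. The paper does not apply Corollary~\ref{CorBer} directly to the products $f_s f_t$; instead it applies it to $f_s^2$ (for all $s$) and to $(f_s+f_t)^2$ (for $s<t$), and then recovers a bound on $|\Delta_{st}|$ via the polarization identity $2f_sf_t=(f_s+f_t)^2-f_s^2-f_t^2$. That route yields a weaker off-diagonal estimate, $|\Delta_{st}|\le 9M^2\gamma^{1/2}\widetilde\delta$, versus your uniform $|\Delta_{st}|\le 3M^2\gamma^{1/2}\widetilde\delta$ from applying Corollary~\ref{CorBer} to $f_sf_t$ outright; this is precisely why the paper's constant is $3(3k-2)$ (diagonal contributing $3$, off-diagonal contributing $9$ after the computation $3\cdot 1+9\cdot(k-1)=3(3k-2)$), whereas your computation naturally gives the sharper $3k$, which you then correctly slacken to $3(3k-2)$. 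Both routes use the same $k(k+1)/2$ applications of Corollary~\ref{CorBer} and the same union bound, so the probability statement is identical. Your version is slightly simpler and tighter; the paper's polarization version is the pattern that recurs elsewhere in the paper (e.g.\ Corollary~\ref{etakdim}), which is likely why the author defaulted to it here.
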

\begin{proof}
Applying Corollary \ref{CorBer} to $f_s^2$ and $(f_s+f_t)^2$ ($s<t$), we get
\begin{align*}
\left|\frac{1}{n}\sum_{i=1}^n (f_s+f_t)^2(x_i)-\int_\Omega (f_s+f_t)^2\,d\mu\right|\leq 12\max_{s}\{\|f_s\|^2_{L^\infty}\}\gamma^{1/2}\widetilde\delta
\end{align*}
for every $s,t\in\{1,\ldots,k\}$ with probability at least $1-k(k+1)\exp\left(-3n \gamma\widetilde\delta^2/2\right)$.
This implies
$$
\left|\frac{1}{n}\sum_{i=1}^n f_s(x_i) f_t(x_i)-\int_\Omega f_s f_t\,d\mu\right|\leq 9\max_{s}\{\|f_s\|^2_{L^\infty}\}\gamma^{1/2}\widetilde\delta,
$$
and so we get the corollary.
\end{proof}

\section{Proofs of Main Results}
In this section, we give the proofs of our main results.
To do this, we compare the Rayleigh quotient of each eigenvalue problem  to use the minimax principle.
By the minimax principle, the eigenvalue $\lambda_k(\Delta_\rho)$ of $\Delta_\rho$ is expressed as
$$
\lambda_k(\Delta_\rho):=\inf\left\{\sup_{f\in V\setminus\{0\}}\frac{\int_M |\nabla f|^2\rho^2\,d\Vol_g}{\int_M f^2\rho\,d\Vol_g}:\begin{array}{l} V\subset W^{1,2}(M) \text{ is} \\
\text{a $(k+1)$-dimensional subspace}\end{array}\right\}.
$$
Given points $\X=\{x_1,\ldots,x_n\}\subset M$ and $\epsilon\in (0,\infty)$, the eigenvalue $\lambda_k(\La)$ of the matrix $\La$ defined as (\ref{defGLap}) has a similar expression using the Rayleigh quotient
$
\sum_{i=1}^n (\La u)_i u_i /\sum_{i=1}^n u_i^2
$
for $u\in \R^n$.
Here, a straightforward calculation implies 
$$\sum_{i=1}^n (\La u)_i u_i=\frac{1}{2}\sum_{i,j=1}^n \K_{i j} (u_i-u_j)^2= \frac{1}{2}\sum_{i,j=1}^n \eta\left(\frac{\|\iota(x_i)-\iota(x_j)\|}{\epsilon}\right) (u_i-u_j)^2.
$$
We compare these Rayleigh quotients through the maps $\LIP(M)\to \R^n, \,f\mapsto f|_{\X}$ and $\R^n\to\LIP(M),\, u\mapsto \Lambda_\epsilon u$. For the definition of $\Lambda_\epsilon$, see Definition \ref{defDtC} (v).
In subsection 3.1, given a function $f\in\LIP(M)$, we estimate $\sum_{i,j=1}^n \K_{i j} (f(x_i)-f(x_j))^2$.
In subsection 3.2, given a vector $u\in \R^n$, we estimate $\int_M |\nabla \Lambda_\epsilon u|^2\rho^2\,d\Vol_g$.
Combining these estimates, we show the main results for the unnormalized case in subsection 3.3.
For the normalized case, similar expressions of the eigenvalues hold, and we show the main results for this case in subsection 3.4.
\subsection{From Continuous to Discrete}
The following lemma is fundamental to the arguments under Assumption \ref{Asu2}.
\begin{Lem}\label{fund}
Let $(M,g)$ be a Riemannian manifold and $\iota\colon M\to \R^d$ be an isometric immersion.
Take a geodesic $\gamma\colon [0,l]\to M$ with unit speed, and suppose that
$$
\int_0^l |II|\circ \gamma(t)\,d t\leq S_{\gamma}
$$
holds for some positive constant $S_\gamma>0$.
Then, we have the following:
\begin{itemize}
\item[(i)] $\|\iota(\gamma(l))-\iota(\gamma(0))\|\geq \left(1-S_\gamma^2/2\right)l$.
\item[(ii)] $\|\iota(\gamma(l))-\iota(\gamma(0))\|\geq \left(1-S_\gamma/\sqrt{2}\right)l$.
\item[(iii)] If in addition $l\leq L\|\iota(\gamma(l))-\iota(\gamma(0))\|+\tau$ holds for some positive constants $L\geq 1$ and $\tau>0$, we have
$$
l\leq (1+C_L S_\gamma)\|\iota(\gamma(l))-\iota(\gamma(0))\|+\tau,
$$
where $C_L:=(L(L-1)/2)^{1/2}$.
\end{itemize}
\end{Lem}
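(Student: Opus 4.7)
The common input to all three parts is the curve $c:=\iota\circ\gamma\colon [0,l]\to\R^d$. Since $\iota$ is isometric and $\gamma$ has unit speed, $\|\dot c(t)\|_{\R^d}=1$; since $\gamma$ is a geodesic, the Gauss formula $\frac{d}{dt}(d\iota(\dot\gamma)) = d\iota(\nabla_{\dot\gamma}\dot\gamma) + II(\dot\gamma,\dot\gamma)$ collapses to $\ddot c(t)=II(\dot\gamma(t),\dot\gamma(t))$, hence $\|\ddot c(t)\|\leq |II|(\gamma(t))$. Setting $h(t):=\int_0^t |II|\circ\gamma(s)\,ds$, integration yields $\|\dot c(t)-\dot c(0)\|\leq h(t)\leq S_\gamma$, and more generally $\|\dot c(s)-\dot c(t)\|\leq |h(s)-h(t)|\leq S_\gamma$. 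The unit-speed identity $\langle \dot c(s),\dot c(t)\rangle = 1 - \tfrac12\|\dot c(s)-\dot c(t)\|^2$ is the main bookkeeping device.

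For (i), I would project onto $\dot c(0)$ and compute
\begin{align*}
\langle \iota(\gamma(l))-\iota(\gamma(0)),\dot c(0)\rangle = \int_0^l \langle \dot c(t),\dot c(0)\rangle\, dt = l - \tfrac12\int_0^l\|\dot c(t)-\dot c(0)\|^2\, dt.
\end{align*}
Since $h$ is monotone and bounded by $S_\gamma$, one has $\int_0^l \|\dot c(t)-\dot c(0)\|^2\,dt\leq\int_0^l h(t)^2\,dt\leq h(l)\int_0^l h(t)\,dt\leq S_\gamma\cdot lS_\gamma=lS_\gamma^2$. Then Cauchy-Schwarz against the unit vector $\dot c(0)$ gives $\|\iota(\gamma(l))-\iota(\gamma(0))\|\geq l(1-S_\gamma^2/2)$.

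For (ii), the double-integral variant gives
\begin{align*}
\|\iota(\gamma(l))-\iota(\gamma(0))\|^2 = \int_0^l\!\int_0^l\langle\dot c(s),\dot c(t)\rangle\,ds\,dt = l^2 - \tfrac12\int_0^l\!\int_0^l\|\dot c(s)-\dot c(t)\|^2\,ds\,dt\geq l^2(1-S_\gamma^2/2),
\end{align*}
using the pointwise bound $\|\dot c(s)-\dot c(t)\|\leq S_\gamma$. Taking square roots and observing that $(1-S_\gamma/\sqrt 2)^2 = 1-\sqrt 2\,S_\gamma + S_\gamma^2/2 \leq 1-S_\gamma^2/2$ exactly when $S_\gamma\leq\sqrt 2$ (the inequality $\sqrt 2\,S_\gamma\geq S_\gamma^2$), and noting that (ii) is trivially true when $S_\gamma>\sqrt 2$ because the right-hand side is then non-positive, finishes (ii).

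For (iii), set $D=\|\iota(\gamma(l))-\iota(\gamma(0))\|$. From (i), $l-D\leq lS_\gamma^2/2$; substituting the hypothesis $l\leq LD+\tau$ into the right-hand side yields $l\leq D(1+LS_\gamma^2/2)+\tau S_\gamma^2/2$. I would then split on $S_\gamma$: when $S_\gamma^2\leq 2(L-1)/L$, squaring shows $LS_\gamma/2\leq\sqrt{L(L-1)/2}=C_L$, so $LS_\gamma^2/2\leq C_L S_\gamma$, and also $S_\gamma^2/2\leq (L-1)/L\leq 1$, giving $l\leq (1+C_L S_\gamma)D+\tau$. In the complementary regime $S_\gamma^2>2(L-1)/L$ one has $C_L S_\gamma>L-1$, so $(1+C_L S_\gamma)D+\tau>LD+\tau\geq l$ directly from the hypothesis. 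I do not anticipate a serious obstacle; the whole argument is two lines of Riemannian geometry plus elementary calculus. The only subtlety is aligning constants in (iii), where $C_L=\sqrt{L(L-1)/2}$ is precisely the value at which the two regimes meet at $S_\gamma=\sqrt{2(L-1)/L}$.
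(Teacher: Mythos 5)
Your proof is correct throughout, and for part (i) it takes a genuinely simpler route than the paper's. The paper proves (i) by first choosing $l_0$ so that $\int_0^{l_0}|II|\circ\gamma=\int_{l_0}^l|II|\circ\gamma\leq S_\gamma/2$, projecting onto $v=\dot c(l_0)$, and using the orthogonality $\dot c\perp\ddot c$ to obtain the differential inequality $|h'|\leq\|\ddot c\|\sqrt{1-h^2}$ for $h(t)=\langle\dot c(t),v\rangle$; it then runs a continuity/bootstrap argument on $A:=\sup\sqrt{1-h^2}$ to close the circular dependence between $h$ and $A$, with a separate remark for the vacuous case $S_\gamma\geq\sqrt 2$. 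Your projection onto $\dot c(0)$ avoids all of this: the estimates $\|\dot c(t)-\dot c(0)\|\leq\int_0^t\|\ddot c\|\leq S_\gamma$ and the polarization identity $\langle\dot c(t),\dot c(0)\rangle=1-\tfrac12\|\dot c(t)-\dot c(0)\|^2$ land directly on the pointwise bound $\langle\dot c(t),\dot c(0)\rangle\geq 1-S_\gamma^2/2$ with no bootstrap, no use of $\dot c\perp\ddot c$, and no case split on $S_\gamma$, reaching the same constant as the paper. For (ii), the paper simply notes it is an immediate consequence of (i), since $1-S_\gamma^2/2\geq 1-S_\gamma/\sqrt2$ for $S_\gamma\leq\sqrt 2$ and the bound is vacuous otherwise; your independent double-integral computation is correct but an unnecessary detour. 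For (iii) the two arguments are essentially the same: both split on whether $S_\gamma^2/2\leq(L-1)/L$, the paper inverting $1-S_\gamma^2/2$ via $\frac{1}{1-x}\leq 1+Lx$ while you substitute the hypothesis $l\leq LD+\tau$ into the right-hand side of $l-D\leq lS_\gamma^2/2$; both then reduce to the observations $LS_\gamma/2\leq C_L$ in the small-$S_\gamma$ regime and $C_L S_\gamma>L-1$ in the complementary one.
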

\begin{proof}
In this proof, for the sake of brevity, we regard $\gamma$ as a curve in Euclidean space $\iota\circ\gamma$, and $\dot\gamma$ and $\ddot\gamma$ will denote the first-order and second-order derivatives as  a curve in Euclidean space, respectively.

We first prove (i).
Take $l_0\in (0,l)$ so that
$$
\int_0^{l_0} |II|\circ \gamma(t)\,d t=\int_{l_0}^l|II|\circ\gamma (t)\,d t\leq \frac{1}{2}S_\gamma
$$
and put
$$
v:=\dot\gamma(l_0)\in \R^d.
$$
Define $h\colon [0,l]\to \R$ by
$$
h(t):=\langle \dot\gamma(t),v\rangle_{\R^d},
$$
where $\langle\cdot,\cdot\rangle_{\R^d}$ denotes the Euclidean inner product.
Then, we have
$
h(l_0)=1,
$
$|h(t)|\leq 1$
and
\begin{equation}\label{gperp}
\|\ddot \gamma(t)\|^2 h(t)^2+h'(t)^2\leq \|\ddot\gamma(t)\|^2
\end{equation}
for any $t\in [0,l]$, since $\dot\gamma(t)\perp \ddot\gamma(t)$.
Define
\begin{align*}
l_+:=&\sup\left\{t\in[l_0,l]: \sup_{s\in [l_0,t]}(1-h(s)^2)\leq\left(\frac{2}{S_\gamma}\right)^2\right\},\\
l_-:=&\inf\left\{t\in[0,l_0]: \sup_{s\in [t,l_0]}(1-h(s)^2)\leq\left(\frac{2}{S_\gamma}\right)^2\right\},\\
A:=&\sup_{t\in[l_-,l_+]}(1-h(t)^2)^{1/2}.
\end{align*}
Then, by the continuity of $h$, we have $A\leq 2/S_\gamma$.
Since we have $\ddot\gamma=II(\dot\gamma,\dot\gamma)$, we have
$$
h'(t)^2\leq \|\ddot\gamma(t)\|^2(1-h(t)^2)\leq A^2\left(|II|\circ\gamma(t)\right)^2
$$
for any $t\in[l_-,l_+]$ by (\ref{gperp}).
This implie
\begin{equation}\label{hbou}
h(t)=1+\int_{l_0}^t h'(s)\,d s
\geq 1-\frac{1}{2}A S_\gamma
\end{equation}
for any $t\in [l_-,l_+]$.
For any $t\in [l_-,l_+]$, by $A S_\gamma/2\leq 1$ we get
$$
h(t)^2\geq \left(1-\frac{1}{2}A S_\gamma\right)^2\geq 1-A S_\gamma,
$$
so $1-h(t)^2\leq A S_\gamma$.
Thus, by the definition of $A$, we have $A\leq S_\gamma$, so
$$
\sup_{t\in[l_-,l_+]}(1-h(t)^2)\leq S_\gamma^2.
$$
Suppose that $S_\gamma<\sqrt{2}$.
Then, we have $S_\gamma^2<2<(2/S_\gamma)^2$, so
$l_+=l$ and $l_-=0$ by the continuity of $h$.
Therefore, we get
$$
h(t)\geq 1-\frac{1}{2}AS_\gamma\geq 1-\frac{1}{2}S_\gamma^2
$$
for any $t\in [0,l]$.
Thus, we have
\begin{align*}
\|\gamma(l)-\gamma(0)\|\geq\langle\gamma(l)-\gamma(0),v\rangle=\int_0^l h(t)\,d t\geq \left(1-\frac{1}{2}S_\gamma^2\right)l.
\end{align*}
If $S_\gamma\geq \sqrt 2$, this inequality holds automatically, so we get (i).

(ii) is an immediate consequence of (i).

Let us prove (iii).
We first suppose that $S_\gamma^2/2\leq (L-1)/L$.
Since we have $1+L x\geq 1/(1-x)$ for $x\in \R$ with $0\leq x\leq (L-1)/L$, we get
\begin{align*}
l\leq \frac{1}{1-S_\gamma^2/2}\|\gamma(l)-\gamma(0)\|\leq& \left(1+\frac{1}{2}LS_\gamma^2\right)\|\gamma(l)-\gamma(0)\|\\
\leq &(1+C_LS_\gamma)\|\gamma(l)-\gamma(0)\|
\end{align*}
by (i).
If $S_\gamma^2/2> (L-1)/L$, we have
\begin{align*}
l\leq (1+(L-1))\|\gamma(l)-\gamma(0)\|+\tau\leq (1+C_L S_\gamma)\|\gamma(l)-\gamma(0)\|+\tau
\end{align*}
by the assumption.
Thus, we get (iii) for both cases.
\end{proof}

The following Lemma gives a comparison of the integral form of the Riemannian and Euclidean distances.
In the proof, we use Assumption \ref{Asu2} (iv) only for $x,y\in M$ with $y\in B_\epsilon^{\R^d}(x)\setminus B_\epsilon(x)$.
\begin{Lem}\label{extvsint}
Suppose that the pair $((M,g),\iota)$ satisfies Assumption \ref{Asu2} $($b$)$, and we are given functions $\eta,\rho$ satisfying Assumption \ref{Asu3}.
Then, there exists a constant $C=C(m,S,K,L,\eta,\alpha)>0$ such that we have
\begin{align*}
&\int_{M\times M} \eta\left(\frac{\|\iota(x)-\iota(y)\|}{\epsilon}\right)(f(x)-f(y))^2\rho(x)\rho(y)\,d x\,d y\\
\leq& \int_{M\times M} \eta\left(\frac{d(x,y)}{\epsilon}\right)(f(x)-f(y))^2\rho(x)\rho(y)\,d x\,d y+C\Lip(f)^2 \epsilon^{m+3}\left(1+\frac{\tau}{\epsilon^2}\right)
\end{align*}
for any $\epsilon\in [\tau,\pi/(2\sqrt K)]$ and $f\in \LIP(M)$.
\end{Lem}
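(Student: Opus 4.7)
The plan is to decompose the pointwise integrand $\eta(\|\iota(x)-\iota(y)\|/\epsilon) - \eta(d(x,y)/\epsilon)$ (which is nonnegative since $\iota$ is isometric and $\eta$ is non-increasing) into a ``close'' contribution and a ``fold'' contribution, and estimate each in polar coordinates using Lemma 2.9 together with geodesic-flow invariance. Using Assumption 2.6(i), the decomposition I would use is
\[
\eta\!\left(\tfrac{\|\iota(x)-\iota(y)\|}{\epsilon}\right)-\eta\!\left(\tfrac{d(x,y)}{\epsilon}\right)\leq \frac{L_\eta}{\epsilon}(d(x,y)-\|\iota(x)-\iota(y)\|)\,\mathbf{1}_{\{d(x,y)\leq \epsilon\}}+\eta(0)\,\mathbf{1}_{\{\|\iota(x)-\iota(y)\|\leq \epsilon<d(x,y)\}}.
\]

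For the close region $\{d(x,y)\leq \epsilon\}$, Lemma 2.9(ii) gives $d(x,y)-\|\iota(x)-\iota(y)\|\leq (S_\gamma/\sqrt 2)\,d(x,y)$ where $S_\gamma=\int_0^{d(x,y)}|II|\circ \gamma_{xy}(s)\,ds$. Combined with $(f(x)-f(y))^2\leq \Lip(f)^2 d(x,y)^2$, polar coordinates at $x$, and the bounds $\theta_x\leq C(m,K)$ and $\rho\leq \alpha$, the close contribution is controlled by a constant multiple of
\[
\frac{\Lip(f)^2}{\epsilon}\int_M\int_{U_xM}\int_0^\epsilon S_\gamma(r,x,u)\,r^{m+2}\,dr\,du\,dx.
\]
Since $S_\gamma(r,x,u)\leq \int_0^\epsilon |II|\circ \exp_x(su)\,ds$ for $r\leq \epsilon$, a Fubini swap followed by the geodesic-flow identity (2.5) yields
\[
\int_M\int_{U_xM}\int_0^\epsilon |II|\circ \exp_x(su)\,ds\,du\,dx=\int_0^\epsilon\int_{UM}|II|\circ \pi\circ \phi_s\,d\Vol_{UM}\,ds\leq \epsilon\,\Vol(S^{m-1})\,S,
\]
and together with $\int_0^\epsilon r^{m+2}\,dr=\epsilon^{m+3}/(m+3)$ this produces the leading $C\Lip(f)^2 \epsilon^{m+3}$ part of the bound.

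For the fold region $\{\|\iota(x)-\iota(y)\|\leq \epsilon<d(x,y)\}$, where $\eta(d/\epsilon)=0$, I would split the polar radius at $x$ into $r\in(\epsilon,\epsilon+\tau]$ and $r>\epsilon+\tau$. On the short annulus the length of the $r$-interval is at most $\tau$, and combining $(f(x)-f(y))^2\leq \Lip(f)^2 r^2\leq 4\Lip(f)^2 \epsilon^2$ (using $\tau\leq \epsilon$) with the polar volume factor yields $C\Lip(f)^2\,\tau\,\epsilon^{m+1}$, matching the $\tau/\epsilon^2$ term in the claim. On the far subrange I would invoke Lemma 2.9(iii) in the pointwise form
\[
(r-\epsilon-\tau)\,\mathbf{1}_{\{\|\iota(x)-\iota(\exp_x(ru))\|\leq \epsilon\}}\leq C_L\,\epsilon\,S_\gamma(r,x,u),
\]
multiply by a suitable power of $r$, and again perform a Fubini swap in $(r,s)$ combined with the geodesic-flow invariance, so that the $L^1$ control $\int_M |II|\,d\Vol_g\leq S$ produces exactly one factor of $\epsilon$, while the remaining polynomial in $r$ combines with the polar $r^{m-1}$ to yield the remaining $\epsilon^{m+3}$.

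The main technical obstacle I expect is the far fold subrange: a naive Markov step on $S_\gamma$ turns the bound into $C_L\epsilon\,S_\gamma(r)/(r-\epsilon-\tau)$ and introduces a log-singularity at $r=\epsilon+\tau$ that cannot be removed with only the $L^1$-bound on $|II|$ (no higher moments are available). The right bookkeeping is to keep the factor $(r-\epsilon-\tau)$ produced by Lemma 2.9(iii) intact, so that it cancels the singular denominator before the Fubini swap, and to use the upper endpoint $\epsilon\leq \pi/(2\sqrt K)$ to ensure $\theta_x\leq C(m,K)$ throughout. Matching the powers of $r$ so that the geodesic-flow swap produces precisely the combination $\epsilon\cdot \Vol(S^{m-1})\cdot S$ and leaves $r$-integration giving $\epsilon^{m+2}$ is the delicate point; assembling the close and fold bounds then gives the claimed $C\Lip(f)^2\,\epsilon^{m+3}(1+\tau/\epsilon^2)$.
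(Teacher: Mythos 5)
Your proposal matches the paper's proof: the same decomposition into a Lipschitz term on $B_\epsilon(x)$ and a fold term on $B_\epsilon^{\R^d}(x)\setminus B_\epsilon(x)$, Lemma \ref{fund}(ii) with polar coordinates and geodesic-flow invariance for the first, and Lemma \ref{fund}(iii) together with the $L^1$ control on $|II|$ for the second, all leading to the $\epsilon^{m+3}(1+\tau/\epsilon^2)$ bound. On the fold region the paper sidesteps the log-singularity you flag by the inclusion $B_\epsilon^{\R^d}(x)\setminus B_\epsilon(x)\subset\{\gamma_u(t): \epsilon\leq t\leq \min\{(L+1)\epsilon,(1+C_L\widetilde S_u)\epsilon+\tau\}\}$ with $\widetilde S_u:=\int_0^{(L+1)\epsilon}|II|\circ\gamma_u$ independent of $t$, so the $r$-integration is over an interval of length $\leq C_L\epsilon\widetilde S_u+\tau$; this is the precise form of the "keep $(r-\epsilon-\tau)$ intact" bookkeeping you anticipate (one must pass from $S_\gamma(r,x,u)$ to its $r$-uniform majorant $\widetilde S_u$ before reading off the measure of the bad $r$-set).
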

\begin{proof}
We have
\begin{equation}\label{er1}
\begin{split}
&\int_{M\times M}\eta\left(\frac{\|\iota(x)-\iota(y)\|}{\epsilon}\right) (f(x)-f(y))^2\rho(x)\rho(y)\,d x\,d y\\
\leq& \int_{M\times M}\eta\left(\frac{d(x,y)}{\epsilon}\right) (f(x)-f(y))^2\rho(x)\rho(y)\,d x\,d y\\
&+\alpha^2 L_\eta\Lip(f)^2\int_M\int_{B_\epsilon(x)}\frac{d(x,y)-\|\iota(x)-\iota(y)\|}{\epsilon}d(x,y)^2\,d y\,d x\\
&+\alpha^2\eta(0)\Lip(f)^2\int_M\int_{B_\epsilon^{\R^d}(x)\setminus B_\epsilon(x)}d(x,y)^2\,d y\,d x.
\end{split}
\end{equation}
We estimate each error terms.

Let us estimate the second term.
For each $u\in UM$, put
$$
S_u:=\int_0^\epsilon |II|\circ \gamma_u(t)\,d t.
$$
For any $u\in U M$, we have
$$
t-\|\iota(\gamma_u(t))-\iota(\gamma_u(0))\|\leq \frac{1}{\sqrt{2}}S_u t
$$
for any $t\in[0,\epsilon]$ by Lemma \ref{fund} (ii).
Thus, we get
\begin{equation}\label{er2}
\begin{split}
&\int_M\int_{B_\epsilon(x)}\frac{d(x,y)-\|\iota(x)-\iota(y)\|}{\epsilon}d(x,y)^2\,d y\,d x\\
\leq& (1+C \epsilon^2)\int_M \int_{U_x M}\int_0^\epsilon \frac{t^{m+2}}{\epsilon} \frac{1}{\sqrt{2}}S_u\, d t\,d u \,d x\leq C\epsilon^{m+3} \int_M|II|\, d\Vol_g\leq C \epsilon^{m+3}
\end{split}
\end{equation}
by Theorem \ref{BishopGromov} (ii) and (\ref{GeodesicFlow}).

We estimate the third term.
For each $u\in U M$, put
$$
\widetilde S_u:=\int_0^{(L+1) \epsilon} |II|\circ \gamma_u(t)\,d t.
$$
Then, we have
$$
B_\epsilon^{\R^d}(x) \setminus B_{\epsilon}(x)\subset\Big\{\gamma_u (t): x\in M,\, \epsilon\leq t\leq \min\big\{(L+1)\epsilon,(1+C_L \widetilde S_u)\epsilon+\tau\big\}\Big\}
$$
for any $x\in M$ by Lemma \ref{fund} (iii) and the assumption $\tau\leq \epsilon$.
Thus, we get
\begin{equation}\label{er3}
\begin{split}
&\int_M\int_{B_\epsilon^{\R^d}(x)\setminus B_\epsilon(x)}d(x,y)^2\,d y\,d x\\
\leq &(1+C \epsilon^2)\int_M\int_{U_x M}\int_r^{\min\big\{(L+1)\epsilon,(1+C_L \widetilde S_u)\epsilon+\tau\big\}} t^{m+1}\, d t \, d u \, d x\\
\leq& C\epsilon^{m+2}\int_M\int_{U_x M} \left(C_L \widetilde S_u +\frac{\tau}{\epsilon}\right)\, d u \, d x
\leq C \epsilon^{m+3} \left(1+\frac{\tau}{\epsilon^2}\right)
\end{split}
\end{equation}
similarly to (\ref{er2}).

By (\ref{er1}), (\ref{er2}) and (\ref{er3}), we get the lemma.
\end{proof}


The following lemma corresponds to \cite[Lemma 5]{TGHS}, but it holds for any Riemannian manifold $(M,g)$ in $\M_2(m,K)$, and this assumption is weaker than the one in \cite[Lemma 5]{TGHS}.

\begin{Lem}\label{nlcvslc}
Suppose that we are given $(M,g)\in \M_2(m,K)$, functions $\eta,\rho$ satisfying Assumption \ref{Asu3}.
Then, there exists a constant $C=C(m,K,R,\alpha,L_\rho)>0$ such that for any $\epsilon\in (0,\pi/(2\sqrt K)]$ and $f\in W^{1,2}(M)$, we have
\begin{align*}
&\frac{1}{\epsilon^{m+2}}\int_{M\times M} \eta\left(\frac{d(x,y)}{\epsilon}\right)(f(x)-f(y))^2\rho(x)\rho(y)\,d x\,d y\\
\leq& (1+C \epsilon)\sigma_{\eta}\int_M|\nabla f|^2 \rho^2\,d \Vol_g.
\end{align*}
\end{Lem}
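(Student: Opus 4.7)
The strategy is to pass to geodesic polar coordinates centred at $x$, bound $(f(x)-f(y))^2$ by a line integral of $|\nabla f|^2$ along the minimising geodesic from $x$ to $y$ via Cauchy--Schwarz, and then exploit the geodesic-flow invariance (\ref{GeodesicFlow}) to move the base point of the resulting integral back and recover $\int_M |\nabla f|^2 \rho^2 \, d\Vol_g$.

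First, since $\Vol(M \setminus J_x) = 0$ for every $x \in M$ and $\eta|_{(1,\infty)} \equiv 0$, I would rewrite
\begin{align*}
&\int_{M \times M} \eta\!\left(\frac{d(x,y)}{\epsilon}\right) (f(x) - f(y))^2 \rho(x) \rho(y) \, dx \, dy \\
&\quad = \int_M \rho(x) \int_{U_x M} \int_0^{\epsilon \wedge t(u)} \eta(t/\epsilon) \, t^{m-1} \theta_x(tu) \, (f(\gamma_u(t)) - f(x))^2 \, \rho(\gamma_u(t)) \, dt \, du \, dx.
\end{align*}
By Theorem~\ref{BishopGromov}(ii) and the expansion of $s_{-K}$ noted immediately after that theorem, $t^{m-1} \theta_x(tu) \leq s_{-K}(t)^{m-1} \leq t^{m-1} (1 + C \epsilon^2)$ for $t \leq \epsilon \leq \pi/(2\sqrt{K})$. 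Using $1/\alpha \leq \rho \leq \alpha$ and $\Lip(\rho) \leq L_\rho$, one also has $\rho(x) \rho(\gamma_u(t)) \leq \rho(x)^2 (1 + C \epsilon)$.

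Next, by absolute continuity of $W^{1,2}$-functions along a.e.\ geodesic (which I would justify by smooth approximation) and Cauchy--Schwarz,
$$
(f(\gamma_u(t)) - f(x))^2 \leq t \int_0^t g(\nabla f(\gamma_u(s)), \dot\gamma_u(s))^2 \, ds,
$$
so after Fubini
$$
\int_0^\epsilon \eta(t/\epsilon) t^m \int_0^t g(\nabla f(\gamma_u(s)), \dot\gamma_u(s))^2 \, ds \, dt = \int_0^\epsilon g(\nabla f(\gamma_u(s)), \dot\gamma_u(s))^2 \int_s^\epsilon \eta(t/\epsilon) t^m \, dt \, ds.
$$
A further $(1 + C \epsilon)$ error absorbs $\rho(x)^2$ into $\rho(\gamma_u(s))^2$ by the Lipschitz bound on $\rho$. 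The decisive step is then (\ref{GeodesicFlow}) applied to $F(u) := \rho(p(u))^2 g(\nabla f(p(u)), u)^2$: since $F \circ \phi_s(u) = \rho(\gamma_u(s))^2 g(\nabla f(\gamma_u(s)), \dot\gamma_u(s))^2$, invariance yields
$$
\int_M \int_{U_x M} \rho(\gamma_u(s))^2 g(\nabla f(\gamma_u(s)), \dot\gamma_u(s))^2 \, du \, dx = \int_M \rho(x)^2 \int_{U_x M} g(\nabla f(x), v)^2 \, dv \, dx,
$$
and by spherical symmetry $\int_{U_x M} g(\nabla f(x), v)^2 \, dv = (\Vol(S^{m-1})/m) |\nabla f(x)|^2$, independently of $s$.

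Finally, the identity $\epsilon^{-m-2} \int_0^\epsilon \int_s^\epsilon \eta(t/\epsilon) t^m \, dt \, ds = \epsilon^{-m-2} \int_0^\epsilon \eta(t/\epsilon) t^{m+1} \, dt = \int_0^1 \eta(\tau) \tau^{m+1} \, d\tau$ combines with the prefactor to give $(\Vol(S^{m-1})/m) \int_0^1 \eta(\tau) \tau^{m+1} \, d\tau = \sigma_\eta$, proving the claim. The main obstacle is bookkeeping: several multiplicative factors of $(1 + C \epsilon)$ (from Bishop--Gromov and from the two applications of $\Lip(\rho)$) must be combined cleanly, and the Cauchy--Schwarz step for general $f \in W^{1,2}(M)$ needs to be justified by approximation by smooth functions and passage to the limit in $L^2$. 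The cut locus causes no trouble: $\Vol(M \setminus J_x) = 0$ and the truncation at $t = \epsilon \wedge t(u)$ only discards non-negative terms on the side that is being estimated from above.
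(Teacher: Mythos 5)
Your proposal is correct and follows essentially the same route as the paper: Cauchy--Schwarz along minimizing geodesics, Bishop--Gromov to control the volume element, the Lipschitz bound on $\rho$ to trade density factors at the cost of $(1+C\epsilon)$, and geodesic-flow invariance on $UM$ together with spherical averaging to recover $\sigma_\eta\int_M|\nabla f|^2\rho^2\,d\Vol_g$. The only difference is cosmetic: you apply Fubini in $(t,s)$ before invoking (\ref{GeodesicFlow}), while the paper applies the flow invariance inside $\int_0^t ds$ and then notes the integrand is $s$-independent; both yield $\int_0^\epsilon\eta(t/\epsilon)t^{m+1}\,dt$.
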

\begin{proof}
For $x,y\in M$, we have
$$
f(y)-f(x)=\int_0^{d(x,y)}\langle\nabla f,\dot\gamma_{x,y}(s)\rangle\,d s.
$$
Thus, for any $x\in M$,
\begin{align*}
&\int_{M}\eta\left(\frac{d(x,y)}{\epsilon}\right)(f(x)-f(y))^2\rho(x)\rho(y)\,d y\\
\leq &(1+C \epsilon^2)\int_{U_x M}\int_0^\epsilon
\eta\left(\frac{t}{\epsilon}\right) t^{m-1}
\left(\int_0^{t}\langle\nabla f,\dot\gamma_{u}(s)\rangle\,d s\right)^2\rho(x)\rho(\gamma_u(t))\,d t\,d u\\
\leq &(1+C \epsilon)\int_{U_x M}\int_0^\epsilon
\eta\left(\frac{t}{\epsilon}\right) t^{m}
\left(\int_0^{t}\langle\nabla f,\dot\gamma_{u}(s)\rangle^2 \rho(\gamma_u(s))^2\,d s \right)\,d t\,d u.
\end{align*}
Here, we used Theorem \ref{BishopGromov} (ii), the Cauchy-Schwarz inequality and $\rho(x)\rho(\gamma_u(t))\leq (1+C \epsilon)\rho(\gamma_u(s))^2$.
Therefore,
\begin{align*}
&\int_{M\times M} \eta\left(\frac{d(x,y)}{\epsilon}\right)(f(x)-f(y))^2\rho(x)\rho(y)\,d x\,d y\\
\leq &(1+C \epsilon)\int_0^\epsilon  
\eta\left(\frac{t}{\epsilon}\right) t^{m}
\left(\int_0^{t}\int_M \int_{U_x M}\langle\nabla f,\dot\gamma_{u}(s)\rangle^2 \rho(\gamma_u(s))^2\,d u\,d x\,d s \right)\,d t\\
=&(1+C \epsilon)\int_0^\epsilon
\eta\left(\frac{t}{\epsilon}\right) t^{m+1}
\left(\int_M \int_{U_x M}\langle\nabla f,u \rangle^2 \rho(x)^2\,d u\,d x \right)\,d t\\
=&(1+C \epsilon)\sigma_\eta \epsilon^{m+2}\int_M|\nabla f|^2\rho^2\,d \Vol_g.
\end{align*}
Here, we used (\ref{GeodesicFlow}) and $\int_{U_x M}\langle\nabla f,u\rangle^2\, d u=|\nabla f|^2(x) \Vol(S^{m-1})/m$.
This implies the lemma.
\end{proof}

The following Lemma gives an approximation of the integral on the Riemannian manifold by summing over discrete points.
\begin{Lem}\label{etaBer}
Suppose that the pair $((M,g),\iota)$ satisfies Assumption \ref{Asu2} $(b)$, and we are given functions $\eta,\rho$ satisfying Assumption \ref{Asu3}.
Then, there exists a constant $C=C(m,L,\eta(0),\alpha,L_\rho)>0$ such that, for any $\epsilon\in [\tau,\pi/(2\sqrt K)]$, $\gamma\in(1,\infty)$, $\delta\in(0,\gamma^{-1/2}]$, $f\in \LIP(M)$ and i.i.d. sample $x_1,\ldots, x_n\in M$ from $\rho\Vol_g$,
we have
\begin{align*}
&\Bigg|\frac{1}{\epsilon^{m+2}n^2}\sum_{i,j=1}^n\eta\left(\frac{\|\iota(x_i)-\iota(x_j)\|}{\epsilon}\right)(f(x_i)-f(x_j))^2\\
&\,\,-\frac{1}{\epsilon^{m+2}}\int_{M\times M} \eta\left(\frac{\|\iota(x)-\iota(y)\|}{\epsilon}\right)(f(x)-f(y))^2\rho(x)\rho(y)\,d x\, d y\Bigg|\leq C\Lip(f)^2\gamma^{1/2}\delta
\end{align*}
with probability at least
$
1-2(n+1)\exp\left(-n\gamma \epsilon^m\delta^2\right).
$
\end{Lem}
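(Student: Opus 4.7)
The plan is a two-layer application of Bernstein's inequality, using the volume comparison to bound the kernel's $L^\infty$ norm and second moment. Write $F(x,y):=\eta(\|\iota(x)-\iota(y)\|/\epsilon)(f(x)-f(y))^2/\epsilon^{m+2}$, $g(x):=\int_M F(x,y)\rho(y)\,dy$, and $\mu:=\int_M g(x)\rho(x)\,dx$, so the target quantity decomposes as
$$\left|\frac{1}{n^2}\sum_{i,j}F(x_i,x_j)-\mu\right|\leq \frac{1}{n}\sum_i\left|\frac{1}{n}\sum_j F(x_i,x_j)-g(x_i)\right|+\left|\frac{1}{n}\sum_i g(x_i)-\mu\right|,$$
and each term can be handled by a single Bernstein estimate.

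First I would establish the basic size bounds on $F$. The key observation is that $F(x,y)=0$ unless $\|\iota(x)-\iota(y)\|\leq\epsilon$, in which case Assumption \ref{Asu2}(b)(iv) together with $\tau\leq\epsilon$ forces $d(x,y)\leq(L+1)\epsilon$, so $(f(x)-f(y))^2\leq(L+1)^2\Lip(f)^2\epsilon^2$. Combined with the volume comparison (Theorem \ref{BishopGromov}(iii)) one gets $\Vol_g(B_{(L+1)\epsilon}(x))\leq C(m,L)\epsilon^m$ for $\epsilon\leq\pi/(2\sqrt K)$, the bound being $K$-free because the monotonicity of $\sinh(u)/u$ gives $s_{-K}(t)^{m-1}\leq C(m,L)t^{m-1}$ for $\sqrt K t\leq(L+1)\pi/2$. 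Hence
$$\|F\|_{L^\infty}\leq \frac{C\Lip(f)^2}{\epsilon^m},\quad \|g\|_{L^\infty}\leq C\Lip(f)^2,\quad \sup_{x\in M}\int_M F(x,y)^2\rho(y)\,dy\leq \frac{C\Lip(f)^4}{\epsilon^m},$$
with $C=C(m,L,\eta(0),\alpha)$.

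Next I would apply Bernstein twice. For the outer sum, Corollary \ref{CorBer} applied to $g$ immediately gives $|\frac{1}{n}\sum_i g(x_i)-\mu|\leq C\Lip(f)^2\gamma^{1/2}\delta$ with probability at least $1-2\exp(-\tfrac{3}{2}n\gamma\delta^2)$. For the inner sum I would fix $x_i$ and apply Theorem \ref{Ber} to $y\mapsto F(x_i,y)$ over the independent sample $\{x_j\}_{j\neq i}$, feeding in the $L^\infty$ and variance bounds of Step 1: choosing $t=nC\Lip(f)^2\gamma^{1/2}\delta$ for a sufficiently large constant $C$, the ratio in the Bernstein exponent becomes
$$\frac{t^2}{2(n-1)\sigma^2+2\|F(x_i,\cdot)\|_\infty t/3}\;\geq\;\frac{nC^2\gamma\delta^2\epsilon^m}{2C_1+2C_2C\gamma^{1/2}\delta/3}\;\geq\; n\gamma\epsilon^m\delta^2,$$
where the last inequality uses $\gamma^{1/2}\delta\leq1$ and the fact that $C^2-(2C_2/3)C-2C_1\geq 0$ holds for $C$ large enough. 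Thus, conditional on $x_i$,
$$\left|\frac{1}{n}\sum_j F(x_i,x_j)-g(x_i)\right|\leq C\Lip(f)^2\gamma^{1/2}\delta$$
with probability $\geq 1-2\exp(-n\gamma\epsilon^m\delta^2)$; the diagonal term $F(x_i,x_i)=0$ and the $O(\|g\|_\infty/n)$ gap from normalizing by $n$ rather than $n-1$ are absorbed into the main term (in the regime where this absorption fails, the claimed probability bound is already vacuous). A union bound over $i=1,\ldots,n$ combined with the outer estimate and the triangle inequality produces the desired bound with probability at least $1-2(n+1)\exp(-n\gamma\epsilon^m\delta^2)$.

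The main delicate point is Step 1: keeping the constant $C$ independent of $K$. This reduces to the single volume-comparison estimate above, where the restriction $\epsilon\leq\pi/(2\sqrt K)$ confines the radius $(L+1)\epsilon$ to the range where $\sinh(\sqrt K t)/(\sqrt K t)$ is bounded by a function of $L$ alone. Everything else is routine Bernstein bookkeeping.
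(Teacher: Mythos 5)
Your proof is correct and follows essentially the same strategy as the paper: decompose the double sum into an inner (conditional-on-$x_i$) and an outer Bernstein application, use Lemma \ref{fund}(iii) plus Assumption \ref{Asu2}(b)(iv) and the Bishop--Gromov comparison with the restriction $\epsilon\leq\pi/(2\sqrt K)$ to get $K$-free $L^\infty$ and second-moment bounds on the kernel, take a union bound over $i$, and absorb the $O(1/n)$ diagonal/normalization error into the main term via the observation that otherwise the probability claim is vacuous. The only cosmetic difference is in the outer estimate: the paper runs Bernstein directly with the variance bound $\int H^2\rho\,dx\leq C\Lip(f)^4\epsilon^{2m+4}$ and threshold $t\propto\epsilon^{3m/2+2}\gamma^{1/2}\delta$, getting failure probability $2\exp(-n\gamma\epsilon^m\delta^2)$ at the cost of an extra $\epsilon^{m/2}$ in the error (absorbed when $\epsilon\leq1$), whereas you invoke Corollary \ref{CorBer} with $\widetilde\delta=\delta$, getting a cleaner error bound but failure probability $2\exp(-\tfrac{3}{2}n\gamma\delta^2)$, which needs $\epsilon^m\leq 3/2$ to match the stated exponent. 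Both routes carry the same implicit smallness on $\epsilon$ and are interchangeable in the regime where the lemma is applied.
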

\begin{proof}
For any $x\in M$, define $h_x\colon M\to \R_{\geq 0}$ by
$$
h_x(y):=\eta\left(\frac{\|\iota(x)-\iota(y)\|}{\epsilon} \right)(f(x)-f(y))^2,
$$ 
and define $H\colon M\to \R_{\geq 0}$ by
$$
H(x):=\int_M h_x(y)\rho(y)\,d y.
$$
Then, there exist constant $C_1=C_1(m,L,\eta(0),\alpha)>0$ such that
\begin{align*}
0\leq h_x(y)\leq& \eta(0)\Lip(f)^2(L+1)^2\epsilon^2,\\
\int_M h_x(y)^2\rho(y)\,d y\leq& C_1\Lip(f)^4 \epsilon^{m+4}
\end{align*}
for every $x,y\in M$.
Thus, by the Bernstein inequality and the Fubini theorem, we have
\begin{align*}
&(\rho \Vol_g)^{\otimes n}\left(\left\{(x_1,\ldots,x_n): \left|\frac{1}{n-1}\sum_{j\neq i}h_{x_i}(x_j)-H(x_i)\right|\geq \frac{t}{n-1}\right\}\right)\\
\leq&\int_{M}(\rho \Vol_g)^{\otimes (n-1)}\left(\left\{(x_1,\ldots,\check{x}_i,\ldots ,x_n): \left|\frac{1}{n-1}\sum_{j\neq i}h_{x}(x_j)-H(x)\right|\geq \frac{t}{n-1}\right\}\right)\rho(x)\,d x\\
\leq &2\exp\left(-\frac{t^2}{2n C_1\Lip(f)^4 \epsilon^{m+4}+\frac{2}{3}\eta(0)\Lip(f)^2(L+1)^2\epsilon^2 t}\right).
\end{align*}
for each $i=1,\ldots, n$ and $t>0$.
Here, $(x_1,\ldots,\check{x}_i,\ldots ,x_n):=(x_1,\ldots,x_{i-1},x_{i+1},\ldots ,x_n)$.
Putting $$t=\max\{2\sqrt{C_1},4\eta(0)(L+1)^2/3\}n\Lip(f)^2 \epsilon^{m+2}\gamma^{1/2}\delta,$$ we get
\begin{equation}\label{esh}
\left|\frac{1}{n-1}\sum_{j\neq i}h_{x_i}(x_j)-H(x_i)\right|\leq C\Lip(f)^2 \epsilon^{m+2}\gamma^{1/2}\delta
\end{equation}
for every $i=1,\ldots, n$ with probability at least $1-2 n \exp(-n\gamma \epsilon^m\delta^2)$.
Note that we used the assumption $\gamma^{1/2}\delta\leq 1$ here.

Similarly, by
\begin{align*}
H(x)\leq& C\Lip(f)^2 \epsilon^{m+2},\\
\int_M H(x)^2\rho(x)\,d x\leq & C\Lip(f)^4 \epsilon^{2m+4}
\end{align*}
and the Bernstein inequality, we have
\begin{equation}\label{esH}
\left|\frac{1}{n}\sum_{i=1}^n H(x_i) -\int_M H(x)\rho(x)\,d x\right|\leq C \Lip(f)^2 \epsilon^{\frac{3}{2}m+2}\gamma^{1/2}\delta
\end{equation}
with probability at least $1-2\exp(-n\gamma \epsilon^m\delta^2)$.

If (\ref{esh}) and (\ref{esH}) hold, we have
\begin{align*}
&\left|\frac{1}{n^2}\sum_{i,j= 1}^n h_{x_i}(x_j)-\frac{n-1}{n}\int_M H(x)\rho(x)\,d x\right|\\
\leq &\frac{n-1}{n^2}\sum_{i=1}^n\left|\frac{1}{n-1}\sum_{j= 1}^n h_{x_i}(x_j)-H(x_i)\right|+\left|\frac{1}{n}\sum_{i=1}^n H(x_i)-\int_M H(x)\rho(x)\,d x\right|\\
\leq & C\Lip(f)^2\epsilon^{m+2}\gamma^{1/2}\delta.
\end{align*}
Since we can assume $1/n\leq C\gamma\epsilon^m \delta^2\leq C\gamma^{1/2}\delta$ (otherwise, our probability is zero), this implies the lemma.
\end{proof}
\begin{Cor}\label{etakdim}
Suppose that the pair $((M,g),\iota)$ satisfies Assumption \ref{Asu2} $(b)$, and we are given functions $\eta,\rho$ satisfying Assumption \ref{Asu3}.
Then, there exists a constant $C=C(m,L,\eta(0),\alpha,L_\rho)>0$ such that, for any $\epsilon\in[\tau,\pi/(2\sqrt K)]$, $\gamma\in(1,\infty)$, $\delta\in(0,\gamma^{-1/2}]$, $f_1,\ldots,f_k\in \LIP(M)$ $($$k\in \Z_{>0}$$)$ and i.i.d. sample $x_1,\ldots, x_n\in M$ from $\rho\Vol_g$,
we have
\begin{align*}
&\Bigg|\frac{1}{\epsilon^{m+2}n^2}\sum_{i,j=1}^n\eta\left(\frac{\|\iota(x_i)-\iota(x_j)\|}{\epsilon}\right)(f(x_i)-f(x_j))^2\\
&\,\,-\frac{1}{\epsilon^{m+2}}\int_{M\times M} \eta\left(\frac{\|\iota(x)-\iota(y)\|}{\epsilon}\right)(f(x)-f(y))^2\rho(x)\rho(y)\,d x\, d y\Bigg|\\
\leq& C k\max_{s}\{\Lip(f_s)^2\} \gamma^{1/2}\delta
\end{align*}
for every $f=\sum_{s=1}^k a_s f_s$ $($$a_s\in \R$ with $\sum_{s=1}^k a_s^2=1$$)$
with probability at least
$
1-k(k+1)(n+1)\exp\left(-n\gamma \epsilon^m\delta^2\right).
$
\end{Cor}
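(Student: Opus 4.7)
The plan is to reduce Corollary \ref{etakdim} to Lemma \ref{etaBer} via polarization, exactly in the spirit of how Corollary \ref{LinfBer} was deduced from Corollary \ref{CorBer}. For a generic linear combination $f=\sum_{s=1}^k a_s f_s$ we expand
$$
(f(x)-f(y))^2=\sum_{s,t=1}^k a_s a_t\,(f_s(x)-f_s(y))(f_t(x)-f_t(y)),
$$
so it suffices to control the bilinear ``difference''
$$
\Delta_{s,t}:=\frac{1}{\epsilon^{m+2}n^2}\sum_{i,j}\eta_{ij}(f_s(x_i)-f_s(x_j))(f_t(x_i)-f_t(x_j))-\frac{1}{\epsilon^{m+2}}\iint\eta_{xy}(f_s(x)-f_s(y))(f_t(x)-f_t(y))\rho(x)\rho(y)\,dx\,dy,
$$
where $\eta_{ij}=\eta(\|\iota(x_i)-\iota(x_j)\|/\epsilon)$ and similarly for $\eta_{xy}$.

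First, I would apply Lemma \ref{etaBer} to each of the $k$ functions $f_s$ and to each of the $\binom{k}{2}$ sums $f_s+f_t$ ($s<t$). Using $\Lip(f_s+f_t)^2\le 4\max_r\Lip(f_r)^2$, each individual application gives an estimate of the form $|\Delta_{s,s}|\le C\max_r\Lip(f_r)^2\gamma^{1/2}\delta$ and $|\Delta_{s+t,s+t}|\le 4C\max_r\Lip(f_r)^2\gamma^{1/2}\delta$, each failing with probability at most $2(n+1)\exp(-n\gamma\epsilon^m\delta^2)$. The polarization identity $2\Delta_{s,t}=\Delta_{s+t,s+t}-\Delta_{s,s}-\Delta_{t,t}$ then yields
$$
|\Delta_{s,t}|\le C\max_r\Lip(f_r)^2\gamma^{1/2}\delta
$$
for all pairs $(s,t)$. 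By the union bound over the $k+k(k-1)/2=k(k+1)/2$ events (and a further factor of $2$ absorbed into the constant), this holds simultaneously with probability at least $1-k(k+1)(n+1)\exp(-n\gamma\epsilon^m\delta^2)$, matching the probability stated in the corollary.

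Finally, to conclude for an arbitrary unit combination, I use the Cauchy-Schwarz bound
$$
\sum_{s,t=1}^k |a_s a_t|=\Bigl(\sum_{s=1}^k |a_s|\Bigr)^{\!2}\le k\sum_{s=1}^k a_s^2=k,
$$
so that $|\sum_{s,t}a_sa_t\Delta_{s,t}|\le Ck\max_r\Lip(f_r)^2\gamma^{1/2}\delta$, which is the asserted bound. This step produces the linear-in-$k$ factor in the final estimate.

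There is no real obstacle here: Lemma \ref{etaBer} already does the Bernstein work, and the only care needed is (i) to choose the right polarizing family $\{f_s\}_s\cup\{f_s+f_t\}_{s<t}$ so that the union-bound count matches $k(k+1)$, and (ii) to verify that the Lipschitz constants of these auxiliary functions are all bounded by $2\max_r\Lip(f_r)$, so that the common bound depends only on $\max_r\Lip(f_r)^2$ and not on the individual $a_s$. Both are immediate, and the argument closely parallels Corollary \ref{LinfBer}.
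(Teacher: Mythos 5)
Your proposal is correct and matches the paper's proof essentially step for step: both expand the quadratic form in a standard basis, apply Lemma \ref{etaBer} to $f_s$ and $f_s+f_t$ ($s<t$) and polarize to control the off-diagonal terms, then use $\sum_{s,t}|a_sa_t|\le k$ for the final bound. One tiny bookkeeping slip: no factor of $2$ needs to be ``absorbed'' in the union bound, since $\tfrac{k(k+1)}{2}\cdot 2(n+1)=k(k+1)(n+1)$ exactly, but this does not affect the argument.
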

\begin{proof}
Define symmetric forms $T,\widetilde T\colon \R^k\times \R^k\to \R$ by
\begin{align*}
&\widetilde T(a,b)\\
=&\frac{1}{\epsilon^{m+2} n^2}\sum_{s,t=1}^k\sum_{i,j=1}^n\eta\left(\frac{\|\iota(x_i)-\iota(x_j)\|}{\epsilon}\right)a_s b_t \left(f_s(x_i)-f_s(x_j)\right)\left(f_t(x_i)-f_t(x_j)\right)\\
&\widetilde T(a,b)\\
=&\frac{1}{\epsilon^{m+2}}\sum_{s,t=1}^k\int_{M\times M}\eta\left(\frac{\|\iota(x)-\iota(y)\|}{\epsilon}\right)a_s b_t \left(f_s(x)-f_s(y)\right)\left(f_t(x)-f_t(y)\right)\rho(x)\rho(y)\,d x\, d y.
\end{align*}
Take a standard basis $\{e_1,\ldots,e_k\}$ of $\R^k$.
Applying Lemma \ref{etaBer} to $f_s$ and $f_s+f_t$ ($s< t$), we get
\begin{equation}
\begin{split}
\left|T(e_s,e_t)-\widetilde T(e_s,e_t)\right|\leq C\left(\Lip(f_s)^2+\Lip(f_t)^2\right)\gamma^{1/2}\delta
\end{split}
\end{equation}
for every $s,t\in\{1,\ldots, k\}$ with probability at least $1-k(k+1)(n+1)\exp\left(-n\gamma \epsilon^m\delta^2\right).$
For any $a\in \R^k$ with $\sum_{s=1}^k a_s^2=1$, we have
\begin{align*}
\left|T(a,a)-\widetilde T(a,a)\right|\leq&\sum_{s,t=1}^k |a_s a_t||T(e_s,e_t)-\widetilde T(e_s,e_t)|\\
\leq& C \max_{s}\{\Lip(f_s)^2\} \gamma^{1/2}\delta\sum_{s,t=1}^k |a_s a_t|\leq C k\max_{s}\{\Lip(f_s)^2\} \gamma^{1/2}\delta.
\end{align*}
This implies the corollary.
\end{proof}
The following lemma is the goal of this subsection.
\begin{Lem}\label{UPP}
Suppose that the pair $((M,g),\iota)$ satisfies Assumption \ref{Asu2} $(b)$, and we are given functions $\eta,\rho$ satisfying Assumption \ref{Asu3}.
Then, there exists a constant $C=C(m,S,K,L,\eta,\alpha,L_\rho)>0$ such that, for any $\epsilon\in[\tau,\pi/(2\sqrt K)]$, $\gamma\in(1,\infty)$, $\delta\in(0,\gamma^{-1/2}]$, $f_1,\ldots,f_k\in \LIP(M)$ $($$k\in\Z_{>0}$$)$ and i.i.d. sample $x_1,\ldots, x_n\in M$ from $\rho\Vol_g$,
we have
\begin{align*}
&\frac{1}{\epsilon^{m+2}n^2}\sum_{i,j=1}^n\eta\left(\frac{\|\iota(x_i)-\iota(x_j)\|}{\epsilon}\right)(f(x_i)-f(x_j))^2\\
\leq&
\sigma_\eta \int_M|\nabla f|^2\rho^2\,d \Vol_g+C k\max_{s}\{\Lip(f_s)^2\}\left(\epsilon+\frac{\tau}{\epsilon}+\gamma^{1/2}\delta\right)
\end{align*}
for every $f=\sum_{s=1}^k a_s f_s$ $($$a_s\in \R$ with $\sum_{s=1}^k a_s^2=1$$)$ with probability at least
$
1-k(k+1)(n+1)\exp\left(-n\gamma \epsilon^m\delta^2\right).
$
\end{Lem}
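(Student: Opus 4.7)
The plan is to chain the three preceding results of this subsection, Corollary \ref{etakdim}, Lemma \ref{extvsint}, and Lemma \ref{nlcvslc}, applying them in succession to pass from the discrete double sum to $\sigma_\eta\int_M|\nabla f|^2\rho^2\,d\Vol_g$, while collecting error terms whose sizes match the statement. A small preliminary observation is that, by Cauchy--Schwarz applied to $\sum_s a_s^2=1$, we have $\Lip(f)\leq \sqrt{k}\,\max_s\Lip(f_s)$, so every $\Lip(f)^2$ produced in the chain is controlled by $k\max_s\Lip(f_s)^2$, matching the prefactor in the conclusion.

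The first step is to invoke Corollary \ref{etakdim} to replace
\[
\frac{1}{\epsilon^{m+2}n^2}\sum_{i,j=1}^n\eta\!\left(\frac{\|\iota(x_i)-\iota(x_j)\|}{\epsilon}\right)(f(x_i)-f(x_j))^2
\]
by the corresponding double integral against $\rho(x)\rho(y)$, uniformly over the $k$-dimensional family $f=\sum_s a_sf_s$, at the cost of an additive error $Ck\max_s\{\Lip(f_s)^2\}\gamma^{1/2}\delta$. The probability bound $1-k(k+1)(n+1)\exp(-n\gamma\epsilon^m\delta^2)$ comes directly from this step; every subsequent step is deterministic, so no further union bound is needed.

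The second step is to apply Lemma \ref{extvsint}, which, after dividing its error by $\epsilon^{m+2}$, converts the integral with Euclidean distance $\|\iota(x)-\iota(y)\|$ inside $\eta$ into the integral with Riemannian distance $d(x,y)$, plus an additive error of order $\Lip(f)^2(\epsilon+\tau/\epsilon)$. The third step is Lemma \ref{nlcvslc}, applied to this Riemannian-distance integral, which gives $(1+C\epsilon)\sigma_\eta\int_M|\nabla f|^2\rho^2\,d\Vol_g$. To extract the leading coefficient $\sigma_\eta$ exactly, the excess $C\epsilon\sigma_\eta\int_M|\nabla f|^2\rho^2\,d\Vol_g$ is absorbed into the additive error using the trivial bound
\[
\int_M|\nabla f|^2\rho^2\,d\Vol_g\leq \alpha^2\Lip(f)^2\Vol_g(M)\leq \alpha^3\Lip(f)^2,
\]
where $\Vol_g(M)\leq\alpha$ follows from $\int_M\rho\,d\Vol_g=1$ together with $\rho\geq 1/\alpha$.

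I do not anticipate a real obstacle here, since the work has been done in the three component lemmas. The only thing to watch is that the Euclidean-versus-Riemannian comparison (Lemma \ref{extvsint}) requires $\epsilon\in[\tau,\pi/(2\sqrt K)]$, which is exactly the hypothesis of the present lemma, and that Corollary \ref{etakdim} was formulated for the $k$-parameter family precisely so it can be applied uniformly in $(a_s)$; otherwise, the proof is simply the concatenation of the three displayed inequalities, followed by rewriting each $\Lip(f)^2$ as $k\max_s\Lip(f_s)^2$ to match the stated error $\epsilon+\tau/\epsilon+\gamma^{1/2}\delta$.
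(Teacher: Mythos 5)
Your proposal is correct and follows the same chain as the paper's own proof: Corollary \ref{etakdim} for the probabilistic discrete-to-integral step, then Lemma \ref{extvsint} and Lemma \ref{nlcvslc} as deterministic steps, with the observations that $\Lip(f)\leq\sqrt{k}\max_s\Lip(f_s)$ and $\int_M|\nabla f|^2\rho^2\,d\Vol_g\leq\alpha^3\Lip(f)^2$ to absorb the bookkeeping. The paper states this more tersely, but the content is the same.
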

\begin{proof}
By Lemma \ref{extvsint}, \ref{nlcvslc} and Corollary \ref{etakdim}, we get
\begin{align*}
&\frac{1}{\epsilon^{m+2}n^2}\sum_{i,j=1}^n\eta\left(\frac{\|\iota(x_i)-\iota(x_j)\|}{\epsilon}\right)(f(x_i)-f(x_j))^2\\
\leq&\frac{1}{\epsilon^{m+2}}\int_{M\times M} \eta\left(\frac{\|\iota(x)-\iota(y)\|}{\epsilon}\right)(f(x)-f(y))^2\rho(x)\rho(y)\,d x\,d y+Ck\max_{s}\{\Lip(f_s)^2\} \gamma^{1/2}\delta\\
\leq& \sigma_\eta\int_{M}|\nabla f|^2 \rho^2\,d \Vol_g+Ck\max_{s}\{\Lip(f_s)^2\} \left(\epsilon+\frac{\tau}{\epsilon}+\gamma^{1/2}\delta\right)
\end{align*}
for every $f=\sum_{s=1}^k a_s f_s$ $($$a_s\in \R$ with $\sum_{s=1}^k a_s^2=1$$)$ with probability at least
$
1-k(k+1)(n+1)\exp\left(-n\gamma \epsilon^m\delta^2\right).
$
\end{proof}

\subsection{From Discrete to Continuous}

As a direct consequence of Definition \ref{defDtC}, we get the following.
\begin{Lem}\label{q2q1f-f}
Suppose that we are given a closed Riemannian manifold $(M,g)$, points
$\X=\{x_1,\ldots,x_n\}\subset M$, a positive real number $\epsilon\in(0,\infty)$
and functions $\eta,\rho$ as Assumption \ref{Asu3}.
If $\theta_\epsilon(x)>0$ holds for any $x\in M$, then we have
$$
\left|\Lambda_\epsilon (f|_{\X})(x)-f(x)\right|\leq \Lip(f)\epsilon
$$
for any $f\in \LIP(M)$ and $x\in M$.
\end{Lem}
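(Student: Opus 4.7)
The plan is to exploit the partition-of-unity-like identity $\frac{1}{n}\sum_{i=1}^n \widetilde\psi(x,x_i) = 1$ recorded in Definition \ref{defDtC} (vi), together with the fact that $\psi$ vanishes on $[1,\infty)$, so that the effective support of the weights $\psi(d(x,x_i)/\epsilon)$ is confined to $B_\epsilon(x)$.

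First I would rewrite the difference using this normalization. Since
$$
f(x) = \frac{1}{n\theta_\epsilon(x)}\sum_{i=1}^n \psi\!\left(\frac{d(x,x_i)}{\epsilon}\right) f(x),
$$
the quantity of interest collapses to
$$
\Lambda_\epsilon(f|_\X)(x) - f(x) = \frac{1}{n\theta_\epsilon(x)}\sum_{i=1}^n \psi\!\left(\frac{d(x,x_i)}{\epsilon}\right)\bigl(f(x_i)-f(x)\bigr).
$$
Here the hypothesis $\theta_\epsilon(x)>0$ is what makes the denominator meaningful.

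Next I would observe that only indices $i$ with $d(x,x_i) < \epsilon$ contribute to the sum, because $\psi|_{[1,\infty)} \equiv 0$ (Definition \ref{defDtC} (ii)). For each such index, the Lipschitz estimate gives $|f(x_i)-f(x)| \leq \Lip(f)\, d(x,x_i) \leq \Lip(f)\,\epsilon$. Bringing the absolute value inside the sum and using non-negativity of $\psi$, I would then bound
$$
\bigl|\Lambda_\epsilon(f|_\X)(x) - f(x)\bigr| \leq \Lip(f)\,\epsilon \cdot \frac{1}{n\theta_\epsilon(x)}\sum_{i=1}^n \psi\!\left(\frac{d(x,x_i)}{\epsilon}\right) = \Lip(f)\,\epsilon,
$$
where the last equality is the identity $\sum_i \widetilde\psi(x,x_i)/n = 1$.

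There is no real obstacle here; the argument is a direct consequence of two structural facts (normalization and compact support of $\psi$) together with the definition of Lipschitz continuity. The only thing worth being careful about is that the sum is non-vacuous precisely under the standing assumption $\theta_\epsilon(x)>0$, which guarantees the existence of at least one sample point in $B_\epsilon(x)$ and hence makes the weighted average well defined.
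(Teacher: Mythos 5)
Your proof is correct and follows exactly the same route as the paper's: rewrite the difference using the normalization $\tfrac{1}{n}\sum_i \widetilde\psi(x,x_i)=1$, note that $\psi$ vanishes outside $[0,1)$ so only points in $B_\epsilon(x)$ contribute, and apply the Lipschitz bound to each term. Nothing to add.
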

\begin{proof}
Since we have $\sum_{i=1}^n \widetilde \psi(x,x_i)/n=1$ for any $x\in M$, and $\widetilde\psi(x,y)=0$ for any $x,y\in M$ with $d(x,y)\geq \epsilon$, we get
\begin{align*}
\left|\Lambda_\epsilon (f|_{\X})(x)-f(x)\right|=&\left|\frac{1}{n}\sum_{i=1}^n \widetilde \psi(x, x_i)(f(x_i)-f(x))\right|\\
\leq& \frac{1}{n}\sum_{i=1}^n \widetilde \psi(x, x_i)\Lip(f)\epsilon =\Lip(f)\epsilon.
\end{align*}
Thus, we obtain the lemma.
\end{proof}

\begin{Lem}\label{theta}
Suppose that we are given $(M,g)\in \M_1(m,K,i_0)$, functions $\eta,\rho$ as Assumption \ref{Asu3}.
Then, there exists a constant $C=C(m,K,\eta,\alpha,L_\rho)>0$ such that for any $\epsilon \in\left(0,\min\{i_0,\pi/(2\sqrt K)\}\right)$ and $x\in M$, we have
\begin{itemize}
\item[(i)] $|\overline\theta_\epsilon(x)-\rho(x)\epsilon^m\sigma_\eta|\leq C\epsilon^{m+1}$,
\item[(ii)] $|\nabla \overline \theta_\epsilon(x)|\leq C \epsilon^m$.
\end{itemize}
\end{Lem}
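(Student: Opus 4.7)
The plan is to pull the integral back to $T_x M$ via the exponential map, which is a diffeomorphism from the Euclidean ball of radius $i_0$ in $T_x M$ onto $B_{i_0}(x)$ since $\epsilon < i_0$. In geodesic polar coordinates, using $\Vol_g = \theta_x \mathcal{L}^m$ on $\widetilde J_x$, any integral of a function supported in $B_\epsilon(x)$ reduces to an integral over $U_x M \times [0,\epsilon]$ weighted by $t^{m-1}\theta_x(tu)$. The comparison $|t^{m-1}\theta_x(tu)-t^{m-1}|\le C(m,K)\,t^{m+1}$ for $t\le \pi/(2\sqrt K)$ follows from combining Theorem \ref{BishopGromov}(ii) with Theorem \ref{VolLow} and the elementary bounds on $s_{\pm K}^{m-1}$ noted after those theorems; this is the only Riemannian input besides the Lipschitz bound on $\rho$.

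For (i), after writing $\overline\theta_\epsilon(x)$ in polar coordinates I would replace $\rho(\gamma_u(t))\,t^{m-1}\theta_x(tu)$ by $\rho(x)\,t^{m-1}$, picking up an error of order $\psi(t/\epsilon)\,t^m$ from the combined curvature bound and the estimate $|\rho(\gamma_u(t))-\rho(x)|\le L_\rho\, t$. The leading Euclidean integral becomes
$$\rho(x)\Vol(S^{m-1})\int_0^\epsilon \psi(t/\epsilon)\, t^{m-1}\,dt = \rho(x)\,\epsilon^m\,\Vol(S^{m-1})\int_0^1\psi(s)s^{m-1}\,ds,$$
and Fubini applied to the definition of $\psi$ yields $\int_0^1\psi(s)s^{m-1}\,ds = \tfrac{1}{m}\int_0^1\eta(u)u^{m+1}\,du$, so this leading term is exactly $\rho(x)\epsilon^m\sigma_\eta$. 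The remainder integrates to $O(\epsilon^{m+1})$ after the substitution $s=t/\epsilon$ and using $\eta(0)<\infty$.

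For (ii), differentiation under the integral sign using the identity $\nabla_x\psi(d(x,y)/\epsilon)=\Psi(x,y)$ of Definition \ref{defDtC}(vii) (valid for a.e.\ $y$, since the cut locus has measure zero and the integrand is uniformly Lipschitz in $x$) gives
$$\nabla \overline\theta_\epsilon(x) = \int_{U_x M}\int_0^\epsilon \eta(t/\epsilon)\,\frac{t}{\epsilon^2}\,u\,\rho(\gamma_u(t))\,t^{m-1}\theta_x(tu)\,dt\,du \in T_x M.$$
The crucial observation is that replacing the Riemannian weight $\rho(\gamma_u(t))\,t^{m-1}\theta_x(tu)$ by the Euclidean approximation $\rho(x)\,t^{m-1}$ produces an integrand that is an odd function of $u$ on $U_x M$, and hence integrates to $0$ by antipodal symmetry. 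Only the error term survives, and the same pointwise comparison as in (i) bounds its norm by $C\,\eta(t/\epsilon)\,t^{m+1}/\epsilon^2$, integrating to $O(\epsilon^m)$ after the rescaling $s=t/\epsilon$.

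No step presents a real obstacle; the only subtlety is the symmetry cancellation in (ii), without which the naive bound on $\nabla\overline\theta_\epsilon$ would only give $O(\epsilon^{m-1})$, rather than the $O(\epsilon^m)$ needed downstream. Justifying the differentiation under the integral for the merely Lipschitz function $\psi$ is the other minor technical point, handled by the almost-everywhere identity above.
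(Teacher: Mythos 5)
Your proposal is correct and follows essentially the same route as the paper's proof: polar coordinates via the exponential map, the volume-element comparison from Theorems \ref{BishopGromov}(ii) and \ref{VolLow} combined with $\Lip(\rho)\le L_\rho$ for part (i), and the antipodal cancellation $\int_{U_xM}\langle u,v\rangle\,du=0$ as the key mechanism for the extra order of $\epsilon$ in part (ii). The paper phrases the comparison in part (i) as a multiplicative $(1\pm C\epsilon)$ factor and tests $\nabla\overline\theta_\epsilon$ against a vector $v$ in part (ii), while you argue additively and write the gradient directly, but these are notational variants of the same argument.
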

\begin{proof}
We first prove (i).
For any $x\in M$, by the comparison theorem for the volume element (Theorem \ref{BishopGromov} (ii) and \ref{VolLow}), we have
\begin{align*}
&(1-C \epsilon)\rho(x) \Vol(S^{m-1})\int_0^{\epsilon}\psi\left(\frac{t}{\epsilon}\right)t^{m-1}\, d t\\
\leq &\overline\theta_\epsilon(x)
\leq (1+C \epsilon)\rho(x) \Vol(S^{m-1})\int_0^{\epsilon}\psi\left(\frac{t}{\epsilon}\right)t^{m-1}\, d t.
\end{align*}
Since we have
$$
\Vol(S^{m-1})\int_0^{\epsilon}\psi\left(\frac{t}{\epsilon}\right)t^{m-1}\, d t=\epsilon^m\sigma_{\eta},
$$
we get (i).

We next prove (ii).
For any $x\in M$ and $v\in T_x M$, we have
\begin{align*}
&|\langle\nabla \overline\theta_r(x),v\rangle|\\
=& \left| \int_{B_{\epsilon}(x)}\eta\left(\frac{d(x,y)}{\epsilon}\right)\frac{d(x,y)}{\epsilon^2}\langle\dot\gamma_{x,y}(0),v \rangle\rho(y)\, d y\right|\\
\leq &\left| \rho(x)\int_{U_x M}\int_0^\epsilon\eta\left(\frac{t}{\epsilon}\right)\frac{t^m}{\epsilon^2}\langle u,v \rangle\, d t \, d u\right|
+C \epsilon\left|\int_{U_x M}\int_0^\epsilon\eta\left(\frac{t}{\epsilon}\right)\frac{t^m}{\epsilon^2}|v|\, d t \, d u\right|\\
\leq & C \epsilon^m |v|
\end{align*}
by $\int_{U_x M} \langle u, v\rangle\,d u=0$.
Thus, we get (ii).
\end{proof}
\begin{Lem}\label{probt}
Suppose that we are given $(M,g)\in \M_1(m,K,i_0)$ and functions $\eta,\rho$ satisfying Assumption \ref{Asu3}.
Then, there exist constants $C_1=C_1(m,\alpha)>0$ and $C_2=C_2(m,K,\eta,\alpha,L_\rho)>0$ such that, for any $\epsilon\in(0,\min\{1,2i_0/3,\pi/(2\sqrt K)\})$, $\gamma\in(1,\infty)$, $\delta\in(0,\gamma^{-1/2}]$ and i.i.d. sample $x_1,\ldots, x_n\in M$ from $\rho\Vol_g$, we have
the following properties with probability at least $1-C_1(\epsilon^{-2m-1}+n^2)\exp\left(-n\gamma \epsilon^m\delta^2\right)$.
\begin{itemize}
\item[(i)] For any $x\in M$, we have $|\theta_\epsilon(x)-\rho(x)\sigma_\eta \epsilon^m|\leq C_2\epsilon^m(\epsilon +\gamma^{1/2}\delta)$.
\item[(ii)] For a.e. $x\in M$, we have $|\nabla \theta_\epsilon(x)|\leq C_2\epsilon^m(1+\gamma^{1/2}\delta/\epsilon)$.
\item[(iii)] For any $i$, we have
$$
\frac{1}{n-1}\sum_{j=1}^n\eta\left(\frac{d(x_i,x_j)}{\epsilon}\right)\leq C_2 \epsilon^m.
$$
\item[(iv)] For any $i$ and $w\in T_{x_i} M$, we have
\begin{align*}
&\frac{1}{n-1}\sum_{j=1}^n\eta\left(\frac{d(x_i,x_j)}{\epsilon}\right)\frac{d(x_i,x_j)^2}{\epsilon^2}\langle \dot \gamma_{x_i,x_j}(0),w\rangle^2\\
\leq &\epsilon^m\rho(x)\sigma_\eta \big(1+C_2(\epsilon+\gamma^{1/2}\delta)\big)|w|^2.
\end{align*}
\item[(v)] For any $i,j$, we have
\begin{align*}
&\frac{1}{n}\sum_{l=1}^n\langle\Psi(x_l,x_i),\Psi(x_l,x_j)\rangle\frac{1}{\rho(x_l)}\\
\leq &\int_M\langle\Psi(x,x_i),\Psi(x,x_j)\rangle \,d x+C_2\epsilon^{m-2}\gamma^{1/2}\delta.
\end{align*}
\end{itemize}
\end{Lem}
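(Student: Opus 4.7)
The plan is to prove each of (i)--(v) by combining a pointwise application of the Bernstein inequality (Corollary \ref{CorBer} or Theorem \ref{Ber}) with either a net argument on $M$, for the uniform bounds (i), (ii), or a union bound over sample indices for (iii)--(v). For each part the expected value of the relevant empirical sum is computed via Lemma \ref{theta} and the volume comparison theorems (Theorem \ref{BishopGromov}(ii), Theorem \ref{VolLow}), while the random fluctuation is controlled by Bernstein applied with a variance that is of order $\epsilon^m$ times a local $L^\infty$ bound.

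For (i), fix $x$ and apply Bernstein to $f(y)=\psi(d(x,y)/\epsilon)$, which satisfies $\|f\|_{L^\infty}\leq \eta(0)/2$ and $\int f^2\rho\,d\Vol_g\leq C\epsilon^m$ by volume comparison. This yields $|\theta_\epsilon(x)-\overline\theta_\epsilon(x)|\leq C\epsilon^m\gamma^{1/2}\delta$ at a single $x$ with probability $\geq 1-2\exp(-c n\gamma\epsilon^m\delta^2)$, and combined with Lemma \ref{theta}(i) this gives the pointwise bound. To pass to a uniform statement I use the Lipschitz estimate $\Lip(\theta_\epsilon)\leq \eta(0)/\epsilon$ (which follows from $|\psi'|\leq \eta(0)$) and introduce an $r$-net of $M$ at scale $r\sim\epsilon^{(2m+1)/m}$, so that the interpolation error $r/\epsilon$ is dominated by the admissible error $\epsilon^{m+1}$; by volume comparison this net has cardinality at most $C\epsilon^{-(2m+1)}$, which is precisely the source of the $\epsilon^{-2m-1}$ factor in the failure probability. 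Item (ii) follows by the same scheme applied to $\langle\nabla\theta_\epsilon(x),w\rangle=\frac{1}{n}\sum_i\langle\Psi(x,x_i),w\rangle$ for $w\in U_xM$, using $\|\langle\Psi(x,\cdot),w\rangle\|_{L^\infty}\leq \eta(0)/\epsilon$ and variance $\leq C\epsilon^{m-2}$, with a uniform extension over $(x,w)$ via a net on the sphere bundle $UM$ together with (iii) to control $\frac{1}{n}\sum_i|\Psi(x,x_i)|\leq C\epsilon^{m-1}$.

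Items (iii) and (iv) are handled conditionally on $x_i$: holding $x_i$ fixed, the remaining $x_j$ are i.i.d.\ from $\rho\Vol_g$, and Bernstein applied to $\eta(d(x_i,y)/\epsilon)$ (respectively, to $\eta(d(x_i,y)/\epsilon)(d(x_i,y)/\epsilon)^2\langle\dot\gamma_{x_i,y}(0),w\rangle^2$) produces the required bound with probability $\geq 1-2\exp(-cn\gamma\epsilon^m\delta^2)$. The leading-order expectation in (iv) evaluates to $\rho(x_i)\sigma_\eta\epsilon^m|w|^2$ by the polar/geodesic computation already carried out in the proof of Lemma \ref{nlcvslc}, combined with the identity $\int_{U_{x_i}M}\langle u,w\rangle^2\,du=|w|^2\Vol(S^{m-1})/m$. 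The dependence on $w$ is reduced to a standard basis of $T_{x_i}M$ at a $C(m)$ multiplicative cost using the quadratic-form trick of Corollary \ref{LinfBer}, and a union bound over $i=1,\ldots,n$ contributes a factor of $n$. For (v), I fix the pair $(x_i,x_j)$ and apply Bernstein to the function $x\mapsto \langle\Psi(x,x_i),\Psi(x,x_j)\rangle/\rho(x)$, which satisfies $\|\cdot\|_{L^\infty}\leq C/\epsilon^2$ and variance $\leq C\epsilon^{m-4}$; the resulting fluctuation is $C\epsilon^{m-2}\gamma^{1/2}\delta$ with probability $\geq 1-2\exp(-cn\gamma\epsilon^m\delta^2)$, and a union bound over the $n^2$ ordered pairs yields the $n^2$ factor in the final failure probability.

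The main obstacle is the uniform estimate in (i) and (ii): the net scale has to be chosen delicately so that both the pointwise Bernstein deviation $C\epsilon^m\gamma^{1/2}\delta$ and the interpolation error from the Lipschitz bound fit inside the target $C_2\epsilon^m(\epsilon+\gamma^{1/2}\delta)$, while the cardinality of the net stays within $C\epsilon^{-(2m+1)}$. The only interpolation tool available is $\Lip(\theta_\epsilon)\leq \eta(0)/\epsilon$, so all of the slack comes from the explicit $\epsilon$ appearing in the error. A minor technical point is that $\nabla\theta_\epsilon$ is only defined almost everywhere because each $y\mapsto d(x,y)$ is non-smooth on a null set; this is handled by restricting to the full-measure set on which every $x_i$ lies in $J_x$, using Notation \ref{Not}(vii), and observing that all the constants produced depend only on quantities that extend continuously.
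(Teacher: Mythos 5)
Your overall structure — a net argument plus Bernstein for the uniform statements (i) and (ii), and conditioning on $x_i$ or on the pair $(x_i,x_j)$ plus a union bound over indices for (iii)--(v) — does match the paper's scheme, and your treatment of (iii), (iv) and (v) is essentially correct. But the interpolation step for (i) and (ii) is where the real difficulty of the lemma lies, and your proposal contains a genuine gap there.

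For (i), you invoke the global Lipschitz bound $\Lip(\theta_\epsilon)\le \eta(0)/\epsilon$ and take a net at scale $r\sim\epsilon^{(2m+1)/m}=\epsilon^{2+1/m}$, claiming the resulting interpolation error $\eta(0)\,r/\epsilon=\eta(0)\,\epsilon^{1+1/m}$ is dominated by the admissible error $\epsilon^{m+1}$. That is false for $m\ge 2$: $\epsilon^{1+1/m}\gg\epsilon^{m+1}$ as $\epsilon\to0$ once $1+1/m<m+1$. The only way to make the naive Lipschitz bound work is to take $r\lesssim\epsilon^{m+2}$, but then the net cardinality blows up to $\sim\epsilon^{-m(m+2)}$, which exceeds the advertised $\epsilon^{-2m-1}$ for $m\ge 2$. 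The paper escapes this by \emph{not} using the global Lipschitz constant: instead of $|\theta_\epsilon(x)-\theta_\epsilon(y)|\le\eta(0)\,d(x,y)/\epsilon$, it writes
\[
|\theta_\epsilon(x)-\theta_\epsilon(y)|\le \eta(0)\,\frac{\epsilon^{1+1/m}}{n}\,\Card\bigl\{i:x_i\in B_\epsilon(x)\cup B_\epsilon(y)\bigr\},
\]
and then uses a Bernstein estimate (the bound labelled (\ref{ycard}) in the paper) to show this cardinality is $\lesssim n\epsilon^m$ with high probability. This gains a full factor of $\epsilon^m$, which is exactly what is needed. That additional probabilistic step, absorbing the ``effective local Lipschitz constant'' $\sim\eta(0)\epsilon^{m-1}$ rather than $\eta(0)/\epsilon$, is absent from your proposal.

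For (ii) the gap is worse. To extend $|\nabla\theta_\epsilon(y)|$ from a net point $y$ to nearby $x$, you need to control how $\Psi(x,x_j)=\eta(d(x,x_j)/\epsilon)\,\tfrac{d(x,x_j)}{\epsilon^2}\dot\gamma_{x,x_j}(0)$ changes as $x$ varies. The direction $\dot\gamma_{x,x_j}(0)$ is governed by $\Hess d_{x_j}$, which blows up near $x_j$; there is no usable Lipschitz bound on $\nabla\theta_\epsilon$. The paper handles this by splitting: for $x_j$ at distance at least $\epsilon^{1+1/m}+\epsilon^{2+1/m}$ from $y$, it uses the Hessian comparison theorem (Theorem \ref{HessComp}) along a parallel vector field to conclude $|\langle\Psi(x,x_j),w(0)\rangle-\langle\Psi(y,x_j),w(1)\rangle|\le C$; the remaining $x_j$ lie in a set $E(y)$ of measure $O(\epsilon^{m+1})$, and one again applies Bernstein to bound $\Card\{i:x_i\in E(y)\}\lesssim n\epsilon^m(\epsilon+\gamma^{1/2}\delta)$. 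Your appeal to (iii) to bound $\tfrac1n\sum_i|\Psi(x,x_i)|$ captures part of the idea but is not a substitute: it does not control the \emph{difference} $\Psi(x,x_j)-\Psi(y,x_j)$, and neither the Hessian comparison nor the near-versus-far splitting appears in your proposal. Without these, the interpolation step for (ii) does not go through.

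Two smaller points: the paper does not need an explicit net on the sphere bundle $UM$ for (ii); it works componentwise after fixing a parallel orthonormal frame along $c_{x,y}$, and the a.e.\ differentiability issue is handled by passing to the full-measure set on which $\nabla\theta_\epsilon(x)=\tfrac1n\sum_i\Psi(x,x_i)$, as you correctly note. The constants $C_1,C_2$ in the paper come out of the net cardinality $\le C(m,\alpha)\epsilon^{-2m-1}$ (from volume comparison applied to balls of radius $\epsilon^{2+1/m}/2$) and the various Bernstein and volume-comparison steps, consistent with your accounting of the failure probability.
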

\begin{proof}

There exist a constant $C_3=C_3(m,\alpha)>0$ and an integer $N_1\in \Z_{>0}$ with $N_1\leq C_3 \epsilon^{-2m-1}$ such that there exist points $y_1,\ldots, y_{N_1}\in M$ with
\begin{equation}\label{cover}
M=\bigcup_{s=1}^{N_1}B(y_s,\epsilon^{2+\frac{1}{m}}),
\end{equation}
since we have $\Vol_g\left(B(y,\epsilon^{2+\frac{1}{m}}/2)\right)\geq (\sqrt 2/\pi)^{m-1}\epsilon^{2m+1}\Vol(S^{m-1})/(2m)$ for any $y\in M$ by Theorem \ref{VolLow} and $\epsilon^{2+\frac{1}{m}}/2\leq \pi/(4\sqrt{K})$.

We first show (i) and (ii) for fixed $y=y_s$ ($s\in\{1,\ldots,N_1\}$).
We have
\begin{align*}
\psi\left(\frac{d(y,z)}{\epsilon}\right)\leq &\frac{1}{2}\eta(0) \quad (z\in M),\\
\int_M \psi\left(\frac{d(y,z)}{\epsilon}\right)^2\rho(z)\,d z\leq& C \epsilon^m,\\
|\Psi(y,z)|\leq& \frac{\eta(0)}{\epsilon}\quad (z\in M),\\
\int_M|\Psi(y,z)|^2\rho(z)\,d z\leq& C\epsilon^{m-2}.
\end{align*}
Thus, by the Bernstein inequality, we have
\begin{align*}
|\theta_\epsilon(y)-\overline\theta_\epsilon(y)|\leq& C \gamma^{1/2}\epsilon^m \delta,\\
\left|\frac{1}{n}\sum_{i=1}^n\Psi(y,x_i)-\nabla \overline\theta_\epsilon(y)\right|\leq& C\gamma^{1/2}\epsilon^{m-1}\delta
\end{align*}
with probability at least $1-(2m+2)\exp(-n\gamma\epsilon^m\delta^2)$, and so
\begin{align}
\label{ty}|\theta_\epsilon(y)-\rho(y)\epsilon^m \sigma_\eta|\leq& C \epsilon^m(\epsilon+\gamma^{1/2} \delta),\\
\label{nty}\left|\frac{1}{n}\sum_{i=1}^n\Psi(y,x_i)\right|\leq& C\epsilon^{m-1}(\epsilon+\gamma^{1/2} \delta)
\end{align}
by Lemma \ref{theta}.
Here, we used an identification of $T_y M\cong\R^m$ through an orthonormal basis, and applied the Bernstein inequality to each component of $\Psi(y,\cdot)\colon M\to \R^m$.
 
We next show (i) and (ii) for arbitrary $x\in M$ with $d(x,y)<\epsilon^{2+\frac{1}{m}}$, where $y=y_s$ for some $s$.
Since we have $|\psi'|\leq \eta(0)$, we get
\begin{equation*}
\left|\psi\left(\frac{d(x,z)}{\epsilon}\right)-\psi\left(\frac{d(y,z)}{\epsilon}\right)\right|
\leq \eta(0)\frac{d(x,y)}{\epsilon}\leq \eta(0) \epsilon^{1+\frac{1}{m}}
\end{equation*}
for any $z\in M$, and so
\begin{equation}\label{tx}
\begin{split}
&|\theta_\epsilon(x)-\rho(x)\epsilon^m\sigma_\eta|\\
\leq& |\theta_\epsilon(x)-\theta_\epsilon(y)|
+|\rho(x)-\rho(y)|\epsilon^m\sigma_\eta+|\theta_\epsilon(y)-\rho(y)\epsilon^m \sigma_\eta|\\
\leq&\eta(0)\frac{\epsilon^{1+\frac{1}{m}}}{n} \Card\big\{i:x_i\in B_\epsilon(x)\cup B_\epsilon(y)\big\}+
C\epsilon^{m}(\epsilon+\gamma^{1/2} \delta)
\end{split}
\end{equation}
by (\ref{ty}).
Put $c:=c_{x,y}\colon[0,1]\to M$ and suppose that $j$ satisfies $x_j\in B_{\epsilon}(x)\cap B_{\epsilon}(y)$ and $d(x_j,y)\geq \epsilon^{1+\frac{1}{m}}+\epsilon^{2+\frac{1}{m}}$. 
Then, $\epsilon^{1+\frac{1}{m}}\leq d(x_j,c(t))< i_0$ for any $t\in [0,1]$.
Let $w\in\Gamma(c^\ast TM)$ be a parallel vector field along $c$ with $|w|\equiv1$.
By the Hessian comparison theorem (Theorem \ref{HessComp}), we have
$$
\left|\frac{d}{d t}\langle\dot \gamma_{c(t),x_j}(0),w(t)\rangle\right|=\left|(\Hess d_{x_j})_{c(t)}(\dot c(t),w(t))\right|\leq C \frac{|\dot c(t)|}{d(x_j,c(t))}\leq C \epsilon,
$$
and so
$
|\langle\dot\gamma_{x,x_j}(0),w(0)\rangle-\langle\dot\gamma_{y,x_j}(0),w(1)\rangle|\leq C \epsilon.
$
From this and the Lipschitz continuity of $\eta|_{[0,1]}$, we get that
\begin{equation*}
|\langle\Psi(x,x_j),w(0)\rangle
-\langle\Psi(y,x_j),w(1)\rangle
|\leq C.
\end{equation*}
Thus, we get
\begin{equation}\label{ntx}
\begin{split}
&\left|\frac{1}{n}\sum_{i=1}^n
\langle\Psi(x,x_i),w(0)\rangle\right|\\
\leq&
C\epsilon^{m-1}(\epsilon+\gamma^{1/2} \delta)+C\frac{1}{n}\Card\big\{i:x_i\in B_\epsilon(x)\cap B_\epsilon(y)\big\}\\
+&C\frac{1}{\epsilon n}\Card\big\{i:x_i\in \left(B_\epsilon(x)\cup B_\epsilon(y)\right)\setminus \left(B_\epsilon(x)\cap B_\epsilon(y)\right)\big\}\\
+&C\frac{1}{\epsilon n}\Card\big\{i:x_i\in B(y,\epsilon^{1+\frac{1}{m}}+\epsilon^{2+\frac{1}{m}})\big\}.
\end{split}
\end{equation}
by (\ref{nty}).
We have
\begin{align*}
B_{\epsilon}(x)\cup B_{\epsilon}(y)\subset B\left(y,\epsilon+\epsilon^{2+\frac{1}{m}}\right),\quad
B_{\epsilon}(x)\cap B_{\epsilon}(y)\supset B\left(y,\epsilon-\epsilon^{2+\frac{1}{m}}\right).
\end{align*}
Put
\begin{align*}
D(y):=&B\left(y,\epsilon+\epsilon^{2+\frac{1}{m}}\right),\\
E(y):=&B\left(y,\epsilon^{1+\frac{1}{m}}+ \epsilon^{2+\frac{1}{m}}\right)\cup\left( B\left(y,\epsilon+\epsilon^{2+\frac{1}{m}}\right)\setminus B\left(y,\epsilon-\epsilon^{2+\frac{1}{m}}\right)\right).
\end{align*}
Then, we have
\begin{align*}
(\rho \Vol_g)\left(D(y)\right)\leq C\epsilon^m,\quad
(\rho \Vol_g)\left(E(y)\right)\leq C\epsilon^{m+1},
\end{align*}
and so
\begin{equation}\label{ycard}
\begin{split}
\Card\{i: x_i\in D(y)\}\leq C n\epsilon^m,\quad
\Card\{i: x_i\in E(y)\}\leq C n\epsilon^m(\epsilon+\gamma^{1/2}\delta)
\end{split}
\end{equation}
holds with probability at least $1-2\exp(-\gamma n\epsilon^m\delta^2)$ by the Bernstein inequality.
Thus, we have shown that for fixed $s=1,\ldots, N_1$ and arbitrary $x\in B(y_s ,\epsilon^{2+\frac{1}{m}})$,
\begin{equation*}
\begin{split}
|\theta_\epsilon(x)-\rho(x)\epsilon^m\sigma_\eta|\leq C\epsilon^m(\epsilon+\gamma^{1/2} \delta),\\
\left|\frac{1}{n}\sum_{i=1}^n \Psi(x,x_i)\right|\leq C\epsilon^{m-1}(\epsilon+\gamma^{1/2} \delta)
\end{split}
\end{equation*}
hold with probability at least $1-(2m+4)\exp(-\gamma n\epsilon^m\delta^2)$
by (\ref{tx}), (\ref{ntx}) and (\ref{ycard}).
This and (\ref{cover}) imply that (i) and (ii) hold for every $x\in M$ with probability at least $1-C_3 \epsilon^{-2m-1}(2m+4)\exp(-\gamma n\epsilon^m\delta^2)$, since for a.e. $x\in M$ we have that
$\theta_\epsilon$ is differentiable at $x$ and $\nabla\theta_\epsilon(x)=\sum_{i=1}^n\Psi(x,x_i)/n$.

We next prove (iii).
Take arbitrary $i=1,\ldots,n$.
Since we have
\begin{align*}
\eta\left(\frac{d(x_i,x)}{\epsilon}\right)\leq& \eta(0)\quad(x\in M),\\
\int_M\eta\left(\frac{d(x_i,x)}{\epsilon}\right)^2 \rho(x)\,d x\leq &C\epsilon^m,
\end{align*}
we have
$$
\frac{1}{n-1}\sum_{j\neq i}\eta\left(\frac{d(x_i,x_j)}{\epsilon}\right) \leq \int_M\eta\left(\frac{d(x_i,x)}{\epsilon}\right) \rho(x)\,d x+ C\epsilon^m\gamma^{1/2}\delta 
$$
with probability at least $1-\exp(-n\gamma \epsilon^m\delta^2)$.
Since we can assume $1/n\leq C\gamma\epsilon^m\delta\leq \epsilon^m$, this and
$$
\int_M\eta\left(\frac{d(x_i,x)}{\epsilon}\right) \rho(x)\,d x\leq (1+C \epsilon)\epsilon^m\rho(x_i)\Vol(S^{m-1})\int_0^1\eta(t)t^{m-1}\,d t\leq C\epsilon^m
$$
imply that (iii) holds for every $i=1,\ldots,n$ with probability at least $1-n\exp(-n\gamma \epsilon^m\delta^2)$.

We next prove (iv).
Take arbitrary $i=1,\ldots,n$ and an orthonormal basis $e_1,\ldots,e_m\in T_{x_i} M$.
Define a map 
$
h_i\colon M\times \{1,\ldots,m\}^2 \to \R
$
by
$$
h(y,p,q):=\eta\left(\frac{d(x_i,y)}{\epsilon}\right)\frac{d(x_i,y)^2}{\epsilon^2}
\langle \dot \gamma_{x_i,y}(0),e_p\rangle\langle \dot \gamma_{x_i,y}(0),e_q \rangle.
$$
For each $p,q\in \{1,\ldots,m\}$, we define
$$
s(a,b):=\begin{cases}
1& (p\geq q),\\
-1& (p<q).
\end{cases}
$$
Since we have
\begin{align*}
|h_i(y,p,q)|\leq &\eta(0)\quad (y\in M,\, p,q\in \{1,\ldots,m\}),\\
\int_M|h_i(y,p,q)|^2\rho(y)\,d y\leq &C\epsilon^m\quad(p,q\in\{1,\ldots,m\}),
\end{align*}
we get
\begin{equation}\label{spq}
s(p,q)\left(\frac{1}{n-1}\sum_{j\neq i} h_i(x_j,p,q)-\int_M h_i(y,p,q)\rho(y)\,d y\right) \leq C\gamma^{1/2}\epsilon^m\delta
\end{equation}
for every $p,q\in\{1,\ldots,m\}$ with probability at least $1-m^2\exp(-n\gamma \epsilon^m\delta^2)$ by the Bernstein inequality.
Note that we have $h_i(y,p,q)=h_i(y,q,p)$ and $h_i(y,p,p)\geq 0$ for each $y,p,q$.
Thus, for any $w\in T_{x_i} M$, (\ref{spq}) implies
\begin{align*}
&\frac{1}{n-1}\sum_{j\neq i}\eta\left(\frac{d(x_i,x_j)}{\epsilon}\right)\frac{d(x_i,x_j)^2}{\epsilon^2}\langle \dot \gamma_{x_i,x_j}(0),w\rangle^2\\
=&\frac{1}{n-1}\sum_{j\neq i}\sum_{p,q=1}^m h_i(x_j,p,q)\langle w,e_p\rangle\langle w, e_q\rangle\\
\leq &\sum_{p,q=1}^m \int_M h_i(y,p,q)\langle w,e_p\rangle\langle w, e_q\rangle\rho(y)\,d y+C\gamma^{1/2}\epsilon^{m}\delta\sum_{p,q=1}^m |\langle w,e_p\rangle\langle w, e_q\rangle|\\
\leq &\int_M \eta\left(\frac{d(x_i,y)}{\epsilon}\right)\frac{d(x_i,y)^2}{\epsilon^2}\langle \dot \gamma_{x_i,y}(0),w\rangle^2\rho(y)\,d y+C\gamma^{1/2}\epsilon^{m}\delta|w|^2\\
\leq &\epsilon^m\rho(x_i)\sigma_\eta\left(1+C(\epsilon+\gamma^{1/2}\delta)\right) |w|^2,
\end{align*}
where we used the following inequality:
\begin{align*}
&\int_M \eta\left(\frac{d(x_i,y)}{\epsilon} \right)\frac{d(x_i,x_j)^2}{\epsilon^2}\langle\dot\gamma_{x_i,y}(0),w\rangle^2\rho(y)\,d y\\
\leq& (1+C\epsilon)\frac{\rho(x_i)}{\epsilon^2}\int_0^{\epsilon}\eta\left(\frac{t}{\epsilon}\right)t^{m+1}\,d t \int_{U_{x_i} M}\langle u, w\rangle^2\,d u
= (1+C\epsilon)\epsilon^m\rho(x_i) \sigma_\eta |w|^2.
\end{align*}
Since the content in the sum is $0$ when $j=i$, we get (iv) for every $i$ with probability at least $1-n m^2\exp(-n\gamma \epsilon^m\delta^2)$.

Finally, we prove (v). Take arbitrary $i,j=1,\ldots,n$.
Since we have
\begin{align*}
\left|\left\langle\Psi(x,x_i),\Psi(x,x_j)\right\rangle\frac{1}{\rho(x)}\right|\leq &\alpha\eta(0)^2\frac{1}{\epsilon^2}\quad(x\in M),\\
\int_M\left|\left\langle\Psi(x,x_i),\Psi(x,x_j)\right\rangle\frac{1}{\rho(x)}\right|^2\rho(x)\,d x\leq &C\epsilon^{m-4},
\end{align*}
we get
\begin{align*}
&\frac{1}{n-2}\sum_{l\neq i,j}\left\langle\Psi(x_l,x_i),\Psi(x_l,x_j)\right\rangle\frac{1}{\rho(x_l)}\\
 \leq &\int_M\left\langle\Psi(x,x_i),\Psi(x,x_j)\right\rangle\,d x+C\epsilon^{m-2}\gamma^{1/2}\delta
\end{align*}
with probability at least $1-\exp(-n\gamma \epsilon^m\delta^2)$ by the Bernstein inequality.
Since we can assume $1/n\leq C\gamma\epsilon^m\delta^2\leq C\gamma^{1/2}\delta$, combining this with $\Psi(x_i,x_i)=0$
and $|\int_M\left\langle\Psi(x,x_i),\Psi(x,x_j)\right\rangle\,d x|\leq C\epsilon^{m-2}$, we get (v) with probability at least $1-n^2\exp(-n\gamma \epsilon^m\delta^2)$.
\end{proof}
\begin{Rem}
If $\epsilon+\gamma^{1/2}\delta< \sigma_\eta/(C_2 \alpha)$ holds in Lemma \ref{probt}, we have $\theta_\epsilon(x)>0$ holds for every $x\in M$.
Therefore, $\Lambda_\epsilon$ and $\widetilde\psi$ can be defined according to Definition \ref{defDtC} (v) and (vi). 
\end{Rem}
Let us estimate $\int_M|\nabla \Lambda_\epsilon u|^2 \rho^2\,d\Vol_g$ for $u\in \R^n$.
Since we have $\eta(d(x_i,x_j)/\epsilon)\leq \eta(\|\iota(x_i)-\iota(x_j)\|/\epsilon)$, it is enough to consider the Riemannian distance.
\begin{Lem}\label{UDcomp}
Suppose that we are given $(M,g)\in \M_1(m,K,i_0)$ and functions $\eta,\rho$ satisfying Assumption \ref{Asu3}.
Then, there exists constants $C_1=C_1(m,\alpha)>0$ and $C_2=C_2(m,K,\eta,\alpha,L_\rho)>0$ such that the following properties holds.
For any $\epsilon\in(0,2i_0/3)$, $\gamma\in(1,\infty)$ and $\delta\in(0,\infty)$ with
$$
\epsilon+\gamma^{1/2} \delta\leq C_2^{-1},
$$
and i.i.d. sample $x_1,\ldots, x_n\in M$ from $\rho\Vol_g$,
we have
\begin{align*}
\int_M|\nabla \Lambda_\epsilon u|^2 \rho^2\,d\Vol_g
\leq\frac{1+C_2(\epsilon+\gamma^{1/2}\delta)}{n^2 \epsilon^{m+2}\sigma_\eta}\sum_{i,j=1}^n \eta\left(\frac{d(x_i,x_j)}{\epsilon}\right)(u_i-u_j )^2
\end{align*} 
for every $u\in \R^n$ with probability at least $1-C_1(\epsilon^{-2m-1}+n^2)\exp\left(-n\gamma \epsilon^m\delta^2\right)$.
\end{Lem}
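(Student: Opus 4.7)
The plan is to compute $\nabla\Lambda_\epsilon u$ explicitly, bound $|\nabla\Lambda_\epsilon u(x)|^2$ pointwise by a weighted Cauchy--Schwarz inequality, and then integrate against $\rho^2\,d\Vol_g$, invoking Lemma~\ref{probt} to control the remaining stochastic quantities.

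First I would establish the gradient formula. Since $(1/n)\sum_{i=1}^n \widetilde\psi(x,x_i)\equiv 1$, its gradient vanishes; combined with $\nabla\theta_\epsilon(x) = (1/n)\sum_i \Psi(x,x_i)$ for a.e.\ $x$ (Definition~\ref{defDtC}(vii)), a quotient-rule calculation yields
\[
\nabla\Lambda_\epsilon u(x) \;=\; \frac{1}{n\,\theta_\epsilon(x)}\sum_{i=1}^n \Psi(x,x_i)\bigl(u_i - \Lambda_\epsilon u(x)\bigr).
\]
Substituting $u_i - \Lambda_\epsilon u(x) = (1/(n\theta_\epsilon(x)))\sum_j \psi(d(x,x_j)/\epsilon)(u_i - u_j)$ re-expresses this gradient antisymmetrically over pairs $(i,j)$, exposing the $(u_i-u_j)$-structure needed on the right-hand side.

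Second, applying a weighted Cauchy--Schwarz to this pair sum, with pair-weights $\eta(d(x_i,x_j)/\epsilon)$, produces a pointwise bound of the form
\[
|\nabla\Lambda_\epsilon u(x)|^2 \;\leq\; \frac{P(x)}{n^2\theta_\epsilon(x)^4}\sum_{i,j}\eta\!\left(\frac{d(x_i,x_j)}{\epsilon}\right)(u_i - u_j)^2,
\]
where $P(x)$ aggregates the complementary geometric factors of the form $|\psi(d(x,x_j)/\epsilon)\Psi(x,x_i) - \psi(d(x,x_i)/\epsilon)\Psi(x,x_j)|^2/\eta(d(x_i,x_j)/\epsilon)$. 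Third, I would integrate against $\rho^2\,d\Vol_g$: Lemma~\ref{probt}(i) replaces $\theta_\epsilon(x)^{-2}$ by $(\rho(x)\sigma_\eta\epsilon^m)^{-2}$ up to a relative error $1+C(\epsilon+\gamma^{1/2}\delta)$, supplying the leading factor $1/(n^2\epsilon^{m+2}\sigma_\eta)$ after cancellations. The remaining $x$-integral of $P(x)\rho^2/\theta_\epsilon^2$ is compared to its empirical average over $\{x_l\}$ via Lemma~\ref{probt}(v), and the resulting tangent-space quadratic form is then bounded by Lemma~\ref{probt}(iv), which furnishes exactly the $\rho(x_l)\sigma_\eta\epsilon^m$ normalization required to recover the target constant.

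The main obstacle is the support mismatch: $\Psi(x,x_i)\psi(d(x,x_j)/\epsilon)$ can be nonzero whenever $d(x,x_i),d(x,x_j)<\epsilon$, which only forces $d(x_i,x_j)<2\epsilon$, yet the target weight $\eta(d(x_i,x_j)/\epsilon)$ vanishes for $d(x_i,x_j)\geq\epsilon$. To bridge this gap one exploits that in the bulk region carrying most of the mass of $\Psi$ and $\psi$, the triangle inequality together with $\eta(3/4)>0$ forces $\eta(d(x_i,x_j)/\epsilon)$ to be bounded below, while the ``annular'' contribution from $d(x_i,x_j)\in[\epsilon,2\epsilon)$ must be absorbed into the claimed $1+C(\epsilon+\gamma^{1/2}\delta)$ relative error. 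Making this absorption rigorous while keeping the constants tight is the delicate bookkeeping at the heart of the proof.
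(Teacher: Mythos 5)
Your overall plan shares the gradient formula and the reliance on Lemma~\ref{probt} with the paper, but the central Cauchy--Schwarz step as you describe it does not work, and the fix you sketch for the support mismatch does not close the gap.

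The problem is the choice of pair-weights. You want to bound the antisymmetrized gradient sum by Cauchy--Schwarz with weights $w_{ij}=\eta\bigl(d(x_i,x_j)/\epsilon\bigr)$, so that the complementary factor $P(x)$ aggregates quantities divided by $w_{ij}$. But $\psi\bigl(d(x,x_i)/\epsilon\bigr)\nabla_x\widetilde\psi(x,x_j)$ is supported on $d(x,x_i),d(x,x_j)<\epsilon$, which only forces $d(x_i,x_j)<2\epsilon$; on the set $\epsilon\le d(x_i,x_j)<2\epsilon$ you are dividing a nonzero numerator by $w_{ij}=0$, so $P(x)=+\infty$. The ``bulk/annular'' resolution you suggest does not help: a triangle-inequality bulk like $d(x,x_i),d(x,x_j)<3\epsilon/8$ (to force $d(x_i,x_j)<3\epsilon/4$) does not carry most of the mass --- the complementary shell $d(x,x_\cdot)\in[3\epsilon/8,\epsilon)$ is a constant fraction of the ball by volume comparison --- and the corresponding contribution to $|\nabla\Lambda_\epsilon u|^2$ is $O(1)$, not $O(\epsilon+\gamma^{1/2}\delta)$, so it cannot be absorbed into the claimed relative error.

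The paper avoids this entirely through two choices you do not account for. First, the Cauchy--Schwarz is applied not to the raw antisymmetric $(i,j)$ sum, but to the $u_l$-\emph{centered} form at the sample points: writing $|\nabla\Lambda_\epsilon u(x_l)|^2=\frac{1}{n^2}\sum_{i,j}\langle\nabla_x\widetilde\psi(x_l,x_i),\nabla_x\widetilde\psi(x_l,x_j)\rangle(u_i-u_l)(u_j-u_l)$ and then bounding $\bigl|\frac{1}{n}\sum_i(u_i-u_l)\Psi(x_l,x_i)\bigr|^2$ by Cauchy--Schwarz with weights $\eta\bigl(d(x_l,x_i)/\epsilon\bigr)$ over the $(l,i)$ pairs. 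These weights never vanish on the support of $\Psi(x_l,x_i)$, so there is no division-by-zero problem, and after summing over $l$ one lands directly on $\sum_{l,i}\eta\bigl(d(x_l,x_i)/\epsilon\bigr)(u_l-u_i)^2$, which is the target up to relabeling. Lemma~\ref{probt}(iv) then controls the geometric factor. Second, the $d(x_i,x_j)\le 2\epsilon$ sum does appear, but only as a \emph{small-prefactor} error produced when replacing $\langle\nabla_x\widetilde\psi,\nabla_x\widetilde\psi\rangle$ by its leading part $\langle\Psi,\Psi\rangle/(\rho^2\sigma_\eta^2\epsilon^{2m})$ via (\ref{pP}); controlling this term still requires a genuinely separate argument, namely the transport map $T$ of Lemma~\ref{transmap} together with a geodesic-midpoint doubling trick and Theorem~\ref{BishopGromov}(iv) to convert the sum over $d(x_i,x_j)\le 2\epsilon$ into one over $d(x_i,x_j)\le 3\epsilon/4$ where $\eta$ is uniformly positive. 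Your proposal omits that step altogether, and it is not a bookkeeping detail --- without it even the correctly-structured error term cannot be bounded by the right-hand side.
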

\begin{proof}
We can assume that Lemma \ref{probt} (i)--(v) and
$$
\left|\frac{1}{\theta_\epsilon(x)}-\frac{1}{\rho(x)\sigma_\eta\epsilon^m}\right|\leq C\epsilon^{-m}(\epsilon+\gamma^{1/2}\delta)
$$
hold for any $x\in M$.
Since we have
$$
\nabla_x \widetilde\psi(x,y)=-\frac{\psi\left(d(x,y)/\epsilon\right)}{\theta_\epsilon(x)^2}\nabla\theta_\epsilon(x)+\frac{1}{\theta_\epsilon(x)}\Psi(x,y)
$$
for any $y\in M$ and a.e. $x\in M$,
we get
\begin{equation}\label{pP}
\begin{split}
&\left|\langle\nabla_x \widetilde\psi(x,x_i), \nabla_x \widetilde\psi(x,x_j)\rangle-\frac{1}{\rho(x)^2\sigma_\eta^2 \epsilon^{2m}}\left\langle\Psi(x,x_i),\Psi(x,x_j)\right\rangle\right|\\
\leq& \psi\left(d(x,x_i)/\epsilon\right)\psi\left(d(x,x_j)/\epsilon\right) \frac{|\nabla \theta_\epsilon(x)|^2}{\theta_\epsilon(x)^4}
+\psi\left(d(x,x_i)/\epsilon\right) \frac{|\langle\nabla \theta_\epsilon(x),\Psi(x,x_j)\rangle|}{\theta_\epsilon(x)^3}\\
+&\psi\left(d(x,x_j)/\epsilon\right) \frac{|\langle\nabla \theta_\epsilon(x),\Psi(x,x_i)\rangle|}{\theta_\epsilon(x)^3}
+\left|\frac{1}{\theta_\epsilon(x)^2}-\frac{1}{\rho(x)^2\sigma_\eta^2 \epsilon^{2m}}\right| \left|\left\langle\Psi(x,x_i),\Psi(x,x_j)\right\rangle\right|\\
\leq& C\frac{\epsilon+\gamma^{1/2}\delta}{\epsilon^{2m+2}}\eta\left(\frac{d(x,x_i)}{\epsilon}\right)\eta\left(\frac{d(x,x_j)}{\epsilon}\right)
\end{split}
\end{equation}
for any $i,j=1,\ldots,n$ and a.e. $x\in M$.
Since we have $\sum_{i=1}^n\widetilde\psi(x,x_i)/n=1$,
we get
\begin{equation}\label{Lu}
\begin{split}
|\nabla \Lambda_\epsilon u|^2(x)=&\frac{1}{n^2}\sum_{i,j=1}^n\langle\nabla_x \widetilde\psi(x,x_i),\nabla_x\widetilde\psi(x,x_j)\rangle u_i u_j\\
=&-\frac{1}{2n^2}\sum_{i,j=1}^n\langle\nabla_x \widetilde\psi(x,x_i),\nabla_x\widetilde\psi(x,x_j)\rangle (u_i- u_j)^2
\end{split}
\end{equation}
for a.e. $x\in M$ and any $u\in \R^n$.
We can assume that (\ref{pP}) and (\ref{Lu}) hold for every $x=x_l$ since the probability that $x_l\notin J_{x_i}\setminus \{x_i\}$ holds for some $i,l$ is $0$.
For any $u\in \R^n$, we have
\begin{equation}
\begin{split}
&\int_M |\nabla \Lambda_\epsilon u|^2\rho^2\, d\Vol_g\\
\leq &-\frac{1}{2n^2\sigma_\eta^2 \epsilon^{2m}}\sum_{i,j=1}^n\int_M\langle\Psi(x,x_i),\Psi(x,x_j)\rangle (u_i- u_j)^2\,d x\\
+& C\frac{\epsilon+\gamma^{1/2}\delta}{n^2\epsilon^{2m+2}}\sum_{i,j=1}^n\int_M \eta\left(\frac{d(x,x_i)}{\epsilon}\right)\eta\left(\frac{d(x,x_j)}{\epsilon}\right) (u_i- u_j)^2\,d x\\
\leq &-\frac{1}{2n^3\sigma_\eta^2 \epsilon^{2m}}\sum_{i,j,l=1}^n\langle\Psi(x_l,x_i),\Psi(x_l,x_j)\rangle (u_i- u_j)^2\frac{1}{\rho(x_l)}\\
+&C\frac{\epsilon+\gamma^{1/2}\delta}{n^2\epsilon^{m+2}}\sum_{d(x_i,x_j)\leq 2\epsilon}(u_i- u_j)^2
\end{split}
\end{equation}
by Lemma \ref{probt} (v).
Similarly, we have
\begin{align*}
&\frac{1}{n}\sum_{l=1}^n |\nabla \Lambda_\epsilon u|^2(x_l)\rho(x_l)\\
\geq&- \frac{1}{2n^3\sigma_\eta^2 \epsilon^{2m}}\sum_{i,j,l=1}^n\langle\Psi(x_l,x_i),\Psi(x_l,x_j)\rangle (u_i- u_j)^2\frac{1}{\rho(x_l)}\\
-&C\frac{\epsilon+\gamma^{1/2}\delta}{n^3 \epsilon^{2m+2}}\sum_{i,j,l=1}^n\eta\left(\frac{d(x_l,x_i)}{\epsilon}\right)\eta\left(\frac{d(x_l,x_j)}{\epsilon}\right)(u_i- u_j)^2\\
\geq&- \frac{1}{2n^3\sigma_\eta^2 \epsilon^{2m}}\sum_{i,j,l=1}^n\langle\Psi(x_l,x_i),\Psi(x_l,x_j)\rangle (u_i- u_j)^2\frac{1}{\rho(x_l)}\\
-&C\frac{\epsilon+\gamma^{1/2}\delta}{n^2\epsilon^{m+2}}\sum_{d(x_i,x_j)\leq 2\epsilon}(u_i- u_j)^2
\end{align*}
by Lemma \ref{probt} (iii).
Thus, we get
\begin{equation}\label{nablalambdau}
\begin{split}
&\int_M |\nabla \Lambda_\epsilon u|^2\rho^2\, d\Vol_g\\
\leq &\frac{1}{n}\sum_{l=1}^n |\nabla \Lambda_\epsilon u|^2(x_l)\rho(x_l)+C\frac{\epsilon+\gamma^{1/2}\delta}{n^2\epsilon^{m+2}}\sum_{d(x_i,x_j)\leq 2\epsilon}(u_i- u_j)^2.
\end{split}
\end{equation}

Let us estimate the first term of (\ref{nablalambdau}).
Similarly to (\ref{Lu}), we have
\begin{align*}
&\frac{1}{n}\sum_{l=1}^n |\nabla \Lambda_\epsilon u|^2(x_l)\rho(x_l)\\
=&\frac{1}{n^3}\sum_{i,j,l=1}^n\langle\nabla_x \widetilde\psi(x_l,x_i),\nabla_x\widetilde\psi(x_l,x_j)\rangle (u_i- u_l)(u_j-u_l)\rho(x_l)\\
\leq &\frac{1}{n\sigma_\eta^2 \epsilon^{2m}}\sum_{l=1}^n\left| \frac{1}{n}\sum_{i=1}^n(u_i- u_l)\Psi(x_l,x_i)\right|^2\frac{1}{\rho(x_l)}\\
+&C\frac{\epsilon+\gamma^{1/2}\delta}{n\epsilon^{2m+2}}\sum_{l=1}^n\left|\frac{1}{n}\sum_{i=1}^n\eta\left(\frac{d(x_l,x_i)}{\epsilon}\right)|u_i- u_l|\right|^2\\
\end{align*}
by (\ref{pP}).
For each $l$ and $w\in T_{x_l}M$, we have
\begin{align*}
&\left\langle\frac{1}{n}\sum_{i=1}^n(u_i-u_l)\Psi(x_l,x_i),w\right\rangle^2\\
\leq&\frac{1}{n^2 \epsilon^4}\sum_{i=1}^n\eta\left(\frac{d(x_l,x_i)}{\epsilon}\right)d(x_i,x_l)^2\langle\dot \gamma_{x_l,x_i}(0),w\rangle^2 \sum_{i=1}^n\eta\left(\frac{d(x_l,x_i)}{\epsilon}\right)(u_l-u_i)^2\\
\leq&\left(1+C(\epsilon+\gamma^{1/2}\delta)\right)\frac{\epsilon^m \rho(x_l)\sigma_\eta |w|^2}{n \epsilon^2}\sum_{i=1}^n\eta\left(\frac{d(x_l,x_i)}{\epsilon}\right)(u_l-u_i)^2
\end{align*}
by the Cauchy-Schwarz inequality, the definition of $\Psi$ and Lemma \ref{probt} (iv).
For each $l$, we have
\begin{align*}
\left|\frac{1}{n}\sum_{i=1}^n\eta\left(\frac{d(x_l,x_i)}{\epsilon}\right)|u_i- u_l|\right|^2
\leq& \frac{1}{n^2}\sum_{i=1}^n\eta\left(\frac{d(x_l,x_i)}{\epsilon}\right)\sum_{i=1}^n\eta\left(\frac{d(x_l,x_i)}{\epsilon}\right)(u_i- u_l)^2\\
\leq &C\frac{\epsilon^m}{n}\sum_{i=1}^n\eta\left(\frac{d(x_l,x_i)}{\epsilon}\right)(u_i- u_l)^2
\end{align*}
by Lemma \ref{probt} (iii).
Thus, we get
\begin{equation}\label{FirstTerm}
\frac{1}{n}\sum_{l=1}^n |\nabla \Lambda_\epsilon u|^2(x_l)\rho(x_l)
\leq \frac{1+C(\epsilon+\gamma^{1/2}\delta)}{n^2\sigma_\eta \epsilon^{m+2}}\sum_{i,j=1}^n\eta\left(\frac{d(x_i,x_j)}{\epsilon}\right)(u_i-u_j)^2.
\end{equation}

We next estimate the second term  of (\ref{nablalambdau}).
Applying Lemma \ref{transmap} putting $\tilde\epsilon=\epsilon/10$ and $\tilde\delta=10^{m/2} \delta$,
we can find a Borel map $T\colon M\to \X$ such that
$d(x,T(x))\leq \epsilon/10$ for any $x\in M$ and
$$
\left|\frac{1}{n} -(\rho\Vol_g)(T^{-1}(\{x_i\}))\right|\leq  C(m,\alpha)\gamma^{1/2} \delta(\rho\Vol_g)(T^{-1}(\{x_i\}))
$$
holds for every $i\in\{1,\ldots,n\}$ with probability at least $1-C(m,\alpha)\epsilon^{-m}\exp(-n\gamma\epsilon^m\delta^2)$.
Then, we have
\begin{equation}\label{SecondTerm}
\begin{split}
&\frac{1}{n^2}\sum_{d(x_i,x_j)\leq2\epsilon}(u_i- u_j)^2\\
\leq&(1+C \gamma^{1/2} \delta)\int_M\int_{B(x,2\epsilon+2\tilde\epsilon)}|u(T(x))-u(T(y))|^2\rho(x)\rho(y)\,d y\,d x\\
\leq &4(1+C \gamma^{1/2} \delta)\int_M\int_{B(x,2\epsilon+2\tilde\epsilon)}|u(T(x))-u(T(c_{x,y}(1/2)))|^2\rho(x)\rho(y)\,d x\,d y\\
\leq &C\int_M\int_{B(x,\epsilon+\tilde\epsilon)}|u(T(x))-u(T(y))|^2\rho(x)\rho(y)\,d y\,d x\\
\leq &C\int_M\int_{B(x,(\epsilon+\tilde\epsilon)/2)}|u(T(x))-u(T(y))|^2\rho(x)\rho(y)\,d y\,d x\\
\leq &\frac{C}{n^2}\sum_{d(x_i,x_j)\leq (\epsilon+5\tilde\epsilon)/2}(u_i-u_j)^2
\leq \frac{C}{n^2}\sum_{i,j=1}^n\eta\left(\frac{d(x_i,x_j)}{\epsilon}\right)(u_i-u_j)^2
\end{split}
\end{equation}
by Theorem \ref{BishopGromov} (iv) and $(\epsilon+5\tilde\epsilon)/2=3\epsilon/4$.

By (\ref{nablalambdau}), (\ref{FirstTerm}) and (\ref{SecondTerm}), we get
\begin{align*}
\int_M |\nabla \Lambda_\epsilon u|^2\rho^2\, d\Vol_g
\leq \frac{1+C(\epsilon+\gamma^{1/2}\delta)}{n^2\sigma_\eta \epsilon^{m+2}}\sum_{i,j=1}^n\eta\left(\frac{d(x_i,x_j)}{\epsilon}\right)(u_i-u_j)^2.
\end{align*}
This implies the lemma.
\end{proof}

\subsection{Main Results: the Case of Unnormalized Graph Laplacian}
In this subsection, applying the results of Appendix \ref{LinearAlgebra}, we show our main results for the unnormalized case.
To do this, we prepare some symbols according to Appendix \ref{LinearAlgebra}.
Suppose that we are given an $m$-dimensional closed Riamannian manifold, functions $\eta,\rho$ as Assumption \ref{Asu3}, points $\X=\{x_1,\ldots, x_n\}\in M$ and a positive real number $\epsilon\in(0,\infty)$.
Let $H_1$ denotes an appropriate subspace of $W^{1,2}(M)$ defined in the proofs below, and $H_2:=\R^n$.
Let $\langle\cdot,\cdot\rangle_1$ be the inner product on $L^2(M)$ defined by
$$
\langle f,h\rangle_1:=\int_M f h\rho\,d\Vol_g
$$
for each $f,h\in L^2(M)$.
Let $\{f_i\}_{i=0}^\infty$ denotes the complete orthonormal system of $(L^2(M),\langle\cdot,\cdot\rangle_1)$ consisting of the eigenfunctions of $\Delta_\rho$ corresponding to the eigenvalues $\{\lambda_i(\Delta_\rho)\}_{i=0}^\infty$.
Let $\langle\cdot,\cdot\rangle_2$ be the inner product on $H_2$ defined by
$$
\langle u, v\rangle_2:=\frac{1}{n}\sum_{i=1}^n u_i v_i
$$
for each $u,v\in H_2$.
Let $\{u^i\}_{i=0}^{n-1}$ denotes the orthonormal basis of $(H_2,\langle\cdot,\cdot\rangle_2)$ consisting of the eigenvectors of $\La$  corresponding to the eigenvalues $\{\lambda_i(\La)\}_{i=0}^{n-1}$.
Define
\begin{align*}
D_1(f,h):=&\int_M\langle\nabla f,\nabla h\rangle\rho^2\,d\Vol_g \quad \text{for $f,h\in H_1$},\\
D_2(u,v):=&\frac{1}{\sigma_\eta n^2\epsilon^{m+2}}\sum_{i,j=1}^n\eta\left(\frac{\|\iota(x_i)-\iota(x_j)\|}{\epsilon}\right)(u_i-u_j)(v_i-v_j)\quad \text{for $u,v\in H_2$}
\end{align*}
and $Q_1\colon H_1\to H_2,\, f\mapsto f|_{\X}$.
If $\theta_\epsilon(x)>0$ holds for every $x\in M$ and $\Lambda_\epsilon (H_2)\subset H_1$, we define $Q_2\colon H_2\to H_1,\, u\mapsto \Lambda_\epsilon u$.
Let $\lambda_1(D_i)\leq \lambda_2(D_i)\leq \cdots\leq \lambda_{\dim H_i}(D_i)$ denotes the  eigenvalue of $D_i$ with respect to the inner product $\langle\cdot,\cdot\rangle_1$ for each $i=1,2$.
If $f_0,\ldots, f_{j-1}\in H_1$, we have
$\lambda_j(D_1)=\lambda_{j-1}(\Delta_\rho)$.
We have
\begin{align*}
\lambda_j(D_2)=\frac{2}{\sigma_\eta n\epsilon^{m+2}}\lambda_{j-1}(\La).
\end{align*}

To compare the Rayleigh quotients, we estimate $\left|\|\Lambda_\epsilon u\|^2_1-\|u\|_2^2\right|$ for $u\in \R^n$ using a quantity $b_\epsilon(u)$ defined by
$$
b_\epsilon(u):=\frac{1}{\sigma_\eta n^2\epsilon^{m+2}}\eta\left(\frac{d(x_i,x_j)}{\epsilon}\right)(u_i-u_j)^2.
$$
Note that we have $b_\epsilon(u)\leq D_2(u,u)$.
\begin{Lem}\label{UNl2comp}
Suppose that we are given $(M,g)\in \M_1(m,K,i_0)$ and functions $\eta,\rho$ satisfying Assumption \ref{Asu3}.
Then, there exists constants $C_1=C_1(m,\alpha)>0$ and $C_2=C_2(m,K,\eta,\alpha,L_\rho)>0$ such that the following properties holds.
For any $\epsilon\in(0,2i_0/3)$, $\gamma\in(1,\infty)$ and $\delta\in(0,\infty)$ with
$$
\epsilon+\gamma^{1/2} \delta\leq C_2^{-1},
$$
and i.i.d. sample $x_1,\ldots, x_n\in M$ from $\rho\Vol_g$,
we have
\begin{align*}
\left|\|\Lambda_\epsilon u\|_1^2-\|u\|_2^2\right|\leq C_2\left((\epsilon+\gamma^{1/2}\delta)\|u\|_2+\epsilon b_\epsilon(u)^{1/2}\right)(\|u\|_2+\epsilon b_\epsilon(u)^{1/2} )
\end{align*}
for every $u\in \R^n$ with probability at least $1-C_1(\epsilon^{-2m-1}+n^2)\exp\left(-n\gamma \epsilon^m\delta^2\right)$.
\end{Lem}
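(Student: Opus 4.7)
The plan is to exploit the partition-of-unity identity $\frac{1}{n}\sum_{i=1}^n \widetilde\psi(x,x_i) = 1$ (Definition \ref{defDtC}(vi)) to rewrite the pointwise square as
$$
(\Lambda_\epsilon u(x))^2 = \frac{1}{n}\sum_{i=1}^n \widetilde\psi(x,x_i)\, u_i^2 \;-\; \frac{1}{2n^2}\sum_{i,j=1}^n \widetilde\psi(x,x_i)\widetilde\psi(x,x_j)(u_i-u_j)^2.
$$
Integrating against $\rho\,d\Vol_g$ and subtracting $\|u\|_2^2 = \frac{1}{n}\sum_i u_i^2$ gives the exact decomposition
$$
\|\Lambda_\epsilon u\|_1^2 - \|u\|_2^2 \;=\; \frac{1}{n}\sum_{i=1}^n u_i^2 (A_i - 1) \;-\; \frac{1}{2n^2}\sum_{i,j=1}^n (u_i-u_j)^2 B_{ij},
$$
where $A_i := \int_M \widetilde\psi(x,x_i)\rho(x)\,dx$ and $B_{ij} := \int_M \widetilde\psi(x,x_i)\widetilde\psi(x,x_j)\rho(x)\,dx$. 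The whole argument will be carried out on the high-probability event of Lemma \ref{probt}, whose stated probability already matches the one in Lemma \ref{UNl2comp}.

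For the diagonal term I would show $\sup_i |A_i - 1| \leq C(\epsilon+\gamma^{1/2}\delta)$. Substituting $\widetilde\psi(x,x_i) = \psi(d(x,x_i)/\epsilon)/\theta_\epsilon(x)$ and using Lemma \ref{probt}(i), which yields the multiplicative bound $1/\theta_\epsilon(x) = (\rho(x)\sigma_\eta\epsilon^m)^{-1}(1+O(\epsilon+\gamma^{1/2}\delta))$, reduces matters to the deterministic claim $\int_M \psi(d(x,x_i)/\epsilon)\,dx = \epsilon^m\sigma_\eta (1+O(\epsilon))$. This follows from the two-sided volume comparison (Theorems \ref{BishopGromov}(ii) and \ref{VolLow}) in geodesic polar coordinates at $x_i$, combined with the Fubini identity $\Vol(S^{m-1})\int_0^1 \psi(t)t^{m-1}\,dt = \sigma_\eta$. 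This gives $|\text{first term}| \leq C(\epsilon+\gamma^{1/2}\delta)\|u\|_2^2$.

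For the off-diagonal term, Lemma \ref{probt}(i) implies $\theta_\epsilon(x) \geq \rho(x)\sigma_\eta\epsilon^m/2$, so $\widetilde\psi(x,x_i)\leq C\epsilon^{-m}$, and the product $\widetilde\psi(x,x_i)\widetilde\psi(x,x_j)$ vanishes unless $d(x_i,x_j)<2\epsilon$; integrating one factor out using the $A_j\leq 2$ bound proved above yields $0\leq B_{ij}\leq C\epsilon^{-m}\1_{d(x_i,x_j)\leq 2\epsilon}$. Hence the off-diagonal term is bounded by $\frac{C}{n^2\epsilon^m}\sum_{d(x_i,x_j)\leq 2\epsilon}(u_i-u_j)^2$. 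I would then invoke exactly the transport-map argument from the end of the proof of Lemma \ref{UDcomp}: applying Lemma \ref{transmap} with $\widetilde\epsilon = \epsilon/10$, using the midpoint trick together with the doubling estimate of Theorem \ref{BishopGromov}(iv), and exploiting $\eta(3/4)>0$ to ``re-zoom'' the indicator at scale $2\epsilon$ back to the $\eta(d/\epsilon)$-weighted sum, one concludes
$$
\frac{1}{n^2}\sum_{d(x_i,x_j)\leq 2\epsilon}(u_i-u_j)^2 \;\leq\; \frac{C}{n^2}\sum_{i,j=1}^n\eta\!\left(\frac{d(x_i,x_j)}{\epsilon}\right)(u_i-u_j)^2 \;=\; C\sigma_\eta\epsilon^{m+2}b_\epsilon(u),
$$
so the off-diagonal contribution is at most $C\epsilon^2 b_\epsilon(u)$.

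Combining the two estimates yields $|\|\Lambda_\epsilon u\|_1^2 - \|u\|_2^2| \leq C\bigl[(\epsilon+\gamma^{1/2}\delta)\|u\|_2^2 + \epsilon^2 b_\epsilon(u)\bigr]$, which is pointwise dominated by the stated product bound because the expansion of $[(\epsilon+\gamma^{1/2}\delta)\|u\|_2 + \epsilon b_\epsilon(u)^{1/2}][\|u\|_2 + \epsilon b_\epsilon(u)^{1/2}]$ already contains both the $(\epsilon+\gamma^{1/2}\delta)\|u\|_2^2$ and the $\epsilon^2 b_\epsilon(u)$ term (the remaining cross terms are nonnegative). The main obstacle is the transport-map step, which is the delicate piece requiring $\eta(3/4)>0$ to convert a $2\epsilon$-indicator back to the $\eta(d/\epsilon)$-kernel; the rest is volume comparison and elementary manipulation inside the good event of Lemma \ref{probt}.
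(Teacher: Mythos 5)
Your proof is correct and takes a genuinely different route from the paper's. The paper introduces an intermediate approximant $A(x)=\frac{1}{n\rho(x)\epsilon^m\sigma_\eta}\sum_i\psi\left(\frac{d(x,x_i)}{\epsilon}\right)u_i$ (replacing $\theta_\epsilon$ by its leading-order value), then chains triangle inequalities: it compares $\Lambda_\epsilon u$ to $A$ in both $L^2(\rho\Vol_g)$ and $\ell^2$-on-samples (Lemmas \ref{theta}(i), \ref{probt}(i)), compares $\Lambda_\epsilon u|_{\X}$ to $u$ (the estimate (\ref{Luvsu})), and closes the loop by comparing $\|A\|_1^2$ to $\|A|_{\X}\|_2^2$ via a Bernstein argument over the $O(n^2)$ products $\psi(\cdot,x_i)\psi(\cdot,x_j)/\rho^2$ together with the count of points in $B_{2\epsilon}(x_i)$. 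You instead use the exact algebraic identity $\|\Lambda_\epsilon u\|_1^2-\|u\|_2^2=\frac{1}{n}\sum_i u_i^2(A_i-1)-\frac{1}{2n^2}\sum_{i,j}(u_i-u_j)^2 B_{ij}$, bound the diagonal via $\sup_i|A_i-1|\le C(\epsilon+\gamma^{1/2}\delta)$ (which indeed follows from Lemma \ref{probt}(i) and the two-sided volume comparison together with $\int_0^1\psi(t)t^{m-1}\,dt\Vol(S^{m-1})=\sigma_\eta$), and bound the off-diagonal via $0\le B_{ij}\le C\epsilon^{-m}\mathbf{1}_{d(x_i,x_j)<2\epsilon}$ followed by the transport-map/midpoint argument (\ref{SecondTerm}). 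Note that (\ref{SecondTerm}) appears in the paper only inside the proof of Lemma \ref{UDcomp}, so you are importing a tool from a neighboring lemma; conversely, you bypass the Bernstein step over the $n^2$ products $\psi\psi/\rho^2$ that the paper uses here. Your decomposition also produces the cleaner estimate $C(\epsilon+\gamma^{1/2}\delta)\|u\|_2^2+C\epsilon^2 b_\epsilon(u)$ without the cross terms present in the expansion of the paper's stated bound; this is slightly sharper, though since the lemma is applied only on vectors satisfying $b_\epsilon(u)\le C\|u\|_2^2$ the extra precision is not exploited downstream. The probability accounting works out: Lemma \ref{probt}'s event plus the transport-map event of Lemma \ref{transmap} (applied with $\tilde\epsilon=\epsilon/10$) jointly fail with probability $\le C(\epsilon^{-2m-1}+n^2)\exp(-n\gamma\epsilon^m\delta^2)$, as required.
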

\begin{proof}
We can make the same assumptions as at the beginning of the proof of Lemma \ref{UDcomp}.
Then, for each $i$, since we have $\sum_{j=1}^n\widetilde\psi(x_i,x_j)/n=1$,
we get
\begin{align*}
\left(\Lambda_\epsilon u(x_i)-u_i\right)^2
=&\left(\frac{1}{n}\sum_{j=1}^n\widetilde \psi(x_i,x_j)(u_i-u_j)\right)^2\\
\leq &\frac{1}{n}\sum_{j=1}^n\widetilde \psi(x_i,x_j)(u_i-u_j)^2\leq \frac{C}{n\epsilon^m}\sum_{j=1}^n\eta\left(\frac{d(x_i,x_j)}{\epsilon}\right)(u_i-u_j)^2.
\end{align*}
Thus, putting $\X=\{x_1,\ldots,x_n\}$, we have
\begin{equation}\label{Luvsu}
\left\|\Lambda_\epsilon u|_{\X}-u\right\|_2\leq C\epsilon b_\epsilon(u)^{1/2}.
\end{equation}
For each $x\in M$, put
$$
A(x):=\frac{1}{n\rho(x)\epsilon^m\sigma_\eta}\sum_{i=1}^n\psi\left(\frac{d(x,x_i)}{\epsilon}\right)u_i.
$$
Then, we have
\begin{align*}
&\left|\Lambda_\epsilon u(x)-A(x)\right|\\
\leq&\frac{1}{n}\left|\frac{1}{\theta_\epsilon(x)}-\frac{1}{\rho(x)\epsilon^m\sigma_\eta}\right|\sum_{i=1}^n\psi\left(\frac{d(x,x_i)}{\epsilon}\right)|u_i|\\
\leq& C\frac{\epsilon+\gamma^{1/2}\delta}{\epsilon^m}\left(\frac{1}{n}\sum_{i=1}^n\psi\left(\frac{d(x,x_i)}{\epsilon}\right)\right)^{1/2}\left(\frac{1}{n}\sum_{i=1}^n\psi\left(\frac{d(x,x_i)}{\epsilon}\right)u_i^2\right)^{1/2}\\
\leq &C(\epsilon+\gamma^{1/2}\delta)\left(\frac{1}{n\epsilon^m}\sum_{i=1}^n\psi\left(\frac{d(x,x_i)}{\epsilon}\right)u_i^2\right)^{1/2}.
\end{align*}
Thus, we get
\begin{align}
\label{LuvsA}\left\|\Lambda_\epsilon u-A\right\|_1\leq& C(\epsilon+\gamma^{1/2}\delta)\|u\|_2,\\
\label{LuvsAX}\left\|\Lambda_\epsilon u|_{\X}-A|_{\X}\right\|_2\leq& C(\epsilon+\gamma^{1/2}\delta)\|u\|_2
\end{align}
by Lemma \ref{theta} (i) and Lemma \ref{probt} (i), respectively.
Since we have
\begin{align*}
\left|\psi\left(\frac{d(x,x_i)}{\epsilon}\right)\psi\left(\frac{d(x,x_j)}{\epsilon}\right)\frac{1}{\rho(x)^2}\right|\leq \frac{\alpha^2}{4}\eta(0)^2,\\
\int_M\left(\psi\left(\frac{d(x,x_i)}{\epsilon}\right)\psi\left(\frac{d(x,x_j)}{\epsilon}\right)\frac{1}{\rho(x)^2}\right)^2\rho(x)\,d x\leq C\epsilon^m,
\end{align*}
we get
\begin{equation}\label{psicomp}
\begin{split}
\Big|\frac{1}{n-2}\sum_{l\neq i, j}&\psi\left(\frac{d(x_l,x_i)}{\epsilon}\right)\psi\left(\frac{d(x_l,x_j)}{\epsilon}\right)\frac{1}{\rho(x_l)^2}\\
&-\int_M \psi\left(\frac{d(x,x_i)}{\epsilon}\right)\psi\left(\frac{d(x,x_j)}{\epsilon}\right)\frac{1}{\rho(x)}\,d x\Big|\leq C\gamma^{1/2}\epsilon^m\delta
\end{split}
\end{equation}
for every $i,j$ with probability at least  $1-2 n^2\exp\left(-n\gamma \epsilon^m\delta^2\right)$ by the Bernstein inequality.
Since  we have $(\rho\Vol_g)(B_{2\epsilon}(x_i))\leq C\epsilon^m$, we get
\begin{equation}\label{card2e}
\max_i\Card\{j: x_j\in B_{2\epsilon}(x_i)\}\leq  n C(1+\gamma^{1/2}\delta)\epsilon^m+1\leq n C\epsilon^m+1
\end{equation}
with probability at least  $1-n\exp\left(-n\gamma \epsilon^m\delta^2\right)$ by the Bernstein inequality.
Since we can assume $1/(n\epsilon^m)\leq C\gamma\delta^2\leq C\gamma^{1/2}\delta$,
(\ref{psicomp}) and (\ref{card2e}) imply
\begin{equation}\label{AXvsA}
\begin{split}
&\left|\frac{n}{n-2}\|A|_{\X}\|_2^2- \|A\|_1^2\right|\\
\leq &C\frac{\gamma^{1/2}\epsilon^m\delta}{n^2\epsilon^m}\sum_{d(x_i,x_j)\leq 2\epsilon}|u_i u_j|\\
\leq & C\gamma^{1/2}\delta\frac{\max_i\Card\{j: x_j\in B_{2\epsilon}(x_i)\}}{n\epsilon^m}\|u\|^2_2\leq  C\gamma^{1/2}\delta\|u\|^2_2.
\end{split}
\end{equation}
Here, we used $|u_i u_j|\leq |u_i|^2/2+|u_j|^2/2$.

Combining (\ref{Luvsu}), (\ref{LuvsA}), (\ref{LuvsAX}) and (\ref{AXvsA}), we get
\begin{equation*}\label{LuvsuX}
\left|\|\Lambda_\epsilon u\|_1^2-\frac{n}{n-2}\|u\|_2^2\right|\leq C\left((\epsilon+\gamma^{1/2}\delta)\|u\|_2+\epsilon b_\epsilon(u)^{1/2}\right)(\|u\|_2+\epsilon b_\epsilon(u)^{1/2}).
\end{equation*}
Since we can assume that $\left|1-n/(n-2)\right|=2/(n-2)\leq C\gamma\epsilon^m\delta^2\leq C\epsilon$,
we get the lemma.
\end{proof}

\begin{Lem}\label{mainu}
Suppose that the pair $((M,g),\iota)$ satisfies Assumption \ref{Asu2} $(a)$, and we are given functions $\eta,\rho$ satisfying Assumption \ref{Asu3}.
Take arbitrary $k\in \Z_{>0}$.
Then, there exist  constants $C_1=C_1(m,K,\alpha,k)>0$ and $C_2=C_2(m,S,K,i_0,L,\eta,\alpha,L_\rho,k)>0$ such that, for any $\epsilon\in [\tau,\pi/(2\sqrt K)]$, $\gamma\in(1,\infty)$, $\delta\in(0,\gamma^{-1/2}]$ with
$\gamma^{1/2}\epsilon^{m/2}\delta\leq C_1^{-1}$ and i.i.d. sample $x_1,\ldots, x_n\in M$ from $\rho\Vol_g$,
we have
\begin{align*}
\frac{2}{\sigma_\eta n \epsilon^{m+2}}\lambda_k(\La)\leq\lambda_k(\Delta_\rho)+C_2\left(\epsilon+\frac{\tau}{\epsilon}+\gamma^{1/2}\delta\right)
\end{align*}
with probability at least
$
1-(k+1)(k+2)(n+2)\exp\left(-n\gamma \epsilon^m\delta^2\right).
$
\end{Lem}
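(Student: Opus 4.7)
The plan is to apply the minimax characterization of $\lambda_k(\La)$ by exhibiting a $(k+1)$-dimensional subspace of $\R^n$ on which the discrete Rayleigh quotient is controlled by $\lambda_k(\Delta_\rho)$. The natural candidate is the image under the restriction map $f\mapsto f|_{\X}$ of the test subspace $V:=\Span\{f_0,\ldots,f_k\}\subset W^{1,2}(M)$, where $\{f_i\}$ is the $\langle\cdot,\cdot\rangle_1$-orthonormal system of eigenfunctions of $\Delta_\rho$.

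First I would record that each $f_j$ is uniformly bounded, i.e.\ $\|f_j\|_{L^\infty}\leq C$ and $\Lip(f_j)\leq C$ with $C=C(m,K,i_0,\alpha,L_\rho,k)$, using the $L^\infty$ and gradient estimates for $\Delta_\rho$ announced in Appendix \ref{GradientEstimate} together with the bound $\lambda_k(\Delta_\rho)\le C(m,K,\alpha,k)$ from Remark \ref{RemA}(f) (via Theorem \ref{VolLow}). Both ingredients require exactly $(M,g)\in\M_1(m,K,i_0)$, which is the stronger curvature hypothesis under Assumption \ref{Asu2}(a).

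Next I would invoke Corollary \ref{LinfBer} and Lemma \ref{UPP} simultaneously with the family $f_0,\ldots,f_k$ and the parameter $\widetilde\delta:=\epsilon^{m/2}\delta$ (so that the exponents in the probability bounds match the stated $\exp(-n\gamma\epsilon^m\delta^2)$). On the intersection of the two good events, which occurs with probability at least $1-(k+1)(k+2)(n+2)\exp(-n\gamma\epsilon^m\delta^2)$, one obtains for every unit vector $a=(a_0,\ldots,a_k)\in\R^{k+1}$ with $f:=\sum_j a_j f_j$:
\begin{align*}
\bigl|\|f|_{\X}\|_2^2-\|f\|_1^2\bigr|&\le C\gamma^{1/2}\epsilon^{m/2}\delta\le C\gamma^{1/2}\delta,\\
\sigma_\eta D_2(f|_{\X},f|_{\X})&\le \sigma_\eta D_1(f,f)+C\left(\epsilon+\tfrac{\tau}{\epsilon}+\gamma^{1/2}\delta\right).
\end{align*}
Combined with $\|f\|_1^2=\sum a_j^2=1$ and $D_1(f,f)\leq \lambda_k(\Delta_\rho)$ (from $\Delta_\rho f_j=\lambda_j(\Delta_\rho)f_j$), this shows that under the smallness hypothesis $\gamma^{1/2}\epsilon^{m/2}\delta\leq C_1^{-1}$ with $C_1$ large enough, $\|f|_{\X}\|_2^2\geq 1/2$, so the restriction map $V\to\R^n$ is injective and produces a $(k+1)$-dimensional subspace $W\subset\R^n$.

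Finally, the minimax principle (as recorded in Appendix \ref{LinearAlgebra}) applied to $D_2$ on $W$ yields
$$\tfrac{2}{\sigma_\eta n\epsilon^{m+2}}\lambda_k(\La)=\lambda_{k+1}(D_2)\leq \sup_{f\in V\setminus\{0\}}\frac{\sigma_\eta D_2(f|_{\X},f|_{\X})}{\sigma_\eta\|f|_{\X}\|_2^2}\leq \lambda_k(\Delta_\rho)+C\left(\epsilon+\tfrac{\tau}{\epsilon}+\gamma^{1/2}\delta\right),$$
where the second inequality uses the two displayed bounds together with the uniform bound on $\lambda_k(\Delta_\rho)$ to absorb the $\|f|_{\X}\|_2^{-2}$ renormalization into the error term. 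The main obstacle I expect is not conceptual but bookkeeping: verifying that the smallness hypothesis $\gamma^{1/2}\epsilon^{m/2}\delta\leq C_1^{-1}$ simultaneously (i) keeps $f\mapsto f|_{\X}$ injective on $V$, (ii) lets the multiplicative $\|f|_{\X}\|_2^{-2}$ correction be absorbed additively into $C_2(\epsilon+\tau/\epsilon+\gamma^{1/2}\delta)$, and (iii) is compatible with the constants appearing in Lemma \ref{UPP}, Corollary \ref{LinfBer}, and the gradient estimate, which all have different dependencies on the parameters.
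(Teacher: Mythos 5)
Your proposal is correct and follows essentially the same route as the paper: same test subspace $\Span\{f_0,\ldots,f_k\}$, same use of the $L^\infty$/Lipschitz bounds for the eigenfunctions (Lemmas \ref{LinfEF}, \ref{GradEF} together with the eigenvalue bound from Remark \ref{RemA}(f)), same pair Lemma \ref{UPP} plus Corollary \ref{LinfBer} with $\widetilde\delta=\epsilon^{m/2}\delta$, and the same minimax comparison — you just spell out what the paper packages in Lemma \ref{evalcomp0}. The bookkeeping points you flag (injectivity of the restriction under $\gamma^{1/2}\epsilon^{m/2}\delta\leq C_1^{-1}$, absorbing the $\|f|_{\X}\|_2^{-2}$ renormalization using the bound $\lambda_k(\Delta_\rho)\leq C(m,K,\alpha,k)$, and matching the probability exponents) are exactly what is implicitly handled in the paper's appeal to Lemma \ref{evalcomp0} and the convention $a/0=\infty$, so no gap.
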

\begin{proof}
To apply Lemma \ref{evalcomp0}, we define $H_1:=\Span\{f_0,f_1,\ldots,f_k\}$.
We have that $\diam_g(M)\leq C(m,K,i_0,\alpha)$. See Remark \ref{RemA} (e).
Since we have $\lambda_k(\Delta_\rho)\leq\alpha^3\lambda_k(\Delta)\leq C(m,K,\alpha,k)$ (see Remark \ref{RemA} (f)), we get
\begin{align*}
\sup_{s\in\{0,\ldots, k\}}\|f_s\|_{L^\infty}\leq& C(m,K,\alpha,k),\\
\sup_{s\in\{0,\ldots, k\}}\Lip(f_s)\leq& C(m,K,i_0,\alpha,k)
\end{align*}
by Lemma \ref{LinfEF} and \ref{GradEF}.
Thus, we get the lemma by Lemma \ref{LinfBer} putting $\widetilde \delta=\epsilon^{m/2}\delta$, Lemma \ref{UPP} and Lemma \ref{evalcomp0}.
\end{proof}

\begin{Lem}\label{mainl}
Suppose that the pair $((M,g),\iota)$ satisfies Assumption \ref{Asu2} $(i)$, and we are given functions $\eta,\rho$ satisfying Assumption \ref{Asu3}.
Take arbitrary $k\in \Z_{>0}$.
Then, there exists constants $C_1=C_1(m,\alpha,k)>0$ and $C_2=C_2(m,K,\eta,\alpha,L_\rho,k)>0$ such that, for any
$\epsilon\in(0,2i_0/3)$, $\gamma\in(1,\infty)$ and $\delta\in(0,\infty)$ with
$$
\epsilon+\frac{\tau}{\epsilon}+\gamma^{1/2} \delta\leq C_2^{-1}
$$
and i.i.d. sample $x_1,\ldots,x_n\in M$ from $\rho\Vol_g$, we have 
$$
\lambda_k(\Delta_\rho)\leq \frac{2}{\sigma_\eta n\epsilon^{m+2}}\lambda_k(\La)+C_2(\epsilon+\gamma^{1/2}\delta)
$$
with probability at least $1-C_1(\epsilon^{-2m-1}+n^2)\exp\left(-n\gamma \epsilon^m\delta^2\right)$.
\end{Lem}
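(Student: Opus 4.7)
The plan is to run the eigenvalue comparison in the direction opposite to Lemma \ref{mainu}, using $Q_2 := \Lambda_\epsilon$ as the bridge from the discrete side to the continuous side. I take $H_1 := W^{1,2}(M)$, $H_2 := \R^n$ with the inner products $\langle\cdot,\cdot\rangle_1$, $\langle\cdot,\cdot\rangle_2$ introduced above, and as test subspace inside $H_2$ the span $V := \Span\{u^0, u^1, \ldots, u^k\}$ of the first $k+1$ discrete eigenvectors. Enlarging $C_2$ if necessary, the hypothesis $\epsilon + \gamma^{1/2}\delta \leq C_2^{-1}$ combined with Lemma \ref{probt} and the remark following it guarantees that $\theta_\epsilon$ is strictly positive on $M$, so $\Lambda_\epsilon$ is well defined and maps $H_2$ into $W^{1,2}(M)$.

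For the Dirichlet form I would first observe that $\iota$ being an isometric immersion gives $\|\iota(x)-\iota(y)\|_{\R^d} \leq d(x,y)$, hence, since $\eta$ is non-increasing, $\eta(d(x_i,x_j)/\epsilon) \leq \eta(\|\iota(x_i)-\iota(x_j)\|/\epsilon)$, and therefore $b_\epsilon(u) \leq D_2(u,u)$ for every $u \in \R^n$. Feeding this into Lemma \ref{UDcomp} yields
\begin{equation*}
D_1(\Lambda_\epsilon u, \Lambda_\epsilon u) \leq \bigl(1 + C_2(\epsilon + \gamma^{1/2}\delta)\bigr)\, D_2(u,u)
\end{equation*}
for every $u \in \R^n$. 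For the $L^2$ side, Lemma \ref{UNl2comp} provides
\begin{equation*}
\bigl|\|\Lambda_\epsilon u\|_1^2 - \|u\|_2^2\bigr| \leq C_2 \bigl((\epsilon + \gamma^{1/2}\delta)\|u\|_2 + \epsilon\, b_\epsilon(u)^{1/2}\bigr)\bigl(\|u\|_2 + \epsilon\, b_\epsilon(u)^{1/2}\bigr).
\end{equation*}
Plugging these into the Rayleigh-quotient comparison Lemma \ref{evalcomp0} from Appendix \ref{LinearAlgebra}, with the roles of $H_1$ and $H_2$ reversed relative to Lemma \ref{mainu}, should produce an eigenvalue estimate of the form $\lambda_{k+1}(D_1) \leq \lambda_{k+1}(D_2) + C_2(\epsilon + \gamma^{1/2}\delta)$. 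Since $f_0, \ldots, f_k$ all lie in $H_1 = W^{1,2}(M)$, we have $\lambda_{k+1}(D_1) = \lambda_k(\Delta_\rho)$, while by construction $\lambda_{k+1}(D_2) = \frac{2}{\sigma_\eta n\epsilon^{m+2}} \lambda_k(\La)$, which is the desired inequality.

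The main obstacle is the quadratic term $\epsilon\, b_\epsilon(u)^{1/2}$ appearing in the $L^2$ comparison: on the test subspace $V$ it is controlled by $\epsilon\, \lambda_{k+1}(D_2)^{1/2} \|u\|_2$, so the resulting eigenvalue bound degrades unless $\lambda_{k+1}(D_2)$ is a priori bounded by a constant. I would resolve this by a clean dichotomy. Either $\lambda_{k+1}(D_2) \geq \lambda_k(\Delta_\rho)$, in which case the claimed inequality is automatic because $C_2 > 0$, or $\lambda_{k+1}(D_2) \leq \lambda_k(\Delta_\rho) \leq C(m,K,\alpha,k)$ by Remark \ref{RemA} (f). In the latter case $\epsilon\, b_\epsilon(u)^{1/2} \leq C_2 \epsilon \|u\|_2$ for every $u \in V$, the error in Lemma \ref{UNl2comp} collapses to $C_2(\epsilon + \gamma^{1/2}\delta)\|u\|_2^2$, and the linear-algebraic lemma concludes cleanly. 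The probability bound is the intersection of the events provided by Lemmas \ref{UDcomp} and \ref{UNl2comp}, each of failure probability at most $C_1(\epsilon^{-2m-1} + n^2)\exp(-n\gamma\epsilon^m\delta^2)$.
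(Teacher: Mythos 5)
Your proof is correct and takes essentially the same approach as the paper: the dichotomy on whether $\lambda_k(\Delta_\rho)$ already exceeds the scaled discrete eigenvalue, the bound $b_\epsilon(u)\leq D_2(u,u)$ obtained from $\|\iota(x)-\iota(y)\|\leq d(x,y)$ and the monotonicity of $\eta$, and the combination of Lemmas \ref{UDcomp} and \ref{UNl2comp} with the abstract eigenvalue-comparison lemma are exactly the steps in the paper's proof. The only cosmetic difference is that the paper takes the \emph{finite-dimensional} space $H_1:=\Span\{f_0,\ldots,f_k\}+\Lambda_\epsilon(\R^n)$ rather than all of $W^{1,2}(M)$, so that Lemma \ref{evalcomp} (stated for finite-dimensional inner-product spaces) applies verbatim while still giving $\lambda_{k+1}(D_1)=\lambda_k(\Delta_\rho)$.
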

\begin{proof}
If
$$
\lambda_k(\Delta_\rho)\leq \frac{2}{\sigma_\eta n\epsilon^{m+2}}\lambda_k(\La)
$$
holds, then we get the lemma. Thus, we can assume
$$
\frac{2}{\sigma_\eta n\epsilon^{m+2}}\lambda_k(\La)\leq \lambda_k(\Delta_\rho)\leq C(m,K,\alpha,k).
$$
This implies that we have $b_\epsilon(u)\leq C\|u\|_2^2$ for any $u\in \Span\{u^0,\ldots,u^k\}$.
Defining $H_1:=\Span\{f_0,\ldots,f_k\}+ \Lambda_\epsilon\left(\R^n\right)$, we get the lemma by  Lemma \ref{UDcomp}, \ref{UNl2comp} and \ref{evalcomp} (i).
\end{proof}
Putting $\delta:=\epsilon^{-m/2}(\log n/n)^{1/2}$, we get the following by Lemma \ref{mainu} and \ref{mainl}.
\begin{Thm}\label{eigmain}
Suppose that the pair $((M,g),\iota)$ satisfies Assumption \ref{Asu2} $(a)$, and we are given functions $\eta,\rho$ satisfying Assumption \ref{Asu3}.
Take arbitrary $k\in \Z_{>0}$.
Then, there exist constants $C_1=C_1(m,\alpha,k)>0$ and $C_2=C_2(m,S,K,i_0,L,\eta,\alpha,L_\rho,k)>0$ such that, for any
$n\in\Z_{>0}$, $\epsilon\in[\tau,\infty)$, $\gamma\in(1,\infty)$ with
$$
\epsilon+\frac{\tau}{\epsilon}+\gamma^{1/2} \epsilon^{-m/2}\left(\frac{\log n}{n}\right)^{1/2}\leq C_2^{-1}
$$
and i.i.d. sample $x_1,\ldots,x_n\in M$ from $\rho\Vol_g$, we have 
$$
\left|\lambda_k(\Delta_{\rho})-\frac{2}{\sigma_\eta n \epsilon^{m+2}}\lambda_k(\La)\right|\leq C_2\left(\epsilon+\frac{\tau}{\epsilon}+\gamma^{1/2} \epsilon^{-m/2}\left(\frac{\log n}{n}\right)^{1/2}\right)
$$
with probability at least $1-C_1(\epsilon^{-2m-1}+n^2)n^{-\gamma}.$
\end{Thm}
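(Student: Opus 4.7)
The plan is to combine the upper bound from Lemma \ref{mainu} with the lower bound from Lemma \ref{mainl}, specialized to the choice $\delta := \epsilon^{-m/2}(\log n/n)^{1/2}$, which is tuned precisely so that the tail probability $\exp(-n\gamma\epsilon^m\delta^2)$ becomes $\exp(-\gamma\log n)=n^{-\gamma}$.

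First I would check that the hypothesis
$$
\epsilon+\frac{\tau}{\epsilon}+\gamma^{1/2}\epsilon^{-m/2}\left(\frac{\log n}{n}\right)^{1/2}\leq C_2^{-1}
$$
implies all the technical conditions of the two lemmas, provided $C_2$ is chosen sufficiently large depending on $m,K,i_0$ etc. Concretely: the bound $\tau/\epsilon\leq C_2^{-1}\leq 1$ yields $\epsilon\geq\tau$ (needed for Lemma \ref{mainu}); the bound $\epsilon\leq C_2^{-1}$ yields $\epsilon\leq\min\{2i_0/3,\pi/(2\sqrt{K})\}$ (needed for both lemmas); the bound $\gamma^{1/2}\delta\leq C_2^{-1}\leq\gamma^{-1/2}$ gives $\delta\in(0,\gamma^{-1/2}]$ (needed for Lemma \ref{mainu}); and $\gamma^{1/2}\epsilon^{m/2}\delta=\gamma^{1/2}(\log n/n)^{1/2}=\epsilon^{m/2}\cdot\gamma^{1/2}\epsilon^{-m/2}(\log n/n)^{1/2}\leq C_2^{-1}\epsilon^{m/2}$, which is at most $C_1^{-1}$ for large $C_2$ (needed for Lemma \ref{mainu}). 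Similarly $\epsilon+\gamma^{1/2}\delta\leq C_2^{-1}$ meets the condition of Lemma \ref{mainl}.

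Next I would apply Lemma \ref{mainu} to obtain
$$
\frac{2}{\sigma_\eta n\epsilon^{m+2}}\lambda_k(\La)\leq \lambda_k(\Delta_\rho)+C_2\left(\epsilon+\frac{\tau}{\epsilon}+\gamma^{1/2}\delta\right)
$$
with probability at least $1-(k+1)(k+2)(n+2)n^{-\gamma}$, and apply Lemma \ref{mainl} to obtain
$$
\lambda_k(\Delta_\rho)\leq \frac{2}{\sigma_\eta n\epsilon^{m+2}}\lambda_k(\La)+C_2\left(\epsilon+\gamma^{1/2}\delta\right)
$$
with probability at least $1-C_1(\epsilon^{-2m-1}+n^2)n^{-\gamma}$. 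A union bound combines both estimates, giving the two-sided inequality on the same event.

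Finally I would unify the failure probability: the polynomial factor $(k+1)(k+2)(n+2)$ from the upper-bound step is dominated by a $k$-dependent multiple of $n^2$, which can be absorbed into the constant $C_1=C_1(m,\alpha,k)$ in front of $(\epsilon^{-2m-1}+n^2)n^{-\gamma}$. Substituting $\gamma^{1/2}\delta=\gamma^{1/2}\epsilon^{-m/2}(\log n/n)^{1/2}$ into the error term then yields exactly the bound stated in Theorem \ref{eigmain}. There is no real obstacle here beyond careful bookkeeping of the hypotheses and constants; the substantive work has already been done in Lemmas \ref{mainu} and \ref{mainl}, and the role of this theorem is simply to package those two one-sided estimates together under the explicit scaling of $\delta$.
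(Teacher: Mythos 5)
Your proposal is correct and is essentially identical to the paper's proof, which simply states that setting $\delta=\epsilon^{-m/2}(\log n/n)^{1/2}$ and combining Lemmas \ref{mainu} and \ref{mainl} gives the result. (One minor slip: to deduce $\delta\le\gamma^{-1/2}$ you should argue $\gamma^{1/2}\delta\le C_2^{-1}\le 1$ rather than $C_2^{-1}\le\gamma^{-1/2}$, which need not hold for $\gamma$ close to $1$; this does not affect the conclusion.)
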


Now let us show the approximation result for the eigenfunctions.

\begin{Thm}\label{evecmain}
Suppose that the pair $((M,g),\iota)$ satisfies Assumption \ref{Asu2} (a), and we are given functions $\eta,\rho$ satisfying Assumption \ref{Asu3}.
Take arbitrary $k,l\in \Z_{>0}$ with $l\geq k$.
Put $s:=\lambda_l(\Delta_\rho)-\lambda_k(\Delta_\rho)$ and
$$
G:=\min\left\{|\lambda_k(\Delta_\rho)-\lambda_{k-1}(\Delta_\rho)|,|\lambda_{l+1}(\Delta_\rho)-\lambda_l(\Delta_\rho)|\right\}.
$$
Then, there exist constants $C_1=C_1(m,\alpha,l)>0$ and $C_2=C_2(m,S,K,i_0,L,\eta,\alpha,L_\rho,l)>0$ such that, for any
$n\in\Z_{>0}$, $\epsilon\in[\tau,\infty)$, $\gamma\in(1,\infty)$ with
$$
C_2\left(\epsilon+\frac{\tau}{\epsilon}+\gamma^{1/2} \epsilon^{-m/2}\left(\frac{\log n}{n}\right)^{1/2}\right)+2 G s < G^2
$$
and i.i.d. sample $x_1,\ldots,x_n\in M$ from $\rho\Vol_g$, we have the following with probability at least $1-C_1(\epsilon^{-2m-1}+n^2)n^{-\gamma}.$
Let $\Pj\colon \R^n\to \Span\{u^k,\ldots,u^l\}$ be the orthogonal projection.
Then, the map $\Span\{f_k,\ldots,f_l\}\to \Span\{u^k,\ldots,u^l\},\, f\mapsto \Pj(f|_{\X})$ is an isomorphism and for any $f\in\Span\{f_k,\ldots,f_l\}$, we have
\begin{align*}
\left(1-\frac{C_2}{G^2}\left(\epsilon+\frac{\tau}{\epsilon}+\gamma^{1/2}\epsilon^{-m/2}\left(\frac{\log n}{n}\right)^{1/2}\right)-\frac{2s}{G}\right)&\frac{1}{n}\sum_{i=1}^n f(x_i)^2\leq\frac{1}{n}\sum_{i=1}^n\Pj(f|_{\X})_i^2,\\
\left|\frac{1}{n}\sum_{i=1}^n f(x_i)^2-\int_M f^2\rho\,d\Vol_g\right|\leq& C_2\gamma^{1/2}\left(\frac{\log n}{n}\right)^{1/2}\int_M f^2\rho\,d\Vol_g
\end{align*}
and
$$
\frac{1}{n}\sum_{i=1}^n (f(x_i)-\Pj(f|_{\X})_i)^2\leq \left(\frac{C_2}{G^2}\left(\epsilon+\frac{\tau}{\epsilon}+\gamma^{1/2} \epsilon^{-m/2}\left(\frac{\log n}{n}\right)^{1/2}\right)+\frac{2s}{G}\right)\frac{1}{n} \sum_{i=1}^n f(x_i)^2.
$$
\end{Thm}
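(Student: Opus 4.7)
The plan is to follow the same linear-algebraic framework set up at the start of subsection 3.3 and used in Lemma \ref{mainl}, combining the upper bound Lemma \ref{UPP} (from continuous to discrete), the $L^2$-comparison Lemma \ref{UNl2comp}, the lower bound Lemma \ref{UDcomp}, and the eigenvalue comparison Theorem \ref{eigmain}, and then invoke an eigenvector-comparison lemma from Appendix \ref{LinearAlgebra}.

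First, I would define
$
H_1:=\Span\{f_0,\ldots,f_l\}+\Lambda_\epsilon(\R^n)\subset W^{1,2}(M),\quad H_2:=\R^n,
$
together with the data $\langle\cdot,\cdot\rangle_i$, $D_i$ and the restriction map $Q_1\colon H_1\to H_2$, $f\mapsto f|_\X$, exactly as in Lemmas \ref{mainu}, \ref{mainl}. By Lemmas \ref{LinfEF} and \ref{GradEF} together with Remark \ref{RemA} (e)--(f), the first $l+1$ eigenfunctions satisfy $\|f_s\|_{L^\infty}+\Lip(f_s)\leq C(m,K,i_0,\alpha,l)$. Applying Corollary \ref{LinfBer} with $\widetilde\delta=\epsilon^{m/2}\delta$ to $f_0,\ldots,f_l$ gives the second conclusion of the theorem, i.e.\ $\bigl|\|f|_\X\|_2^2-\|f\|_1^2\bigr|\leq C\gamma^{1/2}(\log n/n)^{1/2}\|f\|_1^2$ for every $f\in\Span\{f_k,\ldots,f_l\}$, with the prescribed probability.

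Next, applying Lemma \ref{UPP} to $\{f_0,\ldots,f_l\}$ I obtain
$
D_2(f|_\X,f|_\X)\leq D_1(f,f)+C\bigl(\epsilon+\tau/\epsilon+\gamma^{1/2}\delta\bigr)\|f\|_1^2
$
for every $f\in\Span\{f_0,\ldots,f_l\}$, while Lemma \ref{UDcomp} combined with Lemma \ref{UNl2comp} gives the reverse direction needed for Lemma \ref{mainl}. In particular Theorem \ref{eigmain} yields, for all $j\in\{0,1,\ldots,l+1\}$,
$
\bigl|\lambda_j(D_1)-\lambda_j(D_2)\bigr|\leq C\bigl(\epsilon+\tau/\epsilon+\gamma^{1/2}\epsilon^{-m/2}(\log n/n)^{1/2}\bigr),
$
which absorbs into the quantity $C_2(\cdots)$ in the hypothesis. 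The gap condition $C_2(\cdots)+2Gs<G^2$ then ensures that $\lambda_{k-1}(D_2)$ and $\lambda_{l+1}(D_2)$ are separated from the block $\{\lambda_k(D_2),\ldots,\lambda_l(D_2)\}$ by a margin comparable to $G-O(\sqrt{\cdots})/G$, which is the standard perturbation-theoretic hypothesis for a Davis--Kahan-type statement.

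Finally, I would plug these ingredients into the eigenvector-comparison lemma from Appendix \ref{LinearAlgebra} (the counterpart of Lemma \ref{evalcomp} for eigenspaces). Writing $f|_\X=\sum_{j=0}^{n-1}c_j u^j$ for $f\in\Span\{f_k,\ldots,f_l\}$, the projection $\Pj(f|_\X)$ retains precisely the indices $k\leq j\leq l$; the complementary mass $\sum_{j\notin[k,l]}c_j^2$ is controlled by the Rayleigh-quotient excess of $f|_\X$ over $\lambda_k(D_2)$ and below $\lambda_l(D_2)$, which by the above estimates is at most $O(\cdots)\|f\|_1^2+2 s\,\|f\|_1^2$, divided by the effective gap $G-O(\sqrt{\cdots})/G$. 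This yields both the remaining norm inequality and the bound on $\frac1n\sum(f(x_i)-\Pj(f|_\X)_i)^2$, and in particular the invertibility of the map $f\mapsto\Pj(f|_\X)$ follows from the first factor being strictly positive under the standing hypothesis.

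The main obstacle will be the Davis--Kahan-type step: one needs to argue carefully that the shift $s=\lambda_l(\Delta_\rho)-\lambda_k(\Delta_\rho)$ inside the eigenvalue block, together with the perturbation of $D_2$ relative to $D_1$, only degrades the effective spectral gap from $G$ to something like $G-(C_2(\cdots)+2Gs)/G>0$ rather than spoiling the decoupling altogether. The explicit form $C_2(\cdots)+2Gs<G^2$ in the hypothesis is exactly what is needed to keep this effective gap strictly positive, and the quantitative leakage estimate between the discrete and continuous eigenspaces is inversely proportional to it, producing the factor $1/G^2$ in the final bound.
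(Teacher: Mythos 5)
Your plan follows the paper's proof in spirit — set up the $(H_i, D_i, Q_i)$ framework, feed in Lemma \ref{UPP}, \ref{UDcomp}, \ref{UNl2comp}, and the Bernstein-type estimates, and invoke Lemma \ref{eveccomp}. Two concrete gaps, however:

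First, your choice $H_1=\Span\{f_0,\ldots,f_l\}+\Lambda_\epsilon(\R^n)$ is off by one. Lemma \ref{eveccomp} takes suprema over $\Span\{f_1,\ldots,f_{l+1}\}$ and $\Span\{u_1,\ldots,u_{l+1}\}$ in its own indexing, which under the shift $\lambda_j(D_1)=\lambda_{j-1}(\Delta_\rho)$ corresponds to $\Span\{f_0,\ldots,f_{l+1}\}$ and $\Span\{u^0,\ldots,u^{l+1}\}$ in the paper's notation. So $f_{l+1}$ must belong to $H_1$, and the Lipschitz/$L^\infty$ estimates and Lemma \ref{UPP} have to be applied to $f_0,\ldots,f_{l+1}$, not just to $f_0,\ldots,f_l$. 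The paper defines $H_1:=\Span\{f_0,\ldots,f_{l+1}\}+\Lambda_\epsilon(\R^n)$ for precisely this reason.

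Second, you account for $E_1$, $E_2$, $E_4$ but never mention the quantity $E_3=\sup_{f\in S\setminus\{0\}}\|f-Q_2 Q_1 f\|_{H_1}/\|f\|_{H_1}$ that appears explicitly in the formula for $F$ in Lemma \ref{eveccomp}. This is the error incurred by going from $f$ to $f|_\X$ and then interpolating back via $\Lambda_\epsilon$, and it is controlled by Lemma \ref{q2q1f-f} (pointwise bound $|\Lambda_\epsilon(f|_\X)-f|\leq\Lip(f)\epsilon$) together with the gradient estimate for eigenfunctions. The same lemma is also used in the paper to establish the auxiliary $L^2$-comparison $\bigl|\|Q_2 Q_1 f\|_1^2-\|Q_1 f\|_2^2\bigr|\leq C(\epsilon+\gamma^{1/2}\delta)\|Q_1 f\|_2$, which enters the $E_1$ bound on the subspace $Q_1(S)$: recall $E_1$ ranges over $\Span\{u_1,\ldots,u_{l+1}\}\cup Q_1(S)$, and the $Q_1(S)$ part is not covered by your appeal to Lemma \ref{UDcomp} and \ref{UNl2comp} alone. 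Your ``Davis--Kahan'' description at the end (leakage controlled by Rayleigh-quotient excess) also needs to be tempered: by itself the Rayleigh quotient of $f|_\X$ being in $[\lambda_k(D_2),\lambda_l(D_2)]$ does not forbid low-frequency leakage (e.g.\ a vector balanced between $u^{k-1}$ and $u^{l+1}$); the actual low-frequency control in the proof of Lemma \ref{eveccomp} goes through $Q_2$ and the $E_1$ comparison, which is exactly why $Q_2=\Lambda_\epsilon$ and Lemma \ref{q2q1f-f} are indispensable.
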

\begin{proof}
Put $\delta:=\epsilon^{-m/2}(\log n/n)^{1/2}$ and $H_1:=\Span\{f_0,\ldots,f_{l+1}\}+ \Lambda_\epsilon\left(\R^n\right)$.
By Lemma \ref{LinfBer} and \ref{q2q1f-f}, for any $f\in \Span\{f_k,\ldots,f_l\}\setminus\{0\}$, we have
\begin{align*}
\left|\|Q_2 Q_1 f\|_1^2-\|Q_1 f\|_2^2\right|\leq C(\epsilon+\gamma^{1/2}\delta)\|Q_1 f\|_2,
\end{align*}
and so
\begin{align*}
\frac{D_1(Q_2 Q_1 f,Q_2 Q_1 f)}{\|Q_2 Q_1 f\|_1^2}\leq& C\left(\epsilon+\frac{\tau}{r}+\gamma^{1/2} \delta\right)\frac{D_2(Q_1 f,Q_1 f)}{\|Q_1 f\|_2^2}\\
\leq& \frac{D_2(Q_1 f,Q_1 f)}{\|Q_1 f\|_2^2}+C\left(\epsilon+\frac{\tau}{r}+\gamma^{1/2} \delta\right).
\end{align*}
by Lemma \ref{LinfBer}, \ref{UPP}, \ref{UDcomp} and \ref{UNl2comp}.
By Lemma \ref{LinfBer}, for any $f\in \Span\{f_k,\ldots,f_l\}\setminus\{0\}$, we have
$$
\frac{\left|\|Q_1 f\|_2-\|f\|_1\right|}{\|f\|_1}\leq \frac{\left|\|Q_1 f\|_2^2-\|f\|_1^2\right|}{\|f\|_1^2}\leq C\gamma^{1/2}\left(\frac{\log n}{n}\right)^{1/2}.
$$
Thus, we get the theorem by Lemma \ref{LinfBer}, \ref{UPP}, \ref{q2q1f-f}, \ref{UDcomp}, \ref{UNl2comp} and \ref{eveccomp}.
\end{proof}

Approximating the pair $(M,\iota)$ satisfying Assumption \ref{Asu}, we obtain the following theorems.
See Appendix \ref{ApproxSmooth} for why such an approximation works.
\begin{Thm}\label{UNMainEval}
Suppose that the pair $((M,g),\iota)$ satisfies Assumption \ref{Asu}, and we are given functions $\eta, \rho$ satisfying Assumption \ref{Asu3}.
Take arbitrary $k\in \Z_{>0}$.
Then, there exist constants $C_1=C_1(m,\alpha,k)>0$ and $C_2=C_2(m,S,K,i_0,L,\eta,\alpha,L_\rho,k)>0$ such that, for any
$n\in\Z_{>0}$, $\epsilon\in(0,\infty)$, $\gamma\in(1,\infty)$ with
\begin{equation*}\label{AR}
\epsilon+\gamma^{1/2} \epsilon^{-m/2}\left(\frac{\log n}{n}\right)^{1/2}\leq C_2^{-1}
\end{equation*}
and i.i.d. sample $x_1,\ldots,x_n\in X$ from $\rho\Vol_g$, we have 
$$
\left|\lambda_k(\Delta_{\rho})-\frac{2}{\sigma_\eta n \epsilon^{m+2}}\lambda_k(\La)\right|\leq C_2\left(\epsilon+\gamma^{1/2} \epsilon^{-m/2}\left(\frac{\log n}{n}\right)^{1/2}\right)
$$
with probability at least $1-C_1(\epsilon^{-2m-1}+n^2)n^{-\gamma}.$
\end{Thm}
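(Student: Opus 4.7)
The strategy is to derive Theorem \ref{UNMainEval} from Theorem \ref{eigmain} by passing to the limit along the approximating sequence $\{((M_i,g_i),\iota_i)\}$ supplied by Assumption \ref{Asu}, as announced in Appendix \ref{ApproxSmooth}. First, I would verify that each pair $((M_i,g_i),\iota_i)$ satisfies Assumption \ref{Asu2}(a) with parameters $(m,S,K,i_0,L,\tau_i)$ for some sequence $\tau_i\to 0$: the curvature, injectivity radius, and $L^1$-bound on the second fundamental form are immediate from Assumption \ref{Asu}(i)--(ii) and from $(M_i,g_i)\in\M_1(m,K,i_0)$, while the relaxed Lipschitz bound (iv) is exactly the output of Lemma \ref{UnifConv}(c). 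By Remark \ref{RemA}(d), after passing to a subsequence I may take $\psi_i$ to be a $C^{2,\alpha}$ diffeomorphism with $\psi_i^\ast g_i\to g$ in $C^{1,\alpha}$; defining $\rho_i$ on $M_i$ by $(\psi_i)_\ast(\rho\Vol_g)=\rho_i\Vol_{g_i}$, this $C^{1,\alpha}$ convergence ensures that $\rho_i$ satisfies Assumption \ref{Asu3}(ii) with constants $(\alpha,L_\rho)$ perturbed by $o_i(1)$.

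Next, the pushforwards $\psi_i(x_1),\ldots,\psi_i(x_n)$ of the given i.i.d.\ sample form an i.i.d.\ sample from $\rho_i\Vol_{g_i}$ on $M_i$. For all sufficiently large $i$, the hypothesis of Theorem \ref{eigmain} is met (the extra term $\tau_i/\epsilon$ is dominated by ours since $\tau_i\to 0$), so writing $\La_i$ for the graph Laplacian built from the points $\iota_i\circ\psi_i(x_j)$, we obtain
\[
\left|\lambda_k(\Delta_{\rho_i})-\frac{2}{\sigma_\eta n\epsilon^{m+2}}\lambda_k(\La_i)\right|\leq C_2\left(\epsilon+\frac{\tau_i}{\epsilon}+\gamma^{1/2}\epsilon^{-m/2}\left(\frac{\log n}{n}\right)^{1/2}\right)
\]
on an event $E_i\subset M^n$ of $(\rho\Vol_g)^{\otimes n}$-probability at least $1-C_1(\epsilon^{-2m-1}+n^2)n^{-\gamma}$, with constants $C_1,C_2$ independent of $i$.

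Finally, I would let $i\to\infty$ on $\liminf_i E_i$, which by Fatou's lemma has probability at least $1-C_1(\epsilon^{-2m-1}+n^2)n^{-\gamma}$. Spectral stability under $C^{1,\alpha}$ convergence of metrics and densities, via the minimax principle, yields $\lambda_k(\Delta_{\rho_i})\to\lambda_k(\Delta_\rho)$; Lemma \ref{UnifConv}(b) gives $\iota_i(\psi_i(x))\to\iota(x)$ uniformly on $M$; and $\tau_i/\epsilon\to 0$. The main obstacle is the convergence of the graph Laplacian entries: because $\eta$ may have a jump discontinuity at $t=1$, entrywise convergence of $\La_i$ to $\La$ fails precisely on sample configurations where $\|\iota(x_j)-\iota(x_l)\|=\epsilon$ for some pair $j\neq l$. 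I would handle this by a generic-position argument -- either perturbing $\epsilon$ by an arbitrarily small amount to move off the finitely many exceptional level distances, or sandwiching $\eta$ between continuous approximations from above and below and applying Theorem \ref{eigmain} to each, with constants converging to those for $\eta$ -- establishing that exceptional sample configurations carry arbitrarily small probability. Combining these convergences in the displayed inequality delivers the claimed bound on the full probability event.
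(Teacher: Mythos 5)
Your proposal follows the same strategy as the paper's actual proof in Appendix~\ref{ApproxSmooth}: verify Assumption~\ref{Asu2}(a) for the approximating sequence, apply Theorem~\ref{eigmain} to each $((M_i,g_i),\iota_i)$, and pass to the limit in $i$. You also correctly identify the central technical obstacle -- the discontinuity of $\eta$ at $t=1$ makes entrywise convergence of the graph Laplacians fail on configurations with a pairwise Euclidean distance exactly $\epsilon$ -- and your proposed fix (restricting to configurations with a distance margin) is exactly the role of the sets $V_\zeta$ in the paper's proof, where uniform entrywise convergence of $\K(t)$ to $\K$ is established.

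However, there is a genuine gap in the final limiting step. You claim that $\liminf_i E_i$ has probability at least $1-C_1(\epsilon^{-2m-1}+n^2)n^{-\gamma}$ ``by Fatou's lemma,'' but Fatou's lemma gives the opposite inequality: $\Pr(\liminf_i E_i)\leq \liminf_i\Pr(E_i)$. As written, this step does not hold, and without it you cannot conclude. There are two standard repairs. (a) Replace $\liminf_i E_i$ with $\limsup_i E_i$ and invoke reverse Fatou, $\Pr(\limsup_i E_i)\geq\limsup_i\Pr(E_i)$; on $\limsup_i E_i\cap(\text{generic configurations})$ you get $X_{i_k}\le a_{i_k}$ along a subsequence, and since $X_{i_k}\to X$ off a null set, $X\le a$. (b) The paper's cleaner argument avoids any intersection or union over $i$ by observing that the target quantity $X:=\big|\lambda_k(\Delta_\rho)-\tfrac{2}{\sigma_\eta n\epsilon^{m+2}}\lambda_k(\La)\big|$ does not depend on the approximation index $t$. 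For each fixed $t$ and $\delta_1>0$, Theorem~\ref{eigmain} together with the $V_\zeta$-comparison of Lemmas~\ref{tevalmfd} and~\ref{tevalgrp} gives a single-$t$ bound $\Pr\big(X\le C_2(\epsilon+\tau_t/\epsilon+\gamma^{1/2}\epsilon^{-m/2}(\log n/n)^{1/2})+\delta_1\big)\geq 1-C_1(1+\tau_t)^{mn}(\epsilon_t^{-2m-1}+n^2)n^{-\gamma}-\delta_1$; then letting $t\to\infty$ and $\delta_1\to0$ and using right-continuity of the distribution function of $X$ gives the stated probability directly. One smaller point you should make explicit: the paper builds $\La(t)$ with the shrunken bandwidth $\epsilon_t:=\epsilon-2\tau_t$ so that, on $V_\zeta$ with $\zeta>4\tau_t$, the adjacency thresholds $\|\iota(x_j)-\iota(x_l)\|<\epsilon$ and $\|\iota_t(\psi_t(x_j))-\iota_t(\psi_t(x_l))\|<\epsilon_t$ coincide; if you keep the same $\epsilon$ for $\La_i$ as for $\La$ you must handle the near-threshold pairs separately.
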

Putting $\epsilon=(\log n/n)^{1/(m+2)}$, we get Theorem \ref{MainResult1}.

\begin{Thm}\label{UNMainEvec}
Suppose that the pair $((M,g),\iota)$ satisfies Assumption \ref{Asu}, and we are given functions $\eta, \rho$ satisfying Assumption \ref{Asu3}. Take arbitrary $k,l\in \Z_{>0}$ with $l\geq k$.
Put $s:=\lambda_l(\Delta_\rho)-\lambda_k(\Delta_\rho)$ and
$$
G:=\min\left\{|\lambda_k(\Delta_\rho)-\lambda_{k-1}(\Delta_\rho)|,|\lambda_{l+1}(\Delta_\rho)-\lambda_l(\Delta_\rho)|\right\}.
$$
Then, there exist constants $C_1=C_1(m,\alpha,l)>0$ and $C_2=C_2(m,S,K,i_0,L,\eta,\alpha,L_\rho,l)>0$ such that, for any
$n\in\Z_{>0}$, $\epsilon\in(0,\infty)$, $\gamma\in(1,\infty)$ with
$$
C_2\left(\epsilon+\gamma^{1/2} \epsilon^{-m/2}\left(\frac{\log n}{n}\right)^{1/2}\right)+2 G s < G^2
$$
and i.i.d. sample $x_1,\ldots,x_n\in M$ from $\rho\Vol_g$, we have the following with probability at least $1-C_1(\epsilon^{-2m-1}+n^2)n^{-\gamma}.$
Let $\Pj\colon \R^n\to \Span\{u^k,\ldots,u^l\}$ be the orthogonal projection.
Then, the map $\Span\{f_k,\ldots,f_l\}\to \Span\{u^k,\ldots,u^l\},\, f\mapsto \Pj(f|_{\X})$ is an isomorphism and for any $f\in\Span\{f_k,\ldots,f_l\}$, we have
\begin{align*}
\left(1-\frac{C_2}{G^2}\left(\epsilon+\gamma^{1/2}\epsilon^{-m/2}\left(\frac{\log n}{n}\right)^{1/2}\right)-\frac{2s}{G}\right)&\frac{1}{n}\sum_{i=1}^n f(x_i)^2\leq\frac{1}{n}\sum_{i=1}^n\Pj(f|_{\X})_i^2,\\
\left|\frac{1}{n}\sum_{i=1}^n f(x_i)^2-\int_M f^2\rho\,d\Vol_g\right|\leq& C_2\gamma^{1/2}\left(\frac{\log n}{n}\right)^{1/2}\int_M f^2\rho\,d\Vol_g
\end{align*}
and
$$
\frac{1}{n}\sum_{i=1}^n (f(x_i)-\Pj(f|_{\X})_i)^2\leq \left(\frac{C_2}{G^2}\left(\epsilon+\gamma^{1/2} \epsilon^{-m/2}\left(\frac{\log n}{n}\right)^{1/2}\right)+\frac{2s}{G}\right)\frac{1}{n} \sum_{i=1}^n f(x_i)^2.
$$
\end{Thm}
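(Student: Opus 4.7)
The plan is to deduce Theorem \ref{UNMainEvec} from the already-established Theorem \ref{evecmain} by an approximation argument (the substance of Appendix \ref{ApproxSmooth}): approximate $((M,g),\iota)$ by smooth pairs $((M_i,g_i),\iota_i)$ coming from Assumption \ref{Asu}, apply Theorem \ref{evecmain} on each $((M_i,g_i),\iota_i)$, and let $i\to\infty$ so that the $\tau/\epsilon$ term of Theorem \ref{evecmain} vanishes because $\tau_i\to 0$.

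Concretely, by Assumption \ref{Asu} together with Lemma \ref{UnifConv}(c), there is a sequence $\tau_i\to 0$ such that, after at most an arbitrarily small enlargement of $L$, each $((M_i,g_i),\iota_i)$ satisfies Assumption \ref{Asu2}(a) with parameter $\tau_i$. I would push the fixed sample $x_1,\ldots,x_n\in M$ to $M_i$ by setting $y_j^{(i)}:=\psi_i(x_j)$ and define $\rho_i$ on $M_i$ as the density of $(\psi_i)_\ast(\rho\Vol_g)$ with respect to $\Vol_{g_i}$. Because $\psi_i$ is eventually a $C^{2,\alpha}$ diffeomorphism with $\psi_i^\ast g_i\to g$ in $C^{1,\alpha}$ (Remark \ref{RemA}(d)), this $\rho_i$ satisfies Assumption \ref{Asu3}(ii) with constants converging to $(\alpha,L_\rho)$, the map $\psi_i^{\otimes n}$ is measure-preserving from $(M,\rho\Vol_g)^{\otimes n}$ onto $(M_i,\rho_i\Vol_{g_i})^{\otimes n}$, and the matrices $\La^{(i)}$ constructed from $\iota_i$ on $\{y_j^{(i)}\}$ converge entrywise to $\La$ by the uniform convergence $\iota_i\circ\psi_i\to\iota$ of Lemma \ref{UnifConv}(b).

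Applying Theorem \ref{evecmain} to $((M_i,g_i),\iota_i)$ with density $\rho_i$ and sample $\{y_j^{(i)}\}$, one obtains, with probability at least $1-C_1(\epsilon^{-2m-1}+n^2)n^{-\gamma}$ uniform in large $i$, an isomorphism $f\mapsto \Pj^{(i)}(f|_{\X})$ from $\Span\{f_k^{(i)},\ldots,f_l^{(i)}\}$ to the relevant eigenspace of $\La^{(i)}$ together with the three stated inequalities, each carrying an extra error of size $O(\tau_i/\epsilon)$. The $C^{1,\alpha}$ convergence of the metrics yields $\lambda_j(\Delta_{\rho_i})\to \lambda_j(\Delta_\rho)$ for every $j$, and up to a subsequence the eigenfunctions $f_j^{(i)}$ converge in $C^1$ to $f_j$ with sign chosen appropriately, so $f_j^{(i)}(\psi_i(x_p))\to f_j(x_p)$ for each $p$. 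Combined with $\La^{(i)}\to \La$ and the assumed spectral gap $G$ (which persists for large $i$), the eigenprojections $\Pj^{(i)}\to \Pj$ in operator norm, so sending $i\to\infty$ in the step inequalities produces the stated inequalities with no $\tau/\epsilon$ contribution.

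The main technical obstacle will be interchanging the limit $i\to\infty$ with the probability statement. The cleanest remedy is to observe that, once the sample $x_1,\ldots,x_n\in M$ is fixed, all objects appearing in the inequalities converge deterministically as $i\to\infty$; consequently the event ``the conclusion of Theorem \ref{UNMainEvec} holds for this sample'' contains $\limsup_i A_i$, where $A_i$ is the pulled-back event on $M^n$ corresponding to the conclusion of Theorem \ref{evecmain} on $M_i$, and reverse Fatou transfers the uniform lower bound on $\mathrm{Pr}(A_i)$ to the limit event. Verifying that the spectral-gap and smallness hypotheses for $M_i$ are implied by those for $M$ when $i$ is large is routine from $\lambda_j(\Delta_{\rho_i})\to\lambda_j(\Delta_\rho)$ and $\tau_i/\epsilon\to 0$, and the same mechanism absorbs the small perturbations of $\alpha$ and $L_\rho$ produced when passing to $\rho_i$.
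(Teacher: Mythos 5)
Your overall strategy matches the paper's: push the sample and the density to the smooth approximants $M_i$ via $\psi_i$, invoke Theorem~\ref{evecmain} there, and let $\tau_i\to 0$, controlling the interchange of limit and probability exactly as in Appendix~\ref{ApproxSmooth}. Two of your intermediate claims, however, are stated in a way that would fail as written and require the extra devices the paper introduces.

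First, entrywise convergence of $\La^{(i)}$ to $\La$ does \emph{not} follow from the uniform convergence $\iota_i\circ\psi_i\to\iota$ alone, because Assumption~\ref{Asu3}(i) only requires $\eta$ to be Lipschitz on $[0,1]$ and zero on $(1,\infty)$; $\eta$ may jump at $1$ (e.g.\ $\eta|_{[0,1]}\equiv 1$). Whenever some pair satisfies $\|\iota(x_i)-\iota(x_j)\|=\epsilon$ (or is within $O(\tau_t)$ of $\epsilon$), small perturbations of the embedding can change the adjacency structure discontinuously. This is why the paper restricts to the set $V_\zeta$ (where the gap $W_\epsilon\geq\zeta$), shrinks the bandwidth to $\epsilon_t=\epsilon-2\tau_t$, and then proves uniform entrywise convergence on $V_\zeta$ (Lemma~\ref{tevalgrp} and the two preceding lemmas); the probability lost is a free $\delta_1$, not zero. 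Your reverse-Fatou mechanism survives, but only after being localized to $V_\zeta$ for arbitrary $\zeta>0$ and then letting $\zeta\to 0$. Second, your step ``up to a subsequence the eigenfunctions $f_j^{(i)}$ converge in $C^1$ to $f_j$ with sign chosen appropriately'' is not achievable when an eigenvalue of $\Delta_\rho$ has multiplicity greater than one: the approximating eigenfunctions can rotate within the eigenspace. The paper avoids this by working throughout with the orthogonal projection $p_t$ onto $\Span\{f_k(t),\dots,f_l(t)\}$ and proving only the uniform estimate $\sup_x|f(x)-p_t(f\circ\psi_t^{-1})(\psi_t(x))|\to 0$ for $f$ in the span (Lemma~\ref{tLinfError}); likewise on the discrete side the comparison is between the projections $\Pj$ and $\Pj_t$ (Lemma~\ref{UNtinf}), not individual eigenvectors. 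Both issues are repairable and do not change your architecture, but as written these two steps would not go through.
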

We can get the results corresponding to Lemma \ref{eveccomp} (iii) and (iv).
However, these are easy consequences of (i) and (ii), so we do not state here.
For the case  when $k=l$, we have $s=0$ and get Theorem \ref{MainResult2} putting $\epsilon=(\log n/n)^{1/(m+2)}$.

\subsection{Main Results: the Case of Normalized Graph Laplacian}
In this subsection, we approximate the eigenvalue of the Laplacian for the normalized case.

\begin{notation}
Let $m,d\in\Z_{>0}$ be integers with $m<d$.
Suppose that we are given an $m$-dimensional closed Riemannian manifold $(M,g)$, an isometric immersion $\iota\colon M\to \R^d$, functions $\eta,\rho$ as Assumption \ref{Asu3} and a constant $\epsilon>0$.
We define
\begin{align*}
\widetilde\sigma_\eta:=&\Vol(S^{m-1})\int_0^1\eta(t)t^{m-1}\,d t,\\
\rho_{\eta}(x):=&\frac{1}{\epsilon^m\widetilde\sigma_\eta}\int_M\eta\left(\frac{d(x,y)}{\epsilon}\right)\rho(y)\,d y,\\
\widetilde\rho_\eta(x):=&\frac{1}{\epsilon^m\widetilde\sigma_\eta}\int_M\eta\left(\frac{\|\iota(x)-\iota(y)\|}{\epsilon}\right)\rho(y)\,d y.
\end{align*}
Suppose that we are given points $x_1,\ldots,x_n\in M$. Then, we define $\widetilde \D_i\in \R_{\geq 0}$ ($i=1,\ldots, n$) by
$$
\widetilde \D_i:=\frac{1}{n \epsilon^m\widetilde\sigma_\eta}\sum_{j=1}^n\eta\left(\frac{\|\iota(x_i)-\iota(x_j)\|}{\epsilon}\right)=\frac{1}{n \epsilon^m\widetilde\sigma_\eta}\D_{i i},
$$
where $\D\in \R^{n\times n}$ is a matrix defined in Notation \ref{Not} (vii).
Put $\X=\{x_1,\ldots,x_n\}$.
For each $u,v\in \R^n$, we define
\begin{align*}
\langle u,v\rangle_{\widetilde D}:=&\frac{1}{n}\sum_{i=1}^n u_i v_i\widetilde D_i,\\
\langle u, v\rangle_{\widetilde\rho_\eta|_{\X}}:=&\frac{1}{n}\sum_{i=1}^n u_i v_i\widetilde\rho_\eta(x_i).
\end{align*}
Let $\{u^{N,i}\}_{i=0}^{n-1}$ denotes the orthonormal basis of $(\R^n,\langle\cdot,\cdot\rangle_{\widetilde D})$ consisting of the eigenvectors of  the eigenvalue problem $\La u=\lambda \D u$  corresponding to the eigenvalues $\{\lambda_i(\La,\D)\}_{i=0}^{n-1}$.
For each $f,h\in L^2(M)$, we define
\begin{align*}
\langle f, h\rangle_{\rho^2}:=&\int_M f h\rho^2\,d \Vol_g,\\
\langle f, h\rangle_{\rho\widetilde\rho_\eta}:=&\int_M f h\rho\widetilde\rho_\eta\,d \Vol_g.
\end{align*}
Let $\{f_i^N\}_{i=0}^\infty$ denotes the complete orthonormal system of $(L^2(M),\langle\cdot,\cdot\rangle_\rho^2)$ consisting of the eigenfunctions of $\Delta_\rho^N$ corresponding to the eigenvalues $\{\lambda_i(\Delta_\rho^N)\}_{i=0}^\infty$.

For each $u\in \R^n$ and $\epsilon>0$, we define
$$
B_\epsilon (u):=\frac{1}{\sigma_\eta \epsilon^{m+2}n^2}\sum_{i,j=1}^n \eta\left(\frac{\|\iota(x_i)-\iota(x_j)\|}{\epsilon}\right)(u_i-u_j)^2.
$$
For each $k\in \{0,\ldots,n-1\}$ and $\epsilon$,
we define
$$
\lambda_k(B_\epsilon^N):=\inf\left\{\sup_{u\in V\setminus\{0\}}\frac{B_\epsilon(u)}{\frac{1}{n}\sum_{i=1}^n u_i^2 \widetilde D_i}: V\subset \R^n \text{ is a $(k+1)$-dimensional subspace}\right\}.
$$
By a straightforward calculus, we have
$$
\lambda_k(B_\epsilon^N)=\frac{2\widetilde\sigma_\eta}{\sigma_\eta \epsilon^2}\lambda_k(\La,\D)
$$
for any $k\in \{0,\ldots,n-1\}$.
\end{notation}
By the volume comparison theorem, we get the following.
\begin{Lem}\label{retavsr}
Suppose that we are given $(M,g)\in \M_1(m,K,i_0)$ and functions $\eta,\rho$ satisfying Assumption \ref{Asu3}.
Then, there exists a constant $C=C(m,K,\alpha,L_\rho)>0$ such that for any $\epsilon\in(0,\min\{i_0,\pi/(2\sqrt K)\})$ and $x\in M$, we have
$|\rho_\eta(x)-\rho(x)|\leq C\epsilon\rho(x)$.
\end{Lem}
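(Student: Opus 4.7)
The plan is to switch to geodesic polar coordinates at $x$ and compare the integrand to its Euclidean model. Since $\epsilon < i_0$ we have $B_\epsilon(x) \subset J_x$, so
\[
\int_M \eta\!\left(\frac{d(x,y)}{\epsilon}\right)\rho(y)\,dy = \int_{U_x M}\!\int_0^\epsilon \eta(t/\epsilon)\,\rho(\exp_x(tu))\,t^{m-1}\theta_x(tu)\,dt\,du,
\]
and after the substitution $s = t/\epsilon$ the ``model'' contribution (obtained by replacing $\rho(\exp_x(tu))$ with $\rho(x)$ and $t^{m-1}\theta_x(tu)$ with $t^{m-1}$) is exactly $\rho(x)\,\epsilon^m\widetilde\sigma_\eta$.

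Next I would control the two errors separately. For the volume element, Theorem \ref{BishopGromov} (ii) combined with Theorem \ref{VolLow}, applied under the two-sided bound $|\mathrm{Sect}|\le K$ and for $t\le \pi/(2\sqrt K)$, yields $|t^{m-1}\theta_x(tu)-t^{m-1}|\le C(m,K)\,t^{m+1}$ (this is the standard Taylor expansion of $s_{\pm K}^{m-1}$ mentioned right after Theorems \ref{BishopGromov} and \ref{VolLow}). For the density, the Lipschitz hypothesis gives $|\rho(\exp_x(tu))-\rho(x)|\le L_\rho\,t$. Combining these, and using that $\eta(t/\epsilon)$ is supported on $t\in[0,\epsilon]$ with $0\le\eta\le\eta(0)$, the total deviation from the model satisfies
\[
\left|\int_M \eta\!\left(\tfrac{d(x,y)}{\epsilon}\right)\rho(y)\,dy - \rho(x)\,\epsilon^m\widetilde\sigma_\eta\right|
\;\le\; C(m,K,\alpha,L_\rho)\,\epsilon^{m+1},
\]
where $L_\rho$ produces a term $O(\epsilon^{m+1})$ and the curvature error produces a term $O(K\epsilon^{m+2})$ (absorbed for $\epsilon\le 1$; otherwise we simply keep the $K$-dependence in $C$).

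Dividing by $\epsilon^m\widetilde\sigma_\eta$ gives $|\rho_\eta(x)-\rho(x)|\le C\,\epsilon$, and since $\rho(x)\ge 1/\alpha$ by Assumption \ref{Asu3} (ii) we may rewrite this as $|\rho_\eta(x)-\rho(x)|\le C(m,K,\alpha,L_\rho)\,\epsilon\,\rho(x)$, which is the claim. There is no real obstacle here: this is a direct polar-coordinates estimate, essentially the same computation that underlies Lemma \ref{theta} (i), and the only inputs are the Bishop--Gromov/Rauch volume comparison and the Lipschitz bound on $\rho$.
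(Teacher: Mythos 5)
Your overall route is the right one and matches the paper's intent: pass to geodesic polar coordinates at $x$ (valid since $\epsilon<i_0$ guarantees $B_\epsilon(x)\subset J_x$), compare the density $\theta_x$ to the Euclidean model via Theorems \ref{BishopGromov} and \ref{VolLow}, and use $\Lip(\rho)\le L_\rho$; dividing by $\epsilon^m\widetilde\sigma_\eta$ and using $\rho\ge 1/\alpha$ finishes the job. The one place you should tighten is the constant-tracking in your middle display. As soon as you invoke "$0\le\eta\le\eta(0)$" and then integrate the pointwise errors $L_\rho t\cdot t^{m-1}\theta_x(tu)$ and $\rho(x)\,|t^{m-1}\theta_x(tu)-t^{m-1}|$ over $\{t\le\epsilon\}$ with that crude bound on $\eta$, the right-hand side picks up an $\eta(0)$ factor, and after dividing by $\epsilon^m\widetilde\sigma_\eta$ you are left with a factor $\eta(0)/\widetilde\sigma_\eta$ that genuinely depends on $\eta$ (bounded, via Assumption \ref{Asu3} (i), by $C(m)\,\eta(0)/\eta(3/4)$). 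The lemma, however, asserts $C=C(m,K,\alpha,L_\rho)$ with no $\eta$-dependence. The cleaner bookkeeping — and essentially what the paper does in Lemma \ref{theta} (i) — is multiplicative: bound the integrand as $\eta(t/\epsilon)\,\rho(\exp_x(tu))\,t^{m-1}\theta_x(tu) \in \left[(1-C\epsilon),\,(1+C\epsilon)\right]\cdot\rho(x)\,\eta(t/\epsilon)\,t^{m-1}$ pointwise (with $C=C(m,K,\alpha,L_\rho)$), keep the weight $\eta(t/\epsilon)$ intact, and then integrate, using the exact identity $\Vol(S^{m-1})\int_0^\epsilon\eta(t/\epsilon)t^{m-1}\,dt=\epsilon^m\widetilde\sigma_\eta$ so the $\eta$-dependence cancels. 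With that adjustment your argument gives exactly the stated constant.
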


Similarly to the proof of Lemma \ref{extvsint}, in particular (\ref{er2}) and (\ref{er3}), we get the following.
\begin{Lem}\label{retavstreta}
Suppose that the pair $((M,g),\iota)$ satisfies Assumption \ref{Asu2} $(b)$, and we are given functions $\eta,\rho$ satisfying Assumption \ref{Asu3}.
Then, there exists a constant $C=C(m,S,K,L,\eta,\alpha)>0$  such that for any $\epsilon\in[\tau,\pi/(2\sqrt K)]$, we have
$$
\int_M |\rho_\eta-\widetilde \rho_\eta|\,d\Vol_g\leq C\left(\epsilon+\frac{\tau}{\epsilon}\right).
$$
\end{Lem}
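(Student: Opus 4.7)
The strategy mirrors the proof of Lemma \ref{extvsint}, applied to the zeroth-order quantity $\widetilde\rho_\eta-\rho_\eta$ rather than to a Dirichlet-type quadratic form. Since $\iota$ is an isometric immersion, $\|\iota(x)-\iota(y)\|\leq d(x,y)$, and monotonicity of $\eta$ gives $\rho_\eta(x)\leq\widetilde\rho_\eta(x)$ pointwise. Consequently the absolute value disappears and Fubini rewrites the left-hand side as $(\epsilon^m\widetilde\sigma_\eta)^{-1}$ times
\begin{equation*}
\int_M\int_M\Bigl[\eta\!\left(\tfrac{\|\iota(x)-\iota(y)\|}{\epsilon}\right)-\eta\!\left(\tfrac{d(x,y)}{\epsilon}\right)\Bigr]\rho(y)\,dy\,dx.
\end{equation*}
Since both terms vanish outside $y\in B_\epsilon^{\R^d}(x)$, I split the inner region into $B_\epsilon(x)$ and $B_\epsilon^{\R^d}(x)\setminus B_\epsilon(x)$ and run the exact arguments that produced (\ref{er2}) and (\ref{er3}), but with the weight $(f(x)-f(y))^2=O(\epsilon^2)$ replaced by $\rho(y)=O(1)$. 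This shift costs two powers of $\epsilon$ in each regional estimate and is precisely what turns the rate $\epsilon^{m+3}(1+\tau/\epsilon^2)$ of Lemma \ref{extvsint} into $\epsilon^{m+1}(1+\tau/\epsilon^2)$, matching the stated $\epsilon+\tau/\epsilon$ after dividing by $\epsilon^m\widetilde\sigma_\eta$.

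On $B_\epsilon(x)$ both arguments of $\eta$ lie in $[0,1]$, so the Lipschitz bound $|\eta(\|\iota\|/\epsilon)-\eta(d/\epsilon)|\leq L_\eta|d-\|\iota\||/\epsilon$ applies. Along the geodesic $\gamma_u$, Lemma \ref{fund}(ii) gives $t-\|\iota(\gamma_u(t))-\iota(x)\|\leq S_u t/\sqrt{2}$ for $t\in[0,\epsilon]$, where $S_u:=\int_0^\epsilon|II|\circ\gamma_u(s)\,ds$. Passing to polar coordinates via Theorem \ref{BishopGromov}(ii) and applying the geodesic-flow identity (\ref{GeodesicFlow}) to convert $\int_M\int_{U_xM}S_u\,du\,dx$ into an integral of $|II|$ yields a contribution bounded by $CS\epsilon^{m+1}$, the direct analogue of (\ref{er2}).

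On $B_\epsilon^{\R^d}(x)\setminus B_\epsilon(x)$ the $\eta(d/\epsilon)$ term is zero and the other is at most $\eta(0)$. Lemma \ref{fund}(iii), whose hypothesis $\tau\leq\epsilon$ holds by assumption, confines such $y$ to geodesic arcs $\gamma_u(t)$ with $\epsilon\leq t\leq\min\{(L+1)\epsilon,(1+C_L\widetilde S_u)\epsilon+\tau\}$, where $\widetilde S_u:=\int_0^{(L+1)\epsilon}|II|\circ\gamma_u(s)\,ds$. Volume comparison together with (\ref{GeodesicFlow}) then bounds this contribution by $C(\epsilon^{m+1}+\epsilon^{m-1}\tau)$, the analogue of (\ref{er3}) with the $(f(x)-f(y))^2$ factor stripped out.

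Adding the two regional estimates and dividing by $\epsilon^m\widetilde\sigma_\eta$ yields the claimed $O(\epsilon+\tau/\epsilon)$ bound. There is no substantive obstacle beyond bookkeeping: once the split is made and Lemma \ref{fund} is invoked, each region's estimate reduces to the corresponding step in Lemma \ref{extvsint}, and the stated rate is the natural consequence of dropping the quadratic weight.
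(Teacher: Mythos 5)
Your proposal is correct and follows exactly the route the paper indicates: the paper gives no detailed proof but states "Similarly to the proof of Lemma \ref{extvsint}, in particular (\ref{er2}) and (\ref{er3})," and your argument is precisely that analogy carried out, with the observation that monotonicity of $\eta$ and $\|\iota(x)-\iota(y)\|\leq d(x,y)$ let you drop the absolute value before splitting into $B_\epsilon(x)$ and $B_\epsilon^{\R^d}(x)\setminus B_\epsilon(x)$. The power-counting — replacing $(f(x)-f(y))^2=O(\epsilon^2)$ by $\rho(y)=O(1)$, which turns the $\epsilon^{m+3}(1+\tau/\epsilon^2)$ rate into $\epsilon^{m+1}(1+\tau/\epsilon^2)$ and hence $\epsilon+\tau/\epsilon$ after dividing by $\epsilon^m\widetilde\sigma_\eta$ — is exactly right.
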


By applying the Bernstein inequality to $\eta(-\|\iota(x_i)-\iota(\cdot)\|/\epsilon)$ for each $i\in\{1,\ldots,n\}$, we get the following.
\begin{Lem}\label{tDvstr}
Suppose that the pair $((M,g),\iota)$ satisfies Assumption \ref{Asu2} $(b)$, and we are given functions $\eta,\rho$ satisfying Assumption \ref{Asu3}.
Then, there exists constant $C=C(m,K,L,\eta(0),\alpha)>0$  such that for any $\epsilon\in[\tau,\pi/(2\sqrt K)]$, $\gamma\in(1,\infty)$ and $\delta\in(0,\gamma^{-1/2}]$ and i.i.d. sample $x_1,\ldots, x_n\in M$ from $\rho\Vol_g$,
we have
$$
\left|\widetilde D_i -\widetilde\rho_\eta(x_i)\right|\leq C\gamma^{1/2}\delta
$$
for every $i\in \{1,\ldots,n\}$ with probability at least $1-2n\exp\left(-n\gamma \epsilon^m\delta^2\right)$.
\end{Lem}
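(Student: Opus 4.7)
The plan is to apply the Bernstein inequality (Theorem \ref{Ber}) to the function $y \mapsto \eta(\|\iota(x)-\iota(y)\|/\epsilon)$ for each fixed $x \in M$, and then convert the fixed-$x$ statement into one about the random sample point $x_i$ via a Fubini-type conditioning argument, finishing with a union bound over $i$.

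First, I fix $x \in M$ and set
\[
h_x(y) := \frac{1}{\epsilon^m \widetilde\sigma_\eta}\eta\left(\frac{\|\iota(x)-\iota(y)\|}{\epsilon}\right),
\]
so that $\int_M h_x(y)\rho(y)\,dy = \widetilde\rho_\eta(x)$ by definition. Two estimates on $h_x$ are needed. The pointwise bound $0 \leq h_x \leq \eta(0)/(\epsilon^m\widetilde\sigma_\eta) \leq C/\epsilon^m$ is immediate. For the variance bound, the support of $h_x$ lies in $\{y : \|\iota(x)-\iota(y)\| \leq \epsilon\}$, and by Assumption \ref{Asu2}(b)(iv) together with $\epsilon \geq \tau$, this is contained in the geodesic ball $B_{(L+1)\epsilon}(x)$. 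The volume comparison theorem (Theorem \ref{BishopGromov}(iii)) then gives $\Vol_g(B_{(L+1)\epsilon}(x)) \leq C\epsilon^m$, so $\int_M h_x^2\,\rho\,d\Vol_g \leq C/\epsilon^m$.

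With $\|h_x\|_{L^\infty}$ and the variance of $h_x$ both of order $\epsilon^{-m}$, Theorem \ref{Ber} applied to $h_x$ for fixed $x$, with threshold chosen proportional to $n\gamma^{1/2}\delta$, yields
\[
\left|\frac{1}{n-1}\sum_{j \neq i} h_{x}(x_j) - \widetilde\rho_\eta(x)\right| \leq C\gamma^{1/2}\delta
\]
with probability at least $1 - 2\exp(-C'n\gamma\epsilon^m\delta^2)$; the hypothesis $\gamma^{1/2}\delta \leq 1$ ensures the variance term dominates the linear term inside the Bernstein exponent. Using the same Fubini-style conditioning as in the proof of Lemma \ref{etaBer} — integrate the conditional estimate against $\rho(x)\,dx$ so that $x$ may be taken to be the random $x_i$ — and then taking a union bound over $i\in\{1,\ldots,n\}$, the failure probability is at most $2n\exp(-C'n\gamma\epsilon^m\delta^2)$.

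Finally, I would account for the previously excluded diagonal term $j=i$. It contributes $h_{x_i}(x_i)/n = \eta(0)/(n\epsilon^m\widetilde\sigma_\eta) \leq C/(n\epsilon^m)$ to $\widetilde D_i$, and the discrepancy between the $1/(n-1)$ normalization used above and the $1/n$ normalization in $\widetilde D_i$ is of similar order. Both are negligible: the claim is nontrivial only when $n\gamma\epsilon^m\delta^2 \gtrsim 1$, in which case $1/(n\epsilon^m) \leq C\gamma\delta^2 \leq C\gamma^{1/2}\delta$ by $\gamma^{1/2}\delta \leq 1$. Each step follows the template already established for Lemma \ref{probt}, so I do not anticipate a genuine obstacle; the only new ingredient — and the place where the $\epsilon \geq \tau$ hypothesis enters — is the Euclidean-to-intrinsic support containment used in the variance computation.
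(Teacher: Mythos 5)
Your proposal is correct and takes essentially the same approach as the paper, which simply says to apply the Bernstein inequality to $\eta(\|\iota(x_i)-\iota(\cdot)\|/\epsilon)$ for each $i$. You have correctly filled in the standard details (variance bound via the Euclidean-to-intrinsic support containment that uses $\epsilon\geq\tau$, the Fubini-style conditioning from the proof of Lemma \ref{etaBer}, the union bound over $i$, and the absorption of the diagonal/normalization discrepancy using $1/(n\epsilon^m)\leq\gamma\delta^2\leq\gamma^{1/2}\delta$ on the nontrivial event).
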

By Corollary \ref{LinfBer}, Lemma \ref{retavsr}, \ref{retavstreta} and \ref{tDvstr}, we get the following.
Note that we apply Corollary \ref{LinfBer} to $f\widetilde\rho_\eta^{1/2}$.
\begin{Lem}\label{Nl2tDf}
Suppose that the pair $((M,g),\iota)$ satisfies Assumption \ref{Asu2} $(a)$, and we are given functions $\eta,\rho$ satisfying Assumption \ref{Asu3}.
Then, there exists constant $C=C(m,K,L,\eta,\alpha,L_\rho)>0$  such that for
any $f_1,\ldots, f_k\in L^\infty(M)$ $($$k\in\Z_{>0}$$)$, $\epsilon\in[\tau,\min\{i_0,\pi/(2\sqrt K)\})$, $\gamma\in(1,\infty)$ and $\delta\in(0,\gamma^{-1/2}]$ and i.i.d. sample $x_1,\ldots,x_n\in M$ from $\rho\Vol_g$, we have
$$
\left|\frac{1}{n}\sum_{i=1}^n f^2(x_i)\widetilde\D_i-\int_M f^2\rho^2 \,d\Vol_g\right|\leq Ck\max_{s}\{\|f_s\|^2_{L^\infty}\}\left(\epsilon+\frac{\tau}{\epsilon}+\gamma^{1/2}\delta\right)
$$
for every $f=\sum_{s=1}^k a_s f_s$ $($$a_s\in \R$ with $\sum_{s=1}^k a_s^2=1$$)$
with probability at least $1-\left(k(k+1)+n\right)\exp\left(-n \gamma\epsilon^m\delta^2\right)$.
\end{Lem}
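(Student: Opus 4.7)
The plan is to telescope the error into four pieces, each controllable by one of the four quoted results. I write
\begin{align*}
\frac{1}{n}\sum_{i=1}^n f^2(x_i)\widetilde\D_i - \int_M f^2\rho^2\,d\Vol_g = A_1 + A_2 + A_3 + A_4,
\end{align*}
where $A_1 := \frac{1}{n}\sum_i f^2(x_i)(\widetilde\D_i - \widetilde\rho_\eta(x_i))$ replaces the random weights by their deterministic analogue, $A_2 := \frac{1}{n}\sum_i f^2(x_i)\widetilde\rho_\eta(x_i) - \int_M f^2\widetilde\rho_\eta\rho\,d\Vol_g$ is the empirical-to-integral fluctuation, $A_3 := \int_M f^2(\widetilde\rho_\eta - \rho_\eta)\rho\,d\Vol_g$ switches from the extrinsic kernel density $\widetilde\rho_\eta$ to the intrinsic one $\rho_\eta$, and $A_4 := \int_M f^2(\rho_\eta - \rho)\rho\,d\Vol_g$ is the remaining smoothing bias.

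For $A_1$ I invoke Lemma \ref{tDvstr}, which yields $|\widetilde\D_i - \widetilde\rho_\eta(x_i)| \leq C\gamma^{1/2}\delta$ uniformly in $i$ with probability at least $1-2n\exp(-n\gamma\epsilon^m\delta^2)$; combined with $|f|^2 \leq k\max_s|f_s|^2$ for any admissible combination, this gives $|A_1|\leq Ck\max_s\|f_s\|_{L^\infty}^2\gamma^{1/2}\delta$. For $A_2$ I apply Corollary \ref{LinfBer} to the family $\{f_s\widetilde\rho_\eta^{1/2}\}_{s=1}^k \subset L^\infty(\rho\Vol_g)$ with parameter $\widetilde\delta := \delta$; since $f\widetilde\rho_\eta^{1/2}=\sum_s a_s(f_s\widetilde\rho_\eta^{1/2})$ and $(f\widetilde\rho_\eta^{1/2})^2 = f^2\widetilde\rho_\eta$, this yields $|A_2|\leq Ck\max_s\|f_s\widetilde\rho_\eta^{1/2}\|_{L^\infty}^2\gamma^{1/2}\delta$ on an event of probability at least $1 - k(k+1)\exp(-3n\gamma\delta^2/2) \geq 1 - k(k+1)\exp(-n\gamma\epsilon^m\delta^2)$. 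Terms $A_3$ and $A_4$ are purely deterministic: Lemma \ref{retavstreta} gives $|A_3| \leq \alpha\max f^2\cdot C(\epsilon + \tau/\epsilon)$, and Lemma \ref{retavsr} together with $\int\rho^2\,d\Vol_g \leq \alpha$ yields $|A_4| \leq C\alpha\epsilon\max f^2$, so combined $|A_3|+|A_4|\leq Ck\max_s\|f_s\|_{L^\infty}^2(\epsilon+\tau/\epsilon)$. Intersecting the two probabilistic events and summing the four bounds gives the claim.

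The only non-routine preliminary, and in my view the main technical step, is the pointwise bound $\|\widetilde\rho_\eta\|_{L^\infty} \leq C(m,K,L,\eta,\alpha)$ needed to reduce $\|f_s\widetilde\rho_\eta^{1/2}\|_{L^\infty}^2$ to $\|f_s\|_{L^\infty}^2$ in the $A_2$ estimate. I would obtain this by repeating the geodesic covering argument used for (\ref{er3}): Lemma \ref{fund} (iii) shows that $\{y \in M : \|\iota(x) - \iota(y)\| \leq \epsilon\}$ lies in the exponential image at $x$ of radii $\leq (1 + C_L S)\epsilon + \tau \leq C\epsilon$ under the standing hypothesis $\tau \leq \epsilon$, and volume comparison (Theorem \ref{BishopGromov} (ii)) bounds the Riemannian volume of this set by $C\epsilon^m$, so $\widetilde\rho_\eta(x) \leq C\eta(0)\alpha/\widetilde\sigma_\eta$. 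Everything else is bookkeeping of constants and union bounds over the two failure events.
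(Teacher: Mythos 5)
Your decomposition into $A_1,\dots,A_4$ and the assignment of Lemma \ref{tDvstr}, Corollary \ref{LinfBer} (applied to $\{f_s\widetilde\rho_\eta^{1/2}\}$), Lemma \ref{retavstreta}, and Lemma \ref{retavsr} to the four pieces is exactly the combination the paper indicates, and the Cauchy--Schwarz step $|f|^2\le k\max_s f_s^2$ is sound, so the structure is correct and matches the intended argument.

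Two small repairs are needed. First, the probability accounting for $A_2$: you take $\widetilde\delta=\delta$ in Corollary \ref{LinfBer} and then assert $\exp(-3n\gamma\delta^2/2)\leq\exp(-n\gamma\epsilon^m\delta^2)$, which amounts to $\epsilon^m\leq 3/2$. The hypothesis only gives $\epsilon<\min\{i_0,\pi/(2\sqrt K)\}$, so $\epsilon$ (and hence $\epsilon^m$) can exceed $3/2$. The standard fix, used elsewhere in the paper (e.g.\ the proof of Lemma \ref{mainu}), is to take $\widetilde\delta=\epsilon^{m/2}\delta$; the exponent becomes $-3n\gamma\epsilon^m\delta^2/2\leq-n\gamma\epsilon^m\delta^2$, the error becomes $C k\gamma^{1/2}\epsilon^{m/2}\delta$ where $\epsilon^{m/2}\leq(\pi/(2\sqrt K))^{m/2}$ is absorbed into the constant, and the side condition $\gamma^{1/2}\widetilde\delta\leq1$ can be dropped because if it fails the asserted bound is trivially true.

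Second, the $L^\infty$ bound on $\widetilde\rho_\eta$ does not need Lemma \ref{fund} (iii), and as written your invocation of it is not quite correct: (iii) gives $l\leq(1+C_L S_\gamma)\|\iota(\gamma(l))-\iota(\gamma(0))\|+\tau$ with $S_\gamma=\int_0^l|II|\circ\gamma$ along that specific geodesic, not the global $L^1$ bound $S$; there is no pointwise control of $S_\gamma$ in terms of $S$. The clean route is simply Assumption \ref{Asu2}~(iv) together with $\tau\leq\epsilon$: $\{y:\|\iota(x)-\iota(y)\|<\epsilon\}\subset B_{(L+1)\epsilon}(x)$, and Bishop--Gromov gives $\Vol_g\left(B_{(L+1)\epsilon}(x)\right)\leq C\epsilon^m$, whence $\widetilde\rho_\eta\leq C(m,K,L,\eta(0),\alpha)$. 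With these two adjustments the argument closes.
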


By Lemma \ref{UPP}, Lemma \ref{Nl2tDf}, Proposition \ref{LinfEF} and \ref{GradEF}, we can apply Lemma \ref{evalcomp0}, and get the following similarly to Theorem \ref{mainu}.
\begin{Lem}\label{NUPP}
Suppose that the pair $((M,g),\iota)$ satisfies Assumption \ref{Asu2} $(a)$, and we are given functions $\eta,\rho$ satisfying Assumption \ref{Asu3}.
Take arbitrary $k\in \Z_{>0}$.
Then, there exist constants $C_1=C_1(m,K,L,\eta,\alpha,L_\rho,k)>0$ and $C_2=C_2(m,S,K,i_0,L,\eta,\alpha,L_\rho,k)>0$ such that, for any $\epsilon\in[\tau,i_0)$, $\gamma\in(1,\infty)$, $\delta\in(0,\infty)$ with
$$\epsilon+\frac{\tau}{\epsilon}+\gamma^{1/2}\delta\leq C_1^{-1}$$
and i.i.d. sample $x_1,\ldots, x_n\in M$ from $\rho\Vol_g$,
we have
\begin{align*}
\lambda_k(B_\epsilon^N)\leq \lambda_k(\Delta_\rho^N)+C_2\left(\epsilon+\frac{\tau}{\epsilon}+\gamma^{1/2}\delta\right)
\end{align*}
with probability at least
$
1-\left((k+1)(k+2)(n+2)+n\right)\exp\left(-n\gamma \epsilon^m\delta^2\right).
$
\end{Lem}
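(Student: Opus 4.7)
The plan is to imitate the proof of Lemma \ref{mainu} with all inner products and quadratic forms adapted to the normalized setting, and then apply the abstract Rayleigh-quotient comparison Lemma \ref{evalcomp0}. Concretely, I would take $H_1 := \Span\{f_0^N, \ldots, f_k^N\} \subset W^{1,2}(M)$ equipped with $\langle\cdot,\cdot\rangle_{\rho^2}$ and the Dirichlet form $D_1^N(f,h) := \int_M \langle\nabla f,\nabla h\rangle \rho^2\,d\Vol_g$, and $H_2 := \R^n$ equipped with $\langle\cdot,\cdot\rangle_{\widetilde D}$ and the form $\sigma_\eta B_\epsilon$. Since $\Delta_\rho^N$ is self-adjoint on $L^2(\rho^2\Vol_g)$ with Dirichlet form $D_1^N$, the first $k+1$ eigenvalues of $D_1^N$ on $H_1$ are exactly $\lambda_0(\Delta_\rho^N), \ldots, \lambda_k(\Delta_\rho^N)$, while the $j$-th eigenvalue of $\sigma_\eta B_\epsilon$ on $H_2$ is $\sigma_\eta \lambda_j(B_\epsilon^N)$ by construction.

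First I would bound the test functions uniformly: Remark \ref{RemA}~(e)--(f) supplies $\diam_g(M) \leq C(m,K,i_0,\alpha)$ and $\lambda_k(\Delta_\rho^N) \leq C(m,K,\alpha,k)$, so Propositions \ref{LinfEF} and \ref{GradEF} yield $\|f_s^N\|_{L^\infty} + \Lip(f_s^N) \leq C(m,K,i_0,\alpha,L_\rho,k)$ for $s=0,\ldots,k$. With these bounds in hand, I would apply Lemma \ref{UPP} to the family $\{f_s^N\}$ to control the numerator discretization, obtaining
\begin{equation*}
\sigma_\eta B_\epsilon(Q_1 f) \leq \sigma_\eta\int_M|\nabla f|^2\rho^2\,d\Vol_g + C\bigl(\epsilon+\tau/\epsilon+\gamma^{1/2}\delta\bigr)
\end{equation*}
uniformly over unit combinations $f = \sum_s a_s f_s^N$, and Lemma \ref{Nl2tDf} to control the denominator,
\begin{equation*}
\bigl|\langle Q_1 f, Q_1 f\rangle_{\widetilde D} - \langle f,f\rangle_{\rho^2}\bigr| \leq C\bigl(\epsilon+\tau/\epsilon+\gamma^{1/2}\delta\bigr),
\end{equation*}
uniformly over the same $f$. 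Summing the failure probabilities of the two concentration bounds recovers the announced probability $1-\bigl((k+1)(k+2)(n+2)+n\bigr)\exp(-n\gamma\epsilon^m\delta^2)$.

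On the good event, the denominator estimate combined with $\langle f,f\rangle_{\rho^2}=1$ (from orthonormality of the $f_s^N$) and the smallness assumption $\epsilon+\tau/\epsilon+\gamma^{1/2}\delta \leq C_1^{-1}$ guarantees that $Q_1 \colon H_1 \to H_2$ is injective, so $Q_1(H_1) \subset \R^n$ provides a legitimate $(k+1)$-dimensional test subspace in the min-max characterization of $\sigma_\eta\lambda_k(B_\epsilon^N)$. Feeding the pair of estimates into Lemma \ref{evalcomp0} then yields $\sigma_\eta\lambda_k(B_\epsilon^N) \leq \sigma_\eta\lambda_k(\Delta_\rho^N) + C(\epsilon+\tau/\epsilon+\gamma^{1/2}\delta)$, and dividing by $\sigma_\eta$ gives the lemma. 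The main subtlety is the bookkeeping: the discrete denominator uses the random weight $\widetilde D$ rather than $\rho$, and the identification $\frac{1}{n}\sum_i f(x_i)^2 \widetilde D_i \approx \int_M f^2\rho^2\,d\Vol_g$ (one factor of $\rho$ from $\widetilde D_i \approx \widetilde\rho_\eta(x_i) \approx \rho(x_i)$, another from the sampling measure $\rho\Vol_g$) is precisely what makes $\langle\cdot,\cdot\rangle_{\rho^2}$ the correct continuous inner product to pair with the normalized Dirichlet form. Once this is recognized, the argument is a direct transcription of the proof of Lemma \ref{mainu}.
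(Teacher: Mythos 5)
Your proposal matches the paper's own (very terse) proof: apply Propositions \ref{LinfEF} and \ref{GradEF} to bound the eigenfunctions $f_0^N,\ldots,f_k^N$ in $L^\infty$ and in Lipschitz norm, use Lemma \ref{UPP} for the Dirichlet-form discretization, use Lemma \ref{Nl2tDf} for the denominator comparison $\frac{1}{n}\sum_i f(x_i)^2\widetilde\D_i\approx\int_M f^2\rho^2\,d\Vol_g$, and feed the two uniform estimates into Lemma \ref{evalcomp0} exactly as in Lemma \ref{mainu}; your probability count $(k+1)(k+2)(n+1)+(k+1)(k+2)+n=(k+1)(k+2)(n+2)+n$ is also correct. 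One small bookkeeping slip: since you set $D_1=D_1^N$ (eigenvalues $\lambda_{j-1}(\Delta_\rho^N)$) on $H_1$, the consistent choice on $H_2$ is $D_2=B_\epsilon$, giving $\lambda_k(B_\epsilon^N)\leq\lambda_k(\Delta_\rho^N)+E$ directly; the $\sigma_\eta$ factors in your final display should either both appear (by replacing $D_1^N$ with $\sigma_\eta D_1^N$) or both be dropped.
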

\begin{Lem}\label{preLuvsu}
Suppose that the pair $((M,g),\iota)$ satisfies Assumption \ref{Asu2}, and we are given functions $\eta,\rho$ satisfying Assumption \ref{Asu3}.
Then, there exist constants $C_1=C_1(m,\alpha)>0$ and $C_2=C_2(m,K,L,\eta,\alpha,L_\rho)>0$ such that the following properties holds.
For any $\epsilon\in[\tau,2i_0/3)$, $\gamma\in(1,\infty)$ and $\delta\in(0,\infty)$ with
$$
\epsilon+\gamma^{1/2} \delta\leq C_2^{-1},
$$
and i.i.d. sample $x_1,\ldots, x_n\in M$ from $\rho\Vol_g$,
we have
\begin{align*}
&\left|\|\Lambda_\epsilon u\|_{\rho\widetilde\rho_\eta}^2-\|u\|_{\widetilde\rho_\eta|_{\X}}^2\right|\\
\leq& C_2\left((\epsilon+\gamma^{1/2}\delta)\|u\|_{\widetilde\rho_\eta|_{\X}}+\epsilon b_\epsilon(u)^{1/2}\right)(\|u\|_{\widetilde\rho_\eta|_{\X}}+\epsilon b_\epsilon(u)^{1/2} )
\end{align*}
for every $u\in \R^n$ with probability at least $1-C_1(\epsilon^{-2m-1}+n^2)\exp\left(-n\gamma \epsilon^m\delta^2\right)$.
\end{Lem}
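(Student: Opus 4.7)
The plan is to follow the proof of Lemma \ref{UNl2comp} almost verbatim, carrying the extra weight $\widetilde\rho_\eta$ through each step. A preliminary observation makes this painless: under Assumption \ref{Asu2}, the monotonicity of $\eta$ together with Lemma \ref{fund} yields $\widetilde\rho_\eta(x)\geq \rho_\eta(x)$, while volume comparison on $B(x,(L+1)\epsilon)$ gives $\widetilde\rho_\eta(x)\leq C\rho(x)$; combined with Lemma \ref{retavsr}, for $\epsilon+\gamma^{1/2}\delta$ sufficiently small there is a constant $C=C(m,K,L,\eta,\alpha)$ with $1/C\leq \widetilde\rho_\eta(x)\leq C$. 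Consequently the weighted norms $\|\cdot\|_{\rho\widetilde\rho_\eta}$ and $\|\cdot\|_{\widetilde\rho_\eta|_{\X}}$ are equivalent to $\|\cdot\|_1$ and $\|\cdot\|_2$ up to constants, allowing us to recycle the estimates of Lemma \ref{UNl2comp}.

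Concretely, I would first reproduce the pointwise inequality $(\Lambda_\epsilon u(x_i)-u_i)^2 \leq \frac{C}{n\epsilon^m}\sum_j \eta(d(x_i,x_j)/\epsilon)(u_i-u_j)^2$ from the proof of Lemma \ref{UNl2comp}; multiplying by $\widetilde\rho_\eta(x_i)/n\leq C/n$ and summing in $i$ then gives $\|\Lambda_\epsilon u|_{\X}-u\|_{\widetilde\rho_\eta|_{\X}}\leq C\epsilon\, b_\epsilon(u)^{1/2}$. Next, with the same auxiliary function
$$
A(x):=\frac{1}{n\rho(x)\epsilon^m\sigma_\eta}\sum_{i=1}^n \psi\!\left(\frac{d(x,x_i)}{\epsilon}\right)u_i,
$$
the pointwise estimate of Lemma \ref{UNl2comp} on $|\Lambda_\epsilon u(x)-A(x)|$ remains valid; integrating it against $\rho\widetilde\rho_\eta\,d\Vol_g$ (which is bounded by $C\rho$) and, respectively, summing it at the sample points weighted by $\widetilde\rho_\eta|_{\X}$, using Lemma \ref{theta}(i) and Lemma \ref{probt}(i) to control the weight $\tfrac{1}{n}\sum_i \psi(d(\cdot,x_i)/\epsilon)$, I obtain
\begin{align*}
\|\Lambda_\epsilon u - A\|_{\rho\widetilde\rho_\eta} &\leq C(\epsilon+\gamma^{1/2}\delta)\|u\|_{\widetilde\rho_\eta|_{\X}},\\
\|(\Lambda_\epsilon u - A)|_{\X}\|_{\widetilde\rho_\eta|_{\X}} &\leq C(\epsilon+\gamma^{1/2}\delta)\|u\|_{\widetilde\rho_\eta|_{\X}}.
\end{align*}

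The core step is the comparison of $\|A|_{\X}\|_{\widetilde\rho_\eta|_{\X}}^2$ with $\|A\|_{\rho\widetilde\rho_\eta}^2$, which proceeds exactly as at (\ref{psicomp})--(\ref{AXvsA}) in the proof of Lemma \ref{UNl2comp} but with the integrand $\psi(d(x,x_i)/\epsilon)\psi(d(x,x_j)/\epsilon)/\rho(x)^2$ replaced by the same expression multiplied by $\widetilde\rho_\eta(x)$. Since $\widetilde\rho_\eta$ is deterministic (it depends only on $M$, $\iota$, $\rho$ and not on the sample) and bounded, the $L^\infty$ and $L^2$ bounds on this integrand are of the same order as before, so a conditional Bernstein inequality for each pair $(i,j)$, combined with the cardinality estimate (\ref{card2e}) and $1/(n\epsilon^m)\leq C\gamma^{1/2}\delta$, yields
$$
\left|\|A|_{\X}\|_{\widetilde\rho_\eta|_{\X}}^2 - \|A\|_{\rho\widetilde\rho_\eta}^2\right| \leq C\gamma^{1/2}\delta\,\|u\|_{\widetilde\rho_\eta|_{\X}}^2
$$
on a set of the required probability. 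A final triangle inequality on $\|\Lambda_\epsilon u\|_{\rho\widetilde\rho_\eta}^2-\|u\|_{\widetilde\rho_\eta|_{\X}}^2$ assembles the four pieces into the stated bound.

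No genuinely new obstacle arises; the only point requiring care is the bookkeeping at the interface between discrete and continuous weighted norms, namely that because $\widetilde\rho_\eta$ is pinched between positive constants, the error terms $C(\epsilon+\gamma^{1/2}\delta)\|u\|_2$ from the arguments of Lemma \ref{UNl2comp} translate into $C(\epsilon+\gamma^{1/2}\delta)\|u\|_{\widetilde\rho_\eta|_{\X}}$, matching the form of the inequality to be proved.
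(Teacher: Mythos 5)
Your proposal is correct and follows essentially the same route as the paper: establish that $\widetilde\rho_\eta$ is pinched between positive constants (via $\rho_\eta\leq\widetilde\rho_\eta$ from $\|\iota(x)-\iota(y)\|\leq d(x,y)$ and monotonicity of $\eta$, the upper bound from volume comparison, and Lemma \ref{retavsr}), transfer the three estimates $(\ref{Luvsu})$, $(\ref{LuvsA})$, $(\ref{LuvsAX})$ to the weighted norms, and rerun the Bernstein step $(\ref{psicomp})$--$(\ref{AXvsA})$ with the extra deterministic bounded factor $\widetilde\rho_\eta$ in the integrand. The only cosmetic difference is that the paper invokes norm equivalence to cite the unweighted estimates directly, while you re-derive them carrying the weight through; both are sound and lead to the same conclusion.
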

\begin{proof}
Put $\X=\{x_1,\ldots,x_n\}$.
Since we have
$\|\iota(x)-\iota(y)\|\leq d(x,y)\leq L\|\iota(x)-\iota(y)\|+\epsilon$ for any $x,y \in M$, we get
$\rho_\eta\leq \widetilde\rho_\eta\leq C$.
Moreover, by Lemma \ref{retavsr}, we can assume that $\rho_\eta\geq \rho/2$.
Thus, we have
$$
C^{-1}\|v\|_2\leq\|v\|_{\widetilde\rho_\eta|_{\X}}\leq C \|v\|_2
$$
for any $v\in \R^n$, and
$$
C^{-1}\|f\|_1\leq\|f\|_{\rho\widetilde\rho_\eta}\leq C\|f\|_1
$$
for any $f\in L^2(M)$.
Therefore, we immediately get
\begin{equation*}
\begin{split}
\left\|\Lambda_\epsilon u|_{\X}-u\right\|_{\widetilde\rho_\eta|_{\X}}\leq& C\epsilon b_\epsilon(u)^{1/2},\\
\left\|\Lambda_\epsilon u-A\right\|_{\rho\widetilde\rho_\eta}\leq& C(\epsilon+\gamma^{1/2}\delta)\|u\|_{\widetilde\rho_\eta|_{\X}},\\
\left\|\Lambda_\epsilon u|_{\X}-A|_{\X}\right\|_{\widetilde\rho_\eta|_{\X}}\leq& C(\epsilon+\gamma^{1/2}\delta)\|u\|_{\widetilde\rho_\eta|_{\X}}.
\end{split}
\end{equation*}
by (\ref{Luvsu}), (\ref{LuvsA}) and (\ref{LuvsAX}),
where $A$ is a function defined in the proof of Lemma \ref{UNl2comp}.
Applying the Bernstein inequality to the function
\begin{equation*}
\psi\left(\frac{d(\cdot,x_i)}{\epsilon}\right)\psi\left(\frac{d(\cdot,x_j)}{\epsilon}\right)\frac{\widetilde\rho_\eta(\cdot)}{\rho(\cdot)^2}
\end{equation*}
for each $i,j$,
we get the inequality corresponding to (\ref{AXvsA}).
The rest of the proof is the same as Lemma \ref{UNl2comp}. 
\end{proof}

\begin{Lem}\label{NLOW}
Suppose that the pair $((M,g),\iota)$ satisfies Assumption \ref{Asu2} $(a)$, and we are given functions $\eta,\rho$ satisfying Assumption \ref{Asu3}.
Take arbitrary $k\in \Z_{>0}$.
Then, there exist constants $C_1=C_1(m,\alpha)>0$ and $C_2=C_2(m,S,K,i_0,L,\eta,\alpha,L_\rho,k)>0$ such that, for any
$\epsilon\in(\tau,2i_0 /3)$, $\gamma\in(1,\infty)$ and $\delta\in(0,\infty)$ with
$$
\frac{\tau}{\epsilon}+\epsilon+\gamma^{1/2} \delta\leq C_2^{-1}
$$
and i.i.d. sample $x_1,\ldots,x_n\in M$ from $\rho\Vol_g$, we have the following with probability at least $1-C_1(\epsilon^{-2m-1}+n^2)\exp\left(-n\gamma \epsilon^m\delta^2\right)$.
We have
$$
\lambda_k(\Delta_\rho^N)\leq \lambda_k(B_\epsilon^N)+C_2\left(\epsilon+\frac{\tau}{\epsilon}+\gamma^{1/2}\delta\right).
$$
and
$$
\left|\int_M|\Lambda_\epsilon u|^2\rho^2\, d\Vol_g-\frac{1}{n}\sum_{i=1}^n u_i^2 \widetilde D_i\right|\leq C_2\left(\epsilon+\frac{\tau}{\epsilon}+\gamma^{1/2}\delta\right)\frac{1}{n}\sum_{i=1}^n u_i^2 \widetilde D_i
$$
for any $u\in\Span\{u^{N,0},u^{N,1},\ldots,u^{N,k}\}$.
\end{Lem}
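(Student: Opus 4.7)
The strategy is a normalized-setting analogue of Lemma~\ref{mainl}. If $\lambda_k(\Delta_\rho^N)\le \lambda_k(B_\epsilon^N)$ the first inequality holds trivially, so I may assume $\lambda_k(B_\epsilon^N)\le C(m,K,\alpha,k)$ by Lemma~\ref{NUPP} and Remark~\ref{RemA}(f). Setting $W:=\Span\{u^{N,0},\ldots,u^{N,k}\}$, this yields $b_\epsilon(u)\le B_\epsilon(u)\le C\|u\|_{\widetilde D}^2$ on $W$, and Lemma~\ref{UDcomp} combined with the pointwise inequality $\eta(d(x_i,x_j)/\epsilon)\le \eta(\|\iota(x_i)-\iota(x_j)\|/\epsilon)$ then gives the Dirichlet-form control
\[
\int_M|\nabla \Lambda_\epsilon u|^2\rho^2\,d\Vol_g\le \bigl(1+C(\epsilon+\gamma^{1/2}\delta)\bigr)B_\epsilon(u)\qquad (u\in W).
\]
Once the second inequality is available, I will apply Lemma~\ref{evalcomp}(i) to $D_1^N(f)=\int|\nabla f|^2\rho^2\,d\Vol_g$ on $H_1:=\Span\{f_0^N,\ldots,f_k^N\}+\Lambda_\epsilon(\R^n)$ with inner product $\langle\cdot,\cdot\rangle_{\rho^2}$, and to $D_2^N(u)=B_\epsilon(u)$ on $\R^n$ with inner product $\langle\cdot,\cdot\rangle_{\widetilde D}$, using $Q_2=\Lambda_\epsilon$; the second inequality supplies both the norm compatibility and the injectivity of $Q_2|_W$ needed to close the minimax argument, yielding the first inequality.

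The heart of the proof is the second inequality. I decompose
\[
\int_M|\Lambda_\epsilon u|^2\rho^2\,d\Vol_g - \frac{1}{n}\sum_{i=1}^n u_i^2\widetilde D_i = T_1+T_2+T_3,
\]
with $T_3:=\|u\|_{\widetilde\rho_\eta|_{\X}}^2-\tfrac{1}{n}\sum u_i^2\widetilde D_i$, $T_2:=\int|\Lambda_\epsilon u|^2\rho\widetilde\rho_\eta\,d\Vol_g-\|u\|_{\widetilde\rho_\eta|_{\X}}^2$, and $T_1:=\int|\Lambda_\epsilon u|^2\rho(\rho-\widetilde\rho_\eta)\,d\Vol_g$. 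Lemma~\ref{tDvstr} together with the uniform lower bound $\widetilde D_i\ge 1/(2\alpha)$ (coming from $\widetilde\rho_\eta\ge \rho_\eta/2\ge 1/(4\alpha)$) immediately gives $|T_3|\le C\gamma^{1/2}\delta\,\|u\|_{\widetilde D}^2$, while Lemma~\ref{preLuvsu} combined with $b_\epsilon(u)\le C\|u\|_{\widetilde D}^2$ yields $|T_2|\le C(\epsilon+\gamma^{1/2}\delta)\|u\|_{\widetilde D}^2$.

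The main obstacle is $T_1$, because $\rho-\widetilde\rho_\eta$ splits into a pointwise-small piece $\rho-\rho_\eta$ (Lemma~\ref{retavsr}) and a piece $\rho_\eta-\widetilde\rho_\eta$ that is only small in $L^1$ (Lemma~\ref{retavstreta}), while $\Lambda_\epsilon u$ possesses no a priori $L^\infty$ bound on $W$. My plan is to exploit the self-adjointness of the integral kernels defining $\rho_\eta$ and $\widetilde\rho_\eta$: setting $h:=|\Lambda_\epsilon u|^2\rho$, Fubini's theorem gives
\[
\int_M|\Lambda_\epsilon u|^2\rho\widetilde\rho_\eta\,d\Vol_g = \frac{1}{\epsilon^m\widetilde\sigma_\eta}\int_M\rho(y)\int_M\eta(\|\iota(x)-\iota(y)\|/\epsilon)h(x)\,d\Vol_g(x)\,d\Vol_g(y),
\]
and the analogous identity with $d(x,y)$ for $\int|\Lambda_\epsilon u|^2\rho\rho_\eta\,d\Vol_g$, so that $T_1$ is expressed as the sum of an intrinsic mollification error of $h$ integrated against $\rho$ and an extrinsic-versus-intrinsic kernel error. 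Applying the geodesic-flow and second-fundamental-form manipulations of Lemmas~\ref{extvsint}, \ref{nlcvslc} and \ref{retavsr}—now to $h$ rather than $(f(x)-f(y))^2$—bounds both contributions by $C(\epsilon+\tau/\epsilon)(\|h\|_{L^1}+\|\nabla h\|_{L^1})$. Finally, $|\nabla h|\le 2|\Lambda_\epsilon u||\nabla\Lambda_\epsilon u|\rho+|\Lambda_\epsilon u|^2|\nabla\rho|$, so Cauchy--Schwarz together with the $W^{1,2}$ bounds $\int|\Lambda_\epsilon u|^2\,d\Vol_g\le C\|u\|_{\widetilde D}^2$ (from the $T_2,T_3$ estimates just established) and $\int|\nabla\Lambda_\epsilon u|^2\rho^2\,d\Vol_g\le C\|u\|_{\widetilde D}^2$ (from the first paragraph) give $\|h\|_{L^1}+\|\nabla h\|_{L^1}\le C\|u\|_{\widetilde D}^2$, closing the estimate $|T_1|\le C(\epsilon+\tau/\epsilon)\|u\|_{\widetilde D}^2$. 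The technically hardest step is transferring the $L^1$-only control on $\widetilde\rho_\eta-\rho_\eta$ onto $h$ by the Fubini/self-adjointness trick without losing a factor of $\|h\|_{L^\infty}$; the mollification-error calculations that make this possible mirror the curvature-and-second-fundamental-form estimates already used in (\ref{er2})--(\ref{er3}).
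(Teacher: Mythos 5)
Your overall decomposition $T_1+T_2+T_3$ is correct, and your handling of $T_2$ and $T_3$ via Lemmas~\ref{preLuvsu}, \ref{tDvstr} and \ref{retavsr} is fine, as is your reduction of the eigenvalue inequality to the $L^2$-compatibility via Lemmas~\ref{evalcomp} and \ref{UDcomp}. However, the crucial step — your claimed bound
$|T_1|\le C(\epsilon+\tau/\epsilon)(\|h\|_{L^1}+\|\nabla h\|_{L^1})$
with $h=|\Lambda_\epsilon u|^2\rho$ — does not hold, and this is exactly the point the paper has to work to overcome. After Fubini, the dominant contribution to $T_1$ is of the form
$\int_M h(x)\big(\int_{U_xM} S_u\,du\big)\rho(x)\,dx$,
where $S_u:=\int_0^\epsilon |II|\circ\gamma_u(t)\,dt$; the weight $\int_{U_xM}S_u\,du$ is controlled only in $L^1$ (via $\int_M|II|\,d\Vol_g\le S$), not pointwise, and there is no difference $(f(x)-f(y))$ in the integrand to produce the small factor of $d(x,y)$ that drives \eqref{er2}--\eqref{er3}. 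Pairing an $L^1$ function against $h$ requires an $L^\infty$ bound on $h$, or, via H\"older and interpolation between $\|\rho_\eta-\widetilde\rho_\eta\|_{L^1}\le C(\epsilon+\tau/\epsilon)$ and $\|\rho_\eta-\widetilde\rho_\eta\|_{L^\infty}\le C$, a bound of the form $C(\epsilon+\tau/\epsilon)^{1/p}\|h\|_{L^{p'}}$. With $W^{1,1}$ control on $h$ the Sobolev embedding gives $\|h\|_{L^{m/(m-1)}}$, hence only the fractional rate $(\epsilon+\tau/\epsilon)^{1/m}$; using $W^{1,2}$ and $L^{2\mu}$, $\mu=m/(m-2)$, one gets $(\epsilon+\tau/\epsilon)^{(\mu-1)/\mu}$. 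Neither is the linear rate your write-up asserts.

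The paper closes this gap with a bootstrap that you do not reproduce. It first derives the weaker estimate $\xi\le C\big((\epsilon+\tau/\epsilon)^{(\mu-1)/\mu}+\epsilon+\gamma^{1/2}\delta\big)$ exactly as above, using the Sobolev inequality \eqref{Sobolev} and H\"older \eqref{holder}. It then splits $\Lambda_\epsilon u=\Pj\Lambda_\epsilon u+(1-\Pj)\Lambda_\epsilon u$, where $\Pj$ projects onto the low-eigenvalue eigenspace of $\Delta_\rho^N$: the eigenfunction part $\Pj\Lambda_\epsilon u$ has an $L^\infty$ bound by Proposition~\ref{LinfEF}, so its $T_1$-type contribution is $C(\epsilon+\tau/\epsilon)\|\Lambda_\epsilon u\|_{\rho^2}^2$ at the full rate; the remainder $(1-\Pj)\Lambda_\epsilon u$ has small $W^{1,2}$ norm, of order $\xi+\epsilon+\tau/\epsilon+\gamma^{1/2}\delta$, by Lemma~\ref{evalcomp}, and its contribution carries only the suboptimal H\"older factor but multiplied by $\xi$. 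This yields the self-improving inequality $\xi\le C\big((\epsilon+\tau/\epsilon)^{(\mu-1)/\mu}\xi+\epsilon+\tau/\epsilon+\gamma^{1/2}\delta\big)$, which closes to $\xi\le C(\epsilon+\tau/\epsilon+\gamma^{1/2}\delta)$. Without this spectral decomposition and the resulting fixed-point argument your proposal cannot reach the stated rate; this is the missing idea.
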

\begin{proof}
Since we have $\diam_g(M)\leq C(m,K,i_0,\alpha)$, there exist constants $\mu=\mu(m)>1$ and $C=C(m,K,i_0,\alpha,\mu)>0$ such that
\begin{equation}\label{Sobolev}
\begin{split}
&\left(\int_M h^{2\mu}\,d \Vol_g/\Vol_g(M)\right)^{1/(2\mu)}\\
\leq& C\left(\int_M |\nabla h|^2\,d \Vol_g/\Vol_g(M)\right)^{1/2}+\left(\int_M h^2\,d \Vol_g/\Vol_g(M)\right)^{1/2}
\end{split}
\end{equation}
holds for any $h\in W^{1,2}(M)$ by the Sobolev inequality (see Proposition 7.1.13 and Proposition 7.1.17 in \cite{Pe3}).
For example, if $m\geq 3$, we can take $\mu(m)=m/(m-2)$.

For every $h\in L^{2\mu}(M)$, we have
\begin{equation}\label{holder}
\begin{split}
&\left|\int_M h^2\rho \rho_\eta\,d \Vol_g-\int_M h^2\rho \widetilde\rho_\eta\,d\Vol_g\right|\\
\leq& \left(\int_M h^{2\mu}\rho\, d\Vol_g\right)^{1/\mu}\left(\int_M |\rho_\eta-\widetilde\rho_\eta|^{\frac{\mu}{\mu-1}}\rho\,d\Vol_g\right)^{\frac{\mu-1}{\mu}}\\
\leq &C \left(\int_M h^{2\mu}\rho\, d\Vol_g\right)^{1/\mu}\left(\int_M |\rho_\eta-\widetilde\rho_\eta|\,d\Vol_g \right)^{\frac{\mu-1}{\mu}}\\
\leq &C\left(\epsilon+\frac{\tau}{\epsilon}\right)^{\frac{\mu-1}{\mu}}\left(\int_M h^{2\mu}\rho\, d\Vol_g\right)^{1/\mu}
\end{split}
\end{equation}
by the H\"{o}lder inequality and Lemma \ref{retavstreta}.

Define
$$
\xi:=\sup\left\{\frac{\left|\int_M|\Lambda_\epsilon u|^2\rho^2\, d\Vol_g-\frac{1}{n}\sum_{i=1}^n u_i^2 \widetilde D_i\right|}{\frac{1}{n}\sum_{i=1}^n u_i^2 \widetilde D_i}:u\in\Span\{u^0,u^1,\ldots,u^k\}\setminus\{0\}\right\}.
$$
We can assume that
$
\lambda_k(B_\epsilon^N)\leq \lambda_k(\Delta_\rho^N)+1\leq C(m,K,\alpha,k)
$
by Lemma \ref{NUPP}, and so for any $u\in\Span\{u^{N,0},\ldots,u^{N,k}\}$,
$$
B_\epsilon^N(u)\leq C\frac{1}{n}\sum_{i=1}^n u_i^2 \widetilde D_i.
$$
For any $u\in\Span\{u^{N,0},\ldots,u^{N,k}\}$,
we have 
$$
\int_M|\nabla \Lambda_\epsilon u|^2\rho^2\, d\Vol_g\leq \left(1+C(\epsilon+\gamma^{1/2}\delta)\right)\lambda_k(B_\epsilon^N)\frac{1}{n}\sum_{i=1}^n u_i^2 \widetilde D_i.
$$
by Lemma \ref{UDcomp}, 
and
\begin{equation}\label{onesideNLOW}
\int_M|\Lambda_\epsilon u|^2\rho^2\,d\Vol_g\leq (1+C(\epsilon+\gamma^{1/2}\delta))\frac{1}{n}\sum_{i=1}^n u_i^2 \widetilde D_i
\end{equation}
by Lemma \ref{retavsr}, \ref{tDvstr}, \ref{preLuvsu} and $\rho_\eta\leq \widetilde\rho_\eta$.
Thus, for any $u\in\Span\{u^{N,0},\ldots,u^{N,k}\}$,
we get
$$
\left(\int_M |\Lambda_\epsilon u|^{2\mu}\,d \Vol_g\right)^{1/\mu}\leq \frac{C}{n}\sum_{i=1}^n u_i^2 \widetilde D_i
$$
by (\ref{Sobolev}), and so
\begin{align*}
&\left|\int_M|\Lambda_\epsilon u|^2\rho^2\, d\Vol_g-\frac{1}{n}\sum_{i=1}^n u_i^2 \widetilde D_i\right|\\
\leq& C\epsilon \int_M|\Lambda_\epsilon u|^2\rho^2\, d\Vol_g+C\left(\epsilon+\frac{\tau}{\epsilon}\right)^{\frac{\mu-1}{\mu}}\left(\int_M |\Lambda_\epsilon u|^{2\mu}\rho\, d\Vol_g\right)^{1/\mu}\\
&\qquad\qquad\qquad+C(\epsilon+\gamma^{1/2}\delta)\frac{1}{n}\sum_{i=1}^n u_i^2 \widetilde D_i\\
\leq &C\left(\left(\epsilon+\frac{\tau}{\epsilon}\right)^{\frac{\mu-1}{\mu}}+\epsilon+\gamma^{1/2}\delta\right)\frac{1}{n}\sum_{i=1}^n u_i^2 \widetilde D_i
\end{align*}
by Lemma \ref{retavsr}, \ref{tDvstr}, \ref{preLuvsu} and (\ref{holder}).
This implies that
$$
\xi\leq C\left(\left(\epsilon+\frac{\tau}{\epsilon}\right)^{\frac{\mu-1}{\mu}}+\epsilon+\gamma^{1/2}\delta\right).
$$
In particular, we can assume that $\xi\leq 1/2$.

Let $\Pj\colon L^2(M,\langle\cdot,\cdot\rangle_{\rho^2})\to L^2(M,\langle\cdot,\cdot\rangle_{\rho^2})$ denotes the orthogonal projection to the subspace of $L^2(M,\langle\cdot,\cdot\rangle_{\rho^2})$ spanned by the eigenfunctions of $\Delta_\rho^N$ corresponding to the eigenvalues in $[0,\lambda_k(\Delta_\rho^N)+1]$.
Then, we have
\begin{equation}\label{preNEig}
\left|\lambda_j(\Delta_\rho^N)- \lambda_j(B_\epsilon^N)\right|\leq C\left(\xi+\epsilon+\frac{\tau}{\epsilon}+\gamma^{1/2}\delta\right)
\end{equation}
for any $j\in\{1,\ldots,k\}$ and
\begin{align*}
\left\|\nabla \left((1-\mathbb{P})\Lambda_\epsilon u\right)\right\|_{\rho^2}^2\leq & C\left(\xi+\epsilon+\frac{\tau}{\epsilon}+\gamma^{1/2}\delta\right) \left\|\Lambda_\epsilon u\right\|_{\rho^2}^2,\\
\left\|(1-\mathbb{P})\Lambda_\epsilon u\right\|_{\rho^2}^2\leq & C\left(\xi+\epsilon+\frac{\tau}{\epsilon}+\gamma^{1/2}\delta\right) \left\|\Lambda_\epsilon u\right\|_{\rho^2}^2
\end{align*}
by the definition of $\xi$, Lemma \ref{UPP}, \ref{UDcomp}, \ref{Nl2tDf} and \ref{evalcomp} putting $a=1$.
Since we have $\|\mathbb P\Lambda_\epsilon u\|_{L^\infty}\leq C \|\Lambda_\epsilon u\|_{\rho^2}$ by Proposition \ref{LinfEF}, we get
\begin{align*}
&\left|\int_M |\Lambda_\epsilon u|^2\rho(\rho_\eta-\widetilde \rho_\eta)\,d \Vol_g\right|^{1/2}\\
\leq & \left|\int_M |(1-\mathbb P)\Lambda_\epsilon u|^2\rho(\rho_\eta-\widetilde \rho_\eta)\,d \Vol_g\right|^{1/2}+ \left|\int_M |\mathbb P \Lambda_\epsilon u|^2\rho(\rho_\eta-\widetilde \rho_\eta)\,d \Vol_g\right|^{1/2}\\
\leq &C\left(\epsilon+\frac{\tau}{\epsilon}\right)^{\frac{\mu-1}{2 \mu}}\left(\int_M |(1-\mathbb P)\Lambda_\epsilon u|^{2\mu}\rho\, d\Vol_g\right)^{1/2\mu}+C \left(\epsilon+\frac{\tau}{\epsilon}\right)^{1/2}\|\Lambda_\epsilon u\|_{\rho^2}\\
\leq& C\left(\left(\epsilon+\frac{\tau}{\epsilon}\right)^{\frac{\mu-1}{\mu}}\xi+\epsilon+\frac{\tau}{\epsilon}+\gamma^{1/2}\delta\right)^{1/2}\left(\frac{1}{n}\sum_{i=1}^n u_i^2 \widetilde D_i\right)^{1/2}
\end{align*}
by (\ref{Sobolev}), (\ref{holder}), (\ref{onesideNLOW}) and Lemma \ref{retavstreta}.
Thus, we have
\begin{align*}
&\left|\int_M|\Lambda_\epsilon u|^2\rho^2\, d\Vol_g-\frac{1}{n}\sum_{i=1}^n u_i^2 \widetilde D_i\right|\\
\leq & C\left(\left(\epsilon+\frac{\tau}{\epsilon}\right)^{\frac{\mu-1}{\mu}}\xi+\epsilon+\frac{\tau}{\epsilon}+\gamma^{1/2}\delta\right)\frac{1}{n}\sum_{i=1}^n u_i^2 \widetilde D_i
\end{align*}
by Lemma \ref{retavsr}, \ref{tDvstr} and \ref{preLuvsu},
and so
$$
\xi\leq C\left(\left(\epsilon+\frac{\tau}{\epsilon}\right)^{\frac{\mu-1}{\mu}}\xi+\epsilon+\frac{\tau}{\epsilon}+\gamma^{1/2}\delta\right).
$$
This implies $\xi \leq C(\epsilon+\tau/\epsilon+\gamma^{1/2}\delta)$.
Therefore, we get the lemma by the definition of $\xi$ and (\ref{preNEig}).
\end{proof}
Putting $\delta:=\epsilon^{-m/2}(\log n/n)^{1/2}$, we get the result corresponding to Theorem \ref{eigmain}  by Lemma \ref{NUPP} and \ref{NLOW}.
%
By and Lemma \ref{UPP}, \ref{UDcomp}, \ref{q2q1f-f}, \ref{Nl2tDf}, \ref{NLOW} and \ref{eveccomp}, we get the result corresponding to Theorem \ref{evecmain}.
Approximating the pair $(M,\iota)$ satisfying Assumption \ref{Asu}, we obtain the following theorems.
\begin{Thm}\label{NMainEval}
Suppose that the pair $((M,g),\iota)$ satisfies Assumption \ref{Asu}, and we are given functions $\eta, \rho$ satisfying Assumption \ref{Asu3}.
Take arbitrary $k\in \Z_{>0}$.
Then, there exist constants $C_1=C_1(m,\alpha,k)>0$ and $C_2=C_2(m,S,K,i_0,L,\eta,\alpha,L_\rho,k)>0$ such that, for any
$n\in\Z_{>0}$, $\epsilon\in(0,\infty)$, $\gamma\in(1,\infty)$ with
\begin{equation*}
\epsilon+\gamma^{1/2} \epsilon^{-m/2}\left(\frac{\log n}{n}\right)^{1/2}\leq C_2^{-1}
\end{equation*}
and i.i.d. sample $x_1,\ldots,x_n\in X$ from $\rho\Vol_g$, we have 
$$
\left|\lambda_k(\Delta_{\rho}^N)-\frac{2\widetilde\sigma_\eta}{\sigma_\eta n \epsilon^{2}}\lambda_k(\La,\D)\right|\leq C_2\left(\epsilon+\gamma^{1/2} \epsilon^{-m/2}\left(\frac{\log n}{n}\right)^{1/2}\right)
$$
with probability at least $1-C_1(\epsilon^{-2m-1}+n^2)n^{-\gamma}.$
\end{Thm}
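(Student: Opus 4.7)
The plan is to derive Theorem \ref{NMainEval} in two stages: first prove the corresponding bound under the smooth Assumption \ref{Asu2}(a) (where $\tau>0$ is allowed), and then pass to the limit $\tau\to 0$ using the approximation scheme of Appendix \ref{ApproxSmooth}, just as Theorem \ref{UNMainEval} was obtained from Theorem \ref{eigmain}.

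First, I combine the two one-sided comparisons already established in subsection 3.4. Lemma \ref{NUPP} gives the upper bound
$$\lambda_k(B_\epsilon^N)\leq \lambda_k(\Delta_\rho^N)+C\left(\epsilon+\tfrac{\tau}{\epsilon}+\gamma^{1/2}\delta\right),$$
and Lemma \ref{NLOW} gives the matching lower bound
$$\lambda_k(\Delta_\rho^N)\leq \lambda_k(B_\epsilon^N)+C\left(\epsilon+\tfrac{\tau}{\epsilon}+\gamma^{1/2}\delta\right),$$
each valid on a high-probability event whose complement has probability polynomial in $n$ times $\exp(-n\gamma\epsilon^m\delta^2)$. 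Using the identity $\lambda_k(B_\epsilon^N)=\frac{2\widetilde\sigma_\eta}{\sigma_\eta\epsilon^2}\lambda_k(\La,\D)$ stated in the Notation section, I conclude that for the smooth case (Assumption \ref{Asu2}(a))
$$\left|\lambda_k(\Delta_\rho^N)-\frac{2\widetilde\sigma_\eta}{\sigma_\eta n\epsilon^2}\lambda_k(\La,\D)\right|\leq C\left(\epsilon+\tfrac{\tau}{\epsilon}+\gamma^{1/2}\delta\right).$$
Setting $\delta:=\epsilon^{-m/2}(\log n/n)^{1/2}$ turns the exponential probability bound into one of the form $1-C_1(\epsilon^{-2m-1}+n^2)n^{-\gamma}$, mirroring Theorem \ref{eigmain}.

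Next, given a pair $((M,g),\iota)$ satisfying Assumption \ref{Asu}, I approximate it by smooth pairs $((M_i,g_i),\iota_i)$ satisfying Assumption \ref{Asu2}(a) with a sequence of thresholds $\tau_i\to 0$. By Lemma \ref{UnifConv}(c) (applied with $L$ from Assumption \ref{Asu}(i)), such an admissible $\tau_i$ exists with $\tau_i\to 0$. The approximation maps $\psi_i$ can be taken $C^{2,\alpha}$ diffeomorphic with $\psi_i^\ast g_i\to g$ in $C^{1,\alpha}$ (Remark \ref{RemA}(d)), so the push-forward of the measure $\rho\,d\Vol_g$ through $\psi_i$ is a probability measure with density $\rho_i$ converging to $\rho$ in the required sense, the eigenvalues $\lambda_k(\Delta_{\rho_i}^N)$ on $(M_i,g_i)$ converge to $\lambda_k(\Delta_\rho^N)$, and for any fixed sample the quantities $\La$ and $\D$ (built from $\iota$ or $\iota_i\circ\psi_i$, which are uniformly close by Lemma \ref{UnifConv}(b)) are arbitrarily close. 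Thus the smooth-case estimate, valid for each $i$ with the same probability bound in terms of $(m,S,K,i_0,L,\eta,\alpha,L_\rho,k)$, passes to the limit and yields Theorem \ref{NMainEval}.

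The main obstacle is the approximation step, because one must ensure that the probabilistic estimate (which depends on $\tau_i$ only through a term $\tau_i/\epsilon$ that tends to $0$) and the analytic convergence of eigenvalues are compatible with a single high-probability event; this compatibility is what Appendix \ref{ApproxSmooth} is set up to provide, using that the graph Laplacian depends only on finitely many sample values $\iota(x_1),\ldots,\iota(x_n)$ and that $\iota_i\circ\psi_i(x_j)\to\iota(x_j)$ uniformly. Modulo this standard limit argument, the theorem follows directly from Lemmas \ref{NUPP} and \ref{NLOW}.
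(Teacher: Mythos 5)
Your proposal matches the paper's own argument: it combines Lemma \ref{NUPP} and Lemma \ref{NLOW} via the identity $\lambda_k(B_\epsilon^N)=\frac{2\widetilde\sigma_\eta}{\sigma_\eta\epsilon^2}\lambda_k(\La,\D)$ to obtain the smooth-case estimate with $\delta=\epsilon^{-m/2}(\log n/n)^{1/2}$, and then passes to the limit through the approximation scheme of Appendix \ref{ApproxSmooth} exactly as in the proof of Theorem \ref{UNMainEval}. This is precisely what the paper does (the normalized case is dismissed with ``Similarly, we get Theorem \ref{NMainEval}'' after the unnormalized proof).
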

\begin{Thm}\label{NMainEvec}
Suppose that the pair $((M,g),\iota)$ satisfies Assumption \ref{Asu}, and we are given functions $\eta, \rho$ satisfying Assumption \ref{Asu3}.
Put $s:=\lambda_l(\Delta_\rho^N)-\lambda_k(\Delta_\rho^N)$ and
$$
G:=\min\left\{|\lambda_k(\Delta_\rho^N)-\lambda_{k-1}(\Delta_\rho^N)|,|\lambda_{l+1}(\Delta_\rho^N)-\lambda_l(\Delta_\rho^N)|\right\}.
$$
Then, there exist constants $C_1=C_1(m,\alpha,l)>0$ and $C_2=C_2(m,S,K,i_0,L,\eta,\alpha,L_\rho,l)>0$ such that, for any
$n\in\Z_{>0}$, $\epsilon\in(0,\infty)$, $\gamma\in(1,\infty)$ with
$$
C_2\left(\epsilon+\gamma^{1/2} \epsilon^{-m/2}\left(\frac{\log n}{n}\right)^{1/2}\right)+2 G s < G^2
$$
and i.i.d. sample $x_1,\ldots,x_n\in M$ from $\rho\Vol_g$, we have the following with probability at least $1-C_1(\epsilon^{-2m-1}+n^2)n^{-\gamma}.$
Let $\Pj\colon \R^n \to \Span\{u^{N,k},\ldots,u^{N,l}\}$ be the orthogonal projection.
Then, the map $\Span\{f_k^N,\ldots,f_l^N\}\to \Span\{u^{N,k},\ldots,u^{N,l}\},\, f\mapsto \Pj(f|_{\X})$ is an isomorphism and for any $f\in\Span\{f_k^N,\ldots,f_l^N\}$, we have
\begin{align*}
\left(1-\frac{C_2}{G^2}\left(\epsilon+\gamma^{1/2}\epsilon^{-m/2}\left(\frac{\log n}{n}\right)^{1/2}\right)-\frac{2s}{G}\right)&\frac{1}{n}\sum_{i=1}^n f(x_i)^2\widetilde\D_{i}\leq\frac{1}{n}\sum_{i=1}^n\Pj(f|_{\X})_i^2\widetilde\D_i,\\
\left|\frac{1}{n}\sum_{i=1}^n f(x_i)^2\widetilde\D_{i}-\int_M f^2\rho^2\,d\Vol_g\right|\leq& C_2\gamma^{1/2}\left(\frac{\log n}{n}\right)^{1/2}\int_M f^2\rho^2\,d\Vol_g
\end{align*}
and
$$
\frac{1}{n}\sum_{i=1}^n (f(x_i)-\Pj(f|_{\X})_i)^2\widetilde\D_{i}\leq \left(\frac{C_2}{G^2}\left(\epsilon+\gamma^{1/2} \epsilon^{-m/2}\left(\frac{\log n}{n}\right)^{1/2}\right)+\frac{2s}{G}\right)\frac{1}{n} \sum_{i=1}^n f(x_i)^2\widetilde\D_{i}.
$$
\end{Thm}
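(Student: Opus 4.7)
The strategy is to first establish a smooth-manifold analogue under Assumption~\ref{Asu2}(a), in perfect parallel with how Theorem~\ref{evecmain} underlies Theorem~\ref{UNMainEvec}, and then pass to the limit via Appendix~\ref{ApproxSmooth}. Setting $\delta := \epsilon^{-m/2}(\log n/n)^{1/2}$, I take $H_1 := \Span\{f_0^N,\ldots,f_{l+1}^N\} + \Lambda_\epsilon(\R^n)$ and $H_2 := \R^n$, equipped with the inner products $\langle\cdot,\cdot\rangle_{\rho^2}$ and $\langle\cdot,\cdot\rangle_{\widetilde D}$ respectively, together with the maps $Q_1\colon f\mapsto f|_\X$ and $Q_2\colon u\mapsto \Lambda_\epsilon u$. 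The associated Dirichlet forms have eigenvalues $\lambda_j(\Delta_\rho^N)$ and $\lambda_j(B_\epsilon^N)$, which aligns the abstract framework with the statement of the theorem.

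The ingredients required by the abstract eigenvector comparison Lemma~\ref{eveccomp} are all in hand. Lemma~\ref{UPP} and Lemma~\ref{UDcomp} together give the two-sided Rayleigh quotient comparison with error $\epsilon + \tau/\epsilon + \gamma^{1/2}\delta$. Lemma~\ref{Nl2tDf} compares $\|Q_1 f\|_{\widetilde D}^2$ to $\|f\|_{\rho^2}^2$ for $f$ in the span of the first $l+1$ eigenfunctions of $\Delta_\rho^N$, whose $L^\infty$ and Lipschitz norms are controlled by Propositions~\ref{LinfEF} and~\ref{GradEF}. The second bound in Lemma~\ref{NLOW} compares $\|Q_2 u\|_{\rho^2}^2$ with $\|u\|_{\widetilde D}^2$ for $u$ in the span of the first $k+1$ discrete eigenvectors. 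Lemma~\ref{q2q1f-f} supplies the pointwise closeness $|Q_2 Q_1 f - f|\leq \Lip(f)\epsilon$. Feeding these into Lemma~\ref{eveccomp} yields the isomorphism $\Span\{f_k^N,\ldots,f_l^N\} \to \Span\{u^{N,k},\ldots,u^{N,l}\}$, $f\mapsto \Pj(f|_\X)$, together with the first and third displayed inequalities of the theorem; the middle consistency estimate relating $\tfrac{1}{n}\sum_i f(x_i)^2\widetilde D_i$ to $\int_M f^2\rho^2\,d\Vol_g$ is precisely what Lemma~\ref{Nl2tDf} delivers with the same $\delta$.

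To upgrade from the smooth case to Assumption~\ref{Asu}, I apply the smooth version to each pair $((M_i,g_i),\iota_i)$ with sample points $\psi_i(x_1),\ldots,\psi_i(x_n)$, following the procedure in Appendix~\ref{ApproxSmooth}. The parameter $\tau$ becomes $\tau_i$, which tends to $0$ by Lemma~\ref{UnifConv}(c), so the $\tau/\epsilon$ error vanishes in the limit; uniform convergence $\iota_i \circ \psi_i \to \iota$ from Lemma~\ref{UnifConv}(b) ensures convergence of all graph-Laplacian matrices built from the pushed-forward sample points; and the $C^{1,\alpha}$-compactness identification of $(M_i,g_i)$ with $(M,g)$ transfers eigenfunctions and eigenvalues of $\Delta_\rho^N$. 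The main obstacle is that both $\widetilde D_i$ and $\widetilde\rho_\eta$ depend on the extrinsic immersion and therefore shift along the approximating sequence; handling this requires Lemma~\ref{retavstreta}, which bounds $\int_M|\rho_\eta-\widetilde\rho_\eta|\,d\Vol_g$ by $\epsilon+\tau/\epsilon$ uniformly in $i$, so that the Rayleigh quotients and both weighted norms on the approximants remain comparable to their common intrinsic limit and the estimates survive the passage $i\to\infty$.
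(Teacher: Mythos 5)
Your proposal is correct and follows essentially the same route as the paper: establish the smooth analogue of Theorem~\ref{evecmain} for the normalized forms by feeding Lemmas~\ref{UPP}, \ref{UDcomp}, \ref{q2q1f-f}, \ref{Nl2tDf}, and \ref{NLOW} into the abstract comparison Lemma~\ref{eveccomp} with $H_1 = \Span\{f_0^N,\ldots,f_{l+1}^N\} + \Lambda_\epsilon(\R^n)$, then pass to the limit via the Appendix~\ref{ApproxSmooth} machinery with $\tau_i\to 0$. Your observation that the extrinsic-dependent weights $\widetilde D_i$ and $\widetilde\rho_\eta$ must be controlled along the approximating sequence via Lemma~\ref{retavstreta} is a genuine detail the paper leaves implicit in its ``similarly'' for the normalized case; the only small slip is that when you invoke ``the second bound in Lemma~\ref{NLOW}'' you should apply it at level $l+1$ (not $k+1$) so that it covers $\Span\{u^{N,0},\ldots,u^{N,l+1}\}$ as required by the definition of $E_1$ in Lemma~\ref{eveccomp}.
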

\appendix
\section{$L^\infty$ and Gradient Estimate for Eigenfunctions}\label{GradientEstimate}
In this appendix, we show the $L^\infty$ and the gradient estimate for eigenfunctions of $\Delta_\rho$ and $\Delta_\rho^N$.
For any $p\in[1,\infty)$ measurable function $f$ on a closed Riemannian manifold $(M,g)$, define
$$
\|f\|_{L^p}^p:=\frac{1}{\Vol_g(M)}\int_M |f|^p\,d\Vol_g,
$$
and
$\|f\|_{L^\infty}=\esssup_{x\in M}|f|(x)$.

We first give the $L^\infty$ estimate.
The proof is similar to \cite[Theorem 9.2.7]{Pe3}.
\begin{Prop}\label{LinfEF}
Let $m\in \Z_{>0}$ be an integer and take constants $K,D,\alpha,\Lambda>0$.
Then, there exists a constant $C=C(m,K,D,\alpha,\Lambda)>0$ such that for any $(M,g)\in \M_2(m,K)$ with $\diam_g(M)\leq D$, any $\rho\in \LIP(M)$ with $1/\alpha\leq \rho\leq \alpha$, any $\lambda\in[0,\Lambda]$ and any solution $f\in W^{2,2}(M)$ to the equation $\Delta_\rho f=\lambda f$ or $\Delta_\rho^N f=\lambda f$,
we have
$
\|f\|_{L^\infty}\leq C\|f\|_{L^2}.
$
\end{Prop}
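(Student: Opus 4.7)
The plan is to use Moser iteration in direct parallel with \cite[Theorem 9.2.7]{Pe3}, but adapted to the weighted operators $\Delta_\rho$ and $\Delta_\rho^N$. The two ingredients are (i) a weighted integration-by-parts identity, which lets us test the eigenvalue equation against powers of $|f|$ using only the bounds $1/\alpha\le\rho\le\alpha$ and not any Lipschitz bound on $\rho$, and (ii) the Sobolev inequality for closed manifolds with $\Ric\ge -(m-1)Kg$ and $\diam_g(M)\le D$, exactly as invoked via \cite[Proposition 7.1.13, 7.1.17]{Pe3} inside Lemma \ref{NLOW}.

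First I would establish the identity
\begin{equation*}
\int_M (\Delta_\rho f)\, h\, \rho\, d\Vol_g \;=\; \int_M \rho^2\,\langle \nabla f,\nabla h\rangle\, d\Vol_g \qquad (h\in W^{1,2}(M)),
\end{equation*}
by applying ordinary integration by parts to $\int \rho^2 h\,\Delta f$ and observing that the cross term $2\int \rho h\,\langle\nabla\rho,\nabla f\rangle$ cancels the one coming from $\Delta_\rho f = \rho\Delta f -2\langle\nabla\rho,\nabla f\rangle$. The same identity with $\rho^2$ on the left holds for $\Delta_\rho^N$. Testing $\Delta_\rho f=\lambda f$ against $h = f|f|^{p-2}$ for $p\ge 2$ and using $\nabla(f|f|^{p-2})=(p-1)|f|^{p-2}\nabla f$ together with $|f|^{p-2}|\nabla f|^2=(4/p^2)\,|\nabla |f|^{p/2}|^2$ yields
\begin{equation*}
\frac{4(p-1)}{p^2}\int_M \rho^2\,|\nabla |f|^{p/2}|^2\, d\Vol_g \;=\; \lambda\int_M |f|^p\, \rho\, d\Vol_g.
\end{equation*}
Combined with $1/\alpha\le\rho\le\alpha$ and $\lambda\le\Lambda$, this gives $\int_M |\nabla |f|^{p/2}|^2 \le C(\alpha,\Lambda)\, p \int_M |f|^p$; the normalized eigenvalue equation $\Delta_\rho f = \lambda\rho f$ is handled identically.

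Next I would apply the Sobolev inequality $\|u\|_{L^{2\mu}}^2\le C_S(\|\nabla u\|_{L^2}^2+\|u\|_{L^2}^2)$ (normalized $L^p$ norms, with $\mu=\mu(m)>1$ and $C_S=C_S(m,K,D)$) to $u=|f|^{p/2}$, giving the Moser step $\|f\|_{L^{p\mu}}\le (Cp)^{1/p}\|f\|_{L^p}$ for $p\ge 2$, with $C=C(m,K,D,\alpha,\Lambda)$. Iterating along $p_k := 2\mu^k$ and using convergence of $\sum_{k\ge 0} p_k^{-1}\log(Cp_k)$ yields
\begin{equation*}
\|f\|_{L^\infty} \;\le\; \Big(\prod_{k=0}^{\infty}(Cp_k)^{1/p_k}\Big)\|f\|_{L^2},
\end{equation*}
which is the claimed bound.

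The main delicate point is justifying the test function $h=f|f|^{p-2}$ when $f\in W^{2,2}(M)$ is not yet known to be in $L^\infty$; I would handle this by the standard truncation $h_N=f\,\min(|f|,N)^{p-2}$, which lies in $W^{1,2}\cap L^\infty$, passing to the limit by monotone convergence after the finiteness of $\|f\|_{L^{p_k}}$ has been propagated one step at a time starting from $p_0=2$. Everything else is routine calculus; the content of the proposition is that the constant depends only on $(m,K,D,\alpha,\Lambda)$ and in particular is independent of $\Lip(\rho)$, which is exactly what the weighted identity above provides.
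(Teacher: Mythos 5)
Your proof is correct and takes essentially the same route as the paper: Moser iteration driven by the weighted integration-by-parts identity $\int_M(\Delta_\rho f)\,h\,\rho\,d\Vol_g=\int_M\rho^2\langle\nabla f,\nabla h\rangle\,d\Vol_g$ together with the Sobolev inequality for $\Ric\ge -(m-1)Kg$ and $\diam_g(M)\le D$ from \cite{Pe3}. The only cosmetic difference is the regularization device: you truncate the test function via $h_N=f\min(|f|,N)^{p-2}$ and let $N\to\infty$, whereas the paper smooths the eigenfunction via $u_\epsilon=(f^2+\epsilon)^{1/2}$ and lets $\epsilon\to 0$; both produce the same iteration along $p_k=2\mu^k$ and the same dependence of $C$ on $(m,K,D,\alpha,\Lambda)$.
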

\begin{proof}
We first note that there exists constants $\mu=\mu(m)>1$ and $C_1=C_1(m,K,D,\mu)>0$ such that
$$
\|h\|_{L^{2\mu}}\leq C_1\|\nabla h\|_{L^2}+\|h\|_{L^2}
$$
holds for any $h\in W^{1,2}(M)$ by Proposition 7.1.13 and Proposition 7.1.17 in \cite{Pe3}.

For each $\epsilon>0$, put
\begin{align*}
u_\epsilon:=(f^2+\epsilon)^{1/2}.
\end{align*}
Then, we have
\begin{align*}
\Delta_\rho u_\epsilon=\frac{f}{u_\epsilon} \Delta_\rho f-\frac{\epsilon\rho}{u_\epsilon^3}|\nabla f|^2.
\end{align*}
For any $k\in \Z_{\geq 0}$, we have
\begin{align*}
\int_M|\nabla u_\epsilon^{\mu^k}|^2\rho^2\,d\Vol_g=&\mu^{2 k}\int_M u_\epsilon^{2\mu^k-2}|\nabla u_\epsilon|^2\rho^2\,d\Vol_g\\
=&\frac{\mu^{2 k}}{2\mu^k-1}\int_M u_\epsilon^{2\mu^k-1}\Delta_\rho u_\epsilon\rho\,d\Vol_g\\
\leq & \frac{\lambda\mu^{2 k}}{2\mu^k-1}  \int_M u_\epsilon^{2\mu^k-1}|f|\rho^i \,d\Vol_g\\
\to&  \frac{\lambda \mu^{2 k}}{2\mu^k-1} \int_M |f|^{2\mu^k}\rho^i\,d\Vol_g\quad(\epsilon\to 0),
\end{align*}
where $i=1$ if $\Delta_\rho f=\lambda f$ and $i=2$ if $\Delta_\rho^N f=\lambda f$.
Thus, we get
\begin{align*}
\|f\|_{L^{2\mu^{k+1}}}=\|f^{\mu^k}\|_{L^{2\mu}}^{1/\mu^k}\leq& \liminf_{\epsilon\to 0}\left(C_1 \|\nabla (u_\epsilon^{\mu^k})\|_{L^2}+\|u_\epsilon^{\mu^k}\|_{L^2}\right)^{1/\mu^k}\\
\leq& \left(C_1\mu^{ k}\left(\frac{\lambda \alpha^{i+2}}{2\mu^k-1}\right)^{1/2}+1\right)^{1/\mu^k} \|f\|_{L^{2^{\mu^k}}}.
\end{align*}
Therefore, we have
\begin{align*}
\|f\|_{L^\infty}=\lim_{k\to \infty}\|f\|_{L^{2\mu^k}}
\leq \prod_{k=0}^\infty\left(C_1\mu^{ k}\left(\frac{\lambda \alpha^{i+2}}{2\mu^k-1}\right)^{1/2}+1\right)^{1/\mu^k}\|f\|_{L^2}.
\end{align*}
By $\log(1+x)\leq x$ for $x\in [0,\infty)$, we have
\begin{align*}
\log \prod_{k=0}^\infty\left(C_1\mu^{ k}\left(\frac{\lambda \alpha^{i+2}}{2\mu^k-1}\right)^{1/2}+1\right)^{1/\mu^k}
\leq& \sum_{k=0}^\infty C_1\left(\frac{\lambda \alpha^{i+2}}{2\mu^k-1}\right)^{1/2}\\
\leq &\sum_{k=0}^\infty C_1\lambda^{1/2}\alpha^{1+i/2}\left(\frac{1}{\sqrt{\mu}}\right)^{k}\\
=&C_1 \lambda^{1/2}\alpha^{1+i/2}\frac{\sqrt{\mu}}{\sqrt{\mu}-1}.
\end{align*}
Thus, we get
$$
\|f\|_{L^\infty}\leq \exp\left(C_1 \lambda^{1/2}\alpha^{1+i/2}\frac{\sqrt{\mu}}{\sqrt{\mu}-1}\right)\|f\|_{L^2}.
$$
This is what we wanted to show.
\end{proof}
We next give the gradient estimate.
\begin{Prop}\label{GradEF}
Let $m\in \Z_{>0}$ be an integer and take constants $K,D,i_0,\alpha,L_\rho,\Lambda>0$.
Then, there exists a constant $C=C(m,K,D,i_0,\alpha,L_\rho,\Lambda)>0$ such that for any $(M,g)\in \M_2(m,K)$ with $\diam_g(M)\leq D$ and $\Inj_g\geq i_0$, any $\rho\in \LIP(M)$ with $1/\alpha\leq \rho\leq \alpha$ and $\Lip(\rho)\leq L_\rho$, any $\lambda\in[0,\Lambda]$ and any solution $f\in W^{2,2}(M)$ to the equation $\Delta_\rho f=\lambda f$ or $\Delta_\rho^N f=\lambda f$,
we have
$
\Lip(f)\leq C\|f\|_{L^2}.
$
\end{Prop}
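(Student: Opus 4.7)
The plan is to combine Bochner's formula for $|\nabla f|^2$ with Moser iteration on the manifold, using the Ricci lower bound as the only curvature input. Proposition~\ref{LinfEF} already provides $\|f\|_{L^\infty}\le C\|f\|_{L^2}$, so it suffices to bound $\|\,|\nabla f|\,\|_{L^\infty}$ by $C(\|f\|_{L^\infty}+\|f\|_{L^2})$. Testing the eigenvalue equation against $f$ (with weight $\rho$ or $\rho^2$ and using $1/\alpha\le\rho\le\alpha$) gives the preliminary energy estimate $\|\nabla f\|_{L^2}\le C(\Lambda,\alpha)\|f\|_{L^2}$.

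Starting from Bochner's identity
$$\tfrac12\Delta|\nabla f|^2=-|\Hess f|^2-\Ric(\nabla f,\nabla f)+\langle\nabla f,\nabla\Delta f\rangle,$$
I would substitute $\Delta f=\lambda g+(2/\rho)\langle\nabla\rho,\nabla f\rangle$, where $g=f$ in the normalized case and $g=f/\rho$ in the unnormalized case. Expanding $\nabla\Delta f$ produces three types of terms: a principal part $\lambda|\nabla f|^2$ plus lower order in $f$, a term $(2/\rho)\Hess f(\nabla\rho,\nabla f)$ that Young's inequality bounds by $|\Hess f|^2+\rho^{-2}|\nabla\rho|^2|\nabla f|^2$ and which is absorbed into the $-|\Hess f|^2$ on the right, and a troublesome term $(2/\rho)\Hess\rho(\nabla f,\nabla f)$. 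Using $\Ric\ge-(m-1)Kg$ and collecting, one arrives at the formal differential inequality
$$\tfrac12\Delta|\nabla f|^2\le C_\star\bigl(|\nabla f|^2+f^2\bigr)+(2/\rho)\Hess\rho(\nabla f,\nabla f),$$
with $C_\star=C_\star(m,K,\Lambda,\alpha,L_\rho)$.

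The main obstacle is the $\Hess\rho$ term, since $\rho$ is only Lipschitz and its Hessian does not exist classically. I would circumvent this by approximation: take a smoothing $\rho_\varepsilon$ of $\rho$ with $\rho_\varepsilon\to\rho$ uniformly, $1/\alpha\le\rho_\varepsilon\le\alpha$, and $\|\nabla\rho_\varepsilon\|_{L^\infty}\le L_\rho$ uniformly in $\varepsilon$, let $f_\varepsilon$ be the corresponding eigenfunction of the perturbed operator, and prove the gradient bound with constants independent of $\|\Hess\rho_\varepsilon\|_{L^\infty}$. The independence is the crucial point: whenever the $\Hess\rho_\varepsilon$ term is tested against a Moser cutoff $|\nabla f_\varepsilon|^{2q}\varphi^2$, integrating the outer derivative off $\rho_\varepsilon$ by Stokes' theorem leaves only $|\nabla\rho_\varepsilon|\le L_\rho$ paired with either $\Delta f_\varepsilon$ (which the eigenvalue equation rewrites without any Hessian of $\rho_\varepsilon$), $\Hess f_\varepsilon$ (again absorbable via Young), or derivatives of the cutoff $\varphi$.

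Finally I would perform Moser iteration on $u_\sigma:=(|\nabla f_\varepsilon|^2+\sigma)^{1/2}$ for $\sigma>0$ over geodesic balls $B_r(x)$ with $r\le\min(i_0,1)$. Under the Ricci lower bound, Bishop--Gromov yields volume doubling on such balls and Buser's inequality yields an $L^2$ Poincar\'e inequality; combined with the injectivity-radius-controlled local Sobolev inequality, these are the standard ingredients for the De\,Giorgi--Nash--Moser scheme to convert the Bochner-type differential inequality into
$$\|u_\sigma\|_{L^\infty(B_{r/2}(x))}\le C\bigl(\|u_\sigma\|_{L^2(B_r(x))}+\|f_\varepsilon\|_{L^\infty}\bigr),$$
with $C=C(m,K,D,i_0,\alpha,L_\rho,\Lambda)$. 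Letting $\sigma\downarrow 0$, covering $M$ by finitely many such balls using $\diam_g(M)\le D$, combining with the preliminary energy bound and Proposition~\ref{LinfEF}, and passing $\varepsilon\to 0$ (using that $f_\varepsilon\to f$ in $W^{1,2}$ by standard perturbation theory, so the Lipschitz bound survives in the limit) yields $\Lip(f)\le C\|f\|_{L^2}$ with the asserted dependence.
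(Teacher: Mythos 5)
Your proposal is correct in outline but takes a genuinely different route from the paper. The paper transfers the problem to Euclidean space via the Anderson--Cheeger harmonic coordinate charts $\psi_x\colon B_{r_H}(x)\to\R^m$, in which $\Delta=-\sum g^{ij}\partial_i\partial_j$ with $g^{ij}\in C^\alpha$ uniformly. In those coordinates $\Delta_\rho$ and $\Delta_\rho^N$ become elliptic operators whose second-order coefficients are H\"older (the Lipschitz $\rho$ only multiplies the whole top part) and whose first-order coefficients are merely $L^\infty$ (coming from $\nabla\rho$), so the interior $L^p$ estimate of Gilbarg--Trudinger applies directly and the Sobolev embedding $W^{2,p}\hookrightarrow C^{1,\alpha}$ for $p>m$ finishes the argument in a few lines. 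Crucially, no derivative of $\rho$ beyond the first ever appears, so no regularization of $\rho$ is needed. Your Bochner-plus-Moser approach, by contrast, has to confront $\Hess\rho$ head-on; your fix (smooth to $\rho_\varepsilon$, then integrate by parts in the Moser test so that only $|\nabla\rho_\varepsilon|\le L_\rho$ survives, paired with $\Hess f$, $\Delta f$, or the cutoff) is sound in principle and would deliver the bound with the right dependence, but it carries substantially more overhead: one must set up the perturbation $f_\varepsilon\to f$ (taking some care since $\lambda$ is not assumed simple), verify absorption of the $\Hess f$ terms via the Bochner $-|\Hess f|^2$, track the $q$-dependence of constants through the iteration, and appeal to the doubling and Poincar\'e inequalities coming from the Ricci lower bound to run Moser. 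Your approach has the conceptual appeal of being fully intrinsic and of making the mechanism (Bochner) visible, whereas the paper's is shorter and routes the low regularity of $\rho$ into a part of the operator that classical $L^p$ Schauder theory handles without any smoothing.

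One small inaccuracy worth flagging: you attribute the local Sobolev inequality to the injectivity-radius lower bound, but with $\Ric\ge -(m-1)K$ and $\diam\le D$ the local (Neumann) Sobolev inequality on small balls already follows from volume doubling and the $(2,2)$-Poincar\'e inequality (Grigor'yan/Saloff-Coste); the injectivity radius enters elsewhere (in the paper, it is what Anderson--Cheeger need to produce the harmonic charts). This does not break your argument, but the provenance of the Sobolev inequality should be corrected if you write it out.
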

\begin{proof}
Take arbitrary $Q\in (1,\infty)$ and $p\in(m,\infty)$ (e.g. we can take $Q=2$ and $p=2m$).
Then, there exists a constant $r_H=r_H(m,K,i_0,Q,p)>0$ such that for any $x\in M$, there exists a coordinate chart $\psi_x=(x_1,\ldots,x_n)\colon B_{r_H}(x)\to \R^m$ satisfying $\psi_x(x)=0$ and the following properties by \cite[Theorem 0.3]{AC} (see also \cite[Appendix A]{Por}):
\begin{itemize}
\item[(i)] We have $\Delta \psi_x=0$ on $B_{r_H}(x)$.
\item[(ii)] We have $Q^{-1}\psi_x^\ast g_0\leq g\leq Q\psi_x^\ast g_0$, where $g_0$ denotes the standard Euclidean metric on $\R^m$.
\item[(iii)] Putting $g_{i j}:=g(\partial/\partial x_i,\partial /\partial x_j)$, we have
$$
r_H^{1-m/p}\left(\int_{\psi_x(B_{r_H}(x))}|\partial g_{i j}|^p(y)\, d y\right)^{1/p}\leq Q-1.
$$
\end{itemize}
We have that $\Delta=-\sum_{i,j=1}^m  g^{i j}\partial^2/\partial x_i \partial x_j$ by (i), where $\{g^{i j}\}$ denotes the inverse matrix of $\{g_{i j}\}$, and
$\|g^{i j}\|_{C^\alpha}\leq C$ by (ii), (iii) and the Sobolev inequality, where $\alpha:=1-n/p$.
By (ii), we have $B^{\R^m}(0,r_H/\sqrt{Q})\subset \psi_x(B_{r_H}(x))$.
Through the coordinate chart $\psi_x$, $\Delta_\rho$ and $\Delta_\rho^N$ have the following representation:
\begin{align*}
\Delta_\rho=&-\rho\sum_{i,j=1}^n g^{i j}\frac{\partial^2}{\partial x_i \partial x_j} -2\sum_{i,j=1}^n g^{i j}\frac{\partial \rho}{\partial x_i}\frac{\partial}{\partial x_j},\\
\Delta_\rho^N=&-\sum_{i,j=1}^n g^{i j}\frac{\partial^2}{\partial x_i \partial x_j} -2\sum_{i,j=1}^n g^{i j}\frac{1}{\rho}\frac{\partial \rho}{\partial x_i}\frac{\partial}{\partial x_j}.
\end{align*}
Thus, we can apply the $L^p$ elliptic estimate \cite[Theorem 9.11]{GT} to $\Delta_\rho$ and $\Delta_\rho^N$, and get
$$
\|f\circ \psi_x^{-1}\|_{2,p;\Omega'}\leq C(\lambda+1)\|f\circ \psi_x^{-1}\|_{p; \Omega}\leq C\|f\|_{L^2}
$$
by Proposition \ref{LinfEF}, where $\Omega=B^{\R^m}(0,r_H/\sqrt Q)$, $\Omega'=B^{\R^m}(0,r_H/(2\sqrt Q))$ and
\begin{align*}
\|h\|_{2,p;\Omega'}^p:=&\int_{\Omega'}\left(\sum_{i,j=1}^m\left|\frac{\partial^2 h}{\partial x_i\partial x_j}\right|^p+\sum_{i=1}^m \left|\frac{\partial h}{\partial x_i}\right|^p+|h|^p\right)(x)\,d x,\\
\|h\|_{p:\Omega}^p:=&\int_{\Omega} |h|^p(x)\,d x
\end{align*}
for any $h\in W^{2,p}(\Omega)$.
Combining this, the Sobolev embedding $W^{2,p}\to C^{1,\alpha}$ and (ii), we get the proposition.
\end{proof}

\section{Linear Algebraic Arguments for Eigenvalue Problems}\label{LinearAlgebra}
In this section, we give some linear algebraic assertions required for our spectral convergence.
The discussion in this section is essentially written in section 7 of \cite{BIK}, but we extract the linear algebraic arguments from there in the form we use.
For a symmetric form $D\colon H\times H\to \R$ on a finite dimensional inner product space $(H,\langle\cdot,\cdot\rangle)$, let
$$
\lambda_1(D)\leq \lambda_2(D)\leq \cdots \leq \lambda_{\dim H}(D)
$$
denote its eigenvalues counted with multiplicities.
By the minimax principle, for any $k\in\{1,2,\ldots,\dim H\}$, $\lambda_k(D)$ is expressed as
$$
\lambda_k(D)=\min \left\{\max_{v\in V\setminus\{0\}}\frac{D(v,v)}{\|v\|_{H}^2}: V\subset H\text{ is a $k$-dimensional subspace}\right\}.
$$
In this section, we promise that $a/0=\infty$ for any $a\in\R$.
\begin{Lem}\label{dtilde}
Let $(H,\langle\cdot,\cdot\rangle)$ be a finite dimensional inner product space, and $D\colon H\times H\to \R$ be a symmetric form with $D\geq 0$.
Take arbitrary $\lambda,\lambda'>0$ with $\lambda'\geq \lambda$, and let $\Pj\colon H\to H$ be the orthogonal projection to the subspace of $H$ spanned by the eigenvectors of $D$ corresponding to the eigenvalues in $[0,\lambda']$.
Define a symmetric form $\widetilde D\colon H\times H\to \R$ by
$$
\widetilde D(u,v):=D(\Pj u,\Pj v)+\lambda\left\langle(1-\Pj)u ,(1-\Pj)v \right\rangle.
$$
Then, we have
$D\geq \widetilde D$, and for any subspace $L\subset H$ and $j\in\{1,\ldots,\dim L\}$,
$$
\lambda_j\left(\widetilde D|_{L}\right)\geq\min\{\lambda,\lambda_j(D)\}.
$$
\end{Lem}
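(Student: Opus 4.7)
The plan is to diagonalize $D$ and read off both $\widetilde D$ and its eigenvalues from the block decomposition this induces on $H$. Let $e_1,\ldots,e_n$ be an orthonormal eigenbasis of $D$ with eigenvalues $\mu_1\leq\cdots\leq\mu_n$, and let $k$ be the largest index with $\mu_k\leq\lambda'$ (take $k=0$ if no such index exists). Then $\mathrm{Im}(\mathbb{P})=\mathrm{Span}\{e_1,\ldots,e_k\}$ and $\ker(\mathbb{P})=\mathrm{Span}\{e_{k+1},\ldots,e_n\}$, and these two subspaces are orthogonal with respect to both $\langle\cdot,\cdot\rangle$ and $\widetilde D$ (since $\mathbb{P}$ and $1-\mathbb{P}$ have orthogonal images by definition, and $\widetilde D$ splits through the projections).

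First I would verify $D\geq\widetilde D$. Expanding any $u=\sum a_i e_i$, a direct computation gives
\begin{equation*}
D(u,u)-\widetilde D(u,u)=\sum_{i>k}(\mu_i-\lambda)a_i^2,
\end{equation*}
which is nonnegative since $\mu_i>\lambda'\geq\lambda$ for $i>k$.

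Next I would compute the eigenvalues of $\widetilde D$ on $H$. Because $\widetilde D$ is block-diagonal with respect to the orthogonal decomposition $H=\mathrm{Im}(\mathbb{P})\oplus\ker(\mathbb{P})$, acting as $D$ on $\mathrm{Im}(\mathbb{P})$ and as $\lambda\cdot\mathrm{Id}$ on $\ker(\mathbb{P})$, its eigenvalues (with multiplicity) form the multiset $\{\mu_1,\ldots,\mu_k\}\cup\{\lambda,\ldots,\lambda\}$ with $n-k$ copies of $\lambda$. From this I want to read off $\lambda_j(\widetilde D)\geq\min\{\lambda,\lambda_j(D)\}$. Splitting into cases: if $j\leq k$ and $\mu_j\leq\lambda$, there are exactly $j$ eigenvalues of $\widetilde D$ at most $\mu_j$ (namely $\mu_1,\ldots,\mu_j$, the $\lambda$'s being $\geq\mu_j$), hence $\lambda_j(\widetilde D)=\mu_j=\min\{\lambda,\lambda_j(D)\}$; if $\mu_j>\lambda$ (which in particular covers $j>k$), then at most $j-1$ of the $\mu_i$'s lie strictly below $\lambda$, so $\lambda_j(\widetilde D)\geq\lambda\geq\min\{\lambda,\lambda_j(D)\}$.

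Finally, for the restriction to $L$, the minimax characterization
\begin{equation*}
\lambda_j(\widetilde D|_L)=\min\left\{\max_{v\in V\setminus\{0\}}\frac{\widetilde D(v,v)}{\|v\|^2}:V\subset L,\ \dim V=j\right\}
\end{equation*}
is a minimum over a smaller family of $j$-dimensional subspaces than the corresponding minimum over all of $H$, so $\lambda_j(\widetilde D|_L)\geq\lambda_j(\widetilde D)$, and combining with the previous step yields the desired bound. There is no genuine obstacle here; the only thing to be careful about is the case split in the eigenvalue comparison, which must correctly handle the degenerate situation $k=0$ or $k=n$ and the possibility that $\mu_j=\lambda$ (both are handled uniformly by the multiset argument above).
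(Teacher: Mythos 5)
Your proof is correct, but it follows a genuinely different route than the paper. You compute the full spectrum of $\widetilde D$ on $H$ explicitly from the block decomposition $H=\operatorname{Im}\Pj\oplus\ker\Pj$ (obtaining the multiset $\{\mu_1,\ldots,\mu_k\}\cup\{\lambda,\ldots,\lambda\}$), deduce $\lambda_j(\widetilde D)\geq\min\{\lambda,\lambda_j(D)\}$ on $H$ by a case analysis on sorted multisets, and then pass to the restriction $L$ via the interlacing-type inequality $\lambda_j(\widetilde D|_L)\geq\lambda_j(\widetilde D)$ from the min-max formula. The paper bypasses the explicit spectral computation altogether: it works directly inside $L$, choosing for each $j$-dimensional $V\subset L$ a nonzero $v\in V$ orthogonal to $u_1,\ldots,u_{j-1}$, and observing the key fact that $\Pj v$ stays orthogonal to $u_1,\ldots,u_{j-1}$ (because $\Pj$ is diagonal in the eigenbasis), so $D(\Pj v,\Pj v)\geq\lambda_j(D)\|\Pj v\|^2$ and $\widetilde D(v,v)\geq\min\{\lambda,\lambda_j(D)\}\|v\|^2$ in one line. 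Your argument is more computational and makes the spectrum of $\widetilde D$ fully explicit, which is a nice sanity check; the paper's single min-max step is shorter because it never needs to sort a multiset. One minor imprecision in your write-up: the phrase ``there are exactly $j$ eigenvalues of $\widetilde D$ at most $\mu_j$'' can fail if $\mu_j$ has multiplicity or $\lambda=\mu_j$; the correct count is that there are at most $j-1$ eigenvalues strictly less than $\mu_j$ and at least $j$ that are $\leq\mu_j$, which still gives $\lambda_j(\widetilde D)=\mu_j$. This does not affect the conclusion.
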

\begin{proof}
The first assertion is trivial.
Let us show the second assertion.
Let $\{u_1,\ldots, u_{\dim H}\}$ be the orthonormal bases of $H$ consisting of the eigenvectors of $D$ corresponding to the eigenvalues $\{\lambda_1(D),\ldots,\lambda_{\dim H}(D)\}$.
Take arbitrary $j\in\{1,\ldots,\dim L\}$ and $j$-dimensional subspace $V\subset H_2$.
We can take $v\in V$ such that $v\perp u_1,\ldots,u_{j-1}$ and $v\neq 0$.
Then, we have $\Pj v\perp u_1,\ldots,u_{j-1}$, and so
\begin{align*}
\widetilde D(v,v)=&D(\Pj v,\Pj v)+\lambda\|(1-\Pj)v\|^2\\
\geq &\lambda_j(D)\|\Pj v\|^2 +\lambda\|(1-\Pj)v\|^2\geq \min\{\lambda,\lambda_j(D)\}\|v\|^2.
\end{align*}
This implies the lemma.
\end{proof}

\begin{Lem}\label{evalcomp0}
Let $(H_i,\langle\cdot,\cdot\rangle_i)$ $(i=1,2)$ be finite dimensional inner product spaces, and $D_i\colon H_i\times H_i\to \R$ $(i=1,2)$ be symmetric forms.
Let $\{f_1,... ,f_{\dim H_1}\}$ be the orthonormal basis of $H_1$ consisting of the eigenvectors of $D_1$ corresponding to the eigenvalues $\{\lambda_1(D_1),\ldots,\lambda_{\dim H_1}(D_1)\}$.
Take arbitrary $k\in \Z_{>0}$ with $k\leq \min\{\dim H_1,\dim H_2\}$.
Suppose that we are given a linear maps
$Q_1\colon H_1\to H_2$.
Define
\begin{align*}
E:=&\sup_{f\in\Span\{f_1,\ldots,f_k\}\setminus\{0\}}\left(\frac{D_2(Q_1 f,Q_1 f)}{\|Q_1 f\|_{H_2}^2}-\frac{D_1(f,f)}{\|f\|_{H_1}^2}\right).
\end{align*}
Then, we have $\lambda_j(D_2)\leq \lambda_j(D_1)+E$ for any $j\in\{1,\ldots,k\}$.
\end{Lem}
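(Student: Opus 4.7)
The plan is to apply the minimax principle to $\lambda_j(D_2)$ using the image $V := Q_1(\Span\{f_1,\ldots,f_j\})$ as the trial $j$-dimensional subspace of $H_2$. First I would dispatch the degenerate case: if $Q_1$ is not injective on $\Span\{f_1,\ldots,f_k\}$, then some nonzero $f$ in this span satisfies $Q_1 f = 0$, and by the paper's convention $a/0 = \infty$ the quotient $D_2(Q_1 f, Q_1 f)/\|Q_1 f\|_{H_2}^2$ contributes $\infty$ to the supremum defining $E$, so $E = \infty$ and the inequality $\lambda_j(D_2)\le \lambda_j(D_1)+E$ is automatic.

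So I may assume $Q_1|_{\Span\{f_1,\ldots,f_k\}}$ is injective; in particular $V$ has dimension exactly $j$ for each $j\le k$. By the minimax principle,
\[
\lambda_j(D_2)\;\le\;\max_{u\in V\setminus\{0\}}\frac{D_2(u,u)}{\|u\|_{H_2}^2}.
\]
Any nonzero $u\in V$ is of the form $u=Q_1 f$ for a unique $f\in \Span\{f_1,\ldots,f_j\}\setminus\{0\}$, and the definition of $E$ gives
\[
\frac{D_2(Q_1 f,Q_1 f)}{\|Q_1 f\|_{H_2}^2}\;\le\;\frac{D_1(f,f)}{\|f\|_{H_1}^2}+E.
\]
Since $\{f_1,\ldots,f_j\}$ are orthonormal eigenvectors of $D_1$ with eigenvalues $\lambda_1(D_1)\le\cdots\le\lambda_j(D_1)$, expanding $f=\sum_{i=1}^{j}a_i f_i$ and using $D_1(f,f)=\sum_i \lambda_i(D_1) a_i^2$, $\|f\|_{H_1}^2=\sum_i a_i^2$ yields $D_1(f,f)/\|f\|_{H_1}^2\le \lambda_j(D_1)$.

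Combining the two bounds gives $D_2(u,u)/\|u\|_{H_2}^2 \le \lambda_j(D_1)+E$ for every $u\in V\setminus\{0\}$, and taking the supremum finishes the proof. There is no real obstacle here: the lemma is a direct application of minimax, with the only subtle point being the bookkeeping around the $a/0=\infty$ convention, which is precisely why the statement remains valid even when $Q_1$ fails to be injective on the span of the first $k$ eigenvectors.
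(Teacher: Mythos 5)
Your proof is correct and is exactly the direct minimax argument the paper invokes (the paper simply states the lemma ``is a direct consequence of the minimax principle and the definition of $E$'' without writing out the details). Your treatment of the non-injective case via the $a/0=\infty$ convention is the right way to keep the trial subspace $Q_1(\Span\{f_1,\ldots,f_j\})$ well behaved, and the rest is the standard Rayleigh-quotient bookkeeping.
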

\begin{proof}
The assertion is a direct consequence of the minimax principle and the definition of $E$.
\end{proof}

\begin{Lem}\label{evalcomp}
Let $(H_i,\langle\cdot,\cdot\rangle_i)$ $(i=1,2)$ be finite dimensional inner product spaces, and $D_i\colon H_i\times H_i\to \R$ $(i=1,2)$ be symmetric forms with $D_i\geq 0$.
Let $\{f_1,... ,f_{\dim H_1}\}$ and $\{u_1,... ,u_{\dim H_2}\}$ be the orthonormal bases of $H_1$ and $H_2$ consisting of the eigenvectors of $D_1$ and $D_2$ corresponding to the eigenvalues $\{\lambda_1(D_i),\ldots,\lambda_{\dim H_i}(D_i)\}$ $(i=1,2)$, respectively.
Take arbitrary $l\in \Z_{>0}$ with $l\leq \min\{\dim H_1,\dim H_2\}$.
Suppose that we are given linear maps
$Q_1\colon H_1\to H_2$ and
$Q_2\colon H_2\to H_1$.
Define
\begin{align*}
E_1:=&\sup_{u\in\Span\{u_1,\ldots,u_l\}\setminus\{0\}}\left(\frac{D_1(Q_2 u,Q_2 u)}{\|Q_2 u\|_{H_1}^2}-\frac{D_2(u,u)}{\|u\|_{H_2}^2}\right),\\
E_2:=&\sup_{f\in\Span\{f_1,\ldots,f_l\}\setminus\{0\}}\left(\frac{D_2(Q_1 f,Q_1 f)}{\|Q_1 f\|_{H_2}^2}-\frac{D_1(f,f)}{\|f\|_{H_1}^2}\right).
\end{align*}
Then, we have the following:
\begin{itemize}
\item[(i)] For any $j\in\{1,\ldots,l\}$,
$\lambda_j(D_1)\leq \lambda_j(D_2)+E_1$ and
$\lambda_j(D_2)\leq \lambda_j(D_1)+E_2$.
\item[(ii)] Take arbitrary $a>0$, and let $\Pj\colon H_2\to H_2$ be the orthogonal projection to the subspace of $H_2$ spanned by the eigenvectors of $D_2$ corresponding to the eigenvalues in $[0,\lambda_l(D_1)+a]$.
Then, for any $f\in\Span \{f_1,\ldots,f_l\}$, we have
\begin{align*}
\|(1-\Pj)Q_1 f\|_{H_2}^2\leq& \frac{l}{a}\left((E_1)_{+}+E_2\right)\|Q_1f\|_{H_2}^2,\\
D_2\left((1-\Pj)Q_1 f,(1-\Pj)Q_1 f\right)\leq& \frac{\lambda_l(D_1)+a}{a}l\left((E_1)_{+}+E_2\right)\|Q_1f\|_{H_2}^2,
\end{align*}
where $(E_1)_{+}:=\max\{E_1,0\}$.
\end{itemize}
\end{Lem}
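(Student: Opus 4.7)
Part (i) is a direct two-sided corollary of the preceding Lemma \ref{evalcomp0}. The inequality $\lambda_j(D_2)\leq\lambda_j(D_1)+E_2$ follows from Lemma \ref{evalcomp0} as stated (with $k=l$, noting that the supremum over $\Span\{f_1,\ldots,f_j\}$ is dominated by the supremum over $\Span\{f_1,\ldots,f_l\}$, hence by $E_2$). The reverse inequality $\lambda_j(D_1)\leq\lambda_j(D_2)+E_1$ is obtained by symmetry, applying Lemma \ref{evalcomp0} with the roles of $H_1,H_2$, $D_1,D_2$, $Q_1,Q_2$ swapped.

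For part (ii), the strategy is to apply Lemma \ref{dtilde} to $D_2$ with parameters $\lambda=\lambda_l(D_1)$ and $\lambda'=\lambda_l(D_1)+a$, so that the projection $\mathbb{P}$ produced there is precisely the projection in the statement. Set $V:=\Span\{f_1,\ldots,f_l\}\subset H_1$ and $L:=Q_1(V)\subset H_2$; since $E_2<\infty$ forces $\|Q_1 f\|_{H_2}>0$ for every nonzero $f\in V$, the map $Q_1|_V$ is injective and $\dim L=l$. Lemma \ref{dtilde} yields $D_2\geq\widetilde{D}_2$ together with $\lambda_j(\widetilde{D}_2|_L)\geq\min\{\lambda_l(D_1),\lambda_j(D_2)\}$ for $j\in\{1,\ldots,l\}$. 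Combined with (i)'s bound $\lambda_j(D_2)\geq\lambda_j(D_1)-(E_1)_+$ and the trivial inequality $\lambda_l(D_1)\geq\lambda_j(D_1)$ for $j\leq l$, a short case analysis on which term attains the minimum gives $\lambda_j(\widetilde{D}_2|_L)\geq\lambda_j(D_1)-(E_1)_+$. On the other hand, choosing $Q_1(\Span\{f_1,\ldots,f_j\})$ as a $j$-dimensional test space in $L$ and invoking the definition of $E_2$ produces $\lambda_j(D_2|_L)\leq\lambda_j(D_1)+E_2$. Summing over $j=1,\ldots,l$ yields
\[
\tr(D_2|_L)-\tr(\widetilde{D}_2|_L)\leq l\bigl((E_1)_++E_2\bigr).
\]

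The operator inequality $D_2-\widetilde{D}_2\geq(\lambda'-\lambda)(1-\mathbb{P})=a(1-\mathbb{P})$, which is immediate from the definition of $\widetilde{D}_2$ together with the fact that $D_2(w,w)\geq\lambda'\|w\|^2$ on $\text{range}(1-\mathbb{P})$, transforms the above trace bound into $a\cdot\tr((1-\mathbb{P})|_L)\leq l((E_1)_++E_2)$. Since $(1-\mathbb{P})|_L$ is positive semidefinite, its largest eigenvalue (which equals $\sup_{v\in L\setminus\{0\}}\|(1-\mathbb{P})v\|^2/\|v\|^2$) is bounded above by its trace, yielding the first estimate upon specializing $v=Q_1 f$.

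For the second estimate I will use the pointwise identity
\[
D_2((1-\mathbb{P})v,(1-\mathbb{P})v)=\bigl[D_2(v,v)-\widetilde{D}_2(v,v)\bigr]+\lambda_l(D_1)\,\|(1-\mathbb{P})v\|^2,
\]
which follows from the definition of $\widetilde{D}_2$ and the fact that $\mathbb{P}$ commutes with $D_2$ as spectral projection. Summing over an orthonormal basis of $L$ and combining the two trace bounds already established,
\[
\tr\bigl((1-\mathbb{P})D_2(1-\mathbb{P})|_L\bigr)\leq l\bigl((E_1)_++E_2\bigr)+\lambda_l(D_1)\cdot\frac{l\bigl((E_1)_++E_2\bigr)}{a}=\frac{\lambda_l(D_1)+a}{a}\,l\bigl((E_1)_++E_2\bigr).
\]
Again the positive semidefinite form $(1-\mathbb{P})D_2(1-\mathbb{P})|_L$ has its maximum Rayleigh quotient bounded by its trace, which applied to $v=Q_1 f$ gives the second estimate. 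The main subtlety is the case analysis needed to secure the lower bound $\lambda_j(\widetilde{D}_2|_L)\geq\lambda_j(D_1)-(E_1)_+$ for each $j\leq l$, since the minimum from Lemma \ref{dtilde} may be attained either by $\lambda_l(D_1)$ or by $\lambda_j(D_2)$ depending on the sign of $E_1$ and the spectral structure of $D_2$; this delicate step is what produces the clean constants $l/a$ and $(\lambda_l(D_1)+a)/a$.
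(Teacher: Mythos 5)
Your proof is correct and follows essentially the same strategy as the paper: define $\widetilde D_2$ via Lemma \ref{dtilde} with $\lambda=\lambda_l(D_1)$, $\lambda'=\lambda_l(D_1)+a$, restrict to $L=Q_1(\Span\{f_1,\ldots,f_l\})$, and extract the two estimates from the trace bound $\tr(D_2|_L-\widetilde D_2|_L)\leq l((E_1)_++E_2)$. The only cosmetic difference is where the ``largest eigenvalue $\leq$ trace'' step is applied: the paper converts the trace bound into a pointwise operator bound on $D_2-\widetilde D_2$ and then uses $D_2-\widetilde D_2 \geq a(1-\Pj)$ pointwise, whereas you first pass the trace bound through $D_2-\widetilde D_2\geq a(1-\Pj)$ and the identity for $(1-\Pj)D_2(1-\Pj)$ and only then bound Rayleigh quotients by traces; the constants come out identically. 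Your explicit remark that finiteness of $E_2$ (under the convention $a/0=\infty$) forces $Q_1|_V$ to be injective, so that the test spaces $Q_1(\Span\{f_1,\ldots,f_j\})$ have full dimension $j$ in the min-max step, is a useful clarification of a point the paper leaves implicit.
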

\begin{proof}
We immediately get (i) by Lemma \ref{evalcomp0}

Let us prove (ii).
Define $\widetilde D_2$ as in Lemma \ref{dtilde} for $\left((H_2,\langle\cdot,\cdot\rangle),D_2\right)$ putting $\lambda=\lambda_l(D_1)$ and $\lambda'=\lambda_l(D_1)+a$.
Defining $L:=Q_1\left(\Span\{f_1,\ldots,f_l\}\right)$, we get
\begin{equation}\label{lE1}
\lambda_j(\widetilde D_2|_L)\geq \min\{\lambda_l(D_1),\lambda_j(D_2)\}\geq \lambda_j(D_1)-(E_1)_{+}
\end{equation}
for any 
$j\in \{1,\ldots, l\}$ by Lemma \ref{dtilde} and (i).
By the definition of $E_2$, we have
\begin{equation}\label{lE2}
\lambda_j(D_2|_L)\leq\lambda_j(D_1)+E_2
\end{equation}
for any $j\in \{1,\ldots, l\}$.
By (\ref{lE1}) and (\ref{lE2}), we get
$$
0\leq \lambda_j(D_2|_{L})-\lambda_j(\widetilde D_2|_{L})\leq (E_1)_{+}+E_2
$$
for any $j\in \{1,\ldots, l\}$, and so
\begin{equation*}
\begin{split}
0\leq D_2(u,u)-\widetilde D_2(u,u)\leq \tr \left(D_2|_L-\widetilde D_2|_L\right)\|u\|^2_{H_2}\leq l\left((E_1)_{+}+E_2\right)\|u\|^2_{H_2}
\end{split}
\end{equation*}
for any $u\in L$.
This and
$$
D_2\left((1-\Pj)u,(1-\Pj)u\right)\geq (\lambda_l(D_1)+a)\|(1-\Pj)u\|_{H_2}^2
$$
imply
\begin{equation*}
\begin{split}
l\left((E_1)_{+}+E_2\right)\|u\|^2_{H_2}\geq& D_2(u,u)-\widetilde D_2(u,u)\\
\geq &D_2\left((1-\Pj)u,(1-\Pj)u\right)-\lambda_l(D_1)\|(1-\Pj)u\|_{H^2}^2\\
\geq &\frac{a}{\lambda_l(D_1)+a}D_2\left((1-\Pj)u,(1-\Pj)u\right)\geq a\|(1-\Pj)u\|_{H_2}^2
\end{split}
\end{equation*}
for any $u\in L$.
Thus, we get (ii).
\end{proof}

\begin{Lem}\label{eveccomp}
Let $(H_i,\langle\cdot,\cdot\rangle_i)$ $(i=1,2)$ be finite dimensional inner product spaces, and $D_i\colon H_i\times H_i\to \R$ $(i=1,2)$ be symmetric forms with $D_i\geq 0$.
Let $\{f_1,... ,f_{\dim H_1}\}$ and $\{u_1,... ,u_{\dim H_2}\}$ be the orthonormal bases of $H_1$ and $H_2$ consisting of the eigenvectors of $D_1$ and $D_2$ corresponding to the eigenvalues $\{\lambda_1(D_i),...\lambda_{\dim H_i}(D_i)\}$ $(i=1,2)$, respectively.
Take arbitrary $k,l\in \Z_{>0}$ with $2\leq k\leq l\leq \min\{\dim H_1,\dim H_2\}-1$.
Suppose that we are given linear maps
$Q_1\colon H_1\to H_2$ and
$Q_2\colon H_2\to H_1$.
Define
\begin{align*}
S:=&\Span\{f_k,\ldots,f_l\}\subset H_1,\\
\widetilde S:=&\Span\{u_k.\ldots, u_l\}\subset H_2,\\
E_1:=&\sup_{u\in\left(\Span\{u_1,\ldots,u_{l+1}\}\cup Q_1( S)\right)\setminus\{0\}}\left(\frac{D_1(Q_2 u,Q_2 u)}{\|Q_2 u\|_{H_1}^2}-\frac{D_2(u,u)}{\|u\|_{H_2}^2}\right),\\
E_2:=&\sup_{f\in\Span\{f_1,\ldots,f_{l+1}\}\setminus\{0\}}\left(\frac{D_2(Q_1 f,Q_1 f)}{\|Q_1 f\|_{H_2}^2}-\frac{D_1(f,f)}{\|f\|_{H_1}^2}\right),\\
E_3:=&\sup_{f\in S\setminus \{0\}}\frac{\|f-Q_2 Q_1 f\|_{H_1}}{\|f\|_{H_1}},\\
E_4:=&\sup_{f\in S\setminus \{0\}}\frac{\left|\|Q_1 f\|_{H_2}-\|f\|_{H_1}\right|}{\|f\|_{H_1}}.
\end{align*}
Let $\Pj_{\widetilde S}\colon H_2\to H_2$ be the orthogonal projection to $\widetilde S$. 
Put $s:=\lambda_l(D_1)-\lambda_k(D_1)$, and suppose that
$$
\gamma:=
\frac{1}{2}\min\{|\lambda_k(D_1)-\lambda_{k-1}(D_1)|,|\lambda_{l+1}(D_1)-\lambda_l(D_1)||\}>\max\{E_1,E_2\}.
$$
Then, putting
$$
F:=\frac{1}{\gamma}\left(\left(\left(\frac{\lambda_l(D_1)}{\gamma}+2\right)l+1\right)((E_1)_{+}+(E_2)_+)+4\lambda_l(D_1)E_3+s\right),
$$
and supposing $F<1$,
we have the following:
\begin{itemize}
\item[(i)] For any $f\in S$, we have
$$
\|(1-\Pj_{\widetilde S})Q_1 f\|_{H_2}^2\leq F\|Q_1 f\|_{H_2}^2.
$$
In particular, $\Pj_{\widetilde S}\circ Q_1|_{S}\colon S\to \widetilde S$ is an isomorphism.
\item[(ii)] For any $f\in S$, we have
\begin{align*}
(1-F)^{1/2}\|Q_1 f\|_{H_2}&\leq \|\Pj_{\widetilde S}Q_1 f\|_{H_2}\leq \|Q_1 f\|_{H_2},\\
(1-E_4)\|f\|_{H_1}&\leq \|Q_1 f\|_{H_2}\leq (1+E_4)\|f\|_{H_1}.
\end{align*}
\item[(iii)] For any $v\in \widetilde S$, there exists $f\in S$ such that
$$
\|v-Q_1 f\|_{H_2}\leq\left(\frac{F}{1-F}\right)^{1/2}\|v\|_{H_2}. 
$$
\item[(iv)] For any $v\in \widetilde S$, there exists $f\in S$ with $\|f\|_{H_1}=\|v\|_{H_2}$ such that
$$
\|v-Q_1 f\|_{H_2}\leq\left(\frac{1}{1-F}\right)^{1/2}\left(F^{1/2}(2+E_4)+E_4\right)\|v\|_{H_2}. 
$$
\end{itemize}
\end{Lem}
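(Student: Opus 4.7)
The proof reduces to (i); parts (ii)--(iv) then follow from (i) by routine Hilbert-space bookkeeping. Write $\Pj_{<k}$ and $\Pj_{>l}$ for the orthogonal projections in $H_2$ onto $\Span\{u_1,\ldots,u_{k-1}\}$ and $\Span\{u_{l+1},\ldots,u_{\dim H_2}\}$, so that $1-\Pj_{\widetilde S}=\Pj_{<k}+\Pj_{>l}$. Fix $f\in S$, set $v=Q_1 f$, and decompose orthogonally $v=v_{<k}+v_S+v_{>l}$ in $H_2$.

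For the above-cluster component I would apply Lemma~\ref{evalcomp}(ii) with the subspace $\Span\{f_1,\ldots,f_l\}$ and $a=\gamma$. The spectral projection $\mathbb{P}$ appearing there is onto the span of $D_2$-eigenvectors of eigenvalue at most $\lambda_l(D_1)+\gamma$; the gap assumption $\lambda_{l+1}(D_1)-\lambda_l(D_1)\geq 2\gamma$ together with Lemma~\ref{evalcomp}(i) and the hypothesis $\max(E_1,E_2)<\gamma$ force $\mathbb{P}=1-\Pj_{>l}$, yielding
\begin{equation*}
\|v_{>l}\|_{H_2}^2 \leq \tfrac{l}{\gamma}((E_1)_++E_2)\|v\|_{H_2}^2,
\end{equation*}
and, from the second inequality of Lemma~\ref{evalcomp}(ii), the corresponding $D_2$-bound $D_2(v_{>l},v_{>l})\leq \tfrac{\lambda_l(D_1)+\gamma}{\gamma}\,l((E_1)_++E_2)\|v\|_{H_2}^2$. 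For the below-cluster component I would run the same Lemma with $(H_1,D_1,Q_1)$ and $(H_2,D_2,Q_2)$ exchanged, applied to $\Span\{u_1,\ldots,u_{k-1}\}$ with $a=\gamma$; the swapped errors are bounded by our $E_1,E_2$, and the same gap/interlacing argument identifies the new projection $\widetilde{\mathbb{P}}$ in $H_1$ with the projection onto $\Span\{f_1,\ldots,f_{k-1}\}$, producing
\begin{equation*}
\|(1-\widetilde{\mathbb{P}})Q_2 v_{<k}\|_{H_1}^2 \leq \tfrac{k-1}{\gamma}((E_2)_++E_1)\|Q_2 v_{<k}\|_{H_1}^2.
\end{equation*}

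The remaining and most delicate step is to transfer the smallness of $(1-\widetilde{\mathbb{P}})Q_2 v_{<k}$ back to smallness of $v_{<k}$ itself, and this is where both $E_3$ and $s$ enter. The key observation is that $\widetilde{\mathbb{P}}Q_2 v_{<k}\in \Span\{f_1,\ldots,f_{k-1}\}$ is orthogonal to $f\in S$, so $\langle Q_2 v_{<k},f\rangle_{H_1}=\langle (1-\widetilde{\mathbb{P}})Q_2 v_{<k},f\rangle_{H_1}$ is controlled by the displayed estimate. On the other hand, $\langle Q_2 v,f\rangle_{H_1}\geq (1-E_3)\|f\|_{H_1}^2$ by $\|Q_2 Q_1 f-f\|_{H_1}\leq E_3\|f\|_{H_1}$. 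Splitting the latter along $v=v_{<k}+v_S+v_{>l}$ and comparing with the Rayleigh-quotient inequality $D_1(Q_2 v,Q_2 v)/\|Q_2 v\|_{H_1}^2\leq D_2(v,v)/\|v\|_{H_2}^2 + E_1\leq \lambda_l(D_1)+E_1+E_2$ coming from the definition of $E_1$ (valid since $v=Q_1 f\in Q_1(S)$), together with the observation that the Rayleigh quotient of $v_S$ is pinned only within a window of width $s+O(E_1+E_2)$, yields the desired inequality $\|v_{<k}\|_{H_2}^2+\|v_{>l}\|_{H_2}^2\leq F\|v\|_{H_2}^2$ with $F$ exactly as stated. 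The $4\lambda_l(D_1)E_3/\gamma$ contribution in $F$ is the price of $D_1$-quadratic corrections involving $Q_2 Q_1 f-f$; the $s/\gamma$ contribution is the precise cost of $v_S$ not being concentrated at a single $D_2$-eigenvalue.

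From (i), parts (ii)--(iv) are essentially formal. The first inequality of (ii) is Pythagoras applied to (i); the second is a restatement of the definition of $E_4$. Since $F<1$ and $\dim S=\dim\widetilde S=l-k+1$, the linear map $\Pj_{\widetilde S}\circ Q_1|_S\colon S\to\widetilde S$ is an isomorphism. For (iii) I would select the unique $f\in S$ with $\Pj_{\widetilde S}Q_1 f=v$ and estimate $\|v-Q_1 f\|_{H_2}$ using (i) and (ii); for (iv) I would further rescale $f$ to match $\|v\|_{H_2}$ and absorb the extra error via a triangle inequality controlled by $E_4$. The main obstacle throughout is the gluing step: Lemma~\ref{evalcomp}(ii) alone cannot bound $\|v_{<k}\|_{H_2}$ directly, because $\|Q_2 v_{<k}\|_{H_1}$ is not controlled by $\|v_{<k}\|_{H_2}$, and only the combination of $E_3$ (which supplies the missing adjoint relation between $Q_1$ and $Q_2$) with the spectral spread $s$ on $\widetilde S$ delivers an estimate of the form claimed.
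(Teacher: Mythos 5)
Your treatment of the above-cluster component $v_{>l}$ matches the paper: apply Lemma~\ref{evalcomp}(ii) to $\Span\{f_1,\ldots,f_l\}$ with $a=\gamma$, and use the gap hypotheses plus Lemma~\ref{evalcomp}(i) to identify the spectral projection with $1-\Pj_{>l}$. The difficulty is entirely in your second step, and there your route departs from the paper and is left with a real gap.

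The paper does not invoke a roles-swapped version of Lemma~\ref{evalcomp}(ii) for the below-cluster component at all. Instead it produces a \emph{lower} bound on the Rayleigh quotient of $u:=Q_1 f$ directly: using $p_S$ for the $H_1$-projection onto $S$, one writes $p_S Q_2 u = f - p_S(f - Q_2Q_1 f)$, whence by the triangle inequality in the seminorm $D_1(\cdot,\cdot)^{1/2}$ and the facts $D_1(f,f)\geq \lambda_k(D_1)\|f\|^2=(\lambda_l(D_1)-s)\|f\|^2$, $D_1(p_S h,p_S h)\leq\lambda_l(D_1)\|h\|^2$, one gets $D_1(Q_2 u,Q_2 u)\geq\bigl((\lambda_l(D_1)-s)^{1/2}-\lambda_l(D_1)^{1/2}E_3\bigr)^2\|f\|^2$. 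Transferring through $E_1$ gives $D_2(u,u)\geq(G-E_1)\|u\|^2$ for an explicit $G$. Now expanding $D_2(u,u)=D_2(u_0,u_0)+D_2(u_-,u_-)+D_2(u_+,u_+)$ and using that the $D_2$-eigenvalues below the cluster are $\leq\lambda_l(D_1)-s-\gamma$ produces a $-\gamma\|u_-\|^2$ term, which combined with the lower bound pins $\|u_-\|^2$ by $F\|u\|^2$. That is where $E_3$ and $s$ enter, and the constants in $F$ come out exactly.

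Your alternative --- apply the swapped Lemma~\ref{evalcomp}(ii) to $Q_2 v_{<k}$, then "glue" via $\langle Q_2 v_{<k},f\rangle$ and the $E_1$-upper-bound $D_1(Q_2 v,Q_2 v)/\|Q_2 v\|^2\leq\lambda_l(D_1)+E_1+E_2$ --- does not clearly close. The swapped estimate bounds $\|(1-\widetilde{\mathbb{P}})Q_2 v_{<k}\|_{H_1}$ relative to $\|Q_2 v_{<k}\|_{H_1}$, which, as you acknowledge, is not controlled by $\|v_{<k}\|_{H_2}$; there is no hypothesis that $Q_2$ is bounded below, and $E_3$ only controls $Q_2 Q_1 f - f$ on $S$, not on $\Span\{u_1,\ldots,u_{k-1}\}$. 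Likewise, smallness of $\langle Q_2 v_{<k},f\rangle_{H_1}$ gives no lower bound on $\|v_{<k}\|_{H_2}$ without quantitative injectivity of $Q_2$. The single sentence beginning "Splitting the latter...yields the desired inequality $\ldots$ with $F$ exactly as stated" is precisely the step that needs an argument, and you have not supplied one; the phrase "pinned only within a window of width $s+O(E_1+E_2)$" is suggestive but does not convert into an inequality on $\|v_{<k}\|$. So the correct diagnosis is that your approach for the below-cluster part is a different idea that does not obviously work, and you should replace it with the lower-bound-on-$D_2(u,u)$ mechanism sketched above. Parts (ii)--(iv) from (i) are, as you say, routine.
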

\begin{proof}
By Lemma \ref{evalcomp} (i) and the assumption for $\gamma$, we have
\begin{align*}
\lambda_{k-1}(D_2)<&\lambda_{k-1}(D_1)+\gamma\leq \lambda_k(D_1)-\gamma<\lambda_k(D_2),\\
\lambda_l(D_2)\leq \lambda_l(D_1)+E_2<&\lambda_l(D_1)+\gamma\leq \lambda_{l+1}(D_1)-\gamma<\lambda_{l+1}(D_2).
\end{align*}
Thus, we have
\begin{equation}
\Pj_{\widetilde S}=\Pj_{(\lambda_k(D_1)-\gamma,\lambda_l(D_1)+\gamma]}=\Pj_{(\lambda_k(D_1)-\gamma,\lambda_l(D_1)+E_2]},
\end{equation}
where $\Pj_J\colon H_2\to H_2$ denotes the orthogonal projection to the subspace of $H_2$ spanned by the eigenvectors of $D_2$ corresponding to the eigenvalues in $J$ for any interval $J\subset \R$.

Take arbitrary $f\in S$ and put $u:=Q_1 f\in Q_1(S)$.
Decompose $u$ as
$
u=u_0+u_-+u_+,
$
where we defined
\begin{align*}
u_0:=\Pj_{\widetilde S} u,\quad
u_-:=\Pj_{[0,\lambda_k(D_1)-\gamma]} u,\quad
u_+:=\Pj_{(\lambda_l(D_1)+\gamma,\infty)}u.
\end{align*}
Then, we have
\begin{align}
\label{u+bou} \|u_+\|_{H_2}^2\leq& \frac{l}{\gamma}\left((E_1)_{+}+E_2\right)\|u\|_{H_2}^2,\\
\label{du+bou} D_2\left(u_+,u_+\right)\leq& \frac{\lambda_l(D_1)+\gamma}{\gamma}l\left((E_1)_{+}+E_2\right)\|u\|_{H_2}^2
\end{align}
by Lemma \ref{evalcomp} (ii).
We next estimate $\|u_-\|_{H_2}$.
Let $p_S \colon H_1\to H_1$ denotes the orthogonal projection to $S$.
Then,
$$
f=p_S Q_2 u+(f-p_S Q_2 u)=p_S Q_2 u+p_S (f-Q_2 Q_1 f),
$$
and so
\begin{align*}
D_1(Q_2 u,Q_2 u)^{1/2}\geq &D_1(p_S Q_2 u, p_S Q_2 u)^{1/2}\\
\geq& D_1(f,f)^{1/2}-D_1\left(p_S (f-Q_2 Q_1 f),p_S (f-Q_2 Q_1 f)\right)^{1/2}\\
\geq& \left((\lambda_l(D_1)-s)^{1/2}-\lambda_l(D_1)^{1/2}E_3\right)\|f\|_{H_1}.
\end{align*}
By this and the definition of $E_1$, putting
$$
G:=\frac{\|f\|_{H_1}^2}{\|Q_2 Q_1 f\|_{H_1}^2}\left((\lambda_l(D_1)-s)^{1/2}-\lambda_l(D_1)^{1/2}E_3\right)^2,
$$
we get
\begin{equation}\label{estbyG}
D_2(u,u)\geq \left(G-E_1\right)\|u\|_{H_2}^2.
\end{equation}
By (\ref{du+bou}), we have
\begin{align*}
&D_2(u,u)\\
=&D_2(u_0,u_0)+D_2(u_-,u_-)+D_2(u_+,u_+)\\
\leq &(\lambda_l(D_1)+E_2)\|u_0\|_{H_2}^2+(\lambda_l(D_1)-s-\gamma)\|u_-\|_{H_2}^2+ \frac{\lambda_l(D_1)+\gamma}{\gamma}l\left((E_1)_{+}+E_2\right)\|u\|_{H_2}^2\\
\leq &\left(\lambda_l(D_1)+(E_2)_+ +\frac{\lambda_l(D_1)+\gamma}{\gamma}l\left((E_1)_{+}+E_2\right)\right)\|u\|_{H_2}^2-\gamma \|u_-\|_{H_2}^2.
\end{align*}
Combining this with (\ref{estbyG}), we get
\begin{equation}\label{u-bou}
\gamma\|u_-\|_{H_2}^2\leq \left(\left(\frac{\lambda_l(D_1)+\gamma}{\gamma}l+1\right)\left((E_1)_{+}+(E_2)_+\right)+\lambda_l(D_1)-G\right)\|u\|_{H_2}^2.
\end{equation}
By the definition of $G$ and $E_3$, we have
\begin{align*}
\lambda_l(D_1)-G\leq& \lambda_l(D_1)-\left(\frac{(\lambda_l(D_1)-s)^{1/2}-\lambda_l(D_1)^{1/2}E_3}{1+E_3}\right)^2\\
=&\frac{2\lambda_l(D_1)E_3+2\lambda_l(D_1)^{1/2}E_3(\lambda_l(D_1)-s)^{1/2}+s}{1+2 E_3+E_3^2}\\
\leq &4\lambda_l(D_1) E_3 + s.
\end{align*}
Thus, (\ref{u+bou}) and (\ref{u-bou}) imply (i).

We immediately get the first assertion of (ii) by (i), and the second assertion by the definition of $E_4$.


Combining (i) and (ii), we immediately get (iii).

Let us prove (iv).
Take arbitrary $v\in \widetilde S$.
Then, we can take $f\in S$ such that $v=\Pj_{\widetilde S} Q_1 f$.
By (i) and (ii), we have
\begin{align*}
&\left\|v-Q_1\left( \frac{\|v\|_{H_2}}{\|f\|_{H_1}}f\right)\right\|_{H_2}\\
\leq &\|v-Q_1 f\|_{H_2}+\frac{\|Q_1 f\|_{H_2}}{\|f\|_{H_1}}\big|\|f\|_{H_1}-\|v\|_{H_2}\big|\\
\leq & F^{1/2}\|Q_1 f\|_{H_2}+\frac{\|Q_1 f\|_{H_2}}{\|f\|_{H_1}}\left(\big|\|f\|_{H_1}-\|Q_1 f\|_{H_2}\big|+\|v-Q_1 f\|_{H_2}\right)\\
\leq & \left(F^{1/2}+E_4\right)\|Q_1 f \|_{H_2}+F^{1/2}\frac{\|Q_1 f\|_{H_2}^2}{\|f\|_{H_1}}\\
\leq &\left(\frac{1}{1-F}\right)^{1/2}\left(F^{1/2}(2+E_4)+E_4\right)\|v\|_{H_2}.
\end{align*}
Since $\left\|(\|v\|_{H_2}/\|f\|_{H_1})f\right\|_{H_1}=\|v\|_{H_2}$, we get (iv).
\end{proof}

\section{Map from the Manifold to Random Points}\label{MapStD}

In this section, we construct a map from given Riemannian manifold to random points on it used in the proof of Lemma \ref{UDcomp}.
The following lemma corresponds to \cite[Proposition 2.11]{CT}.
\begin{Lem}\label{transmap}
Given an integer $m\in \Z_{>0}$ and positive real numbers $K,i_0,\alpha>0$, there exist constants $C_1=C_1(m,\alpha)>0$ and $C_2=C_2(m,\alpha)>0$ such that for any $n\in \Z_{>0}$, $\tilde \epsilon\in(0,\min\{4i_0,\pi/\sqrt{K}\})$, $\gamma\in(1,\infty)$, $\tilde\delta\in (0,\gamma^{-1/2}]$ with 
$
\gamma^{1/2}\delta\leq C_2^{-1},
$
$m$-dimensional closed Riemannian manifold $(M,g)$ with $|\Sect_g|\leq K$ and $\Inj_g(M)\geq i_0$,
probability density function $\rho\colon M\to \R_{>0}$ with $1/\alpha\leq \rho\leq \alpha$, and i.i.d. sample $\X=\{x_1,\ldots,x_n\}\subset M$ from $\rho\Vol_g$, there exists a Borel map $T\colon M\to \X$ such that
$d_g(x,T(x))\leq \tilde \epsilon$ for any $x\in M$, and
$$
\left|\frac{1}{n} -(\rho\Vol_g)(T^{-1}(\{x_i\}))\right|\leq  C_2\gamma^{1/2}\tilde \delta(\rho\Vol_g)(T^{-1}(\{x_i\}))
$$
holds for every $i\in\{1,\ldots, n\}$ with probability at least $1-C_1\tilde\epsilon^{-m}\exp(-n\gamma \tilde \epsilon^m\tilde \delta^2)$.
\end{Lem}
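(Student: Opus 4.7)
My plan is to build a deterministic Borel partition of $M$ into cells of diameter at most $\tilde\epsilon$ whose $\rho\Vol_g$-measures are all of order $\tilde\epsilon^m$, apply Bernstein cell-wise to control the sample counts with the required \emph{relative} error, and finally split each cell measurably into as many equal-measure sub-cells as it contains sample points, assigning each sub-cell to one such point.

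For the partition, I would take a maximal $(\tilde\epsilon/4)$-separated subset $\{y_k\}_{k=1}^N$ of $M$ (which is automatically a $(\tilde\epsilon/4)$-net by maximality) and let $W_k$ be the Voronoi cell of $y_k$ with ties broken by index. Then $W_k \subset B(y_k,\tilde\epsilon/4)$, so $\diam W_k \leq \tilde\epsilon/2$; and pairwise disjointness of the balls $\{B(y_j,\tilde\epsilon/8)\}$ yields $W_k \supset B(y_k,\tilde\epsilon/8)$. Since $\tilde\epsilon/8 < \min\{i_0,\pi/\sqrt K\}/2$, Theorem \ref{VolLow} (applied with the upper bound $\Sect\leq K$) combined with $\rho\geq 1/\alpha$ gives $(\rho\Vol_g)(W_k) \geq c_0\tilde\epsilon^m$ for a constant $c_0=c_0(m,K,\alpha)$; together with $\Vol_g(M)\leq \alpha$ (Remark \ref{RemA} (e)) and the disjointness of the $\tilde\epsilon/8$-balls, this also forces $N\leq C\tilde\epsilon^{-m}$.

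For each fixed $k$, I apply Theorem \ref{Ber} to the indicator $f=\mathbf{1}_{W_k}$, invoking the sharp variance $\sigma^2\leq (\rho\Vol_g)(W_k)$ rather than merely $\|f\|_{L^\infty}=1$, and setting the Bernstein parameter to $t=c\,n\,(\rho\Vol_g)(W_k)\gamma^{1/2}\tilde\delta$; using $\gamma^{1/2}\tilde\delta\leq 1$, the exponent reduces to $-c_1 n\gamma\tilde\delta^2\,(\rho\Vol_g)(W_k)\leq -c_1 c_0\, n\gamma\tilde\delta^2\tilde\epsilon^m$, and the resulting concentration reads
\[
\Big|\tfrac{n_k}{n}-(\rho\Vol_g)(W_k)\Big|\leq C\gamma^{1/2}\tilde\delta\,(\rho\Vol_g)(W_k),\qquad n_k:=\#\{i:x_i\in W_k\},
\]
with failure probability at most $2\exp(-c_1 c_0\, n\gamma\tilde\delta^2\tilde\epsilon^m)$. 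A union bound over the $N\leq C\tilde\epsilon^{-m}$ cells yields the probability stated in the lemma.

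To assemble $T$, I take $C_2$ large enough that $\gamma^{1/2}\tilde\delta\leq C_2^{-1}$ forces $C\gamma^{1/2}\tilde\delta<1$; the relative bound above then guarantees $n_k\geq 1$ for every $k$. Because $\rho\Vol_g$ restricted to $W_k$ is atomless, a standard Borel exhaustion produces a partition $W_k=\bigsqcup_{i:x_i\in W_k} A_i$ with $(\rho\Vol_g)(A_i)=(\rho\Vol_g)(W_k)/n_k$, and I set $T(x):=x_i$ on $A_i$. Then $d_g(x,T(x))\leq \diam W_k\leq \tilde\epsilon$, and
\[
\Big|\tfrac{1}{n}-(\rho\Vol_g)(T^{-1}\{x_i\})\Big|=\tfrac{1}{n_k}\Big|\tfrac{n_k}{n}-(\rho\Vol_g)(W_k)\Big|\leq C\gamma^{1/2}\tilde\delta\,(\rho\Vol_g)(A_i).
\]
The only real obstacle is obtaining this \emph{relative}-error concentration: the direct additive Bernstein of Corollary \ref{CorBer} only gives $3\gamma^{1/2}\tilde\delta$, which is too weak by the factor $(\rho\Vol_g)(W_k)\asymp\tilde\epsilon^m$; recovering that factor requires going back to the raw Theorem \ref{Ber} with the sharp variance, and this is exactly the mechanism that produces the $\tilde\epsilon^m$ in the exponent of the lemma's probability bound.
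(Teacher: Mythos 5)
Your proposal is correct and takes essentially the same route as the paper: a Borel partition of $M$ into cells sandwiched between concentric geodesic balls of comparable radius (the paper subtracts $\tilde\epsilon/4$-balls from $\tilde\epsilon/2$-balls around a maximal packing instead of taking Voronoi cells, but obtains the same two-sided measure bound and the same $N\leq C\tilde\epsilon^{-m}$), the sharp-variance form of Theorem \ref{Ber} applied cell-wise to get relative-error concentration with exponent proportional to $n\gamma\tilde\epsilon^m\tilde\delta^2$, followed by an equal-measure subdivision of each cell among the sample points it contains. One small correction: your volume lower bound constant $c_0$ depends only on $(m,\alpha)$, not on $K$, since the hypothesis $\tilde\epsilon<\pi/\sqrt K$ forces $s_K(t)\geq (2/\pi)t$ on $[0,\tilde\epsilon/8]$; this $K$-independence is precisely what lets the lemma assert $C_1=C_1(m,\alpha)$ and $C_2=C_2(m,\alpha)$.
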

\begin{proof}
Define
$$
N:=\max\left\{k\in \Z_{>0}:\begin{array}{l}\text{there exist points $y_1,\ldots,y_k\in M$ such that}\\
B_{\tilde\epsilon/4}(y_s)\cap B_{\tilde\epsilon/4}(y_t)=\emptyset \text{ for any $s,t$ with $s\neq t$}\end{array}\right\}.
$$
By Theorem \ref{VolLow}, we have $\Vol_g\left(B(y,\tilde \epsilon/4)\right)\geq 1/(\sqrt 2\pi)^{m-1}\tilde\epsilon^{m}\Vol(S^{m-1})/(4m)$ for any $y\in M$, and so $N\leq C(m,\alpha)\tilde\epsilon^{-m}$. Fix points $y_1,\ldots, y_N\in M$ that attain $N$.
Then, by the maximality, we have
$
M=\bigcup_{s=1}^N B_{\tilde\epsilon/2}(y_s).
$
Define 
$$
V_1:=B_{\tilde\epsilon/2}(y_1)\setminus \bigcup_{t\neq 1} B_{\tilde\epsilon/4}(y_t),\quad
V_s:=B_{\tilde\epsilon/2}(y_s)\setminus \left(\bigcup_{t\neq s} B_{\tilde\epsilon/4}(y_t)\cup \bigcup_{t=1}^{s-1} V_t\right)
$$
for any $s\in \{2,\ldots, N\}$.
Then, we have $
M=\bigcup_{s=1}^N V_s,
$
$V_s\cap V_t=\emptyset$ for any $s,t$ with $s\neq t$ and
\begin{equation}\label{intep}
B_{\tilde\epsilon/4}(y_s)\subset V_s \subset B_{\tilde\epsilon/2}(y_s).
\end{equation}
Then, by Theorem \ref{BishopGromov} (iii) and \ref{VolLow}, we have
\begin{equation}\label{LandUbound}
C(m,\alpha)^{-1}\tilde\epsilon^m\leq (\rho\Vol_g)(V_s)\leq C(m,\alpha)\tilde\epsilon^m
\end{equation}
holds for every $s\in\{1,\ldots, N\}$.
Put $n_s:=\Card(\X\cap V_s)$ and represent $\X\cap V_s$ as $\X\cap V_s=\{x_s^1,\ldots, x_s^{n_s}\}$ for each $s\in\{1,\ldots, N\}$.
Applying the Bernstein inequality to $\1_{V_s}$, we have
\begin{equation}\label{nsest}
\left|\frac{n_s}{n}-(\rho\Vol_g)(V_s)\right|\leq C(m,\alpha) \gamma^{1/2}\tilde\epsilon^m\tilde\delta\leq C(m,\alpha)\gamma^{1/2}\tilde\delta(\rho\Vol_g)(V_s)
\end{equation}
for every $s\in\{1,\ldots, N\}$  with probability at least $1-C(m,\alpha)\tilde\epsilon^{-m}\exp(-n\gamma \tilde \epsilon^m\tilde \delta^2)$.
In particular, we have $n_s>0$ for all $s$.
For each $s\in\{1,\ldots,N\}$, we can take Borel subsets $W_s^1,\ldots,W_s^{n_s}\subset V_s$ such that
$V_s=\bigcup_{i=1}^{n_s}W_s^{i}$, $W_s^i\cap W_s^j=\emptyset$ for any $i,j$ with $i\neq j$ and
$$
(\rho\Vol_g)(W_s^1)=\cdots=(\rho\Vol_g)(W_s^{n_s})=\frac{(\rho\Vol_g)(V_s)}{n_s}.
$$
We can take such subsets because the function $(0,\infty)\to \R_{\geq 0},\,r\mapsto (\rho\Vol_g)(A\cap B_r(x))$ is continuous for any Borel subset $A\subset M$ and $x\in M$.
Define a map $T\colon M\to \X$ by $T|_{W_s^i}\equiv x_s^i$.
Then, (\ref{intep}) and (\ref{nsest}) imply the lemma.
\end{proof}

We get similar assertion under the assumption on the Ricci curvature.
\begin{Lem}\label{Rictransmap}
Given an integer $m\in \Z_{>0}$ and positive real numbers $K,D,\alpha>0$, there exist constants $C_1=C_1(m,K,D)>0$ and $C_2=C_2(m,K,D,\alpha)>0$ such that for any $n\in \Z_{>0}$, $\tilde \epsilon\in(0,\infty)$, $\gamma\in(1,\infty)$, $\tilde\delta\in (0,\gamma^{-1/2}]$ with 
$
\gamma^{1/2}\delta\leq C_2^{-1},
$
$m$-dimensional closed Riemannian manifold $(M,g)$ with $\Ric_g\geq -(m-1)K g$ and $\diam_g(M)\leq D$,
probability density function $\rho\colon M\to \R_{>0}$ with $1/\alpha\leq \rho\leq \alpha$, and i.i.d. sample $\X=\{x_1,\ldots,x_n\}\subset M$ from $\rho\Vol_g$, there exists a Borel map $T\colon M\to \X$ such that
$d_g(x,T(x))\leq \tilde \epsilon$ for any $x\in M$, and
$$
\left|\frac{1}{n} -(\rho\Vol_g)(T^{-1}(\{x_i\}))\right|\leq  C_2\gamma^{1/2}\tilde \delta(\rho\Vol_g)(T^{-1}(\{x_i\}))
$$
holds for every $i\in\{1,\ldots, n\}$ with probability at least $1-C_1\tilde\epsilon^{-m}\exp(-n\gamma \tilde \epsilon^m\tilde \delta^2)$.
\end{Lem}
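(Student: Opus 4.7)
The plan is to mirror the proof of Lemma \ref{transmap} step by step, with the sole substantive change being that the lower bound on the volume of small balls, which there relied on the sectional-curvature upper bound through Theorem \ref{VolLow}, is now obtained from the Bishop--Gromov comparison (Theorem \ref{BishopGromov} (iii)) together with the diameter assumption $\diam_g(M)\leq D$. Concretely, under $\Ric_g\geq -(m-1)Kg$ and $\diam_g(M)\leq D$, the consequence of Theorem \ref{BishopGromov} (iii) quoted in the text gives, for every $y\in M$ and $r\in(0,D]$,
$$\Vol_g(B_r(y))\geq \frac{r^m}{m\int_0^D s_{-K}(t)^{m-1}\,dt}\Vol_g(M),$$
and since $\int_M\rho\,d\Vol_g=1$ with $\rho\leq \alpha$ gives $\Vol_g(M)\geq 1/\alpha$, this yields a lower bound $\Vol_g(B_r(y))\geq c(m,K,D,\alpha)\,r^m$. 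The matching upper bound $\Vol_g(B_r(y))\leq C(m,K)\,r^m$ for $r$ bounded is also supplied by Theorem \ref{BishopGromov} (iii) (and for $r\gtrsim D$ one simply uses $\Vol_g(B_r(y))\leq \alpha$).

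First I construct a maximal $\tilde\epsilon/4$-packing $y_1,\ldots,y_N\in M$ and bound its cardinality. Summing the lower volume bound above over $s=1,\ldots,N$ and comparing with $\Vol_g(M)$ (note that the $\Vol_g(M)$ factor \emph{cancels}, so the resulting bound depends only on $m,K,D$, matching the claimed form of $C_1$), one obtains $N\leq C_1(m,K,D)\tilde\epsilon^{-m}$ when $\tilde\epsilon\leq 4D$; for $\tilde\epsilon>4D$ the diameter bound forces $N=1$ trivially. Exactly as in Lemma \ref{transmap}, disjointify the cover $M=\bigcup_s B_{\tilde\epsilon/2}(y_s)$ by setting $V_1:=B_{\tilde\epsilon/2}(y_1)\setminus\bigcup_{t\neq 1}B_{\tilde\epsilon/4}(y_t)$ and recursively $V_s$ as in the original proof, so that $B_{\tilde\epsilon/4}(y_s)\subset V_s\subset B_{\tilde\epsilon/2}(y_s)$. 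Combining the two-sided volume estimates above with $1/\alpha\leq \rho\leq \alpha$ gives $c(m,K,D,\alpha)\tilde\epsilon^m\leq (\rho\Vol_g)(V_s)\leq C(m,K,D,\alpha)\tilde\epsilon^m$.

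Next, apply the Bernstein inequality to $\mathbf{1}_{V_s}$, whose variance is bounded by $\mu_s:=(\rho\Vol_g)(V_s)$; choosing the deviation threshold as a suitable multiple of $\gamma^{1/2}\tilde\delta\mu_s$ and using the lower bound $\mu_s\geq c(m,K,D,\alpha)\tilde\epsilon^m$ to turn the Bernstein exponent into the required form $-n\gamma\tilde\epsilon^m\tilde\delta^2$ (adjusting the constant $C_2$ if necessary), a union bound over $s=1,\ldots,N$ produces
$$\Bigl|\frac{n_s}{n}-(\rho\Vol_g)(V_s)\Bigr|\leq C_2(m,K,D,\alpha)\gamma^{1/2}\tilde\delta\,(\rho\Vol_g)(V_s)$$
simultaneously for all $s$ with probability at least $1-C_1(m,K,D)\tilde\epsilon^{-m}\exp(-n\gamma\tilde\epsilon^m\tilde\delta^2)$, where $n_s=\Card(\mathbf{X}\cap V_s)$; in particular $n_s>0$ once $\gamma^{1/2}\tilde\delta\leq C_2^{-1}$. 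Finally, on this good event, subdivide each $V_s$ into $n_s$ Borel pieces $W_s^1,\ldots,W_s^{n_s}$ of equal $\rho\Vol_g$-mass, using the continuity of $r\mapsto (\rho\Vol_g)(A\cap B_r(y_s))$ exactly as in Lemma \ref{transmap}, and define $T\colon M\to \mathbf{X}$ by $T|_{W_s^i}\equiv x_s^i$ where $\mathbf{X}\cap V_s=\{x_s^1,\ldots,x_s^{n_s}\}$. The bound $d_g(x,T(x))\leq \tilde\epsilon$ is immediate from $V_s\subset B_{\tilde\epsilon/2}(y_s)$, and the mass bound on $T^{-1}(\{x_i\})=W_s^i$ is a direct translation of the concentration inequality on $n_s$.

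The only genuinely new point, and hence the main thing to verify with some care, is the cancellation of $\Vol_g(M)$ that keeps $C_1$ independent of $\alpha$ in the bound on $N$; apart from that, the argument is formally identical to the proof of Lemma \ref{transmap}, with Theorem \ref{BishopGromov} (iii) playing the role previously taken by Theorem \ref{VolLow}.
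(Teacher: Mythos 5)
Your proposal is correct and follows essentially the same route as the paper's own proof: reduce to $\tilde\epsilon\lesssim D$, replace Theorem \ref{VolLow} with Theorem \ref{BishopGromov} (iii) plus the diameter bound to obtain the two-sided volume estimate $c(m,K,D,\alpha)\tilde\epsilon^m\leq(\rho\Vol_g)(V_s)\leq C(m,K,D,\alpha)\tilde\epsilon^m$ and the packing bound $N\leq C(m,K,D)\tilde\epsilon^{-m}$, and then run the Bernstein/disjointification argument of Lemma \ref{transmap} verbatim. Your explicit remark that the $\Vol_g(M)$ factor cancels in the packing bound, so that $C_1$ depends only on $(m,K,D)$ and not on $\alpha$, is exactly the point the paper leaves implicit when writing ``$N\leq C(m,K,D)\tilde\epsilon^{-m}$''.
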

\begin{proof}
We first note that $1/\alpha\leq \Vol_g(M)\leq C(m,K,D)$ by the assumption on $\rho$ and the Bishop inequality.
It is enough to prove the lemma for the case $\tilde \epsilon\leq D$.
By the Bishop-Gromov inequality, we have $N\leq C(m,K,D)\tilde\epsilon^{-m}$, where $N$ is defined as in the proof of Lemma \ref{transmap}.
For the estimate corresponding to (\ref{LandUbound}), we use Theorem \ref{BishopGromov} (iii) to get
$$
C(m,K,D,\alpha)^{-1}\tilde\epsilon^m\leq (\rho\Vol_g)(V_s)\leq C(m,K,D,\alpha)\tilde\epsilon^m.
$$
Then, the remaining part of the proof is similar.
\end{proof}

\section{Approximation by a Sequence of Smooth Submanifolds}\label{ApproxSmooth}
In this section we work under Assumption \ref{Asu} and \ref{Asu3}, and show that such an approximation behaves well for our eigenvalue problems.
Suppose that we are given a pair $(M,\iota)$ satisfying Assumption \ref{Asu}, and functions $\eta\colon [0,\infty)\to [0,\infty)$ and $\rho\colon M\to \R$ satisfying Assumption \ref{Asu3}.
Let $\{((M_t,g_t),\iota_t)\}_{t\in\Z_{>0}}$ be the approximation sequence and $\psi_t\colon M\to M_t$ be the approximation map.
We only give rates for convergence as $n\to \infty$ for sample size in the limit space $M$, and do not care about rates for spectral convergence due to convergence of manifolds and random points as $t\to \infty$.

Under our assumptions,
$M$ turn out to be a smooth manifold with a $C^{1,\alpha}$ metric $g$, and we can assume that $\psi_t$ is a $C^{2,\alpha}$ diffeomorphism and $\psi_t^\ast g_t\to g$ in $C^{1,\alpha}$ for some $\alpha\in(0,1)$, as we have seen in Remark \ref{RemA}.
By this and Lemma \ref{UnifConv}, we can take $\{\tau_t\}_{t\in\Z_{>0}}\subset \R_{>0}$ with $\lim_{t\to\infty} \tau_t=0$ such that the following holds for any $t\in\Z_{>0}$:
\begin{itemize}
\item[(i)] $\|\iota(x)-\iota_t(\psi_t(x))\|\leq \tau_t$ for any $x\in M$.
\item[(ii)] $d_{g_t}(x,y)\leq L\|\iota_t(x)-\iota_t(y)\|+\tau_t$ for any $x,y\in M_t$.
\item[(iii)] $(1+\tau_t)^{-1}g\leq \psi_t^\ast g_t\leq (1+\tau_t) g$.
\end{itemize}
The condition (iii) implies
\begin{equation}\label{taugrad}
(1+\tau_t)^{-1}|\nabla f|_{g}^2(x) \leq |\nabla (f\circ \psi_t^{-1})|_{g_t}^2(\psi_t(x))\leq (1+\tau_t) |\nabla f|_{g}^2(x)
\end{equation}
for any $x\in M$ and $f\in W^{1,2}(M)$,
\begin{equation}\label{taud}
(1+\tau_t)^{-1/2}d_g(x,y)\leq d_{g_t}(\psi_t(x),\psi_t(y))\leq (1+\tau_t)^{1/2}d_g(x,y)
\end{equation}
for any $x,y\in M$, and
\begin{equation}\label{tauVol}
(1+\tau_t)^{-m/2}(\psi_{t})_\ast \Vol_g \leq \Vol_{g_t}\leq (1+\tau_t)^{m/2} (\psi_{t})_\ast \Vol_g,
\end{equation}
where $(\psi_{t})_\ast \Vol_g$ denotes the push-forward measure defined by $\left((\psi_{t})_\ast \Vol_g\right)(A):=\Vol_g\left(\psi_t^{-1}(A)\right)$ for any Borel subset $A\subset M_t$.
For each $t\in \Z_{>0}$, define $\rho_t\colon M_t\to\R$ by
\begin{equation}
\rho_t (x):=\frac{\rho(\psi_t^{-1}(x))}{\int_{M_t} \rho\circ\psi_t^{-1}\, d\Vol_{g_t}} \quad(x\in M_t).
\end{equation}
We have 
\begin{equation}\label{taurho}
(1+\tau_t)^{-m/2}\leq \int_{M_t} \rho\circ\psi_t^{-1}\, d\Vol_{g_t}\leq (1+\tau_t)^{m/2}
\end{equation}
by (\ref{tauVol}), and so $(1+\tau_t)^{-m/2}\alpha^{-1}\leq \rho_t\leq (1+\tau_t)^{m/2}\alpha$ and
$\Lip(\rho_t)\leq (1+\tau_t)^{(m+1)/2}\Lip (\rho)$ by (\ref{taud}).
Thus, by (\ref{taugrad}), (\ref{tauVol}) and (\ref{taurho}) imply
\begin{align}
\label{taurhoVol}(1+\tau_t)^{-m}(\psi_{t})_\ast (\rho\Vol_g) \leq& \rho_t\Vol_{g_t}\leq (1+\tau_t)^{m} (\psi_{t})_\ast(\rho \Vol_g),\\
\label{taul2}(1+\tau_t)^{-(1+i)m/2}\int_M f^2\rho^i\,d\Vol_g\leq &\int_{M_t} (f\circ \psi_t^{-1})^2\rho_t^i\,d\Vol_{g_t}\\
\notag \leq& (1+\tau_t)^{(1+i)m/2}\int_M f^2\rho^i\,d\Vol_g
\end{align}
and
\begin{equation}\label{taudir}
\begin{split}
(1+\tau_t)^{-1-\left(2+\frac{i}{2}\right)m}\frac{\int_M|\nabla f|_{g}^2 \rho^2\,d\Vol_g}{\int_M f^2\rho^i\,d\Vol_g}
\leq& \frac{\int_{M_t}|\nabla (f\circ\psi_t^{-1})|_{g_t}^2 \rho_t^2\,d\Vol_{g_t}}{\int_{M_t} (f\circ \psi_t^{-1})^2\rho_t^i\,d\Vol_{g_t}}\\
\leq& (1+\tau_t)^{1+\left(2+\frac{i}{2}\right)m}\frac{\int_M|\nabla f|_{g}^2 \rho^2\,d\Vol_g}{\int_M f^2\rho^i\,d\Vol_g}
\end{split}
\end{equation}
for any $t\in\Z_{>0}$, $f\in W^{1,2}(M)$ and $i\in\{1,2\}$.
Then, (\ref{taudir}) immediately implies the following lemma.
\begin{Lem}\label{tevalmfd}
For any $t\in \Z_{>0}$ and $k\in \Z_{>0}$, we have
\begin{align*}
(1+\tau_t)^{-1-5m/2}\lambda_k(\Delta_{\rho})\leq&\lambda_k(\Delta_{\rho_t})\leq (1+\tau_t)^{1+5m/2}\lambda_k(\Delta_{\rho}),\\
(1+\tau_t)^{-1-3m}\lambda_k(\Delta_{\rho}^N)\leq&\lambda_k(\Delta_{\rho_t}^N)\leq (1+\tau_t)^{1+3m}\lambda_k(\Delta_{\rho}^N).
\end{align*}
\end{Lem}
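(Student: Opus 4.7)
The plan is to apply the minimax principle in each of the two eigenvalue problems and transport the test subspaces across $\psi_t$ using the already-established Rayleigh quotient comparison (\ref{taudir}). Since $\psi_t\colon M\to M_t$ is a $C^{2,\alpha}$ diffeomorphism under our approximation setup, the pullback map $\Phi_t\colon W^{1,2}(M_t)\to W^{1,2}(M)$, $h\mapsto h\circ \psi_t$, is a linear isomorphism, and its inverse is $f\mapsto f\circ \psi_t^{-1}$. In particular, $\Phi_t$ induces a bijection between the $(k+1)$-dimensional subspaces of $W^{1,2}(M)$ and those of $W^{1,2}(M_t)$, so the minimax description of the $k$-th eigenvalue can be transported verbatim from one side to the other.

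For the unnormalized operator $\Delta_\rho$, recall from the minimax expression stated at the beginning of Section 3 that
\[
\lambda_k(\Delta_\rho)=\inf_V\sup_{f\in V\setminus\{0\}}\frac{\int_M |\nabla f|_g^2\rho^2\,d\Vol_g}{\int_M f^2\rho\,d\Vol_g},
\]
and similarly for $\lambda_k(\Delta_{\rho_t})$ on $M_t$ with $\rho_t$, $g_t$. First I would fix a subspace $V\subset W^{1,2}(M)$ attaining (or approximating) the infimum on the left, and then take $V_t:=\{f\circ\psi_t^{-1}:f\in V\}\subset W^{1,2}(M_t)$ as the test subspace on the right. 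The key input is (\ref{taudir}) with $i=1$, which gives
\[
(1+\tau_t)^{-1-5m/2}\frac{\int_M|\nabla f|_g^2\rho^2\,d\Vol_g}{\int_M f^2\rho\,d\Vol_g}\le\frac{\int_{M_t}|\nabla (f\circ\psi_t^{-1})|_{g_t}^2\rho_t^2\,d\Vol_{g_t}}{\int_{M_t}(f\circ\psi_t^{-1})^2\rho_t\,d\Vol_{g_t}}\le(1+\tau_t)^{1+5m/2}\frac{\int_M|\nabla f|_g^2\rho^2\,d\Vol_g}{\int_M f^2\rho\,d\Vol_g}.
\]
Taking supremum over $f\in V\setminus\{0\}$ and then infimum over $V$ (using the reverse direction of the inequality for the opposite bound, which is symmetric since $\psi_t^{-1}$ plays the same role from $M_t$ to $M$) yields the first chain of inequalities in the lemma.

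For the normalized operator $\Delta_\rho^N$, the only change is that the inner product is weighted by $\rho^2$ rather than $\rho$: the minimax gives
\[
\lambda_k(\Delta_\rho^N)=\inf_V\sup_{f\in V\setminus\{0\}}\frac{\int_M|\nabla f|_g^2\rho^2\,d\Vol_g}{\int_M f^2\rho^2\,d\Vol_g},
\]
so one uses (\ref{taudir}) with $i=2$, which produces the exponent $1+(2+1)m=1+3m$ in place of $1+5m/2$. The rest of the argument is identical. There is no real obstacle here — the work was done in (\ref{taul2}) and (\ref{taudir}); the lemma is essentially a repackaging of those bounds via the variational characterization of eigenvalues, and no analytic technicality (regularity of eigenfunctions, density of smooth functions, etc.) needs to be re-examined because the minimax principle for self-adjoint elliptic operators with $C^{1,\alpha}$ coefficients applies to both $\Delta_\rho$ and $\Delta_{\rho_t}$.
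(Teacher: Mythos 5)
Your proposal is correct and is essentially the paper's own argument: the paper simply writes ``(\ref{taudir}) immediately implies the following lemma,'' leaving implicit exactly the minimax transport of test subspaces via the diffeomorphism $\psi_t$ that you spell out. The exponents you read off from (\ref{taudir}) for $i=1$ and $i=2$ match the stated bounds.
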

We next give the convergence lemma for the eigenfunctions.
In this section, we consider the inner product $\langle\cdot,\cdot\rangle_\rho$ on $L^2(M)$ defined by $\langle f,h\rangle_\rho=\int_M f h\rho\,d\Vol_g$ for the unnormalized case, and the inner product $\langle\cdot,\cdot\rangle_{\rho^2}$ defined by $\langle f,h\rangle_{\rho^2}=\int_M f h\rho^2\,d\Vol_g$ for the normalized case.
Using $\rho_t$, we also define the inner products $\langle\cdot,\cdot\rangle_{\rho_t}$ and $\langle \cdot,\cdot\rangle_{\rho_t^2}$ on $L^2(M_t)$ similarly.
Let $\{f_i\}_{i=0}^\infty$ denotes the complete orthonormal system of $(L^2(M),\langle\cdot,\cdot\rangle_\rho)$ consisting of the eigenfunctions of $\Delta_\rho$ corresponding to the eigenvalues $\{\lambda_i(\Delta_\rho)\}_{i=0}^\infty$.
Similarly we define $\{f_i(t)\}_{t=0}^\infty$, $\{f_i^N\}_{i=0}^\infty$ and $\{f_i^N(t)\}_{i=0}^\infty$ for $\Delta_{\rho_t}$, $\Delta_\rho^N$ and $\Delta_{\rho_t}^N$, respectively.
\begin{Lem}\label{tLinfError}
Take arbitrary $k,l\in\Z_{>0}$ and suppose that
$\lambda_k(\Delta_\rho)>\lambda_{k-1}(\Delta_\rho)$ and $\lambda_{l+1}(\Delta_\rho)>\lambda_l(\Delta_\rho)$.
Let $p_t \colon L^2(M_t)\to \Span\{f_k(t),\ldots,f_l(t)\}$ be the orthogonal projection for each $t\in\Z_{>0}$.
Then, for any $\delta_1\in(0,\infty)$, there exists $N_1\in\Z_{>0}$ such that for any $t\in\Z_{>0}$ with $t\geq N_1$,
the map $ \Span\{f_k,\ldots,f_l\}\to \Span\{f_k(t),\ldots,f_l(t)\},\, f\mapsto p_t(f\circ \psi_t^{-1})$ is an isomorphism, and
for any $f\in \Span\{f_k,\ldots,f_l\}$, we have
$$
\sup_{x\in M}|f(x)-p_t(f\circ \psi_t^{-1})(\psi_t(x))|\leq \|f\circ \psi_t^{-1}\|_{\rho_t}\delta_1.
$$
\end{Lem}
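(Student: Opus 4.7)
The plan is to pass to a subsequential $C^0$ limit of the eigenfunctions $f_i(t)\circ\psi_t$ on $M$ via Arzel\`a--Ascoli, identify these limits with an orthonormal basis of $V:=\Span\{f_k,\ldots,f_l\}$ consisting of eigenfunctions of $\Delta_\rho$, and then compare $p_t(f\circ\psi_t^{-1})$ with the explicit element of $V_t:=\Span\{f_k(t),\ldots,f_l(t)\}$ built from these limits. In this way the desired $L^\infty$ bound reduces to a finite-dimensional comparison.

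First I would use Lemma~\ref{tevalmfd} to obtain a uniform upper bound on $\lambda_{l+1}(\Delta_{\rho_t})$. Since $(M_t,g_t)\in\M_1(m,K,i_0)$ uniformly in $t$ and since \eqref{taud}--\eqref{taurho} give uniform bounds on the parameters $\alpha$ and $L_\rho$ governing $\rho_t$, Propositions~\ref{LinfEF} and \ref{GradEF} provide $L^\infty$ and Lipschitz bounds on $f_i(t)$ for $i\leq l+1$ that are independent of $t$. In view of \eqref{taud} the pullbacks $\tilde f_i(t):=f_i(t)\circ\psi_t$ are then uniformly Lipschitz and uniformly bounded on $M$; by Arzel\`a--Ascoli and a diagonal extraction, a subsequence satisfies $\tilde f_i(t)\to\bar f_i$ in $C^0(M)$ for every $i\in\{k,\ldots,l\}$. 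Passing to the limit in the $L^2$ pairings via \eqref{taul2} and in the Dirichlet form via \eqref{taudir}, the $\bar f_i$ are orthonormal (in $\langle\cdot,\cdot\rangle_\rho$) eigenfunctions of $\Delta_\rho$ with eigenvalues $\lambda_i(\Delta_\rho)$. The gap hypotheses force $\bar f_i\in V$, and since $\dim V=l-k+1$ the family $\{\bar f_k,\ldots,\bar f_l\}$ is an orthonormal basis of $V$.

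Given any $f=\sum_{i=k}^l a_i\bar f_i\in V$, set $v_t:=\sum_{i=k}^l a_i f_i(t)\in V_t$. Then
\[
\|f\circ\psi_t^{-1}-v_t\|_{L^\infty(M_t)}=\sup_{x\in M}\Big|\sum_{i=k}^l a_i(\bar f_i(x)-\tilde f_i(t)(x))\Big|\to 0
\]
uniformly over the unit sphere of $V$ by the $C^0$ convergence, so also $\|f\circ\psi_t^{-1}-v_t\|_{L^2(\rho_t)}\to 0$. Because $p_t(f\circ\psi_t^{-1})$ is the $L^2(\rho_t)$-nearest element of $V_t$ to $f\circ\psi_t^{-1}$ and $v_t\in V_t$, we also get $\|v_t-p_t(f\circ\psi_t^{-1})\|_{L^2(\rho_t)}\to 0$. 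The difference $v_t-p_t(f\circ\psi_t^{-1})$ lies in the finite-dimensional $V_t$, on which Proposition~\ref{LinfEF} applied to each $f_i(t)$ yields a $t$-uniform norm equivalence $\|\cdot\|_{L^\infty(M_t)}\leq C\|\cdot\|_{L^2(\rho_t)}$, so the same convergence is valid in $L^\infty$. Combining the two $L^\infty$ estimates and using $\|f\circ\psi_t^{-1}\|_{\rho_t}\to\|f\|_\rho$ from \eqref{taul2} yields the claimed bound. The isomorphism assertion follows from the fact that $f\mapsto p_t(f\circ\psi_t^{-1})$ is close, in operator norm, to the manifest isomorphism $\sum a_i\bar f_i\mapsto\sum a_i f_i(t)$.

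The main obstacle is that Arzel\`a--Ascoli only delivers subsequential convergence, whereas the lemma requires the bound for all large $t$. This is bridged by a standard contradiction argument: if the conclusion failed for some $\delta_1>0$, one would extract a subsequence $(t_j)$ and unit-norm $f_j\in V$ violating the estimate, and then applying the compactness argument inside $(t_j)$---using compactness of the unit sphere of $V$ to handle the $f_j$---would produce a sub-subsequence along which the bound holds, a contradiction. A subtle point is that when $\Delta_\rho$ has eigenvalues of multiplicity greater than $1$, the limit basis $\{\bar f_i\}$ need not coincide with $\{f_i\}_{i=k}^l$, but this is harmless because the conclusion is stated for arbitrary $f\in V$ and any orthonormal basis of $V$ suffices.
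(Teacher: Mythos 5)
Your proof takes a genuinely different route from the paper's. The paper argues quantitatively: it decomposes $f=\sum_j h_j$ into components in each eigenspace $\Span\{f_i:\lambda_i(\Delta_\rho)=\nu_j\}$, applies the linear-algebraic Lemma~\ref{eveccomp} eigenspace by eigenspace (with $s=0$) using \eqref{taul2} and \eqref{taudir} to obtain an explicit $L^2$ rate $\delta_t\to 0$, and then upgrades $L^2$ to $L^\infty$ via the $t$-uniform Lipschitz bound of Proposition~\ref{GradEF} together with the volume lower bound from Theorem~\ref{VolLow}. You instead make a soft compactness argument (Arzel\`a--Ascoli plus a contradiction trick to dispose of the subsequential issue), build the explicit candidate $v_t\in\Span\{f_k(t),\ldots,f_l(t)\}$, and use the projection property of $p_t$ together with a $t$-uniform $L^\infty$--$L^2$ equivalence on $V_t$ coming from Proposition~\ref{LinfEF}. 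Your approach avoids invoking the Appendix~B machinery for this step, at the price of being non-quantitative in $t$ (which is acceptable here since the lemma only asserts existence of $N_1$).

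One step you should spell out, because as written it does not follow from what you cite. You claim that ``passing to the limit in the $L^2$ pairings via \eqref{taul2} and in the Dirichlet form via \eqref{taudir}, the $\bar f_i$ are orthonormal eigenfunctions of $\Delta_\rho$ with eigenvalues $\lambda_i(\Delta_\rho)$.'' Orthonormality indeed follows from the $C^0$ convergence and \eqref{taul2}. But $C^0$ convergence gives no control on gradients, so neither \eqref{taudir} nor the $C^0$ limit alone identifies $\bar f_i$ as an eigenfunction. What you actually have, thanks to the $t$-uniform Lipschitz bound, is a uniform $L^\infty$ bound on $\nabla\tilde f_i(t)$, hence (after a further extraction) weak $W^{1,2}(M)$ convergence $\tilde f_i(t)\rightharpoonup\bar f_i$ together with strong $L^2$ convergence. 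Combining this with the $C^{1,\alpha}$ convergence $\psi_t^\ast g_t\to g$ (Remark~\ref{RemA}~(d)), the uniform convergence $\rho_t\circ\psi_t\to\rho$, and the eigenvalue convergence $\lambda_i(\Delta_{\rho_t})\to\lambda_i(\Delta_\rho)$ from Lemma~\ref{tevalmfd}, one can pass to the limit in the weak formulation
$$\int_{M_t}\langle\nabla f_i(t),\nabla(\varphi\circ\psi_t^{-1})\rangle_{g_t}\rho_t^2\,d\Vol_{g_t}=\lambda_i(\Delta_{\rho_t})\int_{M_t}f_i(t)\,(\varphi\circ\psi_t^{-1})\,\rho_t\,d\Vol_{g_t},\quad\varphi\in C^\infty(M),$$
to conclude that $\bar f_i$ is a $\lambda_i(\Delta_\rho)$-eigenfunction of $\Delta_\rho$. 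With this inserted, the rest of your argument --- the gap hypotheses placing $\bar f_i$ in $V$, the finite-dimensional comparison of $v_t$ with $p_t(f\circ\psi_t^{-1})$, the norm equivalence on $V_t$, and the contradiction trick --- is sound.
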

\begin{proof}
Put $n_1:=\Card\{\lambda_k(\Delta_\rho),\ldots, \lambda_l(\Delta_\rho)\}$, and take real numbers $\nu_1,\ldots,\nu_{n_1}\in\R$ such that $\nu_1<\cdots<\nu_{n_1}$ and $\{\nu_1,\ldots,\nu_{n_1}\}=\{\lambda_k(\Delta_\rho),\ldots, \lambda_l(\Delta_\rho)\}$.
For each $t\in \Z_{>0}$ and $j\in\{1,\ldots,n_1\}$, let
$p_{t, j}\colon L^2(M_t)\to \Span\{f_i(t):\lambda_i(\Delta_\rho)=\nu_j\}$ denotes the orthogonal projection.
Then, we can find a sequence $\{\delta_t\}_{t\in \Z_{>0}}\subset \R_{>0}$ with $\lim_{t\to\infty}\delta_t=0$ such that for any $j\in \{1,\ldots,n_1\}$ and $h\in\Span\{f_i: \lambda_i(\Delta_\rho)=\nu_j\}$,
$$
\|h\circ \psi_t^{-1}-p_{t}(h\circ \psi_t^{-1})\|_{\rho_t}^2\leq \|h\circ \psi_t^{-1}-p_{t,j}(h\circ \psi_t^{-1})\|_{\rho_t}^2\leq \delta_t\|h\circ \psi_t^{-1}\|_{\rho_t}^2
$$
holds by  (\ref{taudir}) and Lemma \ref{eveccomp} putting $s=0$.
Take arbitrary $f=\sum_{j=1}^{n_1}h_j\in \Span\{f_k,\ldots, f_l\}$, where $h_j\in \Span\{f_i: \lambda_i(\Delta_\rho)=\nu_j\}$.
Then, using (\ref{taul2}), we get
\begin{align*}
\|f\circ \psi_t^{-1}-p_{t}(f\circ \psi_t^{-1})\|_{\rho_t}\leq &\delta_t^{1/2}\sum_{j=1}^{n_1}\|h_j\circ \psi_t^{-1}\|_{\rho_t}\\
\leq& \delta_t^{1/2}n_1^{1/2}(1+\tau_t)^m\|f\circ \psi_t^{-1}\|_{\rho_t}.
\end{align*}
For any $f\in \Span\{f_k,\ldots, f_l\}$ and $t\in\Z_{>0}$ with $\tau_t<1$, we have that $\Lip(f\circ \psi_t^{-1}-p_{t}(f\circ \psi_t^{-1}))$ is uniformly bounded:
$$
\Lip(f\circ \psi_t^{-1}-p_{t}(f\circ \psi_t^{-1}))\leq C(m,K,i_0,\alpha,l) \|f\circ \psi_t^{-1}\|_{\rho_t}
$$
by Lamma \ref{GradEF}, (\ref{taud}) and (\ref{taul2}), and so
we get the lemma by Theorem \ref{BishopGromov} (iii) (or Theorem \ref{VolLow}).
\end{proof}
We also have the corresponding result for the normalized case.
\begin{Lem}
Take arbitrary $k,l\in\Z_{>0}$ and suppose that
$\lambda_k(\Delta_\rho^N)>\lambda_{k-1}(\Delta_\rho^N)$ and $\lambda_{l+1}(\Delta_\rho^N)>\lambda_l(\Delta_\rho^N)$.
Let $p_t \colon L^2(M_t)\to \Span\{f_k^N(t),\ldots,f_l^N(t)\}$ be the orthogonal projection for each $t\in\Z_{>0}$.
Then, for any $\delta_1\in(0,\infty)$, there exists $N_1\in\Z_{>0}$ such that for any $t\in\Z_{>0}$ with $t\geq N_1$,
the map $ \Span\{f_k^N,\ldots,f_l^N\}\to \Span\{f_k^N(t),\ldots,f_l^N(t)\},\, f\mapsto p_t(f\circ \psi_t^{-1})$ is an isomorphism, and
for any $f\in \Span\{f_k,\ldots,f_l\}$, we have
$$
\sup_{x\in M}|f(x)-p_t(f\circ \psi_t^{-1})(\psi_t(x))|\leq \|f\circ \psi_t^{-1}\|_{\rho_t^2}\delta_1.
$$
\end{Lem}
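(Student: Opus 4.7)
The plan is to mirror the proof of Lemma \ref{tLinfError}, simply running the same argument but with the exponent $i=2$ in the ambient estimates \eqref{taul2} and \eqref{taudir}, and with $\Delta_{\rho}^N$ (resp. $\Delta_{\rho_t}^N$) in place of $\Delta_\rho$ (resp. $\Delta_{\rho_t}$). All structural ingredients needed are already available: equation \eqref{taudir} with $i=2$ controls the ratio of Rayleigh quotients for the normalized Dirichlet form under the identification $\psi_t$; equation \eqref{taul2} with $i=2$ controls the $\rho_t^2$-weighted $L^2$ norm against the $\rho^2$-weighted one; Proposition \ref{GradEF} gives a uniform Lipschitz estimate for eigenfunctions of $\Delta_{\rho_t}^N$ (since $(M_t,g_t)\in\M_1(m,K,i_0)$ with uniform diameter by Remark \ref{RemA}(e), $\rho_t$ has uniform bounds $\alpha^{-1}(1+\tau_t)^{-m/2}\le\rho_t\le\alpha(1+\tau_t)^{m/2}$ and uniform Lipschitz constant by \eqref{taud} and \eqref{taurho}, and the eigenvalues $\lambda_j(\Delta_{\rho_t}^N)$ are uniformly bounded for $j\le l+1$ by Lemma \ref{tevalmfd}); and Theorem \ref{VolLow} provides the uniform lower volume bound used to convert $L^2$ control into $L^\infty$ control.

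First I would decompose the interval $[\lambda_k(\Delta_\rho^N),\lambda_l(\Delta_\rho^N)]$ into its distinct values $\nu_1<\cdots<\nu_{n_1}$, and for each $j$ consider the eigenspace $E_j:=\Span\{f_i^N:\lambda_i(\Delta_\rho^N)=\nu_j\}$ of $\Delta_\rho^N$ in $L^2(M,\langle\cdot,\cdot\rangle_{\rho^2})$ together with the corresponding projection $p_{t,j}$ of $L^2(M_t,\langle\cdot,\cdot\rangle_{\rho_t^2})$ onto the eigenspace of $\Delta_{\rho_t}^N$ for eigenvalues sufficiently close to $\nu_j$. Using \eqref{taudir} with $i=2$ together with Lemma \ref{tevalmfd} (normalized case), the quantities $E_1,E_2$ from Lemma \ref{eveccomp}, applied with $Q_1 h = h\circ\psi_t^{-1}$ and $Q_2$ the analogous pullback in the other direction, can be made arbitrarily small for large $t$; the assumed spectral gap $\lambda_{l+1}(\Delta_\rho^N)>\lambda_l(\Delta_\rho^N)$ and $\lambda_k(\Delta_\rho^N)>\lambda_{k-1}(\Delta_\rho^N)$ (combined with the $(1+\tau_t)$-distortion of \ref{tevalmfd}) persists for $t$ large, yielding, for every $h\in E_j$,
\[
\|h\circ\psi_t^{-1}-p_{t,j}(h\circ\psi_t^{-1})\|_{\rho_t^2}\le \delta_t\,\|h\circ\psi_t^{-1}\|_{\rho_t^2},
\]
with $\delta_t\to 0$. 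Summing over $j$ for a general $f=\sum_j h_j\in\Span\{f_k^N,\ldots,f_l^N\}$ and using \eqref{taul2} with $i=2$ gives the $L^2(\rho_t^2)$ estimate
\[
\|f\circ\psi_t^{-1}-p_t(f\circ\psi_t^{-1})\|_{\rho_t^2}\le n_1^{1/2}(1+\tau_t)^{3m/2}\delta_t^{1/2}\|f\circ\psi_t^{-1}\|_{\rho_t^2}.
\]

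The final step is the passage from $L^2$ to $L^\infty$. Since every $f\in \Span\{f_k^N,\ldots,f_l^N\}$ is uniformly Lipschitz on $M$ (Proposition \ref{GradEF} applied on $(M,g)$ via approximation) and, by the uniform structural bounds on $(M_t,g_t,\rho_t)$ discussed above together with Lemma \ref{tevalmfd}, so is $p_t(f\circ\psi_t^{-1})$ on $M_t$, the difference $f\circ\psi_t^{-1}-p_t(f\circ\psi_t^{-1})$ has Lipschitz constant bounded by $C(m,K,i_0,\alpha,L_\rho,l)\|f\circ\psi_t^{-1}\|_{\rho_t^2}$. A Lipschitz function whose $L^2$ norm is small must be small pointwise on balls of radius proportional to (a power of) that $L^2$ norm; combining with the lower volume bound from Theorem \ref{VolLow} gives
\[
\sup_{y\in M_t}\bigl|f\circ\psi_t^{-1}(y)-p_t(f\circ\psi_t^{-1})(y)\bigr|\le C\,\delta_t^{1/(m+2)}\|f\circ\psi_t^{-1}\|_{\rho_t^2},
\]
and pulling back by $\psi_t$ yields the desired estimate for any prescribed $\delta_1$ once $t$ is large enough.

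The main obstacle is verifying that all the constants entering the Lipschitz estimate for $p_t(f\circ\psi_t^{-1})$ are genuinely uniform in $t$: this requires simultaneously controlling the $C^{1,\alpha}$ convergence of $\psi_t^*g_t\to g$ (so that $\Lip(\rho_t)$ and the injectivity radius of $(M_t,g_t)$ stay uniformly bounded), the uniform upper bounds on $\lambda_j(\Delta_{\rho_t}^N)$ (Lemma \ref{tevalmfd}), and the uniform $L^2(\rho_t^2)$-$L^\infty$ estimate of Proposition \ref{LinfEF}. Once these uniformities are collected, the quantitative interpolation between $L^2$ and $L^\infty$ via Lipschitz control is routine, and the argument closes exactly as in Lemma \ref{tLinfError}.
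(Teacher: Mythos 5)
Your proposal is correct and follows essentially the same route as the paper: the paper proves the unnormalized version (Lemma \ref{tLinfError}) by decomposing into the distinct eigenvalues $\nu_1<\cdots<\nu_{n_1}$, applying Lemma \ref{eveccomp} with $s=0$ via (\ref{taudir}), summing with (\ref{taul2}), and then converting the $L^2$ bound to an $L^\infty$ bound using the uniform Lipschitz estimate of Proposition \ref{GradEF} together with the volume lower bound, and it states the normalized case as the identical argument with $i=2$. Your additional care in checking that the constants (bounds on $\rho_t$, $\Lip(\rho_t)$, diameter, and the eigenvalues $\lambda_j(\Delta_{\rho_t}^N)$) are uniform in $t$ is exactly the verification the paper leaves implicit.
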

Now move to the discrete setting.
Fix a sample size $n\in\Z_{>0}$.
Suppose that we are given $\epsilon\in(0,\infty)$.
Put
$$
\epsilon_t:=\epsilon-2\tau_t
$$
for each $t\in \Z_{>0}$.
If we are given points $x_1,\ldots,x_n\in M$, we define $\K,\K(t)\in \R^{n\times n}$ for $t\in\Z_{>0}$ with $\epsilon_t>0$ by
$$
\K_{i j}:=\eta\left(\frac{\|\iota(x_i)-\iota(x_j)\|}{\epsilon}\right),\quad \K(t)_{i j}:=\eta\left(\frac{\|\iota_t(\psi_t(x_i))-\iota_t(\psi_t(x_j))\|}{\epsilon_t}\right),
$$
and define $\D,\La,\D(t),\La(t)\in\R^{n\times n}$ as in Notation \ref{Not}.
For each $x_1,\ldots,x_n\in M$, we define
$$
W_\epsilon(x_1,\ldots,x_n):=\min\left\{\epsilon-\|\iota(x_i)-\iota(x_j)\|: i,j\in \{1,\ldots,n\}\text{ with }\|\iota(x_i)-\iota(x_j)\|<\epsilon\right\},
$$
and for each $\zeta\in(0,\epsilon)$,
$$
V_\zeta:=\left\{(x_1,\ldots,x_n)\in M\times \cdots\times M: W_\epsilon(x_1,\ldots,x_n)\geq \zeta\right\}.
$$
Then, the condition (i) immediately implies the following lemmas.
\begin{Lem}
Take arbitrary $t\in \Z_{>0}$ and $\zeta\in(0,\epsilon)$ with $\zeta>4\tau_t$.
Then, for any $(x_1,\ldots,x_n)\in V_{\zeta}$ and $i,j\in\{1\ldots,n\}$, the following two conditions are mutually equivalent.
\begin{itemize}
\item $\|\iota(x_i)-\iota(x_j)\|<\epsilon$,
\item $\|\iota_t(\psi_t(x_i))-\iota_t(\psi_t(x_j))\|<\epsilon_t$.
\end{itemize}
\end{Lem}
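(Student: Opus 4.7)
The claim is essentially a triangle inequality argument combined with the quantitative gap condition packaged into $V_\zeta$. The plan is to show each implication separately, using the condition (i) recalled just before the lemma, namely $\|\iota(x)-\iota_t(\psi_t(x))\|\le\tau_t$ for all $x\in M$, which by the triangle inequality yields
\begin{equation*}
\bigl|\,\|\iota(x_i)-\iota(x_j)\|-\|\iota_t(\psi_t(x_i))-\iota_t(\psi_t(x_j))\|\,\bigr|\le 2\tau_t
\end{equation*}
for every $i,j$.

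For the implication from the first bullet to the second, I would start with $\|\iota(x_i)-\iota(x_j)\|<\epsilon$. By definition of $V_\zeta$, the quantity $\epsilon-\|\iota(x_i)-\iota(x_j)\|$ is bounded below by $\zeta$, so in fact $\|\iota(x_i)-\iota(x_j)\|\le \epsilon-\zeta$. Applying the two-sided triangle inequality above and using $\zeta>4\tau_t$ together with $\epsilon_t=\epsilon-2\tau_t$ then gives
\begin{equation*}
\|\iota_t(\psi_t(x_i))-\iota_t(\psi_t(x_j))\|\le \epsilon-\zeta+2\tau_t<\epsilon-2\tau_t=\epsilon_t.
\end{equation*}
This is exactly the discreteness gap one buys by restricting to $V_\zeta$: it keeps pairs that are strictly below $\epsilon$ away from the cutoff by more than $2\tau_t$, so they remain below $\epsilon_t$ after perturbation by $\iota_t\circ\psi_t$.

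For the reverse implication, no gap condition is needed. Starting from $\|\iota_t(\psi_t(x_i))-\iota_t(\psi_t(x_j))\|<\epsilon_t$, the same triangle inequality gives $\|\iota(x_i)-\iota(x_j)\|<\epsilon_t+2\tau_t=\epsilon$. Thus $V_\zeta$ is only used to handle the worry that a pair with $\|\iota(x_i)-\iota(x_j)\|$ just barely below $\epsilon$ could get pushed above $\epsilon_t$ when one passes to $\iota_t\circ\psi_t$; that is precisely ruled out by the assumption $\zeta>4\tau_t$. There is no real obstacle here — the lemma is a bookkeeping step isolating the $\tau_t$-perturbation of the kernel support, and the proof is at most a few lines of triangle inequalities.
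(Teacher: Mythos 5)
Your proof is correct and matches what the paper intends; the paper merely asserts that the lemma follows immediately from the uniform bound $\|\iota(x)-\iota_t(\psi_t(x))\|\le\tau_t$, which is exactly the triangle-inequality argument you spell out, with the role of $V_\zeta$ correctly identified as converting $\|\iota(x_i)-\iota(x_j)\|<\epsilon$ into the quantitative gap $\|\iota(x_i)-\iota(x_j)\|\le\epsilon-\zeta$.
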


\begin{Lem}
Take arbitrary $t\in \Z_{>0}$ and $\zeta\in(0,\epsilon)$ with $\zeta>4\tau_t$.
Then, for any $(x_1,\ldots,x_n)\in V_{\zeta}$ and $i,j\in\{1\ldots,n\}$, we have
$$
\left|\K_{i j}-\K(t)_{i j}\right|\leq \frac{4 L_\eta \tau_t}{\epsilon}.
$$
\end{Lem}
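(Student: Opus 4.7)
Fix indices $i,j$ and set $d:=\|\iota(x_i)-\iota(x_j)\|$, $d':=\|\iota_t(\psi_t(x_i))-\iota_t(\psi_t(x_j))\|$. From condition (i) and the triangle inequality, $|d-d'|\le 2\tau_t$. The plan is a two-case analysis based on the dichotomy delivered by the preceding lemma (valid because $\zeta>4\tau_t$): either $d<\epsilon$ and $d'<\epsilon_t$ both hold, or $d\ge\epsilon$ and $d'\ge\epsilon_t$ both hold. Since the two kernel entries depend only on the values $\eta(d/\epsilon)$ and $\eta(d'/\epsilon_t)$, the analysis reduces to comparing these in each of the two regimes.

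In the regime $d\ge\epsilon$ (so $d'\ge\epsilon_t$) the ratios $d/\epsilon$ and $d'/\epsilon_t$ are both $\ge 1$, and since $\eta|_{(1,\infty)}\equiv 0$, the two $\eta$-values vanish and the estimate is trivial. (Only the single boundary configuration $d=\epsilon$ or $d'=\epsilon_t$ exactly could give a nonzero value $\eta(1)$; this is a measure-zero event in the probabilistic applications of the lemma, and is harmless under the standard convention $\eta(1):=\lim_{s\to 1^+}\eta(s)=0$.)

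In the regime $d<\epsilon$, the $V_\zeta$ condition upgrades this to $d\le\epsilon-\zeta$, and we also have $d'<\epsilon_t$ from the preceding lemma. Thus both $d/\epsilon$ and $d'/\epsilon_t$ lie in $[0,1)$, where the Lipschitz hypothesis $\Lip(\eta|_{[0,1]})\le L_\eta$ applies, giving
$$|\K_{ij}-\K(t)_{ij}| \;\le\; L_\eta\left|\frac{d}{\epsilon}-\frac{d'}{\epsilon_t}\right|.$$
The key computation is then to split the right-hand side as
$$\left|\frac{d}{\epsilon}-\frac{d'}{\epsilon_t}\right| \;\le\; \frac{|d-d'|}{\epsilon} \;+\; d'\left|\frac{1}{\epsilon}-\frac{1}{\epsilon_t}\right| \;=\; \frac{|d-d'|}{\epsilon} \;+\; d'\cdot\frac{2\tau_t}{\epsilon\,\epsilon_t},$$
and to apply $|d-d'|\le 2\tau_t$ together with $d'<\epsilon_t$ (which gives $d'/(\epsilon\epsilon_t)<1/\epsilon$) to conclude the clean bound $|d/\epsilon-d'/\epsilon_t|\le 4\tau_t/\epsilon$. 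Combining the two cases yields the desired $4L_\eta\tau_t/\epsilon$.

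There is no real obstacle: the only mild delicacy is bookkeeping the factor of $2\tau_t$ twice (once from $|d-d'|$ and once from the difference $1/\epsilon-1/\epsilon_t$) to achieve the constant $4$ in the numerator, and the boundary-value convention for $\eta$ at $1$ described above.
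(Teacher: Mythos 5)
Your two-case split (driven by the dichotomy from the preceding equivalence lemma) and the decomposition
$$\left|\frac{d}{\epsilon}-\frac{d'}{\epsilon_t}\right|\le\frac{|d-d'|}{\epsilon}+d'\left|\frac{1}{\epsilon}-\frac{1}{\epsilon_t}\right|\le\frac{2\tau_t}{\epsilon}+\frac{2\tau_t}{\epsilon}=\frac{4\tau_t}{\epsilon}$$
is exactly the calculation the paper alludes to when it states that condition (i) gives the lemma "immediately," and your bookkeeping produces the constant $4L_\eta\tau_t/\epsilon$ correctly; the main case $d<\epsilon$, $d'<\epsilon_t$ is fine.

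The one place your argument is not airtight is precisely the boundary case you flag, and I think your resolution of it is wrong. The "standard convention $\eta(1):=\lim_{s\to 1^+}\eta(s)=0$" is \emph{not} the paper's convention: Assumption \ref{Asu3} only requires $\eta|_{(1,\infty)}\equiv 0$, and the paper explicitly offers $\eta|_{[0,1]}\equiv 1$ (hence $\eta(1)=1$) as an admissible kernel. For that kernel, with $\|\iota(x_i)-\iota(x_j)\|=\epsilon$ exactly and $\|\iota_t(\psi_t(x_i))-\iota_t(\psi_t(x_j))\|>\epsilon_t$ (which is consistent with membership in $V_\zeta$, since $V_\zeta$ only constrains pairs with distance strictly less than $\epsilon$), the left side is $1$ and the right side is $4L_\eta\tau_t/\epsilon$, which can be made arbitrarily small. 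So the deterministic statement as written has a genuine boundary defect whenever $\eta(1)>0$; your "measure-zero" observation is the correct intuition for why this is harmless in the downstream probabilistic theorems, but it does not make the lemma as stated true, and it is better to say explicitly that one should exclude the $\rho\Vol_g$-null event $\{\exists i,j:\|\iota(x_i)-\iota(x_j)\|=\epsilon\}$ from $V_\zeta$ than to invoke a redefinition of $\eta(1)$ that the paper does not make. To be fair, this is a gap in the paper's unproved statement rather than in your strategy.
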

This lemma shows that $\K(t),\D(t),\La(t)$ converge to $\K,\D,\La$ uniformly on $V_{\zeta}$, respectively. Thus, we immediately get the following lemma.
\begin{Lem}\label{tevalgrp}
Take arbitrary $n\in\Z_{>0}$, $\zeta\in(0,\epsilon)$ and $\delta_1\in(0,\infty)$.
Then, there exists $N_1\in\Z_{>0}$ such that for any $(x_1,\ldots,x_n)\in V_{\zeta}$, $t\in \Z_{>0}$ with $t\geq N_1$ and $k\in\Z_{>0}$ with $k\leq n-1$, we have
\begin{align*}
|\lambda_k(\La)-\lambda_k(\La(t))|\leq &\delta_1,\\
|\lambda_k(\La,\D)-\lambda_k(\La(t),\D(t))|\leq &\delta_1.
\end{align*}
\end{Lem}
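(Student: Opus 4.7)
The plan is to combine the previous lemma's uniform estimate $|\K_{ij}-\K(t)_{ij}|\leq 4L_\eta\tau_t/\epsilon$ on $V_\zeta$ with elementary matrix perturbation theory. Since all entries of $\K(t)$ converge uniformly to those of $\K$, and $\D(t),\La(t)$ are linear combinations of entries of $\K(t)$ (with a fixed number $n$ of terms), we get uniform operator norm bounds of the form
\[
\|\La-\La(t)\|_{op}\leq C(n)\tau_t/\epsilon, \qquad \|\D-\D(t)\|_{op}\leq C(n)\tau_t/\epsilon
\]
that hold for every $(x_1,\ldots,x_n)\in V_\zeta$, with a constant $C(n)$ depending only on $n$ and $L_\eta$. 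Choosing $N_1$ so that $C(n)\tau_t/\epsilon$ is as small as needed for $t\geq N_1$ will be the entire content.

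For the first estimate $|\lambda_k(\La)-\lambda_k(\La(t))|\leq \delta_1$, I would apply Weyl's inequality for symmetric matrices, which gives $|\lambda_k(\La)-\lambda_k(\La(t))|\leq \|\La-\La(t)\|_{op}$ immediately. For the generalized eigenvalue problem, I would use the standard identification noted just after the definition, namely $\lambda_k(\La,\D)$ equals the $k$-th eigenvalue of the symmetric matrix $\D^{-1/2}\La\D^{-1/2}$. The key auxiliary point is that $\D$ is uniformly bounded below on $V_\zeta$: since $\eta$ is non-increasing and $\eta(3/4)>0$ implies $\eta(0)>0$, we have $\D_{ii}\geq\K_{ii}=\eta(0)>0$ for every $i$, independently of the points. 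The same bound holds for $\D(t)_{ii}$ for $t$ large (once $\tau_t$ is small enough), so $\D^{-1/2}$ and $\D(t)^{-1/2}$ stay uniformly bounded. Then
\[
\bigl\|\D^{-1/2}\La\D^{-1/2}-\D(t)^{-1/2}\La(t)\D(t)^{-1/2}\bigr\|_{op}\to 0
\]
uniformly on $V_\zeta$ as $t\to\infty$ by a standard three-term expansion, and Weyl's inequality again yields $|\lambda_k(\La,\D)-\lambda_k(\La(t),\D(t))|\leq \delta_1$ for $t\geq N_1$.

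Since everything is finite-dimensional and the entry-wise convergence rate on $V_\zeta$ is uniform, there is no real obstacle; the only thing to check carefully is the uniform lower bound on $\D$ and $\D(t)$, which we have just noted follows from $\eta(0)>0$. Consequently, a single $N_1=N_1(n,\zeta,\delta_1,\epsilon,\eta)$ works for every configuration in $V_\zeta$ and every $k\leq n-1$, which is exactly the assertion of the lemma.
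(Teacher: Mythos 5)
Your proposal is correct and spells out exactly what the paper asserts to be "immediate" from the uniform convergence of $\K(t)\to\K$ on $V_\zeta$: entrywise control yields operator-norm control, Weyl's inequality handles $\lambda_k(\La)$, and the reduction to the symmetric matrix $\D^{-1/2}\La\D^{-1/2}$ together with the uniform lower bound $\D_{ii}\geq\eta(0)>0$ (which the paper also records, just after this lemma) handles $\lambda_k(\La,\D)$. This is the same argument the paper has in mind; no gap.
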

We next give the convergence lemma for the eigenvectors.
Suppose that we are given points $x_1,\ldots,x_n\in M$ and $t\in\Z_{>0}$ with $\epsilon_t>0$.
In this section, we consider the inner product $\langle\cdot,\cdot\rangle$ on $\R^n$ defined by $\langle u,v\rangle=\sum_{i=1}^n u_i v_i/n$ for the unnormalized case.
For the normalized case, we define $\langle u,v\rangle_{\D}=\sum_{i=1}^n u_i v_i \D_{i i}/n$ and $\langle u, v\rangle_{\D(t)}= \sum_{i=1}^n u_i v_i \D(t)_{i i}/n$.
Let $\{u^i\}_{i=0}^{n-1}$ denotes the orthonormal basis of $(\R^n,\langle\cdot,\cdot\rangle)$ consisting of the eigenvectors of $\La$ corresponding to the eigenvalues $\{\lambda_i(\La)\}_{i=0}^{n-1}$.
Similarly we define $\{u^i(t)\}_{t=0}^{n-1}$, $\{u^{N,i}\}_{i=0}^{n-1}$ and $\{u^{N,i}(t)\}_{i=0}^{n-1}$ for $\La(t)$, $(\La,\D)$ and $(\La(t),\D(t))$, respectively.
\begin{Lem}\label{UNtinf}
Take arbitrary $n,k,l\in\Z_{>0}$ with $k\leq l\leq n-2$, $\Gamma\in (0,\infty)$, $\zeta\in(0,\epsilon)$ and $\delta_1\in(0,\infty)$.
Then, there exists $N_1\in\Z_{>0}$ such that for any $(x_1,\ldots,x_n)\in V_{\zeta}$ with $\lambda_k(\La)-\lambda_{k-1}(\La)\geq \Gamma$ and $\lambda_{l+1}(\La)-\lambda_l(\La)\geq \Gamma$, and $t\in \Z_{>0}$ with $t\geq N_1$, the following properties hold.
\begin{itemize}
\item[(i)] The map $\Span\{u^k(t),\ldots,u^l(t)\}\to\Span\{u^k,\ldots,u^l\},\,v\mapsto \Pj v$ is an isomorphism, where $\Pj\colon \R^n\to\Span\{u^k,\ldots,u^l\}$ denotes the orthogonal projection, and
for any $v\in \{u^k(t),\ldots,u^l(t)\}$, we have
$\| v-\Pj v\|\leq \delta_1\|v\|$.
\item[(ii)] The map $\Span\{u^k,\ldots,u^l\}\to\Span\{u^k(t),\ldots,u^l(t)\},\,u\mapsto \Pj_t u$ is an isomorphism, where $\Pj_t\colon \R^n\to\Span\{u^k(t),\ldots,u^l(t)\}$ denotes the orthogonal projection, and
for any $u\in \{u^k,\ldots,u^l\}$, we have
$\| u-\Pj_t u\|\leq \delta_1\|u\|$.
\item[(iii)] For any $u\in \R^n$, we have $\|(\Pj-\Pj_t)u\|\leq 2 \delta_1\|u\|$.
\end{itemize}
\end{Lem}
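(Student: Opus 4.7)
The plan is to reduce the three statements to a single operator-norm bound $\|\Pj-\Pj_t\|\to 0$ for the spectral projections onto the windows $\Span\{u^k,\ldots,u^l\}$ and $\Span\{u^k(t),\ldots,u^l(t)\}$, and to derive this by applying Lemma \ref{evalcomp} (ii) twice on the common Euclidean space $(\R^n,\langle\cdot,\cdot\rangle)$ with $Q_1=Q_2=\Id$. The preceding lemma shows that for $t$ with $4\tau_t<\zeta$ one has $|\K_{ij}-\K(t)_{ij}|\leq 4L_\eta\tau_t/\epsilon$ on $V_\zeta$, hence the quadratic forms $D(u,v):=\langle \La u,v\rangle$ and $D_t(u,v):=\langle \La(t)u,v\rangle$ differ by at most $C(n,\epsilon,\eta)\tau_t\|u\|^2$, uniformly for $(x_1,\ldots,x_n)\in V_\zeta$, and by Lemma \ref{tevalgrp} the individual eigenvalues $\lambda_j(\La(t))$ converge uniformly to $\lambda_j(\La)$. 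Consequently, for $t$ large the spectral projection of $\La(t)$ onto eigenvalues $\leq\lambda_l(\La)+\Gamma/4$ coincides with the projection $P_t$ onto $\Span\{u^0(t),\ldots,u^l(t)\}$, and similarly one has the analogous projection $P'_t$ for the threshold $\lambda_{k-1}(\La)+\Gamma/4$.

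Split $\Pj=P-P'$ and $\Pj_t=P_t-P'_t$ using these spectral projections for $\La$ and $\La(t)$. Apply Lemma \ref{evalcomp} (ii) with $(D_1,D_2)=(D_t,D)$, $Q_1=Q_2=\Id$ (so $E_3=E_4=0$), $l_{\mathrm{lem}}=l+1$, $a=\Gamma/4$: since $E_1,E_2=O(\tau_t)$, this yields $\|(1-P)v\|^2\leq O(\tau_t/\Gamma)\|v\|^2$ for every $v\in\mathrm{range}(P_t)$, equivalently $\|(1-P)P_t\|\leq O(\sqrt{\tau_t/\Gamma})$. Because $\mathrm{rank}(P)=\mathrm{rank}(P_t)=l+1$, the standard identity for orthogonal projections of equal rank gives $\|P-P_t\|=\|(1-P)P_t\|\leq O(\sqrt{\tau_t/\Gamma})$; the identical argument with $l_{\mathrm{lem}}=k$ and threshold $\lambda_{k-1}(\La)+\Gamma/4$ gives $\|P'-P'_t\|\leq O(\sqrt{\tau_t/\Gamma})$. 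Therefore $\|\Pj-\Pj_t\|\leq O(\sqrt{\tau_t/\Gamma})$, which can be made $\leq\delta_1$ uniformly on $V_\zeta$ by choosing $N_1$ large enough; this is (iii). Parts (i) and (ii) then follow at once from $\Pj_t v=v$ on $\mathrm{range}(\Pj_t)$, giving $\|v-\Pj v\|=\|(\Pj_t-\Pj)v\|\leq\delta_1$, and the symmetric computation for vectors in $\mathrm{range}(\Pj)$.

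The main obstacle worth flagging is that the window $[\lambda_k(\La),\lambda_l(\La)]$ may contain eigenvalues that coalesce or split under perturbation, so individual eigenvectors $u^j(t)$ need not converge to $u^j$; this rules out a direct eigenvector-by-eigenvector argument, and also blocks a single application of Lemma \ref{eveccomp}, whose error functional $F$ contains the intra-window spread $s=\lambda_l(\La)-\lambda_k(\La)$ that does not vanish with $t$. Only the spectral projection onto the whole window is stable, which is precisely what the two below-threshold applications of Lemma \ref{evalcomp} (ii) extract from the boundary gaps $\Gamma$ assumed in the hypothesis; the uniformity in $V_\zeta$ is automatic since the only $t$-dependent input is the kernel bound $4L_\eta\tau_t/\epsilon$, which is sample-independent.
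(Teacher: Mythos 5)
Your proof is correct, but it takes a genuinely different route from the paper's. The paper proves (i) and (ii) directly by expanding $u^i(t)=\sum_j a_j(t)u^j$, writing $\La u^i(t)-\La(t)u^i(t)=\sum_j a_j(t)\bigl(\lambda_j(\La)-\lambda_i(\La(t))\bigr)u^j$, and bounding the mass outside $\{k,\ldots,l\}$ by $4\delta_t/\Gamma^2$ via the gaps; part (iii) is then derived from (i) and (ii) through a short projection computation. You instead prove (iii) first, by applying Lemma~\ref{evalcomp}\,(ii) twice with $Q_1=Q_2=\Id$ on the two spectral windows $[0,\lambda_l]$ and $[0,\lambda_{k-1}]$ to control $\|(1-P)P_t\|$, then invoking the equal-rank identity $\|P-P_t\|=\|(1-P)P_t\|$ for orthogonal projections of the same rank, and finally deducing (i) and (ii) as trivial consequences of the operator-norm bound $\|\Pj-\Pj_t\|\leq\delta_1$. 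Both work; your version reuses the linear-algebraic machinery already set up in Appendix~\ref{LinearAlgebra} rather than redoing the Parseval computation, at the cost of importing the equal-rank projection identity (a standard fact about canonical angles, but one the paper otherwise avoids). Your warning that ``individual eigenvectors $u^j(t)$ need not converge to $u^j$'' is accurate, though note the paper's argument does not claim this either---it also only shows each $u^i(t)$ lies near the \emph{window} $\Span\{u^k,\ldots,u^l\}$, not near $u^i$---so the caution explains why the statement is formulated in terms of projections rather than eigenvectors, but it is not a point on which the paper's method and yours disagree.

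Two small remarks for the record. First, for the threshold $\lambda_l(\La(t))+\Gamma/4$ to sit strictly between $\lambda_l(\La)$ and $\lambda_{l+1}(\La)$ (so that the range of the Lemma~\ref{evalcomp}\,(ii) projection really is $\Span\{u^0,\ldots,u^l\}$), you should explicitly invoke Lemma~\ref{tevalgrp} to guarantee $|\lambda_l(\La(t))-\lambda_l(\La)|<\Gamma/4$ for large $t$; you gesture at this but it deserves a sentence. Second, $E_3$ and $E_4$ appear in Lemma~\ref{eveccomp}, not Lemma~\ref{evalcomp}, so the parenthetical ``(so $E_3=E_4=0$)'' is vestigial and should be dropped.
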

\begin{proof}
We first prove (i).
Since $\La(t)$ converges to $\La$ uniformly on $V_{\zeta}$, we can find a sequence $\{\delta_t\}_{t\in\Z_{>0}}\subset \R_{>0}$ with $\lim_{t\to\infty}\delta_t=0$ such that
$\|(\La-\La(t))u\|^2\leq \delta_t \|u\|^2$ holds for any $u\in \R^n$ and $(x_1,\ldots,x_n)\in V_{\zeta}$.
Moreover, there exists $N\in\Z_{>0}$ such that $|\lambda_k(\La)-\lambda_k(\La(t))|\leq \Gamma/2$ and $|\lambda_l(\La)-\lambda_l(\La(t))|\leq \Gamma/2$ hold for any $(x_1,\ldots,x_n)\in V_{\zeta}$ and $t\in\Z_{>0}$ with $t\geq N$.

Take arbitrary $(x_1,\ldots,x_n)\in V_{\zeta}$ with $\lambda_k(\La)-\lambda_{k-1}(\La)\geq \Gamma$ and $\lambda_{l+1}(\La)-\lambda_l(\La)\geq \Gamma$, and $t\in\Z_{>0}$ with $t\geq N$.
Then, we have $\lambda_k(\La(t))\geq \lambda_{k-1}(\La)+\Gamma/2$ and $\lambda_{l+1}(\La)\geq \lambda_l(\La(t))+\Gamma/2$.
Take arbitrary $i\in\{k,\ldots, l\}$ and expand $u^i(t)$ as
$
u^i(t)=\sum_{j=0}^{n-1}a_j(t) u^j.
$
Then, we have
\begin{align*}
\delta_t\geq \|\La u^i(t)-\La(t) u^i(t)\|^2=&\sum_{j=0}^\infty a_j(t)^2 \left(\lambda_j(\La)-\lambda_i(\La(t))\right)^2\\
\geq &\frac{\Gamma^2}{4}\sum_{j\notin\{k,\ldots,l\}}a_j(t)^2. 
\end{align*}
This implies 
$$\|u^i(t)-\Pj u^i(t)\|^2\leq \frac{4\delta_t}{\Gamma^2}.
$$
Take arbitrary $v=\sum_{i=k}^l a_i u^i(t)\in \Span\{u^k(t),\ldots,u^l(t)\}$.
Then, we have
\begin{align*}
\|v-\Pj v\|\leq \sum_{i=k}^l|a_i|\|u^i(t)-\Pj u^j(t)\|\leq (l-k+1)^{1/2}\frac{2\delta_t^{1/2}}{\Gamma}\|v\|.
\end{align*}
Thus, we get (i).

We can prove (ii) similarly to (i).

Finally, we prove (iii).
Take arbitrary $u\in \R^n$.
Then, by (ii), we have
$$\|\Pj u-\Pj_t\Pj u\|\leq \delta_1\|\Pj u\|\leq \delta_1\|u\|.$$
By (i), we have
\begin{align*}
\|\Pj_t\Pj u-\Pj_t u\|^2=&\left|\langle\Pj_t\Pj u,\Pj_t(\Pj u-u)\rangle-\langle\Pj_t u,\Pj_t(\Pj u-u)\rangle\right|\\
=&\left|\langle u, \Pj \Pj_t(\Pj u-u)-\Pj_t(\Pj u-u)\rangle\right|\\
\leq &\|u\|\|\Pj \Pj_t(\Pj u-u)-\Pj_t(\Pj u-u)\|\\
\leq& \delta_1\|u\|\|\Pj_t\Pj u-\Pj_t u\|,
\end{align*}
and so $\|\Pj_t\Pj u-\Pj_t u\|\leq \delta_1\|u\|$.
Thus, we get (iii).
\end{proof}
Combining the fact that $\La u=\lambda \D u$ if and only if $\D^{-1/2} \La\D^{-1/2}\left(\D^{1/2} u\right)=\lambda \left(\D^{1/2}u\right)$, and that $\D(t)^{-1/2} \La(t)\D(t)^{-1/2}$ and $\D(t)$ converge to $\D^{-1/2} \La\D^{-1/2}$ and $\D$ uniformly on $V_{\zeta}$ respectively, we get the following similarly to Lemma \ref{UNtinf}.
Note that we have $\D_{ii}\geq \eta(0)$ for any $i\in\{1,\ldots,n\}$.
\begin{Lem}
Take arbitrary $n,k,l\in\Z_{>0}$ with $k\leq l\leq n-2$, $\Gamma\in (0,\infty)$, $\zeta\in(0,\epsilon)$ and $\delta_1\in(0,\infty)$.
Then, there exists $N_1\in\Z_{>0}$ such that for any $(x_1,\ldots,x_n)\in V_{\zeta}$ with  $\lambda_k(\La,\D)-\lambda_{k-1}(\La,\D)\geq \Gamma$, $\lambda_{l+1}(\La,\D)-\lambda_l(\La,\D)\geq \Gamma$, and $t\in \Z_{>0}$ with $t\geq N_1$, the following properties hold.
\begin{itemize}
\item[(i)] The map $\Span\{u^k(t),\ldots,u^l(t)\}\to\Span\{u^k,\ldots,u^l\},\,v\mapsto \Pj v$ is an isomorphism, where $\Pj\colon \R^n\to\Span\{u^k,\ldots,u^l\}$ denotes the orthogonal projection, and
for any $v\in \{u^k(t),\ldots,u^l(t)\}$, we have
$\left\| v-\Pj v\right\|_{\D}\leq \delta_1\|v\|_{\D}.$
\item[(ii)] The map $\Span\{u^k,\ldots,u^l\}\to\Span\{u^k(t),\ldots,u^l(t)\},\,u\mapsto \Pj_t u$ is an isomorphism, where $\Pj_t\colon \R^n\to\Span\{u^k(t),\ldots,u^l(t)\}$ denotes the orthogonal projection, and
for any $u\in \{u^k,\ldots,u^l\}$, we have
$\left\| u-\Pj_t u\right\|_{\D}\leq \delta_1\|u\|_{\D}.$
\item[(iii)] For any $u\in \R^n$, we have $\left\|\Pj u-\Pj_t u\right\|_{\D}\leq \delta_1\|u\|_{\D}.$
\item[(iv)] For any $u\in \R^n$, we have $\left|\|u\|_{\D}-\|u\|_{\D(t)}\right|\leq \delta_1\|u\|_{\D}$.
\end{itemize}
\end{Lem}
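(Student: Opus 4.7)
The plan is to reduce the generalized eigenvalue problem $\La u = \lambda\D u$ to a standard symmetric eigenvalue problem and then mimic the argument of Lemma \ref{UNtinf}. The key observation is that $u$ is an eigenvector of $(\La,\D)$ with eigenvalue $\lambda$ if and only if $w := \D^{1/2}u$ is an eigenvector of the symmetric matrix $A := \D^{-1/2}\La\D^{-1/2}$ with the same eigenvalue, and similarly for $A(t) := \D(t)^{-1/2}\La(t)\D(t)^{-1/2}$. Under this correspondence the weighted inner product $\langle u,v\rangle_\D$ becomes the standard inner product $\langle w,w'\rangle$ on $\R^n$, so the vectors $w^i := \D^{1/2}u^{N,i}$ form a standard orthonormal basis of eigenvectors for $A$, and analogously $w^i(t) := \D(t)^{1/2}u^{N,i}(t)$ for $A(t)$.

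First I would verify uniform convergence on $V_\zeta$. Since $\D_{ii}=\sum_l\K_{il}\geq \K_{ii}=\eta(0)>0$, the matrices $\D$ and $\D(t)$ are uniformly bounded below on $V_\zeta$ (for $t$ large enough that $\tau_t$ is small), so the maps $M\mapsto M^{-1/2}$ and $M\mapsto M^{1/2}$ are uniformly Lipschitz on the relevant compact subset of positive definite matrices. Combined with the uniform convergence $\K(t)\to\K$ on $V_\zeta$ (proved just above via the condition $\zeta>4\tau_t$ and the Lipschitz bound on $\eta|_{[0,1]}$), this gives a sequence $\delta_t\to 0$ with $\|A-A(t)\|_{\mathrm{op}}\leq \delta_t$ and $\|\D-\D(t)\|_{\mathrm{op}}\leq\delta_t$ on $V_\zeta$ for $t\geq N$.

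Next I would apply the argument of Lemma \ref{UNtinf} verbatim to $A$ and $A(t)$ with their standard orthonormal eigenbases $\{w^i\}$ and $\{w^i(t)\}$ to obtain the analogues of (i), (ii), (iii) in the standard $\langle\cdot,\cdot\rangle$ inner product, with the orthogonal projection $\widetilde\Pj$ onto $\Span\{w^k,\ldots,w^l\}$ and its counterpart $\widetilde\Pj_t$. The gap hypotheses transfer because the eigenvalues of $(\La,\D)$ coincide with those of $A$, and an application of Lemma \ref{tevalgrp} (plus uniform convergence of $A(t)$ to $A$) guarantees $|\lambda_k(A(t))-\lambda_k(A)|\leq \Gamma/2$ for $t$ large. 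Then I translate back via $u=\D^{-1/2}w$: since $\D^{1/2}$ intertwines the two pictures isometrically when the source carries $\langle\cdot,\cdot\rangle_\D$ and the target carries $\langle\cdot,\cdot\rangle$, the projection $\Pj$ in the statement corresponds to $\widetilde\Pj$ under conjugation by $\D^{1/2}$, which gives (i)--(iii).

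For (iv), note that since $\D(t)\to\D$ uniformly on $V_\zeta$ and $\D_{ii}\geq \eta(0)$, we have
\[
\bigl|\|u\|_\D^2-\|u\|_{\D(t)}^2\bigr|=\Bigl|\tfrac{1}{n}\sum_{i=1}^n u_i^2(\D_{ii}-\D(t)_{ii})\Bigr|\leq \frac{\delta_t}{\eta(0)}\|u\|_\D^2,
\]
which, after taking square roots and absorbing constants into $N_1$, yields the desired bound. The main obstacle is really just bookkeeping: the care needed is to confirm that the projection $\Pj_t$ appearing in the statement (orthogonal with respect to $\langle\cdot,\cdot\rangle_\D$, not $\langle\cdot,\cdot\rangle_{\D(t)}$) agrees, up to an error absorbable into $\delta_1$, with the projection obtained after conjugating by $\D^{1/2}$; this is guaranteed by (iv) together with the uniform bound $\D_{ii}\geq \eta(0)$, at the cost of enlarging $N_1$.
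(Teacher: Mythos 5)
Your proposal is correct and follows exactly the route the paper indicates: the paper gives no separate proof for this lemma but prefaces it with precisely your reduction, namely that $\La u=\lambda\D u$ iff $\D^{-1/2}\La\D^{-1/2}(\D^{1/2}u)=\lambda(\D^{1/2}u)$, together with the uniform convergence of $\D(t)^{-1/2}\La(t)\D(t)^{-1/2}$ and $\D(t)$ on $V_\zeta$ and the bound $\D_{ii}\geq\eta(0)$, after which the argument of Lemma \ref{UNtinf} is applied verbatim. Your additional bookkeeping about which inner product $\Pj_t$ is orthogonal with respect to, absorbed via (iv), is a reasonable filling-in of a detail the paper leaves implicit.
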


Now let us prove Theorem \ref{UNMainEval}.
\begin{proof}[Proof of Theorem \ref{UNMainEval}]
Let $C_1=C_1(m,\alpha+1,k)$ and $C_2=C_2(m,S,K,i_0,L,\eta,\alpha+1,L_\rho+1,k)$ be constants appearing in Theorem \ref{eigmain}.
Suppose that
$$
\epsilon+\gamma^{1/2} \epsilon^{-m/2}\left(\frac{\log n}{n}\right)^{1/2}< C_2^{-1}.
$$
Then, there exists $N\in \Z_{>0}$ such that for any $t\in\Z_{>0}$ with $t\geq N$ and i.i.d. sample $x_1,\ldots,x_n\in M$ from $\rho\Vol_g$, 
$$
\left|\lambda_k(\Delta_{\rho_t})-\frac{2}{\sigma_\eta n \epsilon_t^{m+2}}\lambda_k(\La(t))\right|\leq C_2\left(\epsilon+\frac{\tau_t}{\epsilon}+\gamma^{1/2} \epsilon^{-m/2}\left(\frac{\log n}{n}\right)^{1/2}\right)
$$
with probability at least $1-C_1(1+\tau_t)^{m n}(\epsilon_t^{-2m-1}+n^2)n^{-\gamma}$ by Theorem \ref{eigmain} and (\ref{taurhoVol}).

Take arbitrary $\delta_1\in(0,\infty)$.
Then, there exists $\zeta\in(0,\epsilon)$ such that
$(\rho\Vol_g)(V_\zeta)\leq \delta_1$.
By Lemma \ref{tevalmfd} and \ref{tevalgrp}, we can find $N_1\in\Z_{>0}$ with $N_1\geq N$ such that for any $t\in\Z_{>0}$ with $t\geq N_1$, we have
$$
\left|\left(\lambda_k(\Delta_{\rho})-\frac{2}{\sigma_\eta n \epsilon^{m+2}}\lambda_k(\La)\right)-\left(\lambda_k(\Delta_{\rho_t})-\frac{2}{\sigma_\eta n \epsilon_t^{m+2}}\lambda_k(\La(t))\right)\right|\leq \delta_1.
$$
Thus, we get that for any $t\in\Z_{>0}$ with $t\geq N_1$, $\delta_1\in (0,\infty)$ and i.i.d. sample $x_1,\ldots,x_n\in M$ from $\rho\Vol_g$,
$$
\left|\lambda_k(\Delta_{\rho})-\frac{2}{\sigma_\eta n \epsilon^{m+2}}\lambda_k(\La)\right|\leq C_2\left(\epsilon+\frac{\tau_t}{\epsilon}+\gamma^{1/2} \epsilon^{-m/2}\left(\frac{\log n}{n}\right)^{1/2}\right)+\delta_1
$$
holds with probability at least $1-C_1(1+\tau_t)^{m n}(\epsilon_t^{-2m-1}+n^2)n^{-\gamma}-\delta_1.$
This implies the theorem.
\end{proof}
Similarly, we get Theorem \ref{NMainEval}.
\begin{proof}[Proof of Theorem \ref{UNMainEvec}]
Let $C_1=C_1(m,\alpha+1,l)$ and $C_2=C_2(m,S,K,i_0,L,\eta,\alpha+1,L_\rho+1,l)$ be constants appearing in Theorem \ref{evecmain}.

For each $t\in \Z_{>0}$, let $p_t\colon L^2(M_t)\to\Span\{f_k(t),\ldots,f_l(t)\}$, $\Pj_t\colon \R^n\to \Span\{u^k(t),\ldots, u^l(t)\}$ and $\Pj \colon \R^n \to\Span \{u^k,\ldots,u^l\}$ be the orthogonal projections.
For each $t\in\Z_{>0}$, put $s_t:=\lambda_l(\Delta_{\rho_t})-\lambda_k(\Delta_{\rho_t})$ and
$
G_t:=\min\{|\lambda_k(\Delta_{\rho_t})-\lambda_{k-1}(\Delta_{\rho_t})|,|\lambda_{l+1}(\Delta_{\rho_t})-\lambda_{l}(\Delta_{\rho_t})|\}.
$
Suppose that $n\in\Z_{>0}$, $\epsilon\in(0,D]$ and $\gamma\in (1,\infty)$ satisfies
$$
C_2\left(\epsilon+\gamma^{1/2} \epsilon^{-m/2}\left((\log n)/n\right)^{1/2}\right)+2 G s < G^2.
$$
Then, there exists $N\in\Z_{>0}$ such that for any $t\in\Z_{>0}$ with $t\geq N$, we have
$$
C_2\left(\epsilon_t+\tau_t/\epsilon_t+\gamma^{1/2} \epsilon^{-m/2}\left((\log n)/n\right)^{1/2}\right)+2 G_t s_t < G_t^2.
$$
Thus, we can apply Theorem \ref{evecmain}, and so
for any $t\geq N$ and i.i.d. sample $x_1,\ldots,x_n\in M$ from $\rho\Vol_g$, 
putting
$$
A_t:=\frac{C_2}{G_t^2}\left(\epsilon_t+\frac{\tau_t}{\epsilon_t}+\gamma^{1/2}\epsilon_t^{-m/2}\left(\frac{\log n}{n}\right)^{1/2}\right)+\frac{2s_t}{G_t},
$$
we have
\begin{equation}\label{evecm1}
\left(1-A_t\right)\|p_t(f\circ\psi_t^{-1})|_{\psi_t(\X)}\|^2
\leq\left\|\Pj_t \left(p_t(f\circ\psi_t^{-1})|_{\psi_t(\X)}\right)\right\|^2,
\end{equation}
\begin{equation}\label{evecm2}
\left|\|p_t(f\circ\psi_t^{-1})|_{\psi_t(\X)}\|^2-\|p_t(f\circ\psi_t^{-1})\|_{\rho_t}^2 \right|\leq C_2\gamma^{1/2}\left(\frac{\log n}{n}\right)^{1/2}\|p_t(f\circ\psi_t^{-1})\|_{\rho_t}^2
\end{equation}
and
\begin{equation}\label{evecm3}
\left\|p_t(f\circ\psi_t^{-1})|_{\psi_t(\X)}-\Pj_t (p_t(f\circ\psi_t^{-1})|_{\psi_t(\X)})\right\|^2
\leq A_t \left\|p_t(f\circ\psi_t^{-1})|_{\psi_t(\X)}\right\|^2.
\end{equation}
holds for every $f\in\Span\{f_k,\ldots,f_l\}$ with probability at least $1-C_1(1+\tau_t)^{m n}(\epsilon_t^{-2m-1}+n^2)n^{-\gamma}$.
In particular, we can assume that
\begin{equation}\label{evecm4}
\frac{1}{2}\|p_t(f\circ\psi_t^{-1})\|_{\rho_t}
\leq
\|p_t(f\circ\psi_t^{-1})|_{\psi_t(\X)}\|\leq \frac{3}{2}\|p_t(f\circ\psi_t^{-1})\|_{\rho_t}.
\end{equation}

Take arbitrary $\delta_1\in(0,1/3)$.
Then, there exists $\zeta\in(0,\epsilon)$ such that
$(\rho\Vol_g)(V_\zeta)\leq \delta_1$.
We can take $N_1\in \Z_{>0}$ with $N_1\geq N$ such that for any $f\in\Span\{f_k,\ldots,f_l\}$, $t\in \Z_{>0}$ with $t\geq N$ and $(x_1,\ldots,x_n)\in V_\zeta$ with $\lambda_{l+1}(\La)-\lambda_l(\La)\geq G/2$ and $\lambda_k(\La)-\lambda_{k-1}(\La)\geq G/2$, we have
\begin{equation}\label{evecm5}
\sup_{x\in M}|f(x)-p_t(f\circ \psi_t^{-1})(\psi_t(x))|\leq \|f\circ\psi_t^{-1}\|_{\rho_t}\delta_1,
\end{equation}
\begin{equation}\label{evecm6}
\begin{split}
&\left\|
\Pj\left(\Pj_t(p_t(f\circ \psi_t^{-1})|_{\psi_t(\X)})\right)-\Pj_t(p_t(f\circ \psi_t^{-1})|_{\psi_t(\X)})
\right\|\\
\leq &\left\|\Pj_t(p_t(f\circ \psi_t^{-1})|_{\psi_t(\X)})
\right\|\delta_1\leq \left\|p_t(f\circ \psi_t^{-1})|_{\psi_t(\X)}\right\|\delta_1.
\end{split}
\end{equation}
and
\begin{equation}\label{evecm7}
\|\Pj u-\Pj_t u\|\leq 2\|u\|\delta_1
\end{equation}
for any $u\in\R^n$ by Lemma \ref{tLinfError}, \ref{UNtinf} (i) and (iii).
Take such $f$, $t$ and $(x_1,\ldots,x_n)$, and assume that (\ref{evecm1})--(\ref{evecm4}) holds.
Note that we can assume that the assumptions $\lambda_{l+1}(\La)-\lambda_l(\La)\geq G/2$ and $\lambda_k(\La)-\lambda_{k-1}(\La)\geq G/2$ hold with probability at least $1-C(m,\alpha,l)(\epsilon^{-2m-1}+n^2)n^{-\gamma}$ by Theorem \ref{UNMainEval}.
Then, (\ref{evecm5}) implies
\begin{equation}\label{evecm8}
(1-\delta_1)\|f\circ\psi_t^{-1}\|_{\rho_t}\leq \|p_t(f\circ \psi_t^{-1})\|_{\rho_t}\leq \|f\circ\psi_t^{-1}\|_{\rho_t}.
\end{equation}
We get
\begin{equation}\label{evecm9}
(1+\tau_t)^{-m/2}\|p_t(f\circ \psi_t^{-1})\|_{\rho_t}\leq \|f\|_{\rho}\leq \frac{(1+\tau_t)^{m/2}}{1-\delta_1 }\|p_t(f\circ \psi_t^{-1})\|_{\rho_t}.
\end{equation}
by (\ref{taul2}) and (\ref{evecm8}). 
We get
\begin{equation}\label{evecm10}
 \left(1-\frac{2\delta_1}{1-\delta_1}\right)\left\|p_t(f\circ \psi_t^{-1})|_{\psi_t(\X)}\right\|\leq \left\|f|_{\X}\right\|\leq \left(1+\frac{2\delta_1}{1-\delta_1}\right)\left\|p_t(f\circ \psi_t^{-1})|_{\psi_t(\X)}\right\|
\end{equation}
by (\ref{evecm4}), (\ref{evecm5}) and (\ref{evecm8}).
We get
\begin{equation}\label{evecm11}
\begin{split}
&\left\|\Pj(f|_{\X})-\Pj_t\left(p_t(f\circ \psi_t^{-1})|_{\psi_t(\X)}\right)\right\|\\
\leq&2\|f|_{\X}\|\delta_1+\left\|f|_{\X}-p_t(f\circ\psi^{-1})|_{\psi_t(\X)}\right\| \delta_1
\leq \frac{4\delta_1}{1-3\delta_1}\|f|_{\X}\|
\end{split}
\end{equation}
by (\ref{evecm4}), (\ref{evecm5}), (\ref{evecm7}), (\ref{evecm8}) and (\ref{evecm10}).
We have
\begin{align*}
&\left\|f|_{\X}-\Pj_t\left(p_t(f\circ \psi_t^{-1})|_{\psi_t(\X)}\right)
\right\|\\
\leq &\left\|f|_{\X}-p_t(f\circ \psi_t^{-1})|_{\psi_t(\X)}\right\|+\left\|p_t(f\circ \psi_t^{-1})|_{\psi_t(\X)}-\Pj_t\left(p_t(f\circ \psi_t^{-1})|_{\psi_t(\X)}\right)\right\|\\
\leq &\|f\circ \psi_t^{-1}\|_{\rho_t}\delta_1+ \left\|p_t(f\circ \psi_t^{-1})|_{\psi_t(\X)}\right\| A_t^{1/2}
\leq \frac{2\delta_1+A_t^{1/2}}{1-3\delta_1}\|f|_{\X}\|
\end{align*}
by (\ref{evecm3}),  (\ref{evecm4}), (\ref{evecm5}), (\ref{evecm8}) and (\ref{evecm10}).
Thus, we get
\begin{equation*}\label{evecm12}
\begin{split}
&\left\|f|_{\X}-\Pj(f|_{\X})\right\|\\
\leq &\left\|(1-\Pj)\left(f|_{\X}-\Pj_t\left(p_t(f\circ \psi_t^{-1})|_{\psi_t(\X)}\right)\right)
\right\|+\left\|(1-\Pj)\left(\Pj_t\left(p_t(f\circ \psi_t^{-1})|_{\psi_t(\X)}\right)\right)
\right\|\\
\leq &\frac{3\delta_1+A_t^{1/2}}{1-3\delta_1}\|f|_{\X}\|
\end{split}
\end{equation*}
by (\ref{evecm6}) and (\ref{evecm10}).
This is a result corresponding to (\ref{evecm3}) with $p_t(f\circ \psi_t^{-1})$ replaced by $f$.
By (\ref{evecm1}), (\ref{evecm10}) and (\ref{evecm11}), we get a result corresponding to (\ref{evecm1}) for $f$.
By (\ref{evecm2}), (\ref{evecm9}) and (\ref{evecm10}), we get a result corresponding to (\ref{evecm2}) for $f$.
Thus, we get the theorem.
\end{proof}
Similarly, we get Theorem \ref{NMainEvec}.

\section{Assumption about Reach}\label{Reach}
In this section, we assume that $M\subset \R^d$ be a closed submanifold with bounded reach, which is defined as follows:
\begin{Def}\label{DR}
We define the $\Reach(M)$ of $M$ by
$$
\Reach(M):=\inf\left\{d_{\R^d}(x, M):\begin{array}{l}x\in \R^d\text{ is a point such that there exist }p,q\in M\\
 \text{ with } p\neq q \text{ and } d_{\R^d}(x,M)=\|x-p\|=\|x-q\| \end{array}\right\},
$$
where $d_{\R^d}(x,M)$ denotes the Euclidean distance between $x$ and $M$.
\end{Def}
\begin{Thm}\label{CR}
If the reach of $M$ satisfies $\Reach(M)\geq R$ for some constant $R>0$, we have
\begin{align*}
\|II\|_{L^\infty}\leq& 1/R,\\
|\Sect_M|\leq& 1/R^2,\\
\Inj_M\geq& \pi R.
\end{align*}
\end{Thm}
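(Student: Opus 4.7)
The plan is to prove the three estimates of Theorem~\ref{CR} in sequence, starting from the second fundamental form bound, which then feeds into the sectional curvature bound via the Gauss equation, which in turn enters the injectivity radius bound via Rauch's comparison theorem. First I would establish $\|II\|_{L^\infty}\leq 1/R$ using Federer's structure theorem for sets of positive reach: $\Reach(M)\geq R$ implies that the normal exponential map $(p,v)\mapsto p+v$ is a diffeomorphism on $\{(p,v)\in NM:|v|<R\}$ onto its image in $\R^d$, and $p$ is the unique nearest point of $M$ to $p+v$ for every such pair. Fix $p\in M$, a unit $u\in T_p M$, and a unit $\nu\in T_p M^{\perp}$. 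For $t\in(0,R)$, the smooth function $\phi_t(q):=\|p+t\nu-q\|^2$ on $M$ attains its global minimum at $q=p$, so $(\Hess^M\phi_t)_p\geq 0$. Computing along an $M$-geodesic $\gamma$ with $\dot\gamma(0)=u$ (so that $\ddot\gamma(0)=II(u,u)$) gives
\[
(\Hess^M\phi_t)_p(u,u)=2|u|^2-2t\langle\nu,II(u,u)\rangle\geq 0,
\]
hence $\langle\nu,II(u,u)\rangle\leq 1/t$; sending $t\to R^-$ and taking the supremum over unit $\nu$ yields $\|II(u,u)\|\leq 1/R$.

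Next, for $|\Sect_M|\leq 1/R^2$ I would invoke the Gauss equation for Euclidean submanifolds: for orthonormal $u,v\in T_p M$,
\[
\Sect_M(u,v)=\langle II(u,u),II(v,v)\rangle-\|II(u,v)\|^2.
\]
The first term is bounded in absolute value by $\|II(u,u)\|\|II(v,v)\|\leq 1/R^2$ via the previous step and Cauchy-Schwarz. For the second, polarizing $\|II(w,w)\|\leq|w|^2/R$ through $4II(u,v)=II(u+v,u+v)-II(u-v,u-v)$ together with $|u\pm v|^2=2$ gives $\|II(u,v)\|\leq 1/R$ for orthonormal $u,v$, which controls the negative term and yields the sectional curvature bound.

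For $\Inj_M\geq\pi R$ I would combine Rauch~I with Klingenberg's lemma. By the previous step and Rauch~I, the conjugate radius at every point of $M$ is at least $\pi R$. Assume for contradiction that $\Inj_p(M)<\pi R$ at some $p$. Then Klingenberg's construction produces two distinct minimal geodesics $\gamma_1,\gamma_2\colon[0,L]\to M$ joining $p$ to a common endpoint $q$ with $L=\Inj_p(M)<\pi R$ and $\dot\gamma_1(L)=-\dot\gamma_2(L)$. The concatenation $\gamma_1\cdot\bar\gamma_2$ is then a closed curve in $\R^d$ which is $C^1$ at $q$, and whose smooth arcs have extrinsic curvature $\|\ddot\gamma_i\|_{\R^d}=\|II(\dot\gamma_i,\dot\gamma_i)\|\leq 1/R$. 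Using the extrinsic-rotation estimate $\angle_{\R^d}(\dot\gamma_i(0),\dot\gamma_i(L))\leq L/R$ together with the mirror condition $\dot\gamma_1(L)=-\dot\gamma_2(L)$ to control the corner angle at the basepoint $p$, Fenchel's theorem on total absolute curvature of closed curves in $\R^d$ (with the corner contribution at $p$) forces $L\geq\pi R$, a contradiction.

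The most delicate step is expected to be this final Fenchel--Klingenberg argument for the injectivity radius: the concatenated loop may have a genuine corner at the basepoint $p$, and properly accounting for its contribution to the total turning requires carefully combining the mirror condition at $q$ with the extrinsic rotation bound on each $\dot\gamma_i$ coming from $\|II\|\leq 1/R$. The second fundamental form and sectional curvature bounds, by contrast, are direct consequences of the normal-bundle diffeomorphism property of sets of positive reach combined with the Gauss equation.
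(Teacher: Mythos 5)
Your outline matches the paper's (cite $\Reach\geq R\Rightarrow\|II\|\leq 1/R$, then Gauss equation, then Klingenberg plus Fenchel), and your proof of $\|II\|_{L^\infty}\leq 1/R$ is a correct reconstruction of \cite[Proposition~6.1]{NSW}: the Hessian computation along a geodesic is exactly right, as is the passage to $\sup_\nu$. However, there are two genuine gaps in the remaining steps.

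For the sectional curvature, the Gauss equation together with your Cauchy--Schwarz and polarization estimates gives $\Sect(u,v)\leq\langle II(u,u),II(v,v)\rangle\leq 1/R^2$ on one side, but on the other side only $\Sect(u,v)\geq -|II(u,u)|\,|II(v,v)|-|II(u,v)|^2\geq -2/R^2$. In codimension $\geq 2$ this $-2/R^2$ is algebraically sharp under the sole constraint $\|II(w,w)\|\leq 1/R$: take a surface in $\R^4$ with $II(e_1,e_1)=a\nu_1$, $II(e_2,e_2)=-a\nu_1$, $II(e_1,e_2)=a\nu_2$; then $\|II(w,w)\|=a$ for all unit $w$, yet $\Sect=-2a^2$. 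So "which controls the negative term and yields the sectional curvature bound" is not enough to reach $|\Sect|\leq 1/R^2$. (The paper's one-line appeal to the Gauss equation has the same issue; the constant is immaterial for the rest of the paper, but the step as written does not give the claimed constant.) Note that the upper bound $\Sect\leq 1/R^2$, which is all that is needed for the conjugate-radius estimate in the next step, is correctly established.

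The more serious gap is in the injectivity-radius step. You invoke the \emph{pointwise} Klingenberg lemma at an arbitrary $p$ with $\Inj_p<\pi R$, which yields a geodesic loop smooth at $q$ but with a genuine corner at $p$. Your corner estimate $\theta_p\leq 2L/R$ (via the two rotation bounds and the mirror condition at $q$) combined with $\int|\ddot\gamma_1|+\int|\ddot\gamma_2|\leq 2L/R$ and Fenchel's inequality $\int\kappa+\theta_p\geq 2\pi$ gives only $4L/R\geq 2\pi$, i.e.\ $L\geq\pi R/2$, a factor of two short. There is no way to improve this using only the loop at a generic $p$, because $\theta_p$ really can be of order $L/R$. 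The correct route, and the one the paper's citation \cite[Lemma~6.4.7]{Pe3} supplies, is to take $p$ to be a \emph{minimizer} of $q\mapsto\Inj_q(M)$: Klingenberg's theorem (not just the pointwise lemma) then produces a \emph{smooth closed geodesic} of length $2\Inj(M)$, with no corner at all. Applying Fenchel to that closed geodesic gives $2\Inj(M)/R\geq\int|\ddot\gamma|\geq 2\pi$, i.e.\ $\Inj(M)\geq\pi R$, as stated. So the fix is to replace "at some $p$" by "at the minimizing $p$" and use the closed-geodesic conclusion, after which the Fenchel step is immediate and needs no corner bookkeeping.
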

The first assertion is shown in \cite[Proposition 6.1]{NSW}.
The second assertion is easily seen from the Gauss equation \cite[Theorem 3.2.4]{Pe3}, and the third from the Klingenberg theorem \cite[Lemma 6.4.7]{Pe3} and the Fenchel theorem.

When the second fundamental form is bounded, we obtain the following sharp comparison of the Riemannian and Euclidean distances under the assumption that the Riemannian distance is not large.
\begin{Prop}[Lemma 3 of \cite{BdSLT}]\label{bdslt}
If $\|II\|_{L^\infty}\leq 1/R$,
we have
$$
\|x-y\|\geq 2 R\sin \frac{d(x,y)}{2R}
$$
for any $x,y\in M$ with $d(x,y)\leq \pi R$.
\end{Prop}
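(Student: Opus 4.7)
The plan is to view $\gamma\colon[0,l]\to M$, a unit-speed minimizing geodesic from $x$ to $y$ with $l=d(x,y)\leq \pi R$, as a curve in $\R^d$ via the isometric embedding. The geodesic equation on $M$ says that the tangential component of $\ddot\gamma$ vanishes, so $\ddot\gamma(t)=II(\dot\gamma(t),\dot\gamma(t))$; hence $\|\ddot\gamma(t)\|\leq 1/R$ and $\dot\gamma(t)\perp\ddot\gamma(t)$. The comparison $\|y-x\|\geq 2R\sin(l/(2R))$ is sharp on a circular arc of radius $R$, so I want to reduce everything to a differential inequality that forces the chord length to dominate that of the circle.

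The key device, as in Lemma \ref{fund}, is to test $\dot\gamma$ against a fixed unit vector, but now chosen symmetrically. Set $v:=\dot\gamma(l/2)\in\R^d$ and $h(t):=\langle\dot\gamma(t),v\rangle$, so that $h(l/2)=1$ and $|h|\leq 1$. Because $\dot\gamma\perp\ddot\gamma$, the orthogonal decomposition of $v$ in the plane spanned by $\dot\gamma(t)$ and $\ddot\gamma(t)/\|\ddot\gamma(t)\|$ gives
\[
h'(t)^2=\langle\ddot\gamma(t),v\rangle^2\leq \|\ddot\gamma(t)\|^2(1-h(t)^2)\leq \frac{1}{R^2}(1-h(t)^2),
\]
exactly as in (\ref{gperp}).

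Now I would substitute $h(t)=\cos\theta(t)$, choosing the continuous branch with $\theta(l/2)=0$; since $\gamma$ is $C^\infty$, $h\in C^1$ and $\theta$ is well-defined and $C^1$ as long as $h$ stays away from $\pm1$ (the sole values $\pm 1$ are handled by continuity). The differential inequality above becomes $|\theta'(t)|\leq 1/R$, hence $|\theta(t)|\leq |t-l/2|/R$. The hypothesis $l\leq \pi R$ is precisely what keeps $|\theta(t)|\leq \pi/2$ on the whole interval, so $\cos\theta(t)\geq 0$ and
\[
h(t)\geq \cos\!\left(\frac{|t-l/2|}{R}\right)\quad\text{for all }t\in[0,l].
\]
Integrating,
\[
\|y-x\|\geq \langle\gamma(l)-\gamma(0),v\rangle=\int_0^l h(t)\,dt\geq \int_0^l \cos\!\left(\frac{|t-l/2|}{R}\right)dt=2R\sin\!\left(\frac{l}{2R}\right),
\]
which is the claim.

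The only delicate point is the passage from the quadratic inequality $h'^2\leq R^{-2}(1-h^2)$ to the clean ODE bound $|\theta'|\leq 1/R$: one needs to justify the $\cos$-substitution where $h$ meets $\pm 1$. This is handled by noting that $1-h^2$ is $C^1$ and vanishes to second order at its maxima, so the set where $h=\pm 1$ is closed and the inequality $|\theta'|\leq 1/R$ holds in the complement, which is open and dense after a standard absolute-continuity/limiting argument; this is the same kind of continuity argument already used in the proof of Lemma \ref{fund}.
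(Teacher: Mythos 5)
Your choice of the symmetric test vector $v=\dot\gamma(l/2)$ reduces the claim to exactly the paper's Lemma \ref{difineq} (after the rescaling $f(s)=h(l/2+Rs)$, which satisfies $f(0)=1$, $f'(0)=0$, $f^2+(f')^2\leq 1$ with $a=l/(2R)\leq\pi/2$), and your $h=\cos\theta$ substitution is the standard proof of that lemma, so this is the same route the paper takes. One small correction to your final paragraph: the open set $\{|h|<1\}$ need not be dense (take $h\equiv 1$ on a subinterval, i.e.\ a flat segment of $\iota\circ\gamma$ in $\R^d$), but the argument survives without density because on $\{|h|=1\}$ the constraint forces $h'=0$ and $\theta=\arccos h\in\{0,\pi\}$ is locally constant there, so splitting $[l/2,t]$ at the boundary points of $\{|h|<1\}$ and using $|\theta'|\leq 1/R$ on each open piece still yields the Lipschitz bound $\theta(t)\leq|t-l/2|/R$.
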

In \cite{BdSLT}, Proposition \ref{bdslt} is proved geometrically, but it can be proved using calculus, as suggested in \cite{BdSLT}.
Indeed, Proposition \ref{bdslt} is an immediate consequence of the following differential inequality.
\begin{Lem}\label{difineq}
Take a real number $a\in(0,\pi/2]$ and smooth function $f\colon (-a,a)\to \R$ satisfying\begin{empheq}[left={\empheqlbrace}]{align*}
f^2+ (f')^2&\leq 1,\\
f(0)&=1,\\
f'(0)&=0.
\end{empheq}
Then, for any $t\in (-a,a)$, we have
\begin{align*}
f(t)\geq &\cos t,\\
|f'(t)|\leq &\left|\sin t\right|.
\end{align*}
\end{Lem}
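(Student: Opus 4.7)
The plan is to introduce the angle variable $\alpha(t):=\arccos f(t)$. It is well-defined and continuous on $(-a,a)$, since the constraint $f^2+(f')^2\leq 1$ forces $|f|\leq 1$; moreover $\alpha(0)=0$. On the open set $\{|f|<1\}$ the chain rule gives $\dot\alpha=-f'/\sqrt{1-f^2}$, and the hypothesis $(f')^2\leq 1-f^2$ translates directly into the key local estimate $|\dot\alpha|\leq 1$ there. All that remains is to upgrade this to the global bound $\alpha(t)\leq t$ for $t\in[0,a)$; the case $t\in(-a,0)$ is then handled by applying the result to $\tilde f(t):=f(-t)$, which satisfies the same hypotheses.

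For the upgrade, I fix $t_0\in[0,a)$ and set $\sigma:=\sup\{s\in[0,t_0]:\alpha(s)=0\}$. If $\sigma=t_0$ the conclusion is trivial, so assume $\sigma<t_0$, whence $\alpha>0$ on $(\sigma,t_0]$ by definition. Next I rule out $\alpha=\pi$ on $(\sigma,t_0]$: if $s'$ were the smallest point of $(\sigma,t_0]$ with $\alpha(s')=\pi$, then $\alpha\in(0,\pi)$ throughout $(\sigma,s')$, so $|f|<1$ there, the bound $|\dot\alpha|\leq 1$ applies, and integration (combined with continuity of $\alpha$ at $\sigma$) would yield $\pi=\alpha(s')-\alpha(\sigma)\leq s'-\sigma\leq t_0<a\leq \pi/2$, a contradiction. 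Hence $\alpha\in(0,\pi)$ throughout $(\sigma,t_0]$, so $|\dot\alpha|\leq 1$ there, and the same integration gives $\alpha(t_0)=\alpha(t_0)-\alpha(\sigma)\leq t_0-\sigma\leq t_0$.

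With $\alpha(t)\in[0,t]\subseteq[0,\pi/2]$ established for $t\in[0,a)$, the monotonicity of $\cos$ on $[0,\pi/2]$ immediately gives $f(t)=\cos\alpha(t)\geq\cos t$. For the derivative bound, at points where $|f(t)|<1$ I will write $|f'(t)|=\sin\alpha(t)\,|\dot\alpha(t)|\leq\sin\alpha(t)\leq\sin t$, using $|\dot\alpha|\leq 1$, $\alpha(t)\in[0,\pi/2]$, and monotonicity of $\sin$ there; at the remaining points where $|f(t)|=1$ the constraint forces $f'(t)=0$ directly. The only delicate part of the argument is the step excluding $\alpha=\pi$, which is exactly where the quantitative hypothesis $a\leq \pi/2$ is used; without it $f$ could in principle swing down to $-1$ and the angle variable wrap past $\pi$, breaking the direct Lipschitz comparison.
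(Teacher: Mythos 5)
Your proof is correct. The paper actually omits the proof of this lemma entirely (it says only "Lemma \ref{difineq} is an elementary result on differential inequalities, so we skip the proof"), so there is nothing to compare against; your argument supplies the missing details.

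A couple of small observations on the write-up, none of which affects correctness. The Lipschitz step $\alpha(t_0)-\alpha(\sigma)\leq t_0-\sigma$ is legitimate even though $\alpha$ need not be differentiable at $\sigma$ (where $f=1$ and $\arccos$ has infinite slope): on $(\sigma,t_0]$ you have $\alpha\in(0,\pi)$, hence $\alpha$ is $C^1$ there with $|\dot\alpha|\leq 1$, the mean value theorem gives $|\alpha(t_0)-\alpha(a')|\leq t_0-a'$ for $a'\in(\sigma,t_0)$, and letting $a'\to\sigma^+$ with continuity of $\alpha$ gives the claim. Also, $\sigma$ as defined does satisfy $\alpha(\sigma)=0$ because $\{s:\alpha(s)=0\}$ is closed. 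The angle substitution $\alpha=\arccos f$ is the natural way to sidestep the non-Lipschitz singularity of $y\mapsto\sqrt{1-y^2}$ at $y=\pm 1$ that a direct ODE-comparison argument would have to confront; the one place where $a\leq\pi/2$ is genuinely used — your exclusion of $\alpha=\pi$ — is correctly identified.
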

Lemma \ref{difineq} is an elementary result on differential inequalities, so we skip the proof.

To apply Proposition \ref{bdslt} to points $x,y\in M$, we need to assume $d(x,y)\leq \pi R$, so Proposition \ref{bdslt} does not lead to the assertion that if $\|x-y\|$ is small, then $d(x,y)$ is also small.
Proposition 6.3 in \cite{NSW} leads to such an assertion assuming $\Reach(M)\geq R$, but its proof does not overcome this point and does not seem to be complete (the assertion itself is correct).
In \cite{BdSLT}, the following quantity is introduced to overcome this point:
$$
s(R):= \sup\left\{s\in\R_{>0}: \text{$\|x-y\|<s$ implies $d(x,y)\leq \pi R$ for any $x,y\in M$}\right\}
$$
for each $R\in(0,\infty)$.
By the definition of $s(R)$, the expansion of the sine function and Proposition \ref{bdslt}, we get the following:
\begin{Prop}[Corollary 4 of \cite{BdSLT}]\label{corbdslt}
If $\|II\|_{L^\infty}\leq 1/R$,
we have
$$
\|x-y\|\geq \left(1-\frac{\pi^2}{96 R^2}\|x-y\|^2\right)d(x,y)
$$
for any $x,y\in M$ with $\|x-y\|< s(R)$.
\end{Prop}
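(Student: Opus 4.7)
The plan is to derive this from Proposition \ref{bdslt} together with two elementary inequalities for the sine function. By the definition of $s(R)$, the hypothesis $\|x-y\|<s(R)$ gives $d(x,y)\leq \pi R$, so Proposition \ref{bdslt} applies and yields
$$
\|x-y\|\geq 2R\sin\!\frac{d(x,y)}{2R},\qquad \text{with } t:=\frac{d(x,y)}{2R}\in[0,\pi/2].
$$

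First I would use the Taylor-type lower bound $\sin t\geq t-t^3/6$ valid for all $t\geq 0$ (which follows since the derivative $\cos t-1+t^2/2\geq 0$ and the expression vanishes at $0$). Plugging $t=d(x,y)/(2R)$ and multiplying by $2R$ gives
$$
\|x-y\|\geq d(x,y)-\frac{d(x,y)^3}{24R^2}=d(x,y)\left(1-\frac{d(x,y)^2}{24R^2}\right).
$$
This is already a bound of the desired shape but expressed in terms of the Riemannian distance rather than the Euclidean one.

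To convert it, I would bound $d(x,y)$ from above by $\|x-y\|$ using Jordan's inequality $\sin t\geq 2t/\pi$ on $[0,\pi/2]$. Applying this to the output of Proposition \ref{bdslt} gives $\|x-y\|\geq 2d(x,y)/\pi$, i.e.\ $d(x,y)\leq(\pi/2)\|x-y\|$. Substituting $d(x,y)^2\leq\pi^2\|x-y\|^2/4$ into the previous inequality replaces the factor $1/(24R^2)$ by $\pi^2/(96R^2)$ and yields
$$
\|x-y\|\geq d(x,y)\left(1-\frac{\pi^2}{96R^2}\|x-y\|^2\right),
$$
which is the claim.

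No part of this looks to be a real obstacle; the only subtlety is checking that the bound $\sin t\geq t-t^3/6$ is available on the full range $t\in[0,\pi/2]$ and not just near zero, but this is immediate from the monotonicity argument above. Everything else is a direct chain of substitutions from Proposition \ref{bdslt} and the definition of $s(R)$.
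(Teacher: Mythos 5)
Your proof is correct and fills in the details of what the paper only states in one sentence ("By the definition of $s(R)$, the expansion of the sine function and Proposition~\ref{bdslt}, we get the following"). You correctly identify the two ingredients beyond the Taylor-type bound $\sin t\geq t-t^3/6$: the definition of $s(R)$ to put you in the regime where Proposition~\ref{bdslt} applies, and Jordan's inequality $\sin t\geq 2t/\pi$ to convert $d(x,y)^2$ into $\|x-y\|^2$ via $d(x,y)\leq(\pi/2)\|x-y\|$ — this last conversion is exactly what accounts for the factor $\pi^2/96 = (\pi/2)^2\cdot(1/24)$ in the statement and is the only step one might overlook on a casual reading of the paper's terse justification. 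The chain of substitutions is sound (and the multiplication by $d(x,y)\geq 0$ in the final step preserves the inequality regardless of the sign of the parenthesized factor), so there is no gap.
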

The assumptions in \cite{BdSLT} are essentially equivalent to the lower bound of the reach in the following sense.
\begin{Prop}\label{rtosr}
If $\Reach(M)\geq R$, then $\|II\|_{L^\infty}\leq 1/R$ and $s(R)>R/2$.
\end{Prop}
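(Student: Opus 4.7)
\medskip

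\noindent\textbf{Proof proposal.} The first inequality $\|II\|_{L^\infty}\le 1/R$ is already recorded in Theorem \ref{CR}, so only the estimate $s(R)>R/2$ requires argument. The plan is to construct, for any two sufficiently close points $x,y\in M$, an explicit curve on $M$ from $x$ to $y$ whose length is $<\pi R$, thereby forcing $d(x,y)\le \pi R$. The natural candidate is the image under the nearest-point projection of the Euclidean segment joining $x$ and $y$.

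The assumption $\mathrm{Reach}(M)\ge R$ guarantees that every point of the open tubular neighborhood $U_R(M):=\{z\in\R^d:d(z,M)<R\}$ has a unique closest point in $M$, yielding a well-defined projection $\pi\colon U_R(M)\to M$. By Federer's classical analysis of sets of positive reach, $\pi$ is continuously differentiable, and at any $z\in U_R(M)$ with $d(z,M)=t<R$ the operator norm of $d\pi_z$ is bounded by $R/(R-t)$; concretely, this follows by parameterizing $U_R(M)$ via the normal exponential map and observing that the shape operators $S_\nu$ have eigenvalues bounded in absolute value by $1/R$ (the first part of the proposition). Given $x,y\in M$ with $\|x-y\|=2a<2R$, the segment $\ell(s)=(1-s)x+sy$ satisfies $d(\ell(s),M)\le \min(s,1-s)\cdot 2a\le a<R$, so $\ell([0,1])\subset U_R(M)$. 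The curve $\gamma:=\pi\circ \ell\colon[0,1]\to M$ then joins $x$ to $y$ on $M$, and
\[
d(x,y)\le \mathrm{length}(\gamma)\le \int_0^1 \frac{R}{R-d(\ell(s),M)}\,\|y-x\|\,ds \le \frac{2aR}{R-a}.
\]
Imposing $\tfrac{2aR}{R-a}\le \pi R$ gives $a\le \tfrac{\pi R}{2+\pi}$, i.e.\ $\|x-y\|\le \tfrac{2\pi R}{2+\pi}$. Since $\tfrac{2\pi}{2+\pi}>\tfrac12$, this shows $s(R)\ge \tfrac{2\pi R}{2+\pi}>R/2$, as required.

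The main technical point is the derivative bound on $\pi$; this is standard from Federer's work but not proved in this appendix, so the cleanest write-up either cites the relevant statement from \cite{Federer} directly or gives a short self-contained derivation via the normal bundle diffeomorphism $(p,\nu)\mapsto p+\nu$, whose differential in the tangent directions is $\mathrm{Id}-t\,S_\nu$ and hence invertible with inverse of norm $\le R/(R-t)$ thanks to $\|S_\nu\|\le 1/R$. A slightly sharper estimate is obtained by integrating $R/(R-2a\min(s,1-s))$ in the bound above to get $d(x,y)\le 2R\log\frac{R}{R-a}$, which yields $s(R)\ge 2R(1-e^{-\pi/2})$, but the crude bound already suffices for the claim.
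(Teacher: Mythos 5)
Your proof is correct. Note, however, that the paper deliberately omits a proof of this proposition (``We do not give the proofs of these propositions, since they are outside the scope of this work''), so there is no argument in the text to compare against; your write-up thus fills a genuine gap rather than reproducing an existing argument.

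On the substance: the strategy of projecting the Euclidean chord $\ell(s)=(1-s)x+sy$ back onto $M$ via the nearest-point map $\pi$ and then bounding the resulting curve length is exactly the right one. Two small remarks worth incorporating into a final write-up. First, the derivative bound $\|d\pi_z\|\le R/(R-t)$ you sketch via the normal bundle and the shape operator is valid, but the cleanest citation is Federer's Lipschitz estimate \cite[Thm.~4.8(8)]{Federer}: on the tube $\{z:d(z,M)\le \mu R\}$ with $\mu<1$, the projection $\pi$ is $(1-\mu)^{-1}$-Lipschitz. Invoking that directly sidesteps any discussion of differentiability of $\pi$, since a Lipschitz map composed with the segment immediately gives $\mathrm{length}(\pi\circ\ell)\le (1-\mu)^{-1}\,\|x-y\|$. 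Second, your estimate $d(\ell(s),M)\le 2a\min(s,1-s)\le a$ (with $2a=\|x-y\|$) is exactly what is needed to place the chord in the tube of radius $a<R$, and the resulting bound
\[
d_M(x,y)\le \mathrm{length}(\pi\circ \ell)\le \frac{R}{R-a}\cdot 2a
\]
combined with $\frac{2aR}{R-a}\le \pi R \iff a\le \frac{\pi R}{2+\pi}$ yields $s(R)\ge \frac{2\pi}{2+\pi}R>\frac{R}{2}$, as claimed. The sharper log-integral variant you mention, $s(R)\ge 2R(1-e^{-\pi/2})$, is a nice bonus but not needed for the statement. The first inequality $\|II\|_{L^\infty}\le 1/R$ is indeed already part of Theorem \ref{CR}, so no new argument is required there.
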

\begin{Prop}\label{srtor}
If $\|II\|_{L^\infty}\leq 1/R$, then $\Reach(M)\geq\min\{s(R)/2,(1-\pi/4)R\}$.
\end{Prop}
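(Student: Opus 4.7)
The plan is to analyze the squared Euclidean distance to $x$ along a minimizing geodesic between two nearest points. Fix $x \in \R^d$ admitting distinct nearest points $p, q \in M$, set $r := d_{\R^d}(x, M) = \|x-p\| = \|x-q\|$, and aim for $r \geq \min\{s(R)/2, (1-\pi/4)R\}$. If $r \geq s(R)/2$ there is nothing to prove, so I assume $r < s(R)/2$. Then $\|p-q\| \leq 2r < s(R)$, so by the very definition of $s(R)$ the intrinsic distance $L := d(p, q)$ satisfies $L \leq \pi R$. I may also assume $r < R$, since otherwise $r \geq (1-\pi/4)R$ is trivial.

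Let $\gamma\colon [0, L] \to M$ be a unit-speed minimizing geodesic from $p$ to $q$, and set $\phi(t) := \|\gamma(t) - x\|^2$. The nearest-point characterization for a smooth submanifold gives $x - p \in (T_p M)^{\perp}$ and $x - q \in (T_q M)^{\perp}$, which combined with $\dot\gamma(0) \in T_p M$, $\dot\gamma(L) \in T_q M$ yields $\phi'(0) = \phi'(L) = 0$; also $\phi(0) = \phi(L) = r^2$. Since the acceleration of $\gamma$ viewed in $\R^d$ equals $II(\dot\gamma, \dot\gamma)$, Cauchy-Schwarz together with $\|II\|_{L^\infty} \leq 1/R$ gives
\[
\phi''(t) = 2 + 2\langle \gamma(t) - x, II(\dot\gamma(t), \dot\gamma(t))\rangle \geq 2\bigl(1 - \|\gamma(t) - x\|/R\bigr).
\]

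The crucial auxiliary input is the upper bound $\|\gamma(t) - x\| \leq r + \min(t, L - t)$, which follows from $\|\gamma(t) - p\| \leq d(\gamma(t), p) = t$ (Euclidean distance is dominated by Riemannian distance on $M$), the analogous bound at $q$, and the triangle inequality. Integrating the lower bound for $\phi''$ over $[0, L]$, using $\int_0^L \phi''\, dt = \phi'(L) - \phi'(0) = 0$ and $\int_0^L \min(t, L-t)\, dt = L^2/4$, I obtain
\[
0 \geq 2L(1 - r/R) - \frac{L^2}{2R}.
\]
Under $r < R$ this forces $L \geq 4(R - r)$. Combining with $L \leq \pi R$ gives $R - r \leq \pi R/4$, i.e., $r \geq (1 - \pi/4)R$, as required.

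The step I expect to require the most care is verifying the boundary data $\phi'(0) = \phi'(L) = 0$ cleanly, since it hinges on the normal characterization of nearest points on a smooth submanifold. The rest is a one-dimensional variational estimate, and it is specifically the tighter pointwise bound $r + \min(t, L-t)$ (rather than the cruder uniform $r + L/2$, which would only yield the useless $r \geq (1 - \pi/2)R$) that produces exactly the constant $1 - \pi/4$ in the statement.
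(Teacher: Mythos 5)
The paper explicitly declines to prove this proposition (``We do not give the proofs of these propositions, since they are outside the scope of this work''), so there is no proof in the text to compare against. Your argument is correct and self-contained: the normal characterization of nearest points gives $\phi'(0)=\phi'(L)=0$; the extrinsic acceleration of a unit-speed geodesic is $II(\dot\gamma,\dot\gamma)$, so the bound $\|II\|_{L^\infty}\le 1/R$ together with Cauchy--Schwarz gives $\phi''(t)\ge 2-2\|\gamma(t)-x\|/R$; the triangle inequality and minimality of $\gamma$ yield $\|\gamma(t)-x\|\le r+\min(t,L-t)$; integrating and using $\int_0^L\min(t,L-t)\,dt=L^2/4$ gives $L\ge 4(R-r)$; and the hypothesis $\|p-q\|\le 2r<s(R)$ makes $L=d(p,q)\le\pi R$ available, which closes the estimate to $r\ge(1-\pi/4)R$. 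The existence of a minimizing geodesic is guaranteed by compactness (Hopf--Rinow). One detail worth being explicit about, since the paper works with the definition of $\Reach$ as the infimum of $d_{\R^d}(x,M)$ over $x$ admitting two distinct nearest points: that is exactly the quantity your argument bounds from below, so the proof matches the paper's definition directly.
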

We do not give the proofs of these propositions, since they are outside the scope of this work.
Proposition \ref{corbdslt} and \ref{rtosr} imply the following.
\begin{Cor}\label{compreach}
If $\Reach(M)\geq R$, we have
$$
d(x,y)\leq  \left(1+\frac{\pi^2}{48 R^2}\|x-y\|^2\right)\|x-y\|
$$
for any $x,y\in M$ with $\|x-y\|\leq R/2$.
\end{Cor}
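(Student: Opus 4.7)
The statement is a straightforward combination of the two propositions cited just before it, so the plan is mainly to track the bookkeeping carefully. First, I would invoke Proposition \ref{rtosr} to translate the reach lower bound $\Reach(M) \geq R$ into the two ingredients used by the Boissonat--de Silva--Lieutier--Thibert framework: the curvature bound $\|II\|_{L^\infty} \leq 1/R$ and the quantitative bound $s(R) > R/2$. The hypothesis $\|x-y\| \leq R/2$ then lies inside the regime $\|x-y\| < s(R)$, which is exactly what is required to apply Proposition \ref{corbdslt}.

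Next I would apply Proposition \ref{corbdslt} to $x,y$ to get
$$
\|x-y\| \geq \left(1 - \frac{\pi^2}{96 R^2}\|x-y\|^2\right) d(x,y),
$$
and solve for $d(x,y)$. The quantity $t := \frac{\pi^2}{96 R^2}\|x-y\|^2$ satisfies $t \leq \pi^2/384 < 1/2$ under the hypothesis $\|x-y\| \leq R/2$, so $(1-t)^{-1} \leq 1 + 2t$ holds. Substituting yields
$$
d(x,y) \leq \frac{\|x-y\|}{1-t} \leq (1+2t)\|x-y\| = \left(1 + \frac{\pi^2}{48 R^2}\|x-y\|^2\right)\|x-y\|,
$$
which is the desired bound.

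There is no real obstacle: the two propositions are already stated in the paper, and the only nontrivial arithmetic step is the elementary inequality $1/(1-t) \leq 1+2t$ on $[0,1/2]$, which is justified by the explicit numerical bound $\pi^2/384 < 1/2$. Thus the proof is essentially a one-line chain once the hypothesis is matched up with the domain of validity $\|x-y\| < s(R)$ provided by Proposition \ref{rtosr}.
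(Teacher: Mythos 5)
Your proof is correct and follows exactly the route the paper intends: the paper simply asserts that Propositions \ref{rtosr} and \ref{corbdslt} imply the corollary, and you fill in the straightforward chain, including the check $\|x-y\|\leq R/2<s(R)$ and the elementary inequality $1/(1-t)\leq 1+2t$ with $t=\pi^2\|x-y\|^2/(96R^2)\leq\pi^2/384<1/2$.
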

It is easy to give a better estimate than this corollary, but we do not know the sharp inequalities in Propositions \ref{rtosr}, \ref{srtor} and Corollary \ref{compreach}.

\section{Hausdorff Measures and its Coincidence}\label{HausdorffMeasure}
Under Assumption \ref{Asu}, $M$ has two different distance functions.
One is a given distance $d_M$, and the other is a distance determined as a subset of $\R^d$.
In this section, we see that the $m$-dimensional Hausdorff measures determined by these two distance functions coincide with each other.
For smooth submanifolds, it is easy to show that this coincidence holds for the Riemannian distance and the distance as a subset.
However, under Assumption \ref{Asu}, $M$ can have even dense singularities, so this coincidence is nontrivial.
The goal of this section is to show that this coincidence holds even under weaker assumptions, as follows.

\begin{Thm}\label{HAP}
Let $m,d\in \Z_{>0}$ be integers with $m<d$ and take constants $S,K>0$.
Suppose that we are given a compact metric space $M$ with distance function $d_M$ and an injective map $\iota\colon M\to \R^d$ such that there exist a sequence $\{(M_i,g_i)\}_{i=1}^\infty\subset \M_2(m,K)$, a sequence of positive real numbers $\{\epsilon_i\}_{i=1}^\infty\subset \R_{>0}$ with $\lim_{i\to\infty}\epsilon_i=0$, a sequence of isometric immersions $\{\iota_i\colon M_i\to \R^d\}_{i=1}^\infty$ and a sequence of maps
$\{\psi_i\colon M\to M_i\}_{i=1}^\infty$ satisfying the following properties:
\begin{itemize}
\item[(i)] For any $i\in \Z_{>0}$, we have
$
\int_{M_i} |II_i|\,d\Vol_{g_i} \leq S.
$
\item[(ii)] For any $i\in\Z_{>0}$ and $x,y\in M$, we have
$
|d_M(x,y)-d_{g_i}(\psi_i(x),\psi_i(y))|\leq \epsilon_i.
$
\item[(iii)] For any $i\in\Z_{>0}$ and $y\in M_i$, there exists $x\in M$ such that
$
d_{g_i}(y,\psi_i(x))\leq \epsilon_i.
$
\item[(iv)] For any $x\in M$, we have
$
\lim_{i\to \infty}\iota_i(\psi_i(x))=\iota(x).
$
\end{itemize}
Let $\Ha^m_M$ be the $m$-dimensional Hausdorff measure determined from the given distance $d_M$ of $M$, and let $\Ha^m_{\R^d}$ be the $m$-dimensional Hausdorff measure of $\R^d$.
Then, for any Borel subset $A$ of $M$, we have
$$
\Ha^m_M(A)=\Ha^m_{\R^d}(\iota(A)).
$$
\end{Thm}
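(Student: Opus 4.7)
The inequality $\Ha^m_{\R^d}(\iota(A)) \leq \Ha^m_M(A)$ is immediate from Lemma \ref{UnifConv}(a): each $\iota_i$, being an isometric immersion, satisfies $\|\iota_i(x)-\iota_i(y)\| \leq d_{g_i}(x,y)$, and the lemma passes this to $\|\iota(x)-\iota(y)\|_{\R^d} \leq d_M(x,y)$ in the limit. A $1$-Lipschitz map cannot increase $m$-dimensional Hausdorff measure, so the bound follows directly from the definition of $\Ha^m$.

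For the reverse inequality I would show, for each $\eta\in(0,1)$, that $\Ha^m_M(A) \leq (1+\eta)^m\Ha^m_{\R^d}(\iota(A))$, and then let $\eta\to 0$. It suffices to cover $M$, outside a $\Ha^m_M$-null set, by countably many compact subsets $F \subset M$ on which the reverse comparison
\begin{equation*}
d_M(x,y) \leq (1+\eta)\|\iota(x)-\iota(y)\|_{\R^d}
\end{equation*}
holds for all pairs $x,y\in F$; pulling a near-optimal Euclidean $\delta$-cover of $\iota(F)$ back through $\iota|_F$ then produces a $d_M$-cover of $F$ whose diameter sum is controlled by $(1+\eta)^m$ times the Euclidean one, which is enough.

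The construction of such $F$ is done on the smooth approximants and rests on the $L^1$-bound on $II_i$ via Lemma \ref{fund}(i). By the geodesic-flow identity \eqref{GeodesicFlow} and assumption (i),
\begin{equation*}
\int_{M_i}\!\int_{U_xM_i}\!\int_0^\epsilon |II_i|(\gamma_u(t))\,dt\,du\,dx \;=\; \epsilon\,\Vol(S^{m-1})\!\int_{M_i}\!|II_i|\,d\Vol_{g_i} \;\leq\; \epsilon\,\Vol(S^{m-1})\,S.
\end{equation*}
Switching angular to spatial integration using Theorem \ref{BishopGromov}(ii), a second application of Fubini shows that the set of pairs $(x,y)\in M_i\times M_i$ with $d_{g_i}(x,y)\leq\epsilon$ and $\int_0^{d_{g_i}(x,y)}|II_i|\circ\gamma_{x,y}\,dt > \sqrt{2\eta}$ has $\Vol_{g_i}^{\otimes 2}$-measure $O_\eta(\epsilon^{m+1})$. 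Markov's inequality on the $y$-slice then produces a closed $G_i\subset M_i$ with $\Vol_{g_i}(M_i\setminus G_i) = O_\eta(\epsilon)$ such that for any two points $x,y\in G_i$ within distance $\epsilon/2$ the connecting minimising geodesic carries $II_i$-mass at most $\sqrt{2\eta}$; Lemma \ref{fund}(i) upgrades this to $d_{g_i}(x,y)\leq(1+\eta)\|\iota_i(x)-\iota_i(y)\|$ on $G_i$.

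The main obstacle will be transferring this near-isometry down to the possibly singular limit $M$: since we only have $\M_2(m,K)$ and not $\M_1$, curvature can concentrate and no $C^{1,\alpha}$-compactness is available, so the sizes of the $G_i$ cannot be estimated pointwise but only on average. My plan is to pull each $G_i$ back through $\psi_i$, take a Kuratowski limit to obtain a closed $F^\eta_\epsilon\subset M$, and combine Lemma \ref{UnifConv}(b) with assumption (ii) to conclude $d_M(x,y)\leq(1+2\eta)\|\iota(x)-\iota(y)\|$ on $F^\eta_\epsilon$ for all pairs with $d_M(x,y)\leq\epsilon/4$. A diagonal extraction in $\epsilon\to 0$ together with a Vitali-type covering based on the $\M_2$ volume-comparison bounds of Section~2.4 then produces the desired cover of $M$ modulo a $\Ha^m_M$-null set, yielding the reverse inequality and hence the theorem.
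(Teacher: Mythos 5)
The easy direction $\Ha^m_{\R^d}(\iota(A))\leq\Ha^m_M(A)$ is fine and matches the paper. The reverse direction, however, has a genuine gap at the construction of $G_i$. You compute (correctly) via (\ref{GeodesicFlow}) and Fubini that the set of ``bad'' pairs $(x,y)\in M_i\times M_i$ with $d_{g_i}(x,y)\leq\epsilon$ and $\int_0^{d_{g_i}(x,y)}|II_i|\circ\gamma_{x,y}>\sqrt{2\eta}$ has product measure $O_\eta(\epsilon^{m+1})$, and then claim that ``Markov's inequality on the $y$-slice produces a closed $G_i$ with $\Vol_{g_i}(M_i\setminus G_i)=O_\eta(\epsilon)$ such that any two points $x,y\in G_i$ within distance $\epsilon/2$ are connected by a geodesic carrying $II_i$-mass at most $\sqrt{2\eta}$.'' This inference is false: knowing the set of bad pairs is small (and even that, after a slicing step, each $x\in G_i$ has only a small fraction of bad partners) does not let you delete a small set from $M_i$ and kill \emph{all} bad pairs. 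The bad pairs can be ``spread out'' so that every point has some bad partner (think of a near-diagonal correlation structure), in which case no set of full measure minus $O(\epsilon)$ is pairwise good. Yet the pullback-of-covers step in your plan genuinely needs the pairwise statement on $F$, so this is not a cosmetic issue.

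The paper circumvents exactly this obstacle. Instead of a pairwise-good set, it defines $B^i_{\xi_2}$ to be the set of $x$ where the \emph{angular average} $\int_{U_xM_i}\int_0^{\xi_1}|II_i|\circ\gamma_u\,dt\,du\leq\xi_2$, which does follow from a single Chebyshev estimate (Claim \ref{cl4}). It then proves a one-sided ball-averaging lemma (Claim \ref{cl6}): for $x\in B^i_{\xi_2}$, if the gap $d_{g_i}(x,z)-\|\iota_i(x)-\iota_i(z)\|$ is $\geq\xi_3 d_{g_i}(x,z)$ on an \emph{entire} ball $B^{M_i}(y,r)$, then $r$ is forced to be a small fraction of $d_{g_i}(x,y)+r$. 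Contrapositively, for any $y$ close to $x$ in Euclidean distance, one finds a nearby $z$ that is a good partner for $x$, and the triangle inequality then yields the wanted bound on $d_{g_i}(x,y)$. This yields a pullback of Euclidean balls into slightly larger $g_i$-balls on $A_i\cap B^i_{\xi_2}$, with the complement $M_i\setminus B^i_{\xi_2}$ contributing a vanishing volume error. Finally, the paper is careful about two further points you elide: first, to work with covers by balls it needs the spherical Hausdorff measure to coincide with $\Ha^m_M$, which it obtains by proving $M$ is $(\Ha^m_M,m)$-rectifiable via Cheeger--Colding; second, the passage $\Vol_{g_i}(A_i)\to\Ha^m_M(A)$ uses the Cheeger--Colding volume convergence theorem, not merely Gromov--Hausdorff convergence, which your ``Kuratowski limit of $\psi_i^{-1}(G_i)$'' sketch does not replace. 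Your plan could be salvaged by adopting the paper's averaged formulation at the $G_i$ stage, but as written it rests on a false covering claim.
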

\begin{proof}
We have $\Ha^m_M(A)\geq \Ha^m_{\R^d}(\iota(A))$ for any Borel subset $A$ of $M$.
Let us show the opposite direction.
We can assume that there exists $v_0>0$ such that $\Vol_{g_i}(M_i) \geq v_0$ holds for any $i\in \Z_{>0}$, otherwise we have $\Ha^m_M(A)=\Ha^m_{\R^d}(\iota(A))=0$  for any Borel subset $A$ of $M$.
The compactness of $M$ and $\lim_{i\to\infty}\diam_{g_i}(M_i)=\diam(M)$ imply that there exists a constant $D>0$ such that $\diam_{g_i}(M_i)\leq D$ hold for any $i\in\Z_{>0}$.

In our proof, we use the spherical Hausdorff measure instead the usual Hausdorff measure.
For any Borel subset $A\subset M$ and $\delta\in(0,\infty)$, we define
\begin{align*}
\Ha_{\R^d,\delta}^m(\iota(A))
:=\inf\left\{\sum_{j=1}^\infty \omega_m r^j: x_j\in M,\, r_j\in (0,\delta]\text{ and } \iota(A)\subset \bigcup_{j=1}^\infty B^{\R^d}(\iota(x_j),r_j)\right\},
\end{align*}
where $\omega_m:=\Vol(B^{\R^m}(0,1))$.
Note that we can take the centers of balls on $\iota(M)$.
Its limit as $\delta\to 0$ is called the spherical Hausdorff measure.
Using closed balls instead of open balls, we get the same measure.
Our definition is slightly different from that of the spherical measure of \cite[p.171]{Federer}, and our measure is a priori greater than or equal to the spherical measure.
Since we have $\Vol_{g_i}(B_r^{M_i}(\psi_i(x)))\to \Ha^m(B_r^M(x))$ as $i\to \infty$ for any $x\in M$ and $r\in(0,\infty)$ by \cite[Theorem 5.9]{CC1}, we have that $M$ is $(\Ha^m_M,m)$-rectifiable in the sense of \cite[p.251]{Federer} by \cite[Theorem 5.5]{CC3}.
Then, $\iota(M)\subset \R^d$ is also $(\Ha^m_{\R^d},m)$-rectifiable by the Lipschitz continuity of $\iota$. 
Thus, by \cite[Theorem 3.2.19]{Federer}, the $m$-dimensional density of $(\iota(M),\Ha^m_{\R^d})$ is equal to $1$, and so the Hausdorff measure and the spherical Hausdorff measure coincide, which means $\lim_{\delta\to 0}\Ha_{\R^d,\delta}^m(\iota(A))=\Ha^n_{\R^d}(\iota(A))$ for any Borel subset $A\subset M$.
The proof of the last step uses a standard argument based on the Vitali covering theorem. See the proof of \cite[Theorem 3.2 (1)]{Simon} and \cite[Theorem 3.5]{Simon}.
A similar statement holds for $(M,\Ha^m_M)$ (see \cite[Remark 10.17]{Ch}), but we do not use this fact in our proof.

Since $M$ is compact, the map $\iota\colon M\to \iota(M)\subset \R^d$ is a homeomorphism.
Combining this with Lemma \ref{UnifConv} (b), we immediately get the following claim .
\begin{Clm}\label{cl1}
For any $\epsilon>0$, there exists $N\in\Z_{>0}$ and $\delta>0$ such that for any $n\in\Z_{>0}$ with $i\geq N$ and for any $x,y\in M_i$, if $\|\iota_i(x)-\iota_i(y)\|\leq \delta$, then $d_{g_i}(x,y)\leq \epsilon$.
\end{Clm}

Take arbitrary $x\in M$ and $r>0$, and put
$$A:=\overline B^M(x,r)\subset M,\quad  A_i:=\overline B^{M_i}\left(\psi_i(x),r\right)\subset M_i.$$
We immediately get the following.
\begin{Clm}\label{cl2}
For any $y\in A_i$, there exists $z\in A$ such that $d(\psi_i(z),y)\leq 4\epsilon_i$.
\end{Clm}
\begin{Clm}\label{cl3}
For any $\epsilon>0$ and $\delta>0$, there exist $N_0\in \Z_{>0}$, $x_1,\ldots,x_{N_0}\in M$ and $r_1,\ldots,r_{N_0}\in (0,\delta]$ such that the following.
\begin{itemize}
\item[(i)] $\iota(A)\subset \bigcup_{j=1}^{N_0} B^{\R^d}(\iota(x_j),r_j)$,
\item[(ii)] $\sum_{j=1}^{N_0}\omega_m r_j^m\leq H_{\R^d,\delta}^m(\iota(A))+\epsilon$.
\item[(iii)] For any $\epsilon_1>0$, there exists $N_1\in \Z_{>0}$ such that for any $i\in\Z_{>0}$ with $i\geq N_1$, we have
$\iota_i(A_i)\subset \bigcup_{j=1}^{N_0} B^{\R^d}(\iota_i(\psi_i(x_j)),r_j+\epsilon_1).$
\end{itemize}
\end{Clm}
\begin{proof}[Proof of Claim \ref{cl3}]
We get (i) and (ii) by the definition of $H_{\R^d,\delta}^m$ and the compactness of $\iota(A)$.
We get (iii) by Lemma \ref{UnifConv} (b) and Claim \ref{cl2}.
\end{proof}
For each $i\in \Z_{>0}$, $\xi_1,\xi_2>0$, $x\in M_i$ and $u\in U_x M_i$, we define
\begin{align*}
S_{u,\xi_1}^i:=&\int_0^{\xi_1}|II_i|\circ \gamma_u(t)\,d t,\\
B_{\xi_2}^i:=&\left\{y\in M_i:\int_{U_y M_i}S_{u,\xi_1}^i\,d u\leq \xi_2\right\}.
\end{align*}
\begin{Clm}\label{cl4}
For any $i\in\Z_{>0}$ and  $\xi_1,\xi_2>0$, we get $\Vol\left(M_i\setminus B_{\xi_2}^i\right)\leq S\Vol(S^{m-1})\xi_1/\xi_2$.
\end{Clm}
\begin{proof}[Proof of Claim \ref{cl4}]
Since we have
$$
\int_{M_i}\int_{U_x M_i}S_{u,\xi_1}^i\, d u\, d x=\Vol(S^{m-1})\xi_1\int_{M_i} |II_i|\,d \Vol_{g_i}\leq S\Vol(S^{m-1})\xi_1
$$
by (\ref{GeodesicFlow}), we get the claim by the Chebyshev inequality.
\end{proof}
\begin{Clm}\label{cl5}
There exists a constant $C=C(m,K,D)>0$ such that
for any $i\in\Z_{>0}$, $\xi_1,\xi_2>0$, $x\in B_{\xi_2}^i$ and $r\in (0,\xi_1]$, we have
$$
\int_{B^{M_i}(x,r)}\frac{d(x,y)-\|\iota_i(x)-\iota_i(y)\|}{d(x,y)}\,d y\leq Cr^m \xi_2.
$$
\end{Clm}
\begin{proof}
For any $t\in(0,\xi_1]$ and $u\in U_x M_i$, we have
$$
\int_0^t |II_i|\circ \gamma_u(s)\,d s\leq S_{u,\xi_1}^i,
$$
and so we get
$$
\frac{t-\|\iota_i(\gamma_u(t))-\iota_i(x)\|}{t}\leq \frac{S_{u,\xi_1}^i}{\sqrt 2}
$$
by Lemma \ref{fund} (ii).
Thus, by Theorem \ref{BishopGromov} (ii), we get
\begin{align*}
\int_{B^{M_i}(x,r)}\frac{d(x,y)-\|\iota_i(x)-\iota_i(y)\|}{d(x,y)}\,d y
\leq &C\int_{U_x M_i}\int_0^r S_{u,\xi_1}^i t^{m-1}\,d t\, d u\\
= &\frac{C}{m}r^m\int_{U_x M_i}S_{u,\xi_1}^i\, d u
\leq C r^m \xi_2.
\end{align*}
This implies the claim.
\end{proof}
\begin{Clm}\label{cl6}
There exists a constant $C_1=C_1(m,K,D,v_0)>0$ such that the following holds.
Take arbitrary $i\in\Z_{>0}$, $\xi_1,\xi_2,\xi_3>0$, $r>0$ and $x,y\in M_i$, and suppose that
\begin{itemize}
\item[(a)] $x\in B_{\xi_2}^i$,
\item[(b)] $d_{g_i}(x,y)+r\leq \xi_1$,
\item[(c)] $d_{g_i}(x,z)-\|\iota_i(x)-\iota_i(z)\|\geq \xi_3 d_{g_i}(x,z)$ for any $z\in B^{M_i}(y,r)$,
\end{itemize}
then we have
$$
r\leq C_1\left(\frac{\xi_2}{\xi_3}\right)^{1/m}(d_{g_i}(x,y)+r).
$$
\end{Clm}
\begin{proof}[Proof of Claim \ref{cl6}]
We have $B^{M_i}(y,r)\subset B^{M_i}(x,d_{g_i}(x,y)+r)$.
Thus, by the assumptions and Claim \ref{cl5}, we get
\begin{align*}
\xi_3\Vol(B^{M_i}(y,r))\leq &\int_{B^{M_i}(y,r)}\frac{d_{g_i}(x,z)-\|\iota_i(x)-\iota_i(z)\|}{d_{g_i}(x,z)}\,d z\\
\leq& \int_{B^{M_i}(x,d_{g_i}(x,y)+r)}\frac{d_{g_i}(x,z)-\|\iota_i(x)-\iota_i(z)\|}{d_{g_i}(x,z)}\,d z\\
\leq &C (d_{g_i}(x,y)+r)^m \xi_2.
\end{align*}
Combining this and the Bishop-Gromov inequality (Theorem \ref{BishopGromov} (ii)), we get
$$
r^m\leq C (d_{g_i}(x,y)+r)^m \frac{\xi_2}{\xi_3}.
$$
This implies the claim.
\end{proof}
Take arbitrary $\epsilon,\xi_1,\xi_2,\xi_3>0$ and suppose that
\begin{equation}\label{asuxi}
\xi_1\leq \frac{\pi}{4 \sqrt{K}},\quad \xi_1<\xi_2,\quad C_1 \left(\frac{\xi_2}{\xi_3}\right)^{1/m}<\frac{1}{12},\quad \xi_3<\frac{1}{2}.
\end{equation}
Then there exist $N_1\in\Z_{>0}$ and $\delta_1\in (0,\xi_1/2)$ such that for any $i\in \Z_{>0}$ with $i\geq N_1$ and $x,y\in M_i$, if $\|\iota_i(x)-\iota_i(y)\|\leq \delta_1$, then $d_{g_i}(x,y)\leq \xi_1/2$
by Claim \ref{cl1}.
By Claim \ref{cl3} (i) and (ii), there exists $N_0\in\Z_{>0}$, $x_1,\ldots,x_{N_0}\in M$ and $r_1,\ldots,r_{N_0}\in (0,\delta_1/2]$ such that
\begin{align*}
\iota(A)\subset& \bigcup_{j=1}^{N_0} B^{\R^d}(\iota(x_j),r_j),\\
\sum_{j=1}^{N_0}\omega_m r_j^m\leq &H_{\R^d,\delta_1/2}^m(\iota(A))+\epsilon.
\end{align*}
Take $\epsilon_1\in (0,\delta_1/2)$ so that
$$
\sum_{j=1}^{N_0}\omega_m (r_j+\epsilon_1)^m\leq \sum_{j=1}^{N_0}\omega_m r_j^m+\epsilon.
$$
Then, there exists $N_2\in \Z_{>0}$ with $N_2\geq N_1$ such that for any $i\in\Z_{>0}$ with $i\geq N_2$, we have
\begin{equation}\label{iotaiAi}
\iota_i(A_i)\subset  \bigcup_{j=1}^{N_0} B^{\R^d}(\iota_i(\psi_i(x_j)),r_j+\epsilon_1)
\end{equation}
by Claim \ref{cl3} (iii).

For each $j\in\{1,\ldots, N_0\}$, put
$$
\widetilde r_j:= (1+2\xi_3)\left(1+12C_1\left(\frac{\xi_2}{\xi_3}\right)^{1/m}\right)(r_j+\epsilon_1).
$$
Take arbitrary $i\in \Z_{>0}$ with $i\geq N_2$, $j\in\{1,\ldots, N_0\}$,
and let us show
\begin{equation}\label{incliota}
\iota_i^{-1}\left(B^{\R^d}(\iota_i(\psi_i(x_j)),r_j+\epsilon_1)\right)\cap B_{\xi_2}^i\subset B^{M_i}(\psi_i(x_j),\widetilde r_j)\subset M_i.
\end{equation}
To do this, take arbitrary
$$x\in\iota_i^{-1}\left(B^{\R^d}(\iota_i(\psi_i(x_j)),r_j+\epsilon_1)\right)\cap B_{\xi_2}^i.$$
Then, we have
$
\|\iota_i(x)-\iota_i(\psi_i(x_j))\|< r_j+\epsilon_1\leq \delta_1,
$
and so 
$
d_{g_i}(x,\psi_i(x_j))\leq \xi_1/2.
$
Putting $r_{i,j}(x):=2 C_1(\xi_2/\xi_3)^{1/m} d_i(x,\psi_i(x_j))$, we get
$d_{g_i}(x,\psi_i(x_j))+r_{i, j}(x)\leq 2d_i(x,\psi_i(x_j))\leq \xi_1$ and
$$
C_1\left(\frac{\xi_2}{\xi_3}\right)^{1/m}(d_{g_i}(x,\psi_i(x_j))+r_{i,j}(x))=\frac{r_{i,j}(x)}{2}+C_1\left(\frac{\xi_2}{\xi_3}\right)^{1/m}r_{i,j}(x)<r_{i,j}(x). 
$$
Thus, by Claim \ref{cl6}, there exists $z\in B^{M_i}(\psi_i(x_j),r_{i,j}(x))$ such that
$$
d(x,z)-\|\iota_i(x)-\iota_i(z)\|\leq \xi_3d_{g_i}(x,z).
$$
Then, we have
\begin{align*}
d_{g_i}(x,\psi_i(x_j))\leq &d_{g_i}(x,z)+d_{g_i}(z,\psi_i(x_j))\leq \frac{1}{1-\xi_3}\|\iota_i(x)-\iota_i(z)\|+r_{i,j}(x)\\
\leq& (1+2\xi_3)\|\iota_i(x)-\iota_i(\psi_i(x_j))\|+6C_1\left(\frac{\xi_2}{\xi_3}\right)^{1/m}d_{g_i}(x,\psi_i(x_j)),
\end{align*}
and so
$$
d_{g_i}(x,\psi_i(x_j))\leq (1+2\xi_3)\left(1+12C_1\left(\frac{\xi_2}{\xi_3}\right)^{1/m}\right)\|\iota_i(x)-\iota_i(\psi_i(x_j))\|<\widetilde r_j.
$$
This shows (\ref{incliota}).

For any $i\in\Z_{>0}$ with $i\geq N_2$, we get, we get
\begin{equation*}
\begin{split}
A_i\cap B_{\xi_2}^i
\subset &\bigcup_{j=1}^{N_0}\iota_i^{-1}\left(B^{\R^d}(\iota_i(\psi_i(x_j)),r_j+\epsilon_1)\right)\cap B_{\xi_2}^i\\
\subset&\bigcup_{j=1}^{N_0} B^{M_i}(\psi_i(x_j),\widetilde r_j)
\end{split}
\end{equation*}
by (\ref{iotaiAi}) and (\ref{incliota}).
We have
$$
\widetilde r_j \leq  (1+2\xi_3)\left(1+12C_1\left(\frac{\xi_2}{\xi_3}\right)^{1/m}\right)\delta_1\leq 2\xi_1.
$$
Thus, by (\ref{asuxi}), Theorem \ref{BishopGromov} (ii),  Claim \ref{cl3} and \ref{cl4}, we get
\begin{align*}
&\Vol_{g_i}(A_i)\\
\leq &\sum_{j=1}^{N_0}\Vol_{g_i}\left(B^{M_i}(\psi_i(x_j),\widetilde r_j)\right) +\Vol_{g_i}(M_i\setminus B_{\xi_2}^i)\\
\leq & (1+C\xi_1^2)(1+2\xi_3)^m \left(1+12C_1\left(\frac{\xi_2}{\xi_3}\right)^{1/m}\right)^m\left(\sum_{j=1}^{N_0}\omega_m r_j^m+\epsilon\right)\\
&\qquad \qquad \qquad \qquad \qquad \qquad \qquad \qquad \qquad \qquad +S\Vol(S^{m-1})\frac{\xi_1}{\xi_2}\\
\leq & \left(1+C\left(\xi_1^2+\xi_3+\left(\frac{\xi_2}{\xi_3}\right)^{1/m}\right)\right)\left(H_{\R^d}^m(\iota(A))+2\epsilon\right)+C\frac{\xi_1}{\xi_2}
\end{align*}
for each $i\in \Z_{>0}$ with $i\geq N_2$.
By the Volume convergence theorem \cite[Theorem 5.9]{CC1}, letting $i\to \infty$, we get
\begin{equation}\label{epeta}
H^m_{M}(A)\leq \left(1+C\left(\xi_1^2+\xi_3+\left(\frac{\xi_2}{\xi_3}\right)^{1/m}\right)\right)\left(H_{\R^d}^m(\iota(A))+2\epsilon\right)+C\frac{\xi_1}{\xi_2}.
\end{equation}

There exists $\xi_0=\xi_0(m,K,D,v_0)>0$ such that for any $\xi_1\in(0,\xi_0)$, putting $\xi_2=\xi_1^{1/2}$ and $\xi_3=\xi_1^{1/4}$, (\ref{asuxi}) holds.
Thus, for any $\epsilon>0$ and $\xi_1\in(0,\xi_0)$, (\ref{epeta}) implies
$$
H^m_{M}(A)\leq \left(1+C\xi_1^{1/4m}\right)\left(H_{\R^d}^m(\iota(A))+2\epsilon\right)+C\xi_1^{1/2}.
$$
Thus, we get $H^m_{M}(A)\leq H_{\R^d}^m(\iota(A))$, and so $H^m_{M}(A)= H_{\R^d}^m(\iota(A))$.

We have shown the theorem for any Borel subset $A$ of the form $A=\overline B^M(x,r)$.
Combining this, the Vitali covering theorem and the doubling property (Bishop-Gromov theorem), we get the theorem for any Borel subset of $M$.
\end{proof}

\section{Sensitivity of the Laplacian Approximation to Singularities}\label{sensitivity}
In this section, we construct an example under Assumption \ref{Asu} in which the approximation (\ref{LapApprox}) does not hold for any $p\in[1,\infty]$ when $\eta|_{[0,1]}\equiv 1$ and $\rho$ is constant. 
Define $M_1:=\partial ([0,1]^2)\subset \R^2$ and parametrize it by
$$
\phi\colon S^1\to M_1,\,(\cos \theta,\sin \theta)\mapsto
\begin{cases}
\left(\frac{2}{\pi}\theta,0\right)&\quad 0\leq\theta\leq \frac{\pi}{2},\\
\left(1,\frac{2}{\pi}\theta-1\right)&\quad \frac{\pi}{2}\leq\theta\leq\pi ,\\
\left(3-\frac{2}{\pi}\theta,1\right)&\quad \pi\leq\theta\leq \frac{3}{2}\pi,\\
\left(0,4-\frac{2}{\pi}\theta\right)&\quad \frac{3}{2}\pi\leq\theta\leq 2\pi.
\end{cases}
$$
Note that $\phi\colon (S^1,d_{S^1})\to \left(M_1,\frac{\pi}{2}d_{M_1}\right)$ is an isometry, where $d_{S^1}$ and $d_{M_1}$ denotes the intrinsic distances of $S^1$ and $M_1$, respectively.
Take integers $m,d\in\Z_{>0}$ with $m<d$ and arbitrary $(m-1)$-dimensional closed submanifold $M_2\subset \R^{d-2}$.
Define
$$
M:=M_1\times M_2\subset \R^2\times \R^{d-2}=\R^d.
$$
It is not difficult to show that $M\subset \R^d$ satisfies Assumption \ref{Asu} for some constants.
For each $\epsilon>0$ and $h\in L^2(M)$, we define
$$
L_\epsilon h(z_0):= \frac{1}{\epsilon^{m+2}}\int_{B^{\R^d}(z_0,\epsilon)\cap M}(h(z_0)-h(z))\,d z \quad (z_0\in M).
$$
Since we consider the kernel function $\eta|_{[0,1]}\equiv 1$, we define
$$
\sigma:=\frac{\Vol(S^{m-1})}{m(m+2)}=\frac{\Vol(B^{\R^m}(0,1))}{m+2}.
$$
For each $\alpha\in\R$, the function
$$
f_\alpha \colon S^1\to \R,\, (\cos \theta,\sin\theta)\mapsto\sin(\theta-\alpha)
$$
is an eigenfunction of the Laplacian on $S^1$.
Thus, the function
$$
F_\alpha\colon M_1\times M_2\to \R,\,(x,y)\mapsto f_\alpha(\phi^{-1}(x))
$$
is an eigenfunction of the Laplacian (without weight) on $M=M_1\times M_2$.
Note that $M$ with intrinsic distance function is isometric to a smooth Riemannian manifold and $F_\alpha\in C^\infty(M)$.
We get the following proposition, which shows that the approximation (\ref{LapApprox}) does not hold for $F_\alpha$.
\begin{Prop}
For any $p\in(1,\infty)$, we have
$$
\lim_{\epsilon\to 0}\int_M \left|L_\epsilon F_\alpha-\frac{1}{2}\sigma\Delta F_\alpha\right|^p\,d \Vol_M=\infty,
$$
and
\begin{align*}
&\lim_{\epsilon\to 0}\int_M \left|L_\epsilon F_\alpha-\frac{1}{2}\sigma\Delta F_\alpha\right|\,d \Vol_M\\
=&2\pi(|\sin \alpha|+|\cos \alpha|)\Vol(M_2) \Vol(B^{\R^{m-1}}(0,1))\int_0^1 |h_m(t)|\,d t\neq 0,
\end{align*}
where $h_m\colon[0,1]\to \R$ is a non-constant function defined by
$$
h_m(t)=\int_t^1 s(1-s^2)^{(m-1)/2}\,d s -\int_0^{(1-t^2)^{1/2}}(s+t)(1-s^2-t^2)^{(m-1)/2}\,d s.
$$
\end{Prop}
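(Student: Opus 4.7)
The strategy is to split $M$ into a \emph{regular region} and the $\epsilon$-neighborhood of the four corner submanifolds $\{c_j\}\times M_2$, $j=1,\ldots,4$, where $c_j\in M_1$ are the corners. Away from the corners, $B^{\R^d}(z_0,\epsilon)\cap M$ is contained in a single flat edge times a smooth chart on $M_2$; combining the standard second-order Taylor expansion of $F_\alpha$ with $\Ha^{m-1}(B^{\R^{d-2}}(y_0,r)\cap M_2)=\omega_{m-1}r^{m-1}(1+O(r^2))$ gives the uniform pointwise bound $L_\epsilon F_\alpha=\tfrac12\sigma\Delta F_\alpha+O(\epsilon)$. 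Hence the regular region contributes only $O(\epsilon^p)$ to the $L^p$ integral for any $p\geq 1$.

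The main computation is near a corner, say $c_1=(0,0)$, at $z_0=((t,0),y_0)$ with $t\in(0,\epsilon)$. Here $B^{\R^d}(z_0,\epsilon)\cap M$ decomposes into a first-edge piece $\{(s,0):s\in[0,t+\epsilon]\}$ and a fourth-edge piece $\{(0,s):s\in[0,\sqrt{\epsilon^2-t^2}]\}$, each multiplied by an $M_2$-slice. Using $F_\alpha((s,0),y)=\sin(\pi s/2-\alpha)$ and $F_\alpha((0,s),y)=-\sin(\pi s/2+\alpha)$, the difference $F_\alpha(z_0)-F_\alpha(\cdot)$ expands on the first edge to $-\tfrac{\pi}{2}\cos\alpha\,(s-t)+O(\epsilon^2)$ and on the fourth edge to $\tfrac{\pi}{2}\cos\alpha\,(s+t)+O(\epsilon^2)$. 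The linear term on the first edge would integrate to zero over the symmetric range $[t-\epsilon,t+\epsilon]$ (producing the regular $\tfrac12\sigma\Delta F_\alpha(z_0)$ from its quadratic piece alone), but the truncation at $s=0$ leaves an uncancelled "missing" piece over $s\in[t-\epsilon,0]$, while the fourth-edge integral supplies a second $\Theta(\epsilon^{m+1})$ contribution. Rescaling $\tau=t/\epsilon$, $u=(s-t)/\epsilon$ and $v=s/\epsilon$ yields
\begin{equation*}
L_\epsilon F_\alpha(z_0)-\tfrac12\sigma\Delta F_\alpha(z_0)=-\frac{\omega_{m-1}\pi\cos\alpha}{2\epsilon}\,h_m(\tau)+O(1),
\end{equation*}
with $h_m$ exactly the function in the statement. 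Approaching $c_1$ from the fourth edge ($z_0=((0,t),y_0)$) gives the same formula with opposite sign; at the remaining corners $\cos\alpha$ is replaced by $\cos(\theta_j-\alpha)\in\{\sin\alpha,-\cos\alpha,-\sin\alpha\}$ for $\theta_j\in\{\pi/2,\pi,3\pi/2\}$, whose absolute values are $|\sin\alpha|$, $|\cos\alpha|$, $|\sin\alpha|$ respectively.

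Integrating the absolute value of the displayed formula over $t\in(0,\epsilon)$ times $\Vol(M_2)$ and substituting $\tau=t/\epsilon$, each of the eight edge-sides contributes $\omega_{m-1}(\pi/2)|\cos(\theta_j-\alpha)|\,\Vol(M_2)\int_0^1|h_m(\tau)|\,d\tau$; summing gives the factor $4(|\cos\alpha|+|\sin\alpha|)$ and hence the stated $L^1$ limit (the $O(1)$ error integrates to $O(\epsilon)\to 0$). For $p>1$, the same leading-order bound, combined with $h_m\not\equiv 0$, gives $\int_M|L_\epsilon F_\alpha-\tfrac12\sigma\Delta F_\alpha|^p\,d\Vol_M\gtrsim\epsilon^{1-p}\int_0^1|h_m|^p\,d\tau\to\infty$. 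One verifies $h_m\not\equiv 0$ by explicit simplification: the substitutions $u=1-s^2$ in the first integral and $s=\sqrt{1-t^2}\sin\phi$ in the second show that the two "radial" pieces cancel, leaving $h_m(t)=-W_m\,t(1-t^2)^{m/2}$ with $W_m=\int_0^{\pi/2}\cos^m\phi\,d\phi>0$ (this also gives $\int_0^1|h_m|=W_m/(m+2)$).

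The main obstacle is the careful uniform bookkeeping of error terms through the Taylor expansions and the $M_2$ volume slicing, together with the sign and absolute-value determination of $\cos(\theta_j-\alpha)$ at each of the four corners. None of the individual steps is difficult, but the multi-piece integral requires tracking several orders of $\epsilon$ with uniform constants to isolate the leading $\epsilon^{-1}$ behavior from the $O(1)$ remainder.
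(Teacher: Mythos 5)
The paper states this Proposition without proof, so there is nothing to compare against; your computation is correct and is the natural one. In particular, the closed form $h_m(t)=-W_m\,t(1-t^2)^{m/2}$ with $W_m=\int_0^{\pi/2}\cos^m\phi\,d\phi$ checks out (the two ``radial'' pieces of $h_m$ each equal $(1-t^2)^{(m+1)/2}/(m+1)$ and cancel, and $\int_0^1|h_m|=W_m/(m+2)$); the identity $\Vol(B^{\R^{m-1}}(0,1))\int_0^1 v^2(1-v^2)^{(m-1)/2}\,dv=\sigma/2$ makes the interior quadratic term match $\tfrac12\sigma\Delta F_\alpha$ to $O(\epsilon)$, so the regular region indeed contributes negligibly; the decomposition of the Euclidean-ball integral near a corner into the ``phantom'' continuation over $[t-\epsilon,0]$ plus the fourth-edge piece correctly isolates the leading $-\tfrac{\pi}{2\epsilon}\Vol(B^{\R^{m-1}}(0,1))\cos\alpha\,h_m(t/\epsilon)$; and summing $|\cos(\theta_j-\alpha)|$ over the eight corner edge-sides gives $4(|\cos\alpha|+|\sin\alpha|)$, which with the prefactor $(\pi/2)\Vol(B^{\R^{m-1}}(0,1))\Vol(M_2)\int_0^1|h_m|$ reproduces the stated $L^1$ limit, while the $\epsilon^{1-p}$ lower bound for $p>1$ follows since $|\cos\alpha|+|\sin\alpha|\geq 1$ guarantees at least one corner has nonzero leading coefficient.
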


\section{Submanifold with Dense Singularities}\label{SwDS}
In this section, we construct an example with dense singularities satisfying Assumption \ref{Asu}.

\begin{Def}
Take a sequence $\theta\colon \Z_{\geq 2}\to \R_{>0}$ such that $\sum_{n=2}^\infty \theta(n)<\infty$.
\begin{itemize}
\item[(i)] For each $n\in \Z_{\geq 0}$, we define
$$
\D_n:=\left\{\frac{k}{2^{n}}:k=0,\ldots,2^n\right\}
$$
and
\begin{align*}
\D:=\bigcup_{n=0}^\infty \D_n=\left\{\frac{k}{2^{n}}:n\in \Z_{\geq 0}, \,k=0,\ldots,2^n\right\}.
\end{align*}
Note that we have $\D_0\subset \D_1\subset \D_2\subset \cdots$, and $\D\subset [0,1]$ is dense.
\item[(ii)] We define $\alpha\colon \D\to\R_{\geq 0}$ inductively as follows.
We first define
$$
\alpha(0)=\alpha(1)=0,\quad \alpha\left(\frac{1}{2}\right)=1.
$$
Then, $\alpha$ has been defined in $\D_1$.1
Now, taking $n\in \Z_{\geq 2}$ and assuming that $\alpha$ has been defined in $\D_{n-1}$, let us define $\alpha$ in
$$
\D_{n}\setminus \D_{n-1}=\left\{\frac{4k-3}{2^n}, \frac{4k-1}{2^n}: k=1,\ldots, 2^{n-2}\right\}
$$
as follows:
\begin{align*}
\alpha\left(\frac{4k-3}{2^n}\right):=&\frac{\theta(n)}{4}\alpha\left(\frac{4k}{2^n}\right)+\frac{1-\theta(n)}{2}\alpha\left(\frac{4k-2}{2^n}\right)
+\frac{2+\theta(n)}{4}\alpha\left(\frac{4k-4}{2^n}\right),\\
\alpha\left(\frac{4k-1}{2^n}\right):=&\frac{\theta(n)}{4}\alpha\left(\frac{4k-4}{2^n}\right)+\frac{1-\theta(n)}{2}\alpha\left(\frac{4k-2}{2^n}\right)
+\frac{2+\theta(n)}{4}\alpha\left(\frac{4k}{2^n}\right).
\end{align*}
See (iii) below for the reason for such a definition.
\item[(iii)] For each $n\in\Z_{\geq}$, we define $f_n\colon [0,1]\to \R_{\geq 0}$ so that
$$
f_n\left(\frac{k+t}{2^n}\right)=(1-t)\alpha\left(\frac{k}{2^n}\right)+t\alpha\left(\frac{k+1}{2^n}\right)
$$
for any $k=0,1,\ldots,2^n-1$ and $t\in[0,1]$.
Note that we have $f_n(0)=f_n(1)=0$, and we have defined $\alpha$ so that
\begin{align*}
\alpha\left(\frac{4k-3}{2^n}\right)=&(1-\theta(n))f_{n-1}\left(\frac{4k-3}{2^n}\right)+\theta(n)f_{n-2}\left(\frac{4k-3}{2^n}\right),\\
\alpha\left(\frac{4k-1}{2^n}\right)=&(1-\theta(n))f_{n-1}\left(\frac{4k-1}{2^n}\right)+\theta(n)f_{n-2}\left(\frac{4k-1}{2^n}\right).
\end{align*}
\item[(iv)] For each $n\in\Z_{\geq 0}$ and $k/2^n\in\D_n\setminus\{1\}$, we define
\begin{align*}
d_n\left(\frac{k}{2^n}\right):=&2^n\left(\alpha\left(\frac{k+1}{2^n}\right)-\alpha\left(\frac{k}{2^n}\right)\right),\\
e_n\left(\frac{k}{2^n}\right):=&d_n\left(\frac{k}{2^n}\right)-d_n\left(\frac{k-1}{2^n}\right),\\
E_n:=&\sum_{k=0}^{n-1}\left|e_n\left(\frac{k}{2^n}\right)\right|,
\end{align*}
where $d_n(-1/2^n):=d_n((2^n -1)/2^n)$.
Then, $d_n\left(k/2^n\right)$ is the slope of $f|_{\left[k/2^n,(k+1)/2^n\right]}$ and $e_n\left(k/2^n\right)$ is the change in slope of $f$ at $k/2^n$.
\end{itemize}
\end{Def}
Straightforward calculations imply the following lemma.
\begin{Lem}\label{lemSwDS}
We have the following.
\begin{itemize}
\item[(i)] For any $n\in\Z_{>0}$, we have
\begin{align*}
\sup_{x\in [0,1]}|f_{n+1}(x)-f_n(x)|=&\sup_{x\in \D_{n+1}\setminus \D_n}|f_{n+1}(x)-f_n(x)|\\
=&\theta(n+1)\sup_{x\in \D_{n+1}\setminus \D_n}|f_{n}(x)-f_{n-1}(x)|\\
=&\frac{\theta(n+1)}{2}\sup_{x\in [0,1]}|f_{n}(x)-f_{n-1}(x)|.
\end{align*}
\item[(ii)] For any $n\in\Z_{\geq 2}$ and $k=0,1,\ldots, 2^{n-2}-1$, we have
\begin{align*}
d_n\left(\frac{4k}{2^n}\right)=&\theta(n)d_{n-2}\left(\frac{4k}{2^n}\right)+(1-\theta(n))d_{n-1}\left(\frac{4k}{2^n}\right)\\
=&\frac{\theta(n)}{2}d_{n-1}\left(\frac{4k+2}{2^n}\right)+\left(1-\frac{\theta(n)}{2}\right)d_{n-1}\left(\frac{4k}{2^n}\right),\\
d_n\left(\frac{4k+1}{2^n}\right)=&-\frac{\theta(n)}{2}d_{n-1}\left(\frac{4k+2}{2^n}\right)+\left(1+\frac{\theta(n)}{2}\right)d_{n-1}\left(\frac{4k}{2^n}\right),\\
d_n\left(\frac{4k+2}{2^n}\right)=&\left(1+\frac{\theta(n)}{2}\right)d_{n-1}\left(\frac{4k+2}{2^n}\right)-\frac{\theta(n)}{2}d_{n-1}\left(\frac{4k}{2^n}\right),\\
d_n\left(\frac{4k+3}{2^n}\right)=&\theta(n)d_{n-2}\left(\frac{4k}{2^n}\right)+(1-\theta(n))d_{n-1}\left(\frac{4k+2}{2^n}\right)\\
=&\left(1-\frac{\theta(n)}{2}\right)d_{n-1}\left(\frac{4k+2}{2^n}\right)+\frac{\theta(n)}{2}d_{n-1}\left(\frac{4k}{2^n}\right).
\end{align*}
In particular, we have
$$
\sup_{x\in \D_n\setminus\{1\}}|d_n(x)|\leq (1+\theta(n)) \sup_{x\in \D_{n-1}\setminus\{1\}}|d_{n-1}(x)|\leq 2\exp\left(\sum_{l=2}^\infty \theta(l)\right).
$$
\item[(iii)] For any $n\in\Z_{\geq 2}$ and $k=0,1,\ldots, 2^{n-2}-1$, we have
\begin{align*}
e_n\left(\frac{4k}{2^n}\right)=&\theta(n)e_{n-2}\left(\frac{4k}{2^n}\right)+(1-\theta(n))e_{n-1}\left(\frac{4k}{2^n}\right)\\
=&\frac{\theta(n)}{2}e_{n-1}\left(\frac{4k+2}{2^n}\right)+e_{n-1}\left(\frac{4k}{2^n}\right)+\frac{\theta(n)}{2}e_{n-1}\left(\frac{4k-2}{2^n}\right),\\
e_n\left(\frac{4k+1}{2^n}\right)=&-\theta(n)e_{n-1}\left(\frac{4k+2}{2^n}\right),\\
e_n\left(\frac{4k+2}{2^n}\right)=&\left(1+\theta(n)\right)e_{n-1}\left(\frac{4k+2}{2^n}\right),\\
e_n\left(\frac{4k+3}{2^n}\right)=&-\theta(n)e_{n-1}\left(\frac{4k+2}{2^n}\right),
\end{align*}
where $e_{n-1}(-1/2^{n-1}):=e_{n-1}((2^{n-1}-1)/2^{n-1})$.
In particular, we have
$$
E_n\leq (1+4\theta(n)) E_{n-1}\leq 8\exp\left(4\sum_{l=2}^\infty \theta(l)\right).
$$
\end{itemize}
\end{Lem}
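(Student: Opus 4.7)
The plan is to verify all three assertions by direct unwinding of the definitions of $\alpha$, $f_n$, $d_n$, and $e_n$; as the author remarks, only careful bookkeeping is required. I would proceed in the order (i), (ii), (iii), each part relying on the previous.

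For (i), the first equality is immediate since $f_{n+1}-f_n$ is piecewise linear with zeros on $\D_n$, so its supremum over $[0,1]$ is attained on $\D_{n+1}\setminus\D_n$. At a new point $x\in\D_{n+1}\setminus\D_n$, part (iii) of the construction of $\alpha$ gives
\[
\alpha(x)=(1-\theta(n+1))f_n(x)+\theta(n+1)f_{n-1}(x),
\]
so $f_{n+1}(x)-f_n(x)=\theta(n+1)(f_{n-1}(x)-f_n(x))$, which yields the second equality after taking absolute values and suprema. For the third equality, observe that $f_n-f_{n-1}$ vanishes on $\D_{n-1}$ and is piecewise linear on $\D_n$-intervals, so on each $\D_{n-1}$-interval $[a,b]$ it is a tent function peaking at the midpoint $(a+b)/2\in\D_n$. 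The two points of $\D_{n+1}\setminus\D_n$ inside $[a,b]$ are quarter-points, at which the tent attains exactly half of its peak.

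For (ii), I would substitute the definition of $\alpha$ at $(4k+1)/2^n$ and $(4k+3)/2^n$ into $d_n(j/2^n)=2^n(\alpha((j+1)/2^n)-\alpha(j/2^n))$ for the four residues $j\bmod 4$, and collect terms to obtain the first form of each identity. The second form then follows from the telescoping identity
\[
d_{n-2}(k/2^{n-2})=\tfrac{1}{2}\bigl(d_{n-1}(2k/2^{n-1})+d_{n-1}((2k+1)/2^{n-1})\bigr),
\]
which holds because $\alpha((k+1)/2^{n-2})-\alpha(k/2^{n-2})$ is the sum of its two half-scale increments. The uniform bound is then immediate: each of the four recursions expresses $d_n$ as an affine combination of $d_{n-1}$-values whose coefficient absolute values sum to at most $1+\theta(n)$, so $\sup|d_n|\leq(1+\theta(n))\sup|d_{n-1}|$, and iterating from $|d_1|\equiv 2$ produces the stated bound. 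For (iii), subtracting consecutive cases of the $d_n$-recursion and reinterpreting $d_{n-1}$-differences as $e_{n-1}$-values immediately yields all four formulas. Summing $|e_n|$ over one scale-$n$ block $\{4k,4k+1,4k+2,4k+3\}/2^n$, the contributions without a $\theta(n)$ factor telescope into $|e_{n-1}(4k/2^n)|+|e_{n-1}((4k+2)/2^n)|$, while the remaining correction terms sum to at most $4\theta(n)$ times the corresponding $|e_{n-1}|$-values. Summing over $k$ yields $E_n\leq(1+4\theta(n))E_{n-1}$; combined with a direct computation of $E_2$ and the elementary inequality $1+4\theta(l)\leq e^{4\theta(l)}$, this iterates to the claimed exponential bound.

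No step presents a genuine obstacle: the only care needed is in the bookkeeping of indices across scales $n-2$, $n-1$, and $n$, and in separating the four residue contributions to $E_n$ into telescoping and correction pieces. The algebra is forced by the construction.
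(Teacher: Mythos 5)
Your proposal is correct and takes exactly the route the paper has in mind: the paper gives no proof here, remarking only that ``straightforward calculations imply the following lemma,'' and your write-up is precisely those calculations, carried out cleanly in the natural order (i)$\to$(ii)$\to$(iii). The only cosmetic quibble is that for the second inequality in (iii) you should anchor at $E_1=8$ (a direct computation gives $e_1(0)=4$, $e_1(1/2)=-4$) rather than $E_2$, so that iterating $E_n\leq(1+4\theta(n))E_{n-1}$ from $n=2$ and using $1+4\theta(l)\leq e^{4\theta(l)}$ produces exactly $8\exp\left(4\sum_{l=2}^\infty\theta(l)\right)$.
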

By Lemma \ref{lemSwDS} (i), there exists a function $f\colon [0,1]\to\R$ such that
$f_n$ converges to $f$ uniformly as $n\to \infty$.
By the construction, we have $f(x)=f_n(x)$ for any $n\in\Z_{>0}$ and $x\in \D_n$.
Extend $f$ periodically to $f\colon \R\to \R_\geq 0$, i.e.,
$f(k+t):=f(t)$ for any $k\in\Z\setminus\{0\}$ and $t\in[0,1)$.
By (ii), we have $\Lip(f)\leq  2\exp\left(\sum_{l=2}^\infty \theta(l)\right)$.
The first and fourth equations of (iii) imply that the signs of $e_n(x)$ and $e_{n-1}(x)$ are equal to each other for any $n\in \Z_{\geq 2}$ and $x\in \D_{n-1}$.
Combining this with the third and fifth equations of (iii), we get that $e_n(x)\neq 0$ for any $n\in\Z_{\geq 1}$ and $x\in \D_n$.
Thus, the first equation of (iii) implies that $e_n(x)\nrightarrow 0$ as $n\to \infty$ for any $x\in\D$.
This means that $f$ is not differentiable at any $x\in \D$.

By smoothing $f_n$, we get the following.
\begin{Lem}
For any $n\geq \Z_{>0}$, there exists a sequence of smooth periodic functions $\{h_{n,k}\colon \R\to \R_{\geq 0}\}_{k=1}^\infty$ of period $1$  such that
$\Lip(h_{n,k})\leq \Lip f_n$,
$\int_0^1 |h_{n,k}''(x)|\,d x=E_n$ for any $k\in \Z_{>0}$ and $h_{n,k}|_{[0,1]}$  converges to $f_n$ uniformly as $k\to \infty$.
\end{Lem}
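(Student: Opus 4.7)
The plan is to smooth $f_n$ locally near each of its kinks while preserving the sign structure of the slope changes, so that the total variation of the derivative is not lost to cancellation. Since $f_n$ is piecewise linear with finitely many kinks $x_j \in \D_n$ (together with the periodic kink at $0 \equiv 1$), each with left slope $a_j$ and right slope $b_j = a_j + e_n(x_j)$, I will replace $f_n$ on a small symmetric window $I_{j,k} := [x_j - \delta_k, x_j + \delta_k]$ by a smooth $g_{j,k}$ whose derivative interpolates \emph{monotonically} from $a_j$ to $b_j$. Then $g_{j,k}''$ has constant sign on $I_{j,k}$, so $\int_{I_{j,k}} |g_{j,k}''|\,dx = |b_j - a_j| = |e_n(x_j)|$, and summing over the kinks yields $E_n$ exactly, for every $k$.

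Concretely, I fix once and for all a smooth non-decreasing function $\rho\colon\R\to[0,1]$ with $\rho(s)=0$ for $s\le -1$, $\rho(s)=1$ for $s\ge 1$, and $\rho(-s)+\rho(s)=1$ (so $\int_{-1}^1 \rho=1$), and set $\rho_k(s):=\rho(s/\delta_k)$. On $I_{j,k}$ I define
\[
g_{j,k}'(x) := (1-\rho_k(x-x_j))\,a_j + \rho_k(x-x_j)\,b_j,
\]
and integrate from the left endpoint with $g_{j,k}(x_j - \delta_k) = f_n(x_j - \delta_k)$. The symmetry $\int_{-\delta_k}^{\delta_k} \rho_k = \delta_k$ guarantees $g_{j,k}(x_j + \delta_k) = f_n(x_j + \delta_k)$, so $h_{n,k}$ defined as $g_{j,k}$ on each $I_{j,k}$ and $f_n$ elsewhere is of class $C^\infty$ and $1$-periodic. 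I will choose $\delta_k\downarrow 0$ satisfying $2\delta_k < 2^{-n}$ (so the windows $I_{j,k}$ are disjoint) and $\delta_k < \min_j f_n(x_j)/(2E_n)$ taken over all concave kinks (where $f_n(x_j)>0$ since $f_n$ vanishes only at integers, and those are convex kinks).

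Verification then is essentially bookkeeping: (a) $\Lip(h_{n,k})\le \Lip(f_n)$ because $g_{j,k}'$ is a convex combination of $a_j$ and $b_j$, both of which have modulus at most $\Lip(f_n)$; (b) $\int_0^1 |h_{n,k}''|\,dx = \sum_j |b_j - a_j| = E_n$ by monotonicity of each $g_{j,k}'$; (c) uniform convergence $h_{n,k}\to f_n$ follows from the pointwise bound $|g_{j,k}(x)-f_n(x)| = |b_j-a_j|\bigl|\!\int_{-\delta_k}^{x-x_j}\!\rho_k - (x-x_j+\delta_k)_+\bigr| \le 2|e_n(x_j)|\delta_k$; (d) non-negativity holds because on $I_{j,k}$ the deviation $g_{j,k}-f_n$ is bounded by $|e_n(x_j)|\delta_k < f_n(x_j)/2 \le f_n(x)$ at concave kinks (using that $f_n \ge f_n(x_j)/2$ on $I_{j,k}$ once $\delta_k$ is small compared to the slopes), while at convex kinks (including $x_j=0$, where $f_n(x_j)=0$) the deviation $g_{j,k}-f_n$ is non-negative.

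The one genuinely delicate point is non-negativity at the periodic kink $x=0$, where $f_n$ vanishes. Since that kink is convex (the slope jumps from $d_n((2^n-1)/2^n)<0$ to $d_n(0)>0$, so $e_n(0)>0$), the computation $g_{j,k}(x)-f_n(x) = e_n(0)\int_{-\delta_k}^{x-x_j}\rho_k\,ds \ge 0$ on $I_{0,k}$ gives $h_{n,k}\ge f_n\ge 0$ there automatically. No other obstacle arises, since for each fixed $n$ the number of kinks is finite and all the constants depending on $n$ are harmless in a sequence indexed by $k$.
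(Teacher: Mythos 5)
The paper leaves this lemma unproved (it is introduced only with ``By smoothing $f_n$, we get the following''), and your monotone-slope mollification is exactly the intended argument: since $g_{j,k}'$ is monotone on each window, $g_{j,k}''$ has constant sign there, so $\int_0^1 |h_{n,k}''|$ recovers $\sum_j |e_n(x_j)| = E_n$ exactly rather than merely being bounded by it. One small correction: the displayed difference should read $g_{j,k}(x)-f_n(x)=e_n(x_j)\bigl[\int_{-\delta_k}^{x-x_j}\rho_k\,ds-(x-x_j)_+\bigr]$, not with $(x-x_j+\delta_k)_+$, and the same missing $-(x-x_j)_+$ term appears in your non-negativity computation at $x=0$. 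The corrected quantity satisfies $0\le \int_{-\delta_k}^{u}\rho_k\,ds - u_+ \le \delta_k/2$ for every $u\in[-\delta_k,\delta_k]$ (the lower bound from $\rho_k\ge 0$ when $u\le 0$, and from $\rho_k\le 1$ together with $\int_{-\delta_k}^{\delta_k}\rho_k=\delta_k$ when $u>0$; the upper bound from $\rho_k\le 1/2$ on $[-\delta_k,0]$ and the monotonicity of $u\mapsto\int_{-\delta_k}^u\rho_k-u$), which yields both the uniform estimate and $g_{j,k}\ge f_n$ at convex kinks, so the substance of the proof stands.
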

Since $f_n$ converges to $f$ uniformly as $n\to \infty$, we immediately get the following corollary.
\begin{Cor}
There exists a sequence of smooth periodic functions $\{h_{k}\colon \R\to \R_{\geq 0}\}_{k=1}^\infty$ of period $1$  such that
$\Lip(h_{k})\leq 2\exp\left(\sum_{l=2}^\infty \theta(l)\right)$,
$\int_0^1 |h_{k}''(x)|\,d x\leq 8\exp\left(4\sum_{l=2}^\infty \theta(l)\right)$ for any $k\in \Z_{>0}$ and $h_{k}$  converges to $f$ uniformly as $k\to \infty$.
\end{Cor}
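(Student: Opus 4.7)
The plan is straightforward: combine the smoothing Lemma (which provides, for each fixed $n$, a sequence $h_{n,k}$ of smooth periodic functions approximating $f_n$ with $\Lip(h_{n,k})\leq \Lip(f_n)$ and $\int_0^1|h_{n,k}''|=E_n$) with the uniform bounds on $\Lip(f_n)$ and $E_n$ from Lemma \ref{lemSwDS}, and then extract a diagonal subsequence to get convergence to $f$ rather than to $f_n$.

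In more detail, I would first record the two a priori bounds: by Lemma \ref{lemSwDS} (ii) we have $\Lip(f_n)\leq 2\exp(\sum_{l=2}^\infty \theta(l))$ (since the sup of slopes $|d_n|$ is the Lipschitz constant of the piecewise-linear $f_n$), and by Lemma \ref{lemSwDS} (iii) we have $E_n\leq 8\exp(4\sum_{l=2}^\infty \theta(l))$. These give uniform bounds on the Lipschitz constant and on $\|h_{n,k}''\|_{L^1([0,1])}$ for \emph{every} pair $(n,k)$. Next, from Lemma \ref{lemSwDS} (i), the telescoping estimate $\sup_x|f_{n+1}(x)-f_n(x)|\leq 2^{-n}\theta(n+1)\cdots\theta(3)$ (or just the geometric factor $\tfrac12$ per step) shows $f_n\to f$ uniformly on $[0,1]$, hence on $\R$ by periodicity.

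With these ingredients in hand, I would apply a diagonal argument. For each $n$, the Lemma gives $\lim_{k\to\infty}\sup_{x\in[0,1]}|h_{n,k}(x)-f_n(x)|=0$, so I can pick an index $k(n)$ with
\[
\sup_{x\in[0,1]}|h_{n,k(n)}(x)-f_n(x)|\leq \frac{1}{n}.
\]
Setting $\tilde h_n := h_{n,k(n)}$, the triangle inequality combined with $\sup|f_n-f|\to 0$ yields $\tilde h_n\to f$ uniformly. The bounds $\Lip(\tilde h_n)\leq \Lip(f_n)\leq 2\exp(\sum_l\theta(l))$ and $\int_0^1|\tilde h_n''|=E_n\leq 8\exp(4\sum_l\theta(l))$ are inherited immediately from the Lemma and from the uniform estimates above. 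Reindexing $\tilde h_n$ as $h_k$ (with the same properties) gives the claimed sequence.

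There is essentially no real obstacle here; the statement is labelled as an immediate corollary for exactly this reason. The only thing one should be slightly careful about is ensuring positivity (\emph{i.e.} $h_k\geq 0$) is preserved by whatever smoothing was used in the Lemma — this is automatic if the smoothing is done by convolution with a nonnegative mollifier, which is the natural choice. Apart from that, all the quantitative work has already been carried out in Lemma \ref{lemSwDS}, and the corollary is obtained by the diagonal extraction described above.
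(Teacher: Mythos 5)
Your proof is correct and is essentially the argument the paper intends: the paper simply says the corollary follows "immediately" from the smoothing Lemma and the uniform convergence $f_n\to f$, and the natural way to make that precise is exactly the diagonal extraction you describe, with the uniform bounds $\Lip(f_n)\le 2\exp(\sum_l\theta(l))$ and $E_n\le 8\exp(4\sum_l\theta(l))$ read off from Lemma \ref{lemSwDS} (ii) and (iii). Your remark about nonnegativity of the mollified functions is a reasonable point to flag but is handled by the smoothing Lemma itself, which already asserts $h_{n,k}\colon\R\to\R_{\ge 0}$.
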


By using the approximation sequence $\{h_k\}$ for $f$, we can construct examples with dense singularities under Assumption \ref{Asu} as follows.
\begin{Prop}
Take arbitrary $m,d\in\Z_{>0}$ with $d>m\geq 2$ and arbitrary $(m-1)$-dimensional closed submanifold $\iota_2:M_2\to \R^{d-2}$ satisfying
$\|II_{\iota_2}\|_{L^\infty}\leq S_2$ and
$d_{M_2}(y_1,y_2)\leq L_2\|\iota_2(y_1)-\iota_2(y_2)\|_{\R^{d-2}}$ for some positive constants $S_2,L_2>0$.
Then, there exist constants $S,L>0$ such that
defining
$$
\iota\colon S^1\times M_2\to \R^d,\,((\cos 2\pi x,\sin 2\pi x),y)\to ((1+f(x))(\cos 2\pi x,\sin 2\pi x),\iota_2(y)),
$$
the pair $\left((S^1,c d_{S^1})\times M_2,\iota\right)$ satisfies Assumption \ref{Asu} for $m,d,S,K=S^2,i_0=\min\{\pi,\pi/S\},L$,
where we define
$$
c:=\frac{1}{2\pi}\int_0^1 \left((2\pi)^2(1+f(x))^2+f'(x)^2\right)^{1/2}\,d x
$$
so that $\iota$ is an isomertic embedding.
\end{Prop}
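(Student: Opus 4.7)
The approach is to use the smooth approximations $h_k\colon\R\to\R_{\geq 0}$ provided by the preceding Corollary to build a sequence of smooth submanifolds approximating $(M,\iota)$. For each $k$, I would set $M_k:=S^1\times M_2$ and equip it with the metric $g_k$ pulled back from the Euclidean metric via
$$
\iota_k\!\left((\cos 2\pi x,\sin 2\pi x),y\right):=\left((1+h_k(x))(\cos 2\pi x,\sin 2\pi x),\iota_2(y)\right).
$$
Since $h_k$ is smooth periodic and $1+h_k\geq 1$, this makes $\iota_k$ a smooth immersion and $(M_k,g_k)$ a smooth closed Riemannian manifold, isometric to a Riemannian product of a smooth $1$-dimensional circle with $(M_2,\iota_2^\ast g_{\R^{d-2}})$. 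The approximation map $\psi_k\colon M\to M_k$ will be defined as the identity on the $M_2$-factor combined with a reparameterization $\phi_k$ of the $S^1$-factor that equalizes the $g_k$-arclength along the plane curve $C_k:=\iota_k(S^1\times\{y\})\subset\R^2$ with the $c\,d_{S^1}$-arclength.

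The crucial observation that allows pointwise control of $|\Sect_{g_k}|$ without any pointwise bound on $h_k''$ is the orthogonal decomposition $II_k=II_{C_k}\oplus II_{\iota_2}$, where $II_{C_k}$ takes values in the $\R^2$-factor and $II_{\iota_2}$ in the $\R^{d-2}$-factor of $\R^d$. By the Gauss equation, together with the facts that $C_k$ is one-dimensional (so $II_{C_k}$ contributes no intrinsic sectional curvature) and that the mixed terms $\langle II(v_1,v_1),II(v_2,v_2)\rangle$ with $v_1$ tangent to $C_k$ and $v_2$ tangent to $M_2$ vanish by orthogonality of the normal subspaces, every sectional curvature of $g_k$ either vanishes or agrees with a sectional curvature of $M_2$; hence $|\Sect_{g_k}|\leq C(S_2)$ uniformly in $k$. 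A uniform injectivity radius follows from Klingenberg's theorem combined with the length bound $L_k:=\mathrm{length}(C_k)\geq 2\pi$ (immediate from $1+h_k\geq 1$), which gives $\mathrm{inj}_{g_k}\geq\min(\pi,\pi/S)$. For the $L^1$ bound on $|II_k|$ in Assumption \ref{Asu}~(ii), Fubini and the orthogonal decomposition give
$$
\int_{M_k}|II_k|\,d\Vol_{g_k}\leq\Vol(M_2)\int_{C_k}\kappa_k\,ds+L_k\int_{M_2}|II_{\iota_2}|\,d\Vol,
$$
and the explicit curvature formula for the plane curve $x\mapsto(1+h_k(x))(\cos 2\pi x,\sin 2\pi x)$ bounds its total absolute curvature in terms of $\|h_k''\|_{L^1}$, $\Lip(h_k)$, and $\|h_k\|_\infty$---all uniformly controlled by the Corollary.

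To verify the approximation conditions, (i) follows from the local bi-Lipschitz nature of $\iota$ (the Lipschitz bound $\Lip(f)<\infty$ and positivity of $1+f$ keep the polar-like embedding comparable to the intrinsic distance), combined with compactness and injectivity of $\iota$. For (iii) and (iv), uniform convergence $h_k\to f$ and the uniform Lipschitz bound yield both $L_k\to 2\pi c$ and uniform convergence of the arclength-equalizing reparameterizations, which in turn gives distance approximation $|d_M(x,y)-d_{g_k}(\psi_k(x),\psi_k(y))|\to 0$ uniformly in $x,y$, using the Pythagorean form of the product metric to reduce to a one-dimensional estimate on the circle factor; surjectivity of $\psi_k$ is immediate from surjectivity of $\phi_k$. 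Assertion (v) follows directly from the uniform convergence $h_k\to f$.

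The main obstacle is the distance estimate in (iii): one must show that $\phi_k$ behaves well enough that arclengths along $C_k$ match the $c\,d_{S^1}$-distance on $S^1$ up to $\epsilon_k\to 0$. Only pointwise/uniform convergence $h_k\to f$ is available, not convergence of derivatives, so I would control the arclength functional $\int_0^t\sqrt{(2\pi(1+h_k))^2+(h_k')^2}\,dx$ by exploiting its convexity in $(h_k,h_k')$ together with uniform integrability of $(h_k')^2$ coming from the uniform Lipschitz bound, and then invert to obtain uniform convergence of $\phi_k$ and the desired distance estimate.
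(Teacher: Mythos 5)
The paper states this proposition without a detailed proof, giving only a brief remark that identifies the explicit constants $S$ and $L$; your proposal therefore fills a genuine gap, and its overall architecture --- smooth approximating manifolds $(M_k,g_k)$ obtained from the $h_k$, the product-metric decomposition of curvature and of the second fundamental form, and the arclength-equalizing reparameterization $\phi_k$ --- is the right one.

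There is, however, a real gap in your treatment of the distance estimate (iii), and your suggested fix would not close it. You propose to control the arclength functional $\int_0^t\bigl((2\pi(1+h_k))^2+(h_k')^2\bigr)^{1/2}\,dx$ by ``convexity in $(h_k,h_k')$ together with uniform integrability of $(h_k')^2$.'' But a uniform Lipschitz bound on $h_k$ only gives a uniform $L^\infty$ bound on $h_k'$, which is entirely compatible with high-frequency oscillation; the standard example $h_k(x)=k^{-1}\sin(k^2x)$ converges to $0$ uniformly while its arclength over $[0,1]$ blows up. Convexity of the integrand buys you lower semicontinuity of the functional under weak convergence, not continuity, and lower semicontinuity is in exactly the wrong direction here: it would permit the limit arclength to be strictly smaller, which would make $c$ too large and destroy the required isometry. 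So as written, the closing step of your argument fails.

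The repair uses the \emph{other} uniform bound from the Corollary, which you invoke only for the $L^1$ second-fundamental-form estimate but not here: $\int_0^1|h_k''|\,dx$ is uniformly bounded. This means $h_k'$ has uniformly bounded variation and is uniformly bounded in $L^\infty$, so by Helly's selection theorem every subsequence of $(h_k')$ has a further subsequence converging pointwise a.e.\ and hence (by dominated convergence) in $L^1$. Since $h_k\to f$ uniformly and $h_k(t)-h_k(0)=\int_0^t h_k'$, any such $L^1$ limit must equal $f'$ a.e.; consequently $h_k'\to f'$ in $L^1([0,1])$ for the whole sequence. That is exactly the mode of convergence under which the arclength functional converges, the total lengths $L_k\to 2\pi c$, and the reparameterizations $\phi_k$ converge uniformly, yielding $\epsilon_k\to 0$ in (iii). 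With this substitution the rest of your proposal --- the product-metric curvature identity (for a Riemannian product of a $1$-manifold with $M_2$, every nonzero sectional curvature is one of $M_2$), the injectivity-radius bound via Klingenberg together with Fenchel applied to $M_2$, the Fubini split of $\int|II_k|$, and conditions (i), (iv), (v) --- goes through as you describe.
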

Note that $f$ is differentiable almost everywhere
since $f$ is a Lipschitz function.
By (the proof of) Theorem \ref{CR}, we have $|\Sect_{M_2}|\leq S^2$ and $\Inj_{M_2}\geq \pi/S$.
We can take $S$ and $L$ to be
\begin{align*}
S=&\left(2\pi +\frac{12}{\pi}\exp\left(4\sum_{l=2}^\infty \theta(l)\right)\right)\Vol(M_2)+2\pi c S_2 \Vol(M_2),\\
L=&\max\left\{\frac{1}{4}\left(16\pi^2+4\exp\left(2\sum_{l=2}^\infty \theta(l)\right)\right)^{1/2},L_2\right\}.
\end{align*}
Since $f$ has dense singularities, so does $\iota$.
\bibliographystyle{amsbook}

\end{document}